 \documentclass[11pt]{amsart}
\usepackage{amssymb}
\usepackage{amscd}
\usepackage{amsmath}
\usepackage{amsmath}
\usepackage[all]{xy}
\usepackage{mathrsfs}
\usepackage{graphicx}
 \usepackage{color}

\setlength{\textwidth}{6in}
\setlength{\oddsidemargin}{0in}
\setlength{\evensidemargin}{0in}
\setlength{\topmargin}{0in}
\setlength{\headheight}{0.50in}
\setlength{\headsep}{0.50in}
\setlength{\textheight}{8in}
\setlength{\footskip}{0.5in}
\setlength{\topskip}{0in}

\theoremstyle{plain}
\newtheorem*{theorem*}{Theorem}
\newtheorem{theorem}{Theorem}[section]
\newtheorem{lemma}[theorem]{Lemma}

\newtheorem{proposition}[theorem]{Proposition}

\newtheorem{example}[theorem]{Example}

\newtheorem{definition}[theorem]{Definition}
\newtheorem{remark}[theorem]{Remark}

\font\russ=wncyr10  1
\def\sha{\hbox{\russ\char88}}

\DeclareMathOperator{\Det}{Det}
\DeclareMathOperator{\Ext}{Ext}
\DeclareMathOperator{\Gal}{Gal}
\DeclareMathOperator{\Fr}{Fr}

\DeclareMathOperator{\Hom}{Hom}

\DeclareMathOperator{\Spec}{Spec}
\DeclareMathOperator{\N}{N}

\DeclareMathOperator{\im}{im}

\newcommand{\CC}{\mathbb{C}}
\newcommand{\bc}{\mathbb{C}}

\newcommand{\GG}{\mathbb{G}}

\newcommand{\QQ}{\mathbb{Q}}

\newcommand{\RR}{\mathbb{R}}
\newcommand{\br}{\mathbb{R}}
\newcommand{\ZZ}{\mathbb{Z}}
\newcommand{\Z}{\mathbb{Z}}
\newcommand{\wrp}{\mathfrak{p}}

\newcommand{\Fit}{{\rm{Fit}}}

\newcommand{\co}{\mathcal{O}}

\newcommand{\OO}{\mathcal{O}}

\newcommand{\frp}{\mathfrak{p}}

\newcommand{\cok}{\text{cok}}

\newcommand{\bz}{\mathbb{Z}}

\newcommand{\bq}{\mathbb{Q}}
\newcommand{\sm}{\sigma}
\def\bigcapp{\raise1ex\hbox{\rotatebox{180}{$\biguplus$}}}
 \def\bigcappd{\raise1ex\hbox{\rotatebox{180}{$\displaystyle\biguplus$}}}

\begin{document}

\title[On higher special elements]{On higher special elements\\
 of $p$-adic representations}


\author{David Burns, Takamichi Sano and Kwok-Wing Tsoi}

\begin{abstract} As a natural generalization of the notion of `higher rank Euler system', we develop a theory of `higher special elements' in the exterior power biduals of the Galois cohomology of $p$-adic representations. We show, in particular, that such elements encode detailed information about the structure of Galois cohomology groups and are related by families of congruences involving natural height pairings on cohomology. As a first concrete application of the approach, we use it to refine, and extend, a variety of existing results and conjectures concerning the values of derivatives of Dirichlet $L$-series. 
\end{abstract}

\address{King's College London,
Department of Mathematics,
London WC2R 2LS,
U.K.}
\email{david.burns@kcl.ac.uk}


\address{Osaka City University,
Department of Mathematics,
3-3-138 Sugimoto\\Sumiyoshi-ku\\Osaka\\558-8585,
Japan}
\email{sano@sci.osaka-cu.ac.jp}

\address{King's College London,
Department of Mathematics,
London WC2R 2LS,
U.K.}
\email{kwok-wing.tsoi@kcl.ac.uk}

\thanks{Preliminary version of August 2018}


\maketitle

\tableofcontents
\section{Introduction}\label{intro}

\subsection{Background}In recent years there has been increasing interest in the study of elements that lie in the higher exterior powers (or higher exterior power biduals) of the cohomology of $p$-adic Galois representations that are defined over a number field $K$.

In the best case, one hopes that, for a given representation $T$, a collection of such elements that are defined over a family of abelian extensions of $K$ constitutes a `higher rank Euler system', is explicitly related to special values of an $L$-series associated to $T$ and leads to structural information about the Selmer modules of $T$.

However, this theory is still very much in its infancy. In particular, there are few concrete examples and, since the algebra of higher exterior powers can be rather complicated, there is little understanding of precisely what integrality properties elements in exterior power biduals can be expected to have or what information regarding the structure of Galois cohomology groups or Selmer modules that they can be expected to encode.

With this latter problem in mind, in the first part of this article we shall describe a detailed algebraic theory of `special elements' in the exterior power biduals of the Galois cohomology of $p$-adic representations.

The degree of generality in which we can develop this theory allows subsequent applications in a wide range of natural arithmetic settings, including to the  compactly supported $p$-adic cohomology groups, the finite support cohomology groups in the sense of Bloch and Kato and the cohomology groups of the Nekov\'a\v r-Selmer complexes that arise from very general $p$-adic representations.

\subsection{The general theory} To give a little more detail of our algebraic approach we assume to be given a Dedekind domain $R$ with field of fractions $F$, an $R$-order $\mathfrak{A}$ in a finite dimensional separable commutative $F$-algebra $A$ and an extension field $E$ of $F$.

Then we shall first prove several technical results in homological algebra concerning the theory of `admissible complexes' of $\mathfrak{A}$-modules that was introduced by Barrett and the first author in \cite{barretb} and then subsequently used by Macias Castillo and the first author in \cite{omac} (but see Remark \ref{terminology} for details of terminology differences). These results will play a key role in several of our later arguments and, although we shall not pursue this point any further here, they can also be used to obtain improvements in the theory of `organising matrices' that is developed \cite{omac}.

We assume now to be given a complex of $\mathfrak{A}$-modules $C$ that is both admissible and acyclic outside degrees one and two together with an isomorphism of $E\otimes_FA$-modules $\lambda: E\otimes_F H^1(C)\cong E\otimes_F H^2(C)$ and a generator $\mathcal{L}$ of the $\mathfrak{A}$-submodule of $E\otimes_RA$ that is canonically determined by $\lambda$ and the determinant (over $\mathfrak{A}$) of $C$.

Then, for each non-negative integer $a$, and each ordered $a$-tuple of elements $x_\bullet$ of $H^2(C)$ we shall define a canonical `higher special element' $\eta = \eta_{(C,\lambda, \mathcal{L},x_\bullet)}$ that belongs to the $a$-th exterior power
 of the $A$-module generated by $H^1(C)$.

In the three main algebraic results of this article (Theorems \ref{char els}, \ref{str-bidual} and \ref{mrstheorem}) we shall then investigate the key integrality properties of these higher special elements and the relations that hold between them as the data $(C,\lambda, \mathcal{L},x_\bullet)$ varies.

In particular, in Theorem \ref{char els} we show that higher special elements lead to a computation of the higher Fitting ideals of the $\mathfrak{A}$-module $H^2(C)$ and can also be used to generate annihilators of important subquotients of $H^2(C)$.

Modulo a slight difference of approach, and under certain strong hypotheses on the tuple $x_\bullet$, this sort of result was already proved in \cite{bks1}. However, the key feature of Theorem \ref{char els}, and the reason that its proof is much more complicated than that given in loc. cit., is that it imposes no special hypotheses on $x_\bullet$ and, as we shall later see, this is critical for the purposes of arithmetic applications.

Under the assumption that $\mathfrak{A}$ is Gorenstein, we shall then in Theorem \ref{str-bidual} construct a canonical perfect $\mathfrak{A}$-bilinear pairing between the quotient of ${\bigcap}^a_\mathfrak{A} H^1(C)$ by the submodule generated by any suitable linear multiple of $\eta$ and the quotient of $\mathfrak{A}$ by a canonical ideal determined by (the same linear multiple of) $\eta$. In specific cases, we find that this result gives much information about the positioning of higher special elements in exterior power biduals.

Finally, in Theorem \ref{mrstheorem}, we show that under suitable conditions, higher special elements for differing data sets can be related by congruence relations involving a canonical algebraic height pairing between the cohomology groups of $C$.

This result provides an axiomatic formulation of the `refined class number formula for $\mathbb{G}_m$' that was independently formulated by Mazur and Rubin in \cite{MR2} and by the second author in \cite{sano} and allows us to translate the insight obtained via this approach in the context of the multiplicative group to other important arithmetic settings. 

\subsection{Arithmetic applications} Turning now to consider arithmetic applications, we remark that one of the original motivating factors behind our proving the above algebraic results is that they lead to the formulation of certain precise refinements of the Birch and Swinnerton-Dyer conjecture concerning leading terms of the Artin-Hasse-Weil $L$-series that arise from an abelian variety over $K$ and the complex characters of the Galois group of a given finite abelian extension of $K$. This conjectural formalism is discussed in a subsequent article \cite{bmc2} of Macias Castillo and the first author and leads to a range of new, and very explicit, predictions. These predictions do not require any restrictive hypotheses on either the abelian variety or the abelian extension and, in particular, incorporate and extend earlier conjectures that are due, amongst others, to Mazur and Tate, to Gross and to Darmon.

However, as a first concrete illustration of the interest of our approach, in the second part of this article we shall apply it to the compactly supported $p$-adic cohomology of the representations $T = \ZZ_p(r)$ for varying integers $r$.

In this setting, we find that our theory of higher special elements extends the theory of `generalized Stark elements of arbitrary rank and weight' that is developed by Kurihara and the first and second authors in \cite{bks2-2}. In particular, therefore, when $r=0$ these elements recover the `Rubin-Stark elements' that underpin the theory of abelian Stark conjectures.

Perhaps surprisingly, even in this classical setting, our approach leads to refinements and generalisations of a wide range of existing results and, for the convenience of the reader, we have given a list of these improvements at the beginning of \S\ref{tate section}.

However, rather than discuss these applications in any further detail here, we prefer just to give two very simple, and very concrete, examples of the sort of new results that we are able to prove in this way.

To do this we fix a finite abelian extension $F/k$ of totally real fields. We set $G := \Gal(F/k)$, fix a finite set of places $S$ of $k$ containing  the sets $S_\infty(k)$ of archimedean places and $S_{\rm ram}(F/k)$ of places that ramify in $F$ and an auxiliary finite set of places $\Sigma$ of $k$ that is disjoint from $S$ and such that the multiplicative subgroup $\mathcal{O}_{F,S,\Sigma}^\times$ of $F$ comprising all elements that are integral at all primes outside $S$ and congruent to $1$ modulo all places above $\Sigma$ is torsion-free.

For the first result we assume that $k = \QQ$ and $S = S_\infty(\QQ)\cup S_{\rm ram}(F/\QQ)$ and write $f$ for the conductor of $F$ and $c_{F,\Sigma}$ for the image of the cyclotomic element ${\rm Norm}_{\QQ(\zeta_f)/F}(1- \zeta_f)$ under the action of $\prod_{\ell \in \Sigma}(1-\sigma_\ell^{-1}\cdot \ell)$, where $\zeta_f:=e^{2\pi\sqrt{-1}/f}$ and $\sigma_\ell$ is the Frobenius automorphism of $\ell$. We also write ${\rm Sel}^\Sigma_S(F)$ for the Selmer group with respect to $S$ and $\Sigma$ of $\mathbb{G}_m$ over $F$ (the definition of which is recalled at the beginning of \S\ref{recall selmer}).

For each finitely generated $G$-module $M$ we write $M_{\rm tor}$ for its torsion subgroup and, for each non-negative integer $a$, we denote its $a$-th Fitting ideal by ${\rm Fit}^a_G(M)$.

Then the following result is derived in \S\ref{proof of A} as an easy application of Theorem \ref{str-bidual}.
\medskip

\noindent{\bf Theorem A.}\,\,{\em There exists a canonical perfect $G$-invariant pairing of finite groups }
\[ (\mathcal{O}_{F,S,\Sigma}^\times/\langle c_{F,\Sigma}\rangle)_{\,{\rm tor}}\times
( \ZZ[G]/{\rm  Fit}^1_G({\rm Sel}_S^\Sigma(F)))_{\,{\rm tor}} \to \QQ/\ZZ.\]
\medskip

The observation that such pairings exist is, as far as we are aware, new (although, in the special case of Theorem A, there are direct links to previous work of Kurihara and the first two authors in \cite{bks1} - see Remark \ref{remark bks}) and can be seen to encode detailed information about the structure of the quotient of the group of algebraic units by the group of cyclotomic units. In this regard we recall that in \cite[p. 260]{lang} Lang explicitly refers to the structure of the latter quotient group as a `mystery'.

For example, in the special case that $f$ is an odd prime power the module $\mathcal{O}_{F,S,\Sigma}^\times/\langle c_{F,\Sigma}\rangle$ is finite and the pairing in Theorem A induces a canonical isomorphism 
\[ \mathcal{O}_{F,S,\Sigma}^\times/\langle c_{F,\Sigma}\rangle \cong \Hom_\ZZ(\ZZ[G]/{\rm  Fit}^0_G({\rm Cl}^\Sigma_S(F)),\QQ/\ZZ),\]
where ${\rm Cl}^\Sigma_S(F)$ is the ray class group of $\mathcal{O}_{F,S}$ modulo the product of all places above $\Sigma$ and the Pontryagin dual is endowed with the natural (rather than contragredient) action of $G$.

This isomorphism addresses the problem raised by Washington in \cite[Rem. after Th. 8.2]{washington} of explicitly relating the $G$-structures of the modules $\mathcal{O}_{F,S,\Sigma}^\times/\langle c_{F,\Sigma}\rangle$ and ${\rm Cl}^\Sigma_S(F)$ in this case and also immediately implies an equality
 ${\rm Fit}^0_G(\mathcal{O}_{F,S,\Sigma}^\times/\langle c_{F,\Sigma}\rangle) = {\rm  Fit}^0_G({\rm Cl}^\Sigma_S(F))$ that refines the main result of Cornacchia and Greither in \cite{cg}.

To give a second concrete example we fix a prime $p$, assume $S$ contains all $p$-adic places of $k$ and write $\theta^p_{F/k,S}(1)$ for the unique element of the augmentation ideal of $\CC_p[G]$ with the property that for all non-trivial homomorphisms $\rho: G \to \CC_p^\times$ one has
\begin{equation}\label{interpolation} \rho_*(\theta^p_{F/k,S}(1)) = L_{p,S}(1,\rho),\end{equation}
where $\rho_*$ is the ring homomorphism $\CC_p[G] \to \CC_p$ induced by $\rho$ and $L_{p,S}(s,\rho)$ the $S$-truncated Deligne-Ribet $p$-adic $L$-series of $\rho$. We also write ${\rm Cl}(F)_p$ for the Sylow $p$-subgroup  of the ideal class group of $F$.

We write $e_{\bf 1}$ for the idempotent $|G|^{-1}\sum_{g \in G}g$ of $\QQ[G]$ and note that, for any $G$-module $M$, the ring $\ZZ[G](1-e_{\bf 1})$ acts naturally on the quotient $M/H^0(G,M)$.

Then the following result is proved in \S\ref{proof of B} by combining several of the technical results that we obtain concerning admissible complexes with previous work of Macias Castillo and the first author in \cite{omac} and \cite{dbmc2}.

This result constitutes a rather striking analogue for the values at $s=1$ of $p$-adic $L$-functions of Brumer's classical conjecture that, in the setting of abelian CM extensions of totally real fields, the natural Stickelberger element constructed from values at $s=0$ of Artin $L$-functions should annihilate ideal class groups.
\medskip

\noindent{}{\bf Theorem B.}\,\, {\em If $F$ is totally real and either $\mu_p(F)$ vanishes or $p$ does not divide $[F:k]$, then $\theta^p_{F/k,S}(1)$ belongs to $\ZZ_p[G](1-e_{\bf 1})$ and annihilates ${\rm Cl}(F)_p/H^0(G,{\rm Cl}(F)_p)$.}
\medskip

If $F$ is abelian over $\QQ$, then $\mu_p(F)$ is known to vanish and so Theorem B is unconditional. For this reason, Theorem B is both a generalization and strong refinement of results proved (in the setting of cyclic extensions of $\QQ$) by Oriat in \cite{oriat}.

In addition, our approach leads naturally to the formulation of a precise conjectural extension of Theorem B to abelian extensions of arbitrary number fields that, in particular, specialises to give a strongly refined version of the central conjecture formulated by Solomon in \cite{sol3} (see Remark \ref{general B}).

\subsection{Structure of the article} In a little more detail, the main contents of this article is as follows. In \S\ref{the complexes} we prove certain preliminary results concerning the general categories of complexes to which our constructions and results can be applied and describe several ways in which they occur naturally in arithmetic. In \S\ref{hse sect} we introduce the key notion of `higher special element' and prove the main algebraic results concerning their integrality properties (we note that, as observed earlier, several of these algebraic results extend results from the literature and may have independent interest). As a preliminary to arithmetic applications, in \S\ref{compact-applications} we prove some necessary results concerning compactly supported \'etale cohomology. 
 Then, in \S\ref{tate section}, we give a first illustration of our results by applying them in the setting of Tate motives and deriving refinements and/or generalisations of a range of existing results.

Finally, we would like to note that much of the work presented here grew out of the King's College London PhD Thesis \cite{thesis} of the third author.

\section{Categories of complexes}\label{the complexes}

In this section we prove certain useful preliminary results concerning the category of `admissible' complexes that were introduced by Barrett and the first author in \cite{barretb}.

We shall assume that all rings are commutative and, when the context is clear, we shall often abbreviate functors of the form `$ \otimes_R$' to `$\cdot$'.


For a noetherian ring $R$ we write $D(R)$ for the derived category of $R$-modules and $D^{\rm p}(R)$ for the full triangulated subcategory of $D(R)$ comprising complexes that are perfect.

For an abelian group $N$ we write $N_{\rm tor}$ for its torsion submodule and
set $N_{\rm tf}: = N/N_{\rm tor}$, which we regard as embedded in the associated space $\QQ\cdot N$.

\subsection{Admissible complexes} We fix a
Dedekind domain $R$ of characteristic $0$ with field of fractions
$F$ and an $R$-order $\mathfrak{A}$ that spans a separable $F$-algebra $A := F\otimes_R\mathfrak{A}$.

\subsubsection{}\label{admissible definition}We define the category $D^{\rm a}(\mathfrak{A})$ of `admissible perfect complexes of $\mathfrak{A}$-modules' to be the full subcategory of $D(\mathfrak{A})$ comprising complexes $C = (C^i)_{i \in \ZZ}$ that satisfy the following four conditions:
%
%
\begin{itemize}
\item[(ad$_1$)] $C$ is an object of $D^{{\rm p}}(\mathfrak{A})$;
\item[(ad$_2$)] the Euler characteristic of $A\otimes_{\mathfrak{A}}C$ in the Grothendieck group $K_0(A)$
vanishes;
\item[(ad$_3$)] $C$ is acyclic outside degrees one, two and three;
\item[(ad$_4$)] $H^1(C)$ is $R$-torsion-free.
\end{itemize}

We shall say that an object of $D^{\rm a}(\mathfrak{A})$ that is acyclic outside degrees one and two is said to be a `strictly admissible perfect complex of $\mathfrak{A}$-modules' and write $D^{{\rm s}}(\mathfrak{A})$ denotes the full subcategory of $D^{\rm a}(\mathfrak{A})$ comprising such  complexes.

\begin{remark}\label{semisimplicity}{\em Since the $F$-algebra $A$ is a finite product of fields, the assumptions (ad$_2$) and (ad$_3$) combine to imply there is an isomorphism of $A$-modules
\[ A\otimes_\mathfrak{A}H^2(C) \cong H^2(A\otimes_\mathfrak{A}C) \cong
H^1(A\otimes_\mathfrak{A}C)\oplus H^3(A\otimes_\mathfrak{A}C)\cong A\otimes_\mathfrak{A}(H^1(C)\oplus H^3(C)).\]
}\end{remark}

\begin{remark}\label{representative}{\em Let $\wrp$ be a prime ideal of $R$ and $A'_\wrp$ a local component of the semi-local algebra $A_\wrp$. Then, for each complex $C$ in $D^{\rm a}(\mathfrak{A})$, respectively in $D^{\rm s}(\mathfrak{A})$, the complex $C_\wrp' := \mathfrak{A}_\wrp'\otimes_\mathfrak{A}C$ belongs to
 $D^{\rm a}(\mathfrak{A}'_\wrp)$, respectively $D^{\rm s}(\mathfrak{A}'_\wrp)$. In particular, since each finitely generated torsion-free $A_\frp'$-module has finite projective dimension if and only if it is free, a standard argument of homological algebra combines with the assumptions (ad$_1$), (ad$_3$) and (ad$_4$) to imply that for each $C$ in $D^{\rm a}(\mathfrak{A})$ the complex $C_\wrp'$ is isomorphic in $D(\mathfrak{A}'_\wrp)$ to a complex of $\mathfrak{A}'_\wrp$-modules
\begin{equation}\label{exp rep com} P^1 \xrightarrow{d^1} P^2 \xrightarrow{d^2} P^3\end{equation}
in which each module is both finitely generated and free and the first term is placed in degree one. If $C$ belongs to $D^{\rm s}(\mathfrak{A})$, then one can in addition take the module $P^3$ to be zero. }
\end{remark}

\begin{remark}\label{terminology}{\em The categories $D^{{\rm a}}(\mathfrak{A})$ and $D^{{\rm s}}(\mathfrak{A})$ also play a key role in the theory of organising matrices that is developed by Macias Castillo and the first author in \cite{omac}. However, we caution the reader that there is a slight difference of terminology  in that objects of the categories $D^{{\rm a}}(\mathfrak{A})$ and $D^{{\rm s}}(\mathfrak{A})$ defined above are in loc. cit. respectively referred to as `weakly admissible' and `admissible' complexes.}\end{remark}

\subsubsection{}In this section we record two useful ways in which admissible complexes give rise to new families of admissible complexes.

\begin{lemma}\label{cone construct} We assume to be given the following data:
\begin{itemize}
\item[(a)] an object $C$ of $D^{{\rm a}}(\mathfrak{A})$, and
\item[(b)] a finitely generated projective $\mathfrak{A}$-module $P$ and a homomorphism of $\mathfrak{A}$-modules
\[ \theta^i: P \to H^i(C)\]%
for $i=1$ and $i=2$ where $\theta^1$ is both injective and such that ${\rm cok}(\theta^1)$ is $R$-torsion-free.
\end{itemize}
Then there is an exact triangle in $D(\mathfrak{A})$ of the form

\begin{equation}\label{stated form} P[-1]\oplus P[-2] \xrightarrow{\theta} C \to D \to P[0]\oplus P[-1]\end{equation}
in which $\theta$ is the unique morphism with $H^i(\theta) = \theta^i$ for $i = 1,2$ and $D$ belongs to $D^{{\rm a}}(\mathfrak{A})$.  \end{lemma}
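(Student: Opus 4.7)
The plan is to build $\theta$ via the fact that $P$ being projective forces $\Hom_{D(\mathfrak{A})}(P[-i],C)$ to coincide with $\Hom_\mathfrak{A}(P,H^i(C))$, to take $D := \cone(\theta)$, and then to verify the four admissibility axioms for $D$ from the long exact cohomology sequence of the resulting triangle.

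For the construction, since $P$ is finitely generated projective the functor $\Hom_\mathfrak{A}(P,-)$ is exact, which yields canonical identifications
\[
\Hom_{D(\mathfrak{A})}(P[-i],C)\;\cong\;H^i(\Hom_\mathfrak{A}(P,C))\;\cong\;\Hom_\mathfrak{A}(P,H^i(C)).
\]
Combining these for $i=1,2$ shows that the pair $(\theta^1,\theta^2)$ corresponds bijectively to a unique morphism $\theta\colon P[-1]\oplus P[-2]\to C$ with $H^i(\theta)=\theta^i$ for $i=1,2$. Setting $D:=\cone(\theta)$ then produces the displayed exact triangle.

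Verification is immediate for (ad$_1$), because cones of morphisms between perfect complexes are perfect, and for (ad$_2$), which follows from additivity of Euler characteristics in triangles together with the trivial vanishings $\chi\bigl(A\otimes_\mathfrak{A}(P[-1]\oplus P[-2])\bigr)=-[A\otimes_\mathfrak{A}P]+[A\otimes_\mathfrak{A}P]=0$ and $\chi(A\otimes_\mathfrak{A}C)=0$. For (ad$_3$) I would read off from the long exact cohomology sequence of the triangle that $H^i(D)=0$ for $i\notin\{0,1,2,3\}$ automatically, that $H^3(D)\cong H^3(C)$, and that $H^0(D)=\ker(\theta^1)$, which vanishes by the injectivity hypothesis on $\theta^1$. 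Meanwhile, the same sequence shows $H^1(D)$ fits into
\[
0\to\cok(\theta^1)\to H^1(D)\to\ker(\theta^2)\to 0.
\]

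For (ad$_4$), $\cok(\theta^1)$ is $R$-torsion-free by hypothesis, while $\ker(\theta^2)\subseteq P$ is $R$-torsion-free because the order $\mathfrak{A}$ embeds in the $F$-algebra $A$, so $P$ itself is $R$-torsion-free; since $R$ is a Dedekind domain, any extension of $R$-torsion-free modules is $R$-torsion-free, which gives (ad$_4$). I expect the only delicate point to be precisely this last $H^1(D)$ calculation, since both hypotheses on $\theta^1$, injectivity and torsion-freeness of the cokernel, are used exactly here and in the vanishing of $H^0(D)$; without either of them the conclusion would fail in just that spot, so the statement is essentially sharp.
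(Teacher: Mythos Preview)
Your proof is correct and follows essentially the same approach as the paper's: construct $\theta$ via the bijection $\Hom_{D(\mathfrak{A})}(P[-1]\oplus P[-2],C)\cong \Hom_\mathfrak{A}(P,H^1(C))\oplus\Hom_\mathfrak{A}(P,H^2(C))$ coming from projectivity of $P$, take $D$ to be the cone, and read off (ad$_1$)--(ad$_4$) from the long exact cohomology sequence. Your treatment of (ad$_4$) is in fact slightly more explicit than the paper's, which simply asserts that the long exact sequence together with the hypotheses on $\theta^1$ give torsion-freeness of $H^1(D)$, whereas you spell out the short exact sequence $0\to\cok(\theta^1)\to H^1(D)\to\ker(\theta^2)\to 0$ and argue each piece.
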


\begin{proof} Since $P$ is projective the natural passage to cohomology map
\[ \Hom_{D^{\rm p}(\mathfrak{A})}(P[-1]\oplus P[-2] ,C) \to \Hom_{\mathfrak{A}}(P,H^1(C))\oplus \Hom_{\mathfrak{A}}(P,H^2(C))\]
is bijective and hence there exists a unique morphism $\theta$ with the stated properties.

Choose $D$ to be any complex that lies in an exact triangle (\ref{stated form}). Then it is clear that $D$ belongs to $D^{\rm p}(\mathfrak{A})$ and that the Euler characteristic of $A\otimes_{\mathfrak{A}}D$ in $K_0(A)$ vanishes (since this is true for both $C$ and $P[-1]\oplus P[-2]$). In addition, there long exact cohomology sequence of (\ref{stated form}) has the form
\begin{equation}\label{les sigma} P \xrightarrow{\theta^1} H^1(C) \to H^1(D) \to P \xrightarrow{\theta^2} H^2(C) \to H^2(D) \to 0 \to H^3(C) \to H^3(D)\to 0\end{equation}
and so implies immediately that $D$ is acyclic outside degrees one, two and three and combines with the assumption that $\theta^1$ is injective and such that ${\rm cok}(\theta^1)$ is $R$-torsion-free to imply that $H^1(D)$ is also $R$-torsion-free. \end{proof}


In the next result we assume to be given a homomorphism of $R$-orders $\mathfrak{B} \to \mathfrak{A}$ and use it to regard each $\mathfrak{A}$-module $M$ as a $\mathfrak{B}$-module. We then regard the linear dual $\Hom_\mathfrak{B}(M,\mathfrak{B})$ of any such $M$ as an $\mathfrak{A}$-module by the rule $a(f)(m) := f(a(m))$ for each $a$ in $\mathfrak{A}$, $f$ in $\Hom_\mathfrak{B}(M,\mathfrak{B})$ and $m$ in $M$.

We say that $\mathfrak{A}$ is `everywhere locally Gorenstein relative to $\mathfrak{B}$' if for each prime ideal $\mathfrak{p}$ of $R$ the $R_\mathfrak{p}\otimes_R\mathfrak{A}$-module $R_\mathfrak{p}\otimes_R\Hom_\mathfrak{B}(\mathfrak{A},\mathfrak{B})$ is free of rank one.

\begin{lemma}\label{dual preserve} Let $\mathfrak{B} \to \mathfrak{A}$ be a homomorphism of $R$-orders that satisfies all of the following three conditions.
\begin{itemize}
\item[(a)] $\mathfrak{A}$ is a projective $\mathfrak{B}$-module.
\item[(b)] $\mathfrak{A}$ is everywhere locally Gorenstein relative to $\mathfrak{B}$.
\item[(c)] $\mathfrak{B}$ is Gorenstein.
\end{itemize}

Then the functors $C \mapsto {\rm R}\Hom_{\mathfrak{B}}(C,\mathfrak{B}[-4])$ and $C \mapsto {\rm R}\Hom_{\mathfrak{B}}(C,\mathfrak{B}[-3])$ respectively preserve the categories $D^{\rm a}(\mathfrak{A})$ and $D^{\rm s}(\mathfrak{A})$.
\end{lemma}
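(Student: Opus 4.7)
The overall strategy is to use condition (a) to rewrite $\mathrm{R}\Hom_{\mathfrak{B}}(-, \mathfrak{B})$ as an intrinsic duality functor on $\mathfrak{A}$-modules. Set $\omega := \Hom_{\mathfrak{B}}(\mathfrak{A}, \mathfrak{B})$, equipped with the $\mathfrak{A}$-action described before the statement. Since (a) makes $\mathfrak{A}$ projective over $\mathfrak{B}$, the complex $\mathrm{R}\Hom_{\mathfrak{B}}(\mathfrak{A}, \mathfrak{B}) = \omega$ is concentrated in degree zero, so tensor--Hom adjunction yields a natural isomorphism
\[ \mathrm{R}\Hom_{\mathfrak{B}}(C, \mathfrak{B}) \cong \mathrm{R}\Hom_{\mathfrak{A}}(C, \omega) \]
in $D(\mathfrak{A})$ for every complex $C$ of $\mathfrak{A}$-modules. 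Condition (b) then asserts that $\omega$ is locally free of rank one, and hence finitely generated projective, as an $\mathfrak{A}$-module, while (c), combined with (b), makes $\mathfrak{A}$ itself Gorenstein (with $\omega$ as its dualising module), so that $\omega$ is invertible in the sharp sense. One then sets $D := \mathrm{R}\Hom_{\mathfrak{A}}(C, \omega)[-n]$ with $n = 4$ for admissibility and $n = 3$ for strict admissibility, and verifies (ad$_1$)--(ad$_4$) for $D$.

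The verification is essentially local. Fix a prime $\wrp$ of $R$ and a local component $A'_\wrp$ of $A_\wrp$. By Remark \ref{representative}, $C'_\wrp$ is represented by a complex $P^1 \xrightarrow{d^1} P^2 \xrightarrow{d^2} P^3$ of finitely generated free $\mathfrak{A}'_\wrp$-modules in degrees $1, 2, 3$ (with $P^3 = 0$ when $C$ lies in $D^{\rm s}(\mathfrak{A})$), while $\omega'_\wrp$ is free of rank one over $\mathfrak{A}'_\wrp$. Hence the $\mathfrak{A}'_\wrp$-linear dual of this representative is a complex of finitely generated free $\mathfrak{A}'_\wrp$-modules in degrees $-3, -2, -1$, and shifting by $[-n]$ places its terms in $\{n-3, n-2, n-1\}$. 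This set equals $\{1,2,3\}$ for $n = 4$ and, with the bottom term absent, equals $\{1,2\}$ for $n = 3$; flat base change at $\wrp$ and passage through the idempotent decomposition of $A_\wrp$ then yields (ad$_1$) and (ad$_3$) globally. Moreover,
\[ H^1(D)'_\wrp \cong H^{1-n}\bigl(\Hom_{\mathfrak{A}'_\wrp}(C'_\wrp, \omega'_\wrp)\bigr) \cong \Hom_{\mathfrak{A}'_\wrp}(H^{n-1}(C)'_\wrp, \omega'_\wrp), \]
which is $R_\wrp$-torsion free because $\omega$ is $R$-torsion free, being projective over the $R$-torsion-free ring $\mathfrak{A}$; torsion-freeness being local, (ad$_4$) follows.

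Finally, for (ad$_2$), one base-changes to the semisimple commutative ring $A$: the module $A \otimes_\mathfrak{A} \omega$ is invertible and hence, over a product of fields, non-canonically isomorphic to $A$; since over such a ring every finitely generated module has the same class in $K_0(A)$ as its $A$-linear dual, one computes $\chi_A(A \otimes_\mathfrak{A} D) = (-1)^n \chi_A(A \otimes_\mathfrak{A} C) = 0$. The main obstacle, in my view, is the initial identification $\mathrm{R}\Hom_{\mathfrak{B}}(-, \mathfrak{B}) \cong \mathrm{R}\Hom_{\mathfrak{A}}(-, \omega)$ together with the correct recognition of how hypotheses (a)--(c) enter; once this reformulation is in place, each of (ad$_1$)--(ad$_4$) reduces to a routine calculation on the two- or three-term complex of finitely generated free $\mathfrak{A}'_\wrp$-modules supplied by Remark \ref{representative}.
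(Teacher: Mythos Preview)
Your argument is correct and takes a genuinely different route from the paper's. The key move you make is the derived adjunction
\[
\mathrm{R}\Hom_{\mathfrak{B}}(C,\mathfrak{B}) \;\cong\; \mathrm{R}\Hom_{\mathfrak{A}}(C,\omega),\qquad \omega:=\Hom_{\mathfrak{B}}(\mathfrak{A},\mathfrak{B}),
\]
after which (b) lets you treat $\omega$ as locally free of rank one and everything reduces, via Remark~\ref{representative}, to the trivial observation that the $\mathfrak{A}'_\wrp$-linear dual of a length-two (resp.\ length-three) free complex is again such a complex sitting in the mirrored degrees. The paper, by contrast, never rewrites the functor in terms of $\omega$ and works globally: it first uses (a) and (c) together to prove that $\mathrm{Ext}^j_{\mathfrak{B}}(N,\mathfrak{B})=0$ for every finitely generated $\mathfrak{A}$-module $N$ and every $j>1$, and then feeds this vanishing into the universal coefficient spectral sequence to obtain, for each $i$, a short exact sequence
\[
0 \to \mathrm{Ext}^1_{\mathfrak{B}}(H^{1-i}(C)_{\rm tor},\mathfrak{B}) \to H^i(\mathrm{R}\Hom_{\mathfrak{B}}(C,\mathfrak{B})) \to \Hom_{\mathfrak{B}}(H^{-i}(C),\mathfrak{B}) \to 0,
\]
from which (ad$_3$) and (ad$_4$) are read off after the appropriate shift.

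Your approach is more elementary and makes the role of hypothesis (b) transparent, while the paper's spectral-sequence computation yields the finer structural information above (which is in fact reused later, e.g.\ in the proof of Proposition~\ref{almost there}(iii) and of Lemma~\ref{adm-rep}). One small comment: your parenthetical remark that (b) and (c) combine to make $\mathfrak{A}$ Gorenstein is not actually needed anywhere in your argument---everything you use about $\omega$ follows directly from (b) alone, and the claim as stated would require a short justification if it were load-bearing.
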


\begin{proof} For any $\mathfrak{A}$-module $Q$ we set $Q^* := \Hom_\mathfrak{B}(Q,\mathfrak{B})$.

Then the assumption (b) implies that for any finitely generated projective $\mathfrak{A}$-module $Q$ the $\mathfrak{A}$-module $Q^*$ is finitely generated and projective. This implies that the  category $D^{\rm p}(\mathfrak{A})$ is preserved by the functor $C \mapsto C^* := {\rm R}\Hom_{\mathfrak{B}}(C,\mathfrak{B})$.

To compute the cohomology groups of $C^*$ we claim first that for any finitely generated $\mathfrak{A}$-modules $N$ and for all integers $j > 1$ the group ${\rm Ext}^j_\mathfrak{B}(N,\mathfrak{B})$ vanishes. To prove this we note that any exact sequence of $\mathfrak{A}$-modules of the form
\[ 0 \to N' \to \mathfrak{A}^d \to N\to 0,\]
induces, as a consequence of assumption (a), an isomorphism of the form ${\rm Ext}^j_\mathfrak{B}(N,\mathfrak{B}) \cong {\rm Ext}^{j-1}_\mathfrak{B}(N',\mathfrak{B})$. Thus, since each such module $N'$ is torsion-free over $R$, the vanishing of ${\rm Ext}^j_\mathfrak{B}(N,\mathfrak{B})$ can be reduced, by a downward induction on $j$, to the vanishing of ${\rm Ext}^1_\mathfrak{B}(N,\mathfrak{B})$ for every finitely generated $\mathfrak{B}$-module $N$ that is torsion-free over $R$. It is then enough to note that the latter condition is equivalent to condition (c) (cf. \cite[\S 37]{CR} and \cite[Prop. 6.1]{DKR}).

Then, since the groups ${\rm Ext}^j_\mathfrak{B}(H^i(C),\mathfrak{B})$ vanish for all integers $j > 1$ and all integers $i$, the universal coefficient spectral sequence implies that in each degree $i$ there is a canonical short exact sequence
\begin{equation}\label{ucss seq} 0 \to {\rm Ext}^1_\mathfrak{B}(H^{1-i}(C)_{\rm tor},\mathfrak{B}) \to H^i(C^*) \to H^{-i}(C)^*\to 0.\end{equation}
Here we have also used the fact that the tautological exact sequence
\[ 0 \to H^{1-i}(C)_{\rm tor} \to H^{1-i}(C) \to H^{1-i}(C)_{\rm tf} \to 0\]
combines with assumption (c) to induce an isomorphism
\[ {\rm Ext}^1_\mathfrak{B}(H^{1-i}(C),\mathfrak{B})\cong {\rm Ext}^1_\mathfrak{B}(H^{1-i}(C)_{\rm tor},\mathfrak{B}).\]

By using the sequence (\ref{ucss seq}) to compute the cohomology groups of $C^*[-3]$ and $C^*[-4]$ one finds that the functors $C \mapsto C^*[-4]$ and $C \mapsto C^*[-3]$ respectively preserve the categories $D^{\rm a}(\mathfrak{A})$ and $D^{\rm s}(\mathfrak{A})$, as claimed.\end{proof}

\begin{example}\label{dual exams} {\em There are several natural cases in which the assumptions of Lemma \ref{dual preserve} are satisfied. \

\noindent{}(i) Let $\mathfrak{B} \to \mathfrak{A}$ be the tautological homomorphism $R \to \mathfrak{A}$. Then the assumptions (a) and (c) are automatically satisfied and assumption (b) is satisfied if, for example, $\mathfrak{A}$ is a direct factor of the group ring $R[\Gamma]$ for a finite abelian group $\Gamma$.

\noindent{}(ii) If $\mathfrak{B} \to \mathfrak{A}$ is the identity map $\mathfrak{A} \to \mathfrak{A}$, then the conditions (a), (b) and (c) are all satisfied if and only if $\mathfrak{A}$ is Gorenstein.}
 \end{example}

%
%

\subsection{Arithmetic examples}\label{arithmetic exams} 
To describe arithmetic examples of the classes of complexes defined above we set $G_{L/K} := \Gal(L/K)$ for any Galois extension of fields $L/K$. We also fix an algebraic closure $K^c$ of $K$ and set $G_K := G_{K^c/K}$.

For any finite group $\Gamma$ and any $\ZZ_p[\Gamma]$-module $N$ we write $N^\vee$ for the Pontryagin dual $\Hom_{\ZZ_p}(N,\QQ_p/\ZZ_p)$ which we endow with the usual contragredient action of $\ZZ_p[\Gamma]$.

\subsubsection{The $p$-adic representations}

Let $k$ be a number field. We fix a finite set of places $S$ of $k$ that contains all archimedean places and all $p$-adic places of $k$.

Then in this section we assume to be given a pair $(\mathfrak{A},T)$ comprising  a $\ZZ_p$-order $\mathfrak{A}$ that spans a separable $\QQ_p$-algebra $A$ and a continuous $\ZZ_p[G_k]$-module $T$ that is unramified outside $S$ and is endowed with a commuting action of $\mathfrak{A}$ with respect to which it is a projective module.

We endow the Kummer dual $T^*(1) := \Hom_{\ZZ_p}(T,\ZZ_p(1))$ with the following
commuting actions of $\mathfrak{A}$ and $G_{k}$: for each $a
\in \mathfrak{A}$, $\sigma \in G_{k}$, $f \in T^*(1)$ and $t
\in T$ one has $a(f)(t) = f(a(t))$ and $\sigma(f)(t) =
\sigma(f(\sigma^{-1}(t))$.

%

\begin{remark}\label{base change remark}{\em Let $T$ be a finitely generated free $\ZZ_p$-module that is endowed with a continuous action of $G_k$ unramified outside $S$. Then there are two natural ways in which $T$ gives rise to data $(\mathfrak{A},T')$ of the above sort.

\noindent{}(i) Fix a finite abelian extension of number fields $F/k$ that is unramified outside $S$ and set $\mathfrak{A} := \ZZ_p[G_{F/k}]$. Then  the tensor product $T_F := \mathfrak{A}\otimes_{\ZZ_p}T$
 becomes a module over $\mathfrak{A}\times \ZZ_p[G_k]$ in the following way: the action of $\mathfrak{A}$ is induced by letting $G_{F/k}$ act via multiplication on the left and each
$u\in G_k$ acts as $x\otimes_{\ZZ_p}t \mapsto x\bar{u}^{-1}\otimes_{\ZZ_p}u(t)$ for $x \in\mathfrak{A}$ and $t \in T$ where $\bar{u}$ denotes the image of $u$ in $G_{F/k}$. Then, with respect to this action $T_F$ is both unramified outside $S$ and a free $\mathfrak{A}$-module.

\noindent{}(ii) If $T$ is endowed with an action of any order $\mathfrak{A}$ that is both regular and such that $V$ spans a free $A$-module, then $T$ is automatically a free $\mathfrak{A}$-module. }
\end{remark}

\subsubsection{Compactly supported \'etale cohomology}\label{csc exam} In the sequel we write $\mathcal{O}_{k,S}$ for the subring of $k$ comprising elements that are integral at all places outside $S$ and $\kappa_v$ for the residue field of a non-archimedean place $v$ of $k$.

In \S\ref{compact-applications} we recall the definition of the compactly supported \'etale cohomology complex
\[ C(T) := R\Gamma_c(\mathcal{O}_{k,S},T)\]
of the $p$-adic representation $T$ fixed above and prove that it has the properties recorded in the following result.

This result involves the (Bloch-Kato) Selmer group ${\rm Sel}(T^*(1))$ and Tate-Shafarevich group $\sha(T)$, the definitions of which will be recalled in \S\ref{sha recall}.

\begin{proposition}\label{admiss constructions} For any data $(\mathfrak{A},T)$ as above, the following claims are valid.
\begin{itemize}
\item[(i)] $C(T)$ belongs to $D^{\rm a}(\mathfrak{A})$.
\item[(ii)] Assume that the diagonal localisation homomorphism
\[ H^1(\mathcal{O}_{k,S},T)_{\rm tor} \to \bigoplus_{v \in S}H^1(k_v,T)_{\rm tor}\]
is injective. Then for any homomorphism
\[ \phi: \bigoplus_{v \in S_\infty}H^0(k_v,T)
\to \bigoplus_{v \in S_p}H^1_f(k_v,T)\]
of $\mathfrak{A}$-modules the data $(C(T),\phi)$ gives rise via the construction in Lemma \ref{cone construct} to a canonical object $C_{\phi}(T)$ of $D^{\rm a}(\mathfrak{A})$ with the property that ${\rm Sel}(T^*(1))^\vee$, and hence also $\sha(T)$,  is isomorphic to a subquotient of $H^2(C_{\phi}(T))$.
\item[(iii)] Let $\Sigma$ be any finite non-empty set of places of $k$ that is disjoint from $S$. Then there exists a natural morphism in $D(\mathfrak{A})$ from $\bigoplus_{v \in \Sigma}R\Gamma(\kappa_v,T(-1))[-2]$ to $C(T)$, respectively to $C_\phi(T)$ for any morphism $\phi$ as in claim (ii). The mapping cone $C_\Sigma(T)$, respectively $C_{\phi,\Sigma}(T)$, of this morphism belongs to $D^{\rm s}(\mathfrak{A})$ and the modules ${\rm Sel}(T^*(1))^\vee$ and $\sha(T)$ are isomorphic to subquotients of $H^2(C_{\phi,\Sigma}(T))$.
\end{itemize}
\end{proposition}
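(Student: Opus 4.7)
The plan is to derive all three claims from the defining localisation triangle for compactly supported \'etale cohomology, combined with the cone construction of Lemma \ref{cone construct} and an Artin-Verdier excision triangle at the primes in $\Sigma$. Throughout, I use that projectivity of $T$ over the $\ZZ_p$-order $\mathfrak{A}$ forces $T$ to be $\ZZ_p$-free.

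For claim (i), I would verify the four defining conditions of $D^{\rm a}(\mathfrak{A})$ in turn, working from the exact triangle
$$C(T) \to R\Gamma(\mathcal{O}_{k,S},T) \to \bigoplus_{v\in S} R\Gamma(k_v, T) \to C(T)[1].$$
Condition (ad$_1$) follows because the right-hand two terms are perfect complexes of $\mathfrak{A}$-modules (the standard fact that continuous Galois cochain complexes with coefficients in a finitely generated projective $\mathfrak{A}$-module admit bounded projective representatives). Condition (ad$_2$) follows from the global Euler-Poincar\'e characteristic formula applied after base change to the semisimple algebra $A$. For (ad$_3$), the group $H^0_c$ vanishes because $T^{G_k} \to \bigoplus_{v\in S} T^{G_{k_v}}$ is injective, and $H^i_c=0$ for $i\geq 4$ follows from the standard bound of $3$ on the cohomological dimension of $\mathcal{O}_{k,S}$ with $p$-adic coefficients. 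For (ad$_4$), a degree-one analysis of the long exact sequence identifies $H^1(C(T))$ as an extension of the kernel of $H^1(\mathcal{O}_{k,S},T) \to \bigoplus_{v \in S} H^1(k_v, T)$ by the cokernel of $T^{G_k} \hookrightarrow \bigoplus_{v \in S} T^{G_{k_v}}$; the latter cokernel is $\ZZ_p$-torsion-free as a quotient of $\ZZ_p$-saturated submodules of the $\ZZ_p$-free module $\bigoplus_v T$, while the kernel is automatically $\ZZ_p$-torsion-free, and the extension of two such modules is itself $\ZZ_p$-torsion-free.

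For claim (ii), I would apply Lemma \ref{cone construct} with $C = C(T)$ and $P = \bigoplus_{v \in S_\infty} H^0(k_v, T)$, which is projective over $\mathfrak{A}$. The map $\theta^1:P\to H^1(C(T))$ is induced by the connecting homomorphism $\bigoplus_{v \in S} H^0(k_v, T) \to H^1(C(T))$ from the defining triangle, composed with the canonical inclusion of $P$ into $\bigoplus_{v \in S}H^0(k_v,T)$; the injectivity hypothesis on $H^1(\mathcal{O}_{k,S},T)_{\rm tor} \to \bigoplus_{v \in S} H^1(k_v, T)_{\rm tor}$ translates, via a diagram chase in the long exact sequence, into the injectivity of $\theta^1$ and $\ZZ_p$-torsion-freeness of its cokernel. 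The map $\theta^2$ is the composition
$$P \xrightarrow{\phi} \bigoplus_{v \in S_p} H^1_f(k_v, T) \hookrightarrow \bigoplus_{v \in S} H^1(k_v, T) \xrightarrow{\partial} H^2(C(T)).$$
For the Selmer subquotient assertion, I would use Poitou-Tate duality $H^2(C(T)) \cong H^1(\mathcal{O}_{k,S}, T^*(1))^\vee$ to reinterpret $\mathrm{cok}(\theta^2) = H^2(C_\phi(T))$ as the dual of the subgroup of $H^1(\mathcal{O}_{k,S}, T^*(1))$ cut out by the local conditions dual to those encoded by $\theta^2$ (archimedean via the inclusion of $P$, Bloch-Kato at $p$-adic places via $\phi$). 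This subgroup contains $\mathrm{Sel}(T^*(1))$, so dually $\mathrm{Sel}(T^*(1))^\vee$ (and hence $\sha(T)$) appears as a subquotient of $H^2(C_\phi(T))$.

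For claim (iii), the required morphism
$$\bigoplus_{v \in \Sigma} R\Gamma(\kappa_v, T(-1))[-2] \to C(T)$$
arises from the Artin-Verdier excision triangle comparing $R\Gamma_c$ on $\mathrm{Spec}(\mathcal{O}_{k,S})$ and $\mathrm{Spec}(\mathcal{O}_{k,S \cup \Sigma})$, with the local contribution at each $v \in \Sigma$ identified via purity (using that $T$ is unramified at $v$) as $R\Gamma(\kappa_v, T(-1))[-2]$. The cone $C_\Sigma(T)$ is then identified, up to the appropriate rotation, with $R\Gamma_c(\mathcal{O}_{k, S \cup \Sigma}, T)$, and strict admissibility amounts to the vanishing of $H^3(C_\Sigma(T))$. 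In the long exact sequence this group arises as the cokernel of $\bigoplus_{v \in \Sigma} H^1(\kappa_v, T(-1)) \to H^3(C(T))$, and Artin-Verdier duality identifies this map as the $\ZZ_p$-dual of the injective localisation $H^0(\mathcal{O}_{k,S}, T^*(1)) \hookrightarrow \bigoplus_{v \in \Sigma} H^0(\kappa_v, T^*(1))$; hence the original map is surjective and $H^3(C_\Sigma(T)) = 0$. The Selmer subquotient assertion for $C_{\phi, \Sigma}(T)$ then follows by combining (ii) with the natural morphism of triangles relating $C_\phi(T)$ to $C_{\phi, \Sigma}(T)$.

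The main obstacle I anticipate lies in carefully identifying the excision triangle in (iii) with the correct Tate twist and cohomological shift, and in tracking through the duality needed to check surjectivity at degree three; the remaining verifications in (i) and (ii) are relatively standard manipulations of local-global Galois cohomology and Poitou-Tate duality.
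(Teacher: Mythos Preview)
Your plan for (ii) and for the vanishing of $H^3$ in (iii) is essentially the paper's approach, phrased slightly differently (the paper constructs the morphism in (iii) by first building a triangle on the dual side with $W^*(1)$-coefficients and then Pontryagin-dualising, rather than invoking excision/purity directly, but the $H^3$-computation comes down to the same injectivity of the degree-zero localisation map). One correction in (iii): the cone $C_\Sigma(T)$ is \emph{not} $R\Gamma_c(\mathcal{O}_{k,S\cup\Sigma},T)$ --- the latter has $H^3 \cong H^0(\mathcal{O}_{k,S\cup\Sigma}, W^*(1))^\vee$, which is generally nonzero --- so the excision picture is misleading. Fortunately your long-exact-sequence argument does not actually rely on that identification, and it goes through once you replace $T^*(1)$ by $W^*(1)$ and ``$\ZZ_p$-dual'' by Pontryagin dual.

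There is, however, a genuine gap in your argument for (ad$_4$) in claim (i). You assert that the kernel of the localisation map $\lambda: H^1(\mathcal{O}_{k,S},T) \to \bigoplus_{v \in S} H^1(k_v,T)$ is ``automatically'' $\ZZ_p$-torsion-free. This is false in general: injectivity of $\lambda$ on torsion is precisely the extra hypothesis imposed in claim (ii), and it can fail. The fact that $H^1(C(T))$ is nonetheless always torsion-free is a separate phenomenon --- in the extension $0 \to \mathrm{cok}(\kappa) \to H^1(C(T)) \to \ker(\lambda) \to 0$ the middle term can be torsion-free while the quotient has torsion. The paper sidesteps this entirely by using the coefficient triangle $C(T) \to R\Gamma_c(\mathcal{O}_{k,S},V) \to R\Gamma_c(\mathcal{O}_{k,S},W)$: since $H^1_c$ with $V$-coefficients is a $\QQ_p$-vector space and $H^0_c$ vanishes for every sheaf, the torsion in $H^1(C(T))$ is squeezed to zero. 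You need this (or an equivalent) argument to close the gap in (i); your decomposition approach, on the other hand, is exactly what is used in (ii) to verify that $\mathrm{cok}(\theta^1)$ is torsion-free, where the hypothesis on $\lambda|_{\rm tor}$ is available.
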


\begin{remark}{\em In the case that $T = \ZZ_p(r)_F$ and $\mathfrak{A} = \ZZ_p[G_{F/k}]$ for any integer $r$ and any finite Galois extension $F$ of $k$ the construction in Proposition \ref{admiss constructions}(iii) plays a key role in the article of Kurihara and the first and second authors in \cite{bks2-2} and this case is considered in greater detail in \S\ref{tate section}. In addition, in \cite{bmc2} it is shown that, if $T$ is the $p$-adic Tate module of a critical motive, then the construction in Proposition \ref{admiss constructions}(iii) plays an important role in motivating certain refined conjectures of Birch and Swinnerton-Dyer type for Artin-Hasse-Weil $L$-series.}\end{remark}

\begin{remark}{\em The displayed localisation map in Proposition \ref{admiss constructions}(ii) is injective, for example, whenever the space $H^0(k_v,\QQ_p\otimes_{\ZZ_p} T)$ vanishes for each $v$ in $S\setminus S_\infty$.}\end{remark}

\subsubsection{Finite Support Cohomology}\label{fc exam} Let $A$ be an abelian variety defined over $k$ and write $A^t$ for its dual. Fix a finite abelian extension $F/k$ that is unramified outside $S$ and for each intermediate field $K$ of $F/k$ write $S_p^K$, $S^K_{\rm
r}$ and $S^K_{\rm b}$ for the finite sets of places of $K$ which are $p$-adic, which lie above a place of $k$ that ramifies in $F/k$ and at which $A_{/K}$ has bad reduction respectively. For each $v$ in $S^K_{\rm b}$ write $c_v(A_{/K})$ for the Tamagawa factor at $v$ that occurs in the Birch-Swinnerton-Dyer conjecture for $A_{/K}$. We write $\sha(A_F)$ and ${\rm Sel}_p(A_{/F})$ for the (classical) Tate-Shafarevic and $p$-primary Selmer groups of $A$ over $F$ and
\[ C_f(T_{F}) := R\Gamma_f(k,T_{F})\]
for the (global) finite support cohomology of Bloch and Kato.

The following result is due to Macias Castillo, Wuthrich and the first author (and is discussed more fully in \cite[\S2.2.3]{omac}).

\begin{proposition}\label{admiss constructions2} Let $K$ be the intermediate field of $F/k$ corresponding to the $p$-Sylow subgroup of $G_{F/k}$.

\begin{itemize}
\item[(i)] $C_f(T_{F})$ belongs to $D^{\rm a}(\ZZ_p[G_{F/k}])$ if each of the following hypotheses is satisfied:
\begin{itemize}
\item[(a)] $p$ is odd and does not divide $
|A(K)_{\rm tor}|, |A^t(K)_{\rm tor}|, c_w(A_{/K})$ for any $w\in S^K_{\rm b}$ and $|A(\kappa_v)|$ for any $v \in S^K_{\rm r}$;
\item[(b)]  $A_{/K}$ has good reduction at all places in $S_p^K$ and at any place in $S_p^K \cap S_{\rm r}^K$ the reduction is also ordinary;
\item[(c)] $S^K_{\rm r}\cap S_{\rm b}^K = \emptyset$.
\end{itemize}
\item[(ii)] If $\sha(A_F)$ is finite, then $H^1(C_f(T_{F}))$ and $H^2(C_f(T_{F}))$ are canonically isomorphic to $\ZZ_p\otimes_\ZZ A^t(F)$ and ${\rm Sel}_p(A_{/F})^\vee$ respectively.
\end{itemize}
\end{proposition}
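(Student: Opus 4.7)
The plan is to follow the approach of \cite[\S2.2.3]{omac}: verify the four admissibility conditions (ad$_1$)--(ad$_4$) for $C_f(T_F)$ directly and read off its cohomology via the standard Bloch-Kato formalism. The backbone of the argument is the defining exact triangle of the Nekov\'a\v r-Selmer complex
\[
C_f(T_F) \to R\Gamma(\mathcal{O}_{k,S}, T_F) \to \bigoplus_{v \in S} R\Gamma_{/f}(k_v, T_F),
\]
where $R\Gamma_{/f}(k_v, T_F)$ denotes the cofibre of the canonical map $R\Gamma_f(k_v, T_F) \to R\Gamma(k_v, T_F)$. It is natural to prove claim (ii) first, because its identification of $H^1(C_f(T_F))$ is a key input for condition (ad$_4$).

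For (ii), passing to the long exact cohomology sequence of the triangle above identifies $H^1(C_f(T_F))$ with the kernel of the diagonal localisation $H^1(\mathcal{O}_{k,S}, T_F) \to \bigoplus_v H^1(k_v, T_F)/H^1_f(k_v, T_F)$, and the classical Kummer sequence for $A^t$ identifies this Bloch-Kato Selmer group with $\ZZ_p \otimes_\ZZ A^t(F)$. For $H^2$, global Poitou-Tate duality combined with local Tate duality yields a canonical Pontryagin duality between $H^2(C_f(T_F))$ and the Bloch-Kato Selmer group of the Kummer dual $T_F^*(1)$, and the finiteness of $\sha(A_F)$ ensures that the Bloch-Kato local conditions match the classical ones, so this dual group is identified with ${\rm Sel}_p(A_{/F})$.

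With (ii) established, the verification of (i) proceeds as follows. Condition (ad$_3$) is immediate from the concentration of global arithmetic cohomology over a number field in degrees $0$, $1$, $2$ together with the vanishing of $H^0(\mathcal{O}_{k,S}, T_F)$, which is forced by the hypothesis $p \nmid |A^t(K)_{\rm tor}|$. Condition (ad$_4$) uses the identification in (ii): the $\ZZ_p$-torsion of $\ZZ_p \otimes_\ZZ A^t(F)$ equals $A^t(F)[p^\infty]$, and since $G_{F/K}$ is a $p$-group, any nontrivial $G_{F/K}$-action on a finite $p$-group has nontrivial fixed points, so the vanishing $A^t(K)[p^\infty] = 0$ imposed by hypothesis (a) forces $A^t(F)[p^\infty] = 0$. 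Condition (ad$_2$) follows from the global Euler-Poincar\'e formula applied to $T_F$ together with cancellation of the local Euler characteristics of the $R\Gamma_{/f}(k_v, T_F)$ in the Grothendieck group $K_0(\QQ_p[G_{F/k}])$.

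The principal obstacle, and the reason for the elaborate hypotheses (a)--(c), is the perfectness condition (ad$_1$). This requires perfectness over $\ZZ_p[G_{F/k}]$ of each local complex $R\Gamma_f(k_v, T_F)$ and $R\Gamma_{/f}(k_v, T_F)$ appearing in the defining triangle. At $p$-adic places, hypothesis (b) supplies a two-step filtration on $T_F$ arising from good ordinary reduction that renders the local finite-support complex perfect. At places of bad reduction or ramification, the combined force of the $p$-coprimality statements in (a) and the disjointness condition (c) ensures that the inertia subgroups act with sufficiently constrained fixed and coinvariant modules that the relevant local complexes are perfect by the standard computations of local Galois cohomology. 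Assembling these local perfectness statements across $v \in S$ yields (ad$_1$) and completes the proof, with the detailed bookkeeping being carried out in \cite[\S2.2.3]{omac}.
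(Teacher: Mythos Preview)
The paper itself gives no proof of this proposition: it simply attributes the result to Macias Castillo, Wuthrich and the first author and refers the reader to \cite[\S2.2.3]{omac}. So there is no detailed argument in the paper to compare your sketch against, and your outline is a reasonable reconstruction of the standard approach one would expect to find in that reference.

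That said, there is a genuine logical gap in the way you have organised the argument. You propose to prove claim (ii) first and then use its identification $H^1(C_f(T_F))\cong\ZZ_p\otimes_\ZZ A^t(F)$ as the input for (ad$_4$). But claim (ii) is stated \emph{only} under the hypothesis that $\sha(A_F)$ is finite, whereas claim (i) makes no such assumption. As written, your verification of (ad$_4$) therefore imports an extra hypothesis into (i). The fix is easy but should be made explicit: one does not need the full identification of (ii) to control torsion. The long exact sequence associated to $0\to T_F\to V_F\to W_F\to 0$ shows directly that $H^1(C_f(T_F))_{\rm tor}$ injects into $H^0(F,W_F)=A^t(F)[p^\infty]$, and your $p$-group fixed-point argument then applies. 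Relatedly, your attribution of where the finiteness of $\sha(A_F)$ is used in (ii) is slightly off: it is needed for the $H^1$ identification (to kill the $T_p\sha$ term in the Kummer sequence), not for matching Bloch--Kato and classical local conditions in the $H^2$ identification.
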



\subsubsection{Nekov\'a\v r-Selmer Complexes}\label{cm exam} To consider critical motives beyond those associated to the restricted  class of abelian varieties discussed in Proposition \ref{admiss constructions2} it is natural to use the theory of Nekov\'a\v r-Selmer Complexes.

To discuss this we write $V$ for the $p$-adic realisation of a critical motive $M$ that is defined over $k=\QQ$ and has good ordinary reduction at $p$. We recall that in this case Perrin-Riou \cite{pr} has shown that there exists a unique $G_{\QQ_p}$-stable $\QQ_p$-subspace $V^0$ of $V$ with the property that the composite homomorphism $D_{\rm dR}(\QQ_p,V^0) \to D_{\rm dR}(\QQ_p,V)\twoheadrightarrow
t_p(V)$ induces an identification of $D_{\rm dR}(\QQ_p,V^0)$ with $t_p(V)$.

In particular, if we fix a full $G_\QQ$-stable $\ZZ_p$-sublattice $T$ of $V$, then we obtain a full
$G_{\QQ_p}$-stable $\ZZ_p$-sublattice of $V^0$ by setting $T^0:=T\cap V^0$.

We also fix a finite totally real abelian extension $F$ of $\QQ$ and a finite set of places $S$ of $\QQ$ that contains $\infty, p$, all primes that ramify in $F/\QQ$ and all at which $M$ has bad reduction.

In this setting we use the induced representations $T_F$ and $T_F^0$ that are constructed as in Remark \ref{base change remark}(i) and we endow each of the modules $V_F := \QQ_p\otimes_{\ZZ_p}T_F$, $V^0_F := \QQ_p\otimes_{\ZZ_p}T^0_F$, $W_F := V_F/T_F$ and $W^*(1)_F := V_F^*(1)/T_F^*(1) \cong \Hom_{\ZZ_p}(T_F,(\QQ_p/\ZZ_p)(1))$ with the actions of $G_{F/\QQ}$ and $G_\QQ$ that are
induced from the respective actions on $T_F$ and $T^*(1)_F$.

We then write $C(T_F,T_F^0) := {\rm SC}(\ZZ_S,T_F,T_F^0)$ for the Nekov\'a\v r-Selmer complex constructed by Fukaya and Kato in \cite[\S4.1.2.]{fukaya-kato}.

The relevance of our theory to such complexes is explained by the next result. This result is proved by the argument of Barrett and the first author  in \cite[Prop. 4.1]{barretb} and relies heavily on computations made in \cite{fukaya-kato}.

\begin{proposition}\label{admiss constructons3} Set $G:= G_{F/\QQ}$ and assume the spaces $H^0(\bq_p, V_F/V_F^0)$, $H^0(\bq_p,(V_F^0)^*(1))$ and $H^0(\bq_\ell,V_F)$ for each prime $\ell \notin S$ all vanish.

\begin{itemize}
\item[(i)] If $H^0(\bq,W_F)$ and $H^0(\bq,W^*(1)_F)$ vanish, then $C(T_F,T_F^0)$ is an object of $D^{\rm s}(\bz_p[G])$.
\item[(ii)] The $\ZZ_p[G]$-modules $\sha(T_F)$ and ${\rm Sel}(T^*(1)_F)^\vee$ are respectively isomorphic to subquotients of $H^2(C(T_F,T_F^0))_{\rm tor}$ and $H^2(C(T_F,T_F^0))$.
\item[(iii)]  There is a canonical identification $\bq_p\otimes_{\bz_p}H^2(C(T_F,T_F^0)) = \bq_p\otimes_{\bz_p}{\rm Sel}(T^*(1)_F)^\vee$.
\end{itemize}
\end{proposition}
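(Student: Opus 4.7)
The plan is to follow the blueprint of the argument of Barrett and the first author in \cite[Prop.~4.1]{barretb}, substituting the local and global computations from \cite[\S4.1]{fukaya-kato}. The Selmer complex $C(T_F,T_F^0) = {\rm SC}(\ZZ_S, T_F, T_F^0)$ is realised there as the mapping fibre of a canonical comparison morphism
\[ R\Gamma(\mathcal{O}_{\QQ,S}, T_F) \longrightarrow \bigoplus_{v \in S} Q_v, \]
where $Q_p = R\Gamma(\bq_p, T_F/T_F^0)$, $Q_\infty$ is an archimedean local term, and for each finite $v \in S$ with $v \neq p$ the term $Q_v$ computes $R\Gamma(\bq_v, T_F)$ modulo its unramified subcomplex $R\Gamma(\kappa_v, T_F^{I_v})$. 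The long exact cohomology sequence of this defining triangle is the main tool for everything that follows.

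For (i) I would verify in turn the four defining conditions of $D^{\rm s}(\bz_p[G])$. Perfectness over $\bz_p[G]$ is clear because $T_F$ is $\bz_p[G]$-free of finite rank and every constituent cohomology complex is perfect. The vanishing of the Euler characteristic in $K_0(\bq_p[G])$ is the explicit computation in \cite{fukaya-kato}, which combines the global Euler-Poincar\'e formula with the local Hodge-Tate balancing at $p$ dictated by the Perrin-Riou identification of $V^0$. Acyclicity outside degrees one and two reduces via the defining triangle to the vanishing of $H^0$ and $H^3$ of the Selmer complex: the first is automatic from $H^0(\QQ, W_F) = 0$, since any nonzero $\bq_p$-subspace of $V_F^{G_\QQ}$ would meet the saturated lattice $T_F$ in a $\bq_p$-line and so inject into $W_F^{G_\QQ}$; the second follows symmetrically via the hypothesis on $W^*(1)_F$ together with $H^0(\bq_\ell, V_F) = 0$ for each $\ell \notin S$. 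Finally, the $\bz_p$-torsion-freeness of $H^1(C(T_F,T_F^0))$ is read off from the same long exact sequence once the boundary from $H^0$ of the local terms has been accounted for.

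For (ii) and (iii), the same long exact sequence expresses $H^2(C(T_F,T_F^0))$ in terms of $H^2(\mathcal{O}_{\QQ,S}, T_F)$ and the local $H^1$'s modulo the prescribed local conditions. Applying Poitou-Tate global duality, and invoking the hypotheses $H^0(\bq_p, V_F/V_F^0) = 0 = H^0(\bq_p, (V_F^0)^*(1))$ to match the Nekov\'a\v r local condition at $p$ with the Bloch-Kato finite-part condition up to torsion, one identifies a natural subquotient of $H^2(C(T_F,T_F^0))$ with ${\rm Sel}(T^*(1)_F)^\vee$; the standard description of $\sha(T_F)$ as the kernel of the global-to-local map then exhibits it inside the torsion part of this subquotient. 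Tensoring with $\bq_p$ annihilates the torsion corrections and collapses all local discrepancies to zero, yielding (iii). The principal technical obstacle is precisely this last identification: one must track the local conditions at $p$ and at the ramified primes carefully through Poitou-Tate to isolate exactly the right subquotient of $H^2$ and to place $\sha(T_F)$ in its torsion. All other points are routine bookkeeping once the formalism of \cite{fukaya-kato} is in hand.
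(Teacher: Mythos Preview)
Your plan to follow \cite[Prop.~4.1]{barretb} is exactly what the paper does, so the overall strategy matches. Two points of your sketch, however, deviate from what that argument actually contains.

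First, the defining triangle. The Fukaya--Kato complex ${\rm SC}(\ZZ_S,T_F,T_F^0)$ is introduced in \cite[\S4.1.2,\,(4.1)]{fukaya-kato} via
\[
R\Gamma_c(\ZZ_S,T_F)\longrightarrow C(T_F,T_F^0)\longrightarrow H^0(G_{\CC/\RR},T_F)[0]\oplus R\Gamma(\QQ_p,T_F^0)\longrightarrow,
\]
not as a fibre over $R\Gamma(\mathcal{O}_{\QQ,S},T_F)$. Unwinding the octahedron that compares this with the triangle defining $R\Gamma_c$, the implied local condition at each finite $\ell\in S\setminus\{p\}$ is \emph{empty}: one has $Q_\ell = R\Gamma(\QQ_\ell,T_F)$, not the quotient by its unramified subcomplex. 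This matters once you try to read off the cohomology directly.

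Second, and more substantively, the torsion-freeness of $H^1(C)$ is the delicate step and is not handled in \cite{barretb} by the defining triangle alone. The argument there passes to the dual Selmer complex $C'={\rm SC}(\ZZ_S,T_F^*(1),(T/T^0)_F^*(1))$ and uses the duality triangle of \cite[Prop.~4.4.1(1)]{fukaya-kato} to embed $H^1(C)_{\rm tor}$ into $(H^3(C')_{\rm tor})^\vee$ and thence into $H^0(\ZZ_S,W_F)$. It is \emph{this} step that consumes both the hypothesis $H^0(\QQ,W_F)=0$ and the hypothesis $H^0(\QQ_\ell,V_F)=0$ for $\ell\notin S$ (the latter kills the term $\bigoplus_{\ell\notin S}H^0(\QQ_\ell,T_F)$ arising from the comparison of $S$-level Selmer complexes). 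Your attribution of $H^0(\QQ,W_F)=0$ to the vanishing of $H^0(C)$, and of the $\ell\notin S$ hypothesis to the vanishing of $H^3(C)$, does not match how these hypotheses are actually used; $H^3(C)=0$ needs only $H^0(\QQ,W^*(1)_F)=0$, via global duality and the surjection $H^3_c\twoheadrightarrow H^3(C)$.

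For (ii) and (iii) your outline is correct in spirit, but the actual route in \cite{barretb} goes through the auxiliary complex $\tilde C$ of \cite[\S4.2.11]{fukaya-kato} and the comparison ${\rm Sel}_{(2)}\hookrightarrow{\rm Sel}_{(1)}$ of \cite[Lem.~4.2.32 and Prop.~4.2.35]{fukaya-kato} rather than an unadorned Poitou--Tate argument; the hypotheses on $H^0(\QQ_p,V_F/V_F^0)$ and $H^0(\QQ_p,(V_F^0)^*(1))$ enter precisely to make that comparison an isomorphism after tensoring with $\QQ_p$.
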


\begin{remark}{\em The spaces $H^0(\bq_p, V_F/V_F^0)$, $H^0(\bq_p,(V_F^0)^*(1))$ and $H^0(\bq_\ell,V_F)$ will vanish if, for example, $M$ is pure with weight not equal to either $0$ or $-2$. }
\end{remark}

\begin{remark}{\em The statement of \cite[Prop. 4.1(i)]{barretb} is incorrect as stated since, in the context of loc. cit., the condition (ad$_4$)  need not be satisfied unless one also assumes the vanishing of $H^0(\bq,W_F)$. The first author would like to apologise for this error. }\end{remark}

\subsection{Controlling the cohomology of highest degree}\label{control section} In this section we prove that every admissible complex gives rise to natural families of strictly admissible complexes.

To do this we assume to be given an object $C$ of $D^{\rm a}(\mathfrak{A})$ and we write $e_0 = e_{C,0}$ for the sum of all primitive idempotents $e$ of $A$ which are such that the module $e(F\cdot H^3(C))$ vanishes.

We then also write $\mathfrak{A}_0 = \mathfrak{A}_{C,0}$ for the order $\mathfrak{A}e_0$ and $C_0$ for the object $\mathfrak{A}_0\otimes^{\mathbb{L}}_{\mathfrak{A}}C$ of $D(\mathfrak{A}_0)$. We set $A_0: = Ae_0$.

We can now state the main result to be proved in this section.

\begin{proposition}\label{reduction} Let $C$ be an object of $D^{\rm a}(\mathfrak{A})$.

Then, for each prime ideal $\wrp$ of $R$, there exists a set of generators $\mathcal{F}_\wrp$ of the $\mathfrak{A}_\wrp$-module $
{\rm Fit}_\mathfrak{A}^0(H^3(C))\cdot \mathfrak{A}_{0,\wrp}$ that are each invertible in $A_{0,\wrp}$ and are such that for each $x$ in $\mathcal{F}_\wrp$ there is a complex $C_{x}$ in $D^{\rm s}(\mathfrak{A}_{0,\wrp})$ with all of the following properties:
\begin{itemize}
\item[(i)] $H^1(C_{x}) = H^1(C_{0,\wrp})$. 
\item[(ii)] $H^2(C_{x})$ contains $H^2(C_{0,\wrp})$ as a submodule of finite index and the quotient module $H^2(C_{x})/H^2(C_{0,\wrp})$ is annihilated by $x$.
\item[(iii)] $e_0\cdot{\rm Det}_{\mathfrak{A}}(C)_\wrp = x\cdot{\rm Det}_{\mathfrak{A}_{0,\wrp}}(C_{x})$.
%
\end{itemize}
\end{proposition}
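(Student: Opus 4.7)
The strategy is to reduce to a local computation. Fix $\wrp$, and decompose $\mathfrak{A}_{0,\wrp}$ into its local direct factors; it suffices to construct $\mathcal{F}_\wrp$ and $C_x$ in each local component and assemble. By Remark \ref{representative}, over each local component $\mathfrak{A}'_\wrp$, the complex $C'_\wrp := \mathfrak{A}'_\wrp \otimes_\mathfrak{A}^{\mathbb{L}} C$ is represented by a three-term complex $P^1 \xrightarrow{d^1} P^2 \xrightarrow{d^2} P^3$ of finitely generated free $\mathfrak{A}'_\wrp$-modules of ranks $(m-n, m, n)$ respectively, with $n := \mathrm{rank}(P^3)$ and the rank of $P^1$ forced by the vanishing of the Euler characteristic. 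The hypothesis $e_0 \cdot (A \otimes_\mathfrak{A} H^3(C)) = 0$ implies $A'_\wrp \otimes d^2$ is surjective, so some $n \times n$ minor of $d^2$ is a unit (and hence a non-zero-divisor) in $A'_\wrp$. Since these minors generate $\mathrm{Fit}^0_\mathfrak{A}(H^3(C)) \cdot \mathfrak{A}'_\wrp$ and this ideal contains a non-zero-divisor, one can (using prime-avoidance, and if necessary passing to $\mathfrak{A}'_\wrp$-linear combinations of minors) extract a generating set $\mathcal{F}_\wrp$ consisting of non-zero-divisors.

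For each $x = \det(d^2_I) \in \mathcal{F}_\wrp$, where $I \subseteq \{1,\dots,m\}$ is an $n$-subset making $d^2_I := d^2|_{P^2_I}: P^2_I \to P^3$ invertible over $A'_\wrp$ (and $P^2_I \subseteq P^2$ denotes the corresponding rank-$n$ direct summand), I would define $C_x$ to be the quotient complex
\[ C_x \;:=\; C'_\wrp \,/\, \bigl[\, P^2_I \xrightarrow{d^2_I} P^3 \,\bigr], \]
where the bracketed subcomplex is placed in degrees $2$ and $3$. Explicitly, $C_x$ is the two-term complex $P^1 \to P^2/P^2_I$ in degrees $1$ and $2$, with differential induced by $d^1$ followed by the projection $P^2 \twoheadrightarrow P^2/P^2_I$; as $P^1$ and $P^2/P^2_I$ are free of rank $m-n$, $C_x$ is perfect with vanishing Euler characteristic.

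Claims (i) and (ii) I would verify via the long exact cohomology sequence of the short exact sequence $0 \to [P^2_I \to P^3] \to C'_\wrp \to C_x \to 0$. Since $d^2_I \otimes A'_\wrp$ is an isomorphism, $d^2_I$ is injective (as $P^2_I$ is torsion-free) and $P^3/d^2_I(P^2_I)$ is a finite $\mathfrak{A}'_\wrp$-module annihilated by $x$, by Cramer's rule. The long exact sequence then yields $H^1(C_x) = H^1(C'_\wrp)$, $H^3(C_x) = 0$, and a short exact sequence
\[ 0 \to H^2(C'_\wrp) \to H^2(C_x) \to d^2(P^2)/d^2_I(P^2_I) \to 0 \]
whose right-hand quotient is a subquotient of $P^3/d^2_I(P^2_I)$, hence finite and annihilated by $x$. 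Since $H^1(C_x) = H^1(C'_\wrp)$ is $R$-torsion-free, $C_x$ satisfies (ad$_1$)-(ad$_4$) and belongs to $D^{\mathrm{s}}(\mathfrak{A}'_\wrp)$. For (iii), multiplicativity of the determinant along the induced distinguished triangle gives $\mathrm{Det}(C'_\wrp) = \mathrm{Det}([P^2_I \to P^3]) \cdot \mathrm{Det}(C_x)$; since $[P^2_I \to P^3]$ is rationally acyclic with $\det(d^2_I) = x$, its canonical acyclic trivialization identifies $\mathrm{Det}([P^2_I \to P^3])$ with the principal ideal $x \cdot \mathfrak{A}'_\wrp$ inside $A'_\wrp$. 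Assembling across local components of $\mathfrak{A}_{0,\wrp}$ then yields $e_0 \cdot \mathrm{Det}_\mathfrak{A}(C)_\wrp = x \cdot \mathrm{Det}_{\mathfrak{A}_{0,\wrp}}(C_x)$ and completes the proof.

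I expect the main obstacle to lie in (iii), specifically in keeping careful track of signs and orientations so that the canonical trivialization of the rationally acyclic subcomplex yields $x$ (rather than $x^{-1}$) on the correct side of the determinant identity. A secondary point requiring care is the production of non-zero-divisor generators of $\mathrm{Fit}^0 \cdot \mathfrak{A}_{0,\wrp}$ when no single column-subset $I$ makes $d^2_I$ invertible simultaneously across all local components of $\mathfrak{A}_{0,\wrp}$; in that case one must work with $\mathfrak{A}_{0,\wrp}$-linear combinations of minors and adapt the construction of $C_x$ accordingly, for instance by changing the free representative of $C'_\wrp$ so that the desired combination itself appears as a single minor of the new $d^2$.
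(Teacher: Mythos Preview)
Your quotient construction of $C_x$ is correct whenever $x = \det(d^2_I)$ is a non-zero-divisor, and the verifications of (i)--(iii) via the long exact sequence and multiplicativity of $\mathrm{Det}$ go through as you describe. It is worth noting that your $C_x = [P^1 \to P^2/P^2_I]$ is quasi-isomorphic to the paper's $C_x = [P^1 \oplus P^3 \xrightarrow{(d^1,\phi)} P^2]$ (take $\phi$ to be an identification $P^3 \cong P^2_I \hookrightarrow P^2$ and project out the acyclic subcomplex $[P^3 \xrightarrow{\sim} P^2_I]$), so at the level of $D(\mathfrak{A}_{0,\wrp})$ the two constructions agree; the only real difference lies in which elements $x$ each construction produces.

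The genuine gap is precisely the point you flag as secondary: exhibiting a \emph{generating set} of $\mathrm{Fit}^0_\mathfrak{A}(H^3(C)) \cdot \mathfrak{A}_{0,\wrp}$ consisting of elements invertible in $A_{0,\wrp}$, each realized as a single minor so that your quotient makes sense. Your suggested fix --- change basis of $P^2$ so that a prescribed linear combination of minors appears as a single minor of the new $d^2$ --- faces an obstruction. By Cauchy--Binet, any minor of $d^2$ in a new basis of $P^2$ is a combination $\sum_J \det((d^2)_J)\, c_J$ in which the coefficient vector $(c_J)_J$ consists of the Pl\"ucker coordinates of a rank-$n$ free summand of $P^2$; an arbitrary coefficient vector is not of this form, so it is not clear that a given invertible generator of the Fitting ideal can be realized as a single minor after any change of basis.

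The paper resolves this by allowing $\phi : P^3 \to P^2$ to be an arbitrary map (not just the inclusion of a coordinate summand) and then \emph{perturbing}. Given any $s_3 \times s_3$ minor $M$ of $d^2$, it chooses $\phi$ so that the matrix of $d^2 \circ \phi$ equals $M + N \cdot I_{s_3}$ for a large integer $N$ divisible by $|H^3(C)|$. Then $x := \det(M + N I_{s_3})$ is invertible in all of $A$ (since for $N$ large the integer $-N$ avoids the finitely many eigenvalues of $M$ over each field component of $A$), and one has $x \equiv \det(M) \pmod{\wrp \cdot |H^3(C)| \cdot \mathrm{Fit}^0_\mathfrak{A}(H^3(C))}$, so that as $M$ ranges over all minors the perturbed elements $x$ still generate $\mathrm{Fit}^0_\mathfrak{A}(H^3(C))$. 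This perturbation is the missing ingredient in your argument. With it in hand your construction adapts directly: take the free rank-$s_3$ submodule $\phi(P^3)$ in place of the coordinate summand $P^2_I$ (now $P^2/\phi(P^3)$ need not be free but has projective dimension at most one, so $C_x$ remains perfect), and the rest of your verification of (i)--(iii) is unchanged.
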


\begin{proof} If necessary, we can replace $R$ by $R_\wrp$, $\mathfrak{A}$ by a local component $A_\wrp'$ of $A_\wrp$ and $C$ by $\mathfrak{A}_\wrp'\otimes_\mathfrak{A}C$ to assume in the sequel that $\mathfrak{A}$ is local.

Then, as $C$ belongs to $D^{\rm a}(\mathfrak{A})$, the observation in Remark \ref{representative} implies that $C$ is represented by a complex of finitely generated free $\mathfrak{A}$-modules of the form (\ref{exp rep com}).

Having chosen such a representative, for each $i \in \{1,2,3\}$ we write $s_i$ for the rank of the $\mathfrak{A}$-module $P^i$ and fix a basis  $\{x^i_j\}_{1\le j\le s_i}$ of this module. We assume, as we may, that $s_2 \ge s_3$ and we choose an $s_3\times s_3$ minor $M := (m_{ij})_{1\le i,j\le s_3}$ of the matrix of $d^2$ with respect to these bases.

Then for each integer $j$ with $1\le j\le s_3$ the element $\sum_{i=1}^{i=s_3}m_{ji}\cdot x^3_i$ belongs to $\im(d^2)$ and we fix a pre-image $b_j$ in $P^2$  of it under $d^2$. We also fix a multiple $n$ of $|H^3(C)|$ and an element $c_j$ of $P^2$ with $d^2(c_j) = n\cdot x_j^{3}$.

We write $\phi = \phi_{\{b_\bullet\},\{c_\bullet\}}$ for the homomorphism $P^3\to P^2$ of $\mathfrak{A}$-modules that sends each element $x_j^3$ to $b_j + c_j$ and consider the following commutative diagram

\begin{equation}\label{triangle diag}\begin{CD}
P^1 @> d^1 >> P^2 @> d^2>> P^3\\
@V({\rm id},0)VV @\vert \\
P^1\oplus P^3 @> (d^1,\phi) >> P^2.
\end{CD}\end{equation}

We write $C_\phi$ for the complex given by the lower row of the diagram, with the first term placed in degree one. Then the diagram constitutes a morphism $\theta: C \to C_\phi$ of complexes of $\mathfrak{A}$-modules, the mapping cone of which is the upper complex in the following morphism of complexes

\begin{equation}\label{triangle diag2}\begin{CD} P^1 @> ({\rm id},0,-d^1) >> (P^1\oplus P^3)\oplus P^2 @> (((d^1,\phi),0)+ (0,{\rm id}),-d^2) >> P^2\oplus P^3\\
 @. @V (0,{\rm id},0) VV @VV (d^2,{\rm id}) V \\
 @. P^3 @> d^2\circ \phi>> P^3.\end{CD}\end{equation}
Here the first term in the upper complex is placed in degree zero and an explicit check shows that the two vertical arrows constitute a quasi-isomorphism of complexes.

Writing ${\rm cone}(\theta)'$ for the lower complex in (\ref{triangle diag2}), one therefore obtains an exact triangle
\[ C \to C_\phi \to {\rm cone}(\theta)' \to C[1]\]
in $D(\mathfrak{A})$ and hence an associated long exact sequence of cohomology
\begin{equation*}\label{les}0 \to H^1(C) \to H^1(C_\phi) \to \ker(d^2\circ \phi) \to H^2(C) \to H^2(C_\phi) \to {\rm cok}(d^2\circ \phi) \to  H^3(C)\to 0.\end{equation*}
%


Now, with respect to the given basis $\{x^3_j\}_{1\le j\le s_3}$ of $P^3$, the matrix of $d^2\circ \phi$ is  $M + n\cdot I$, with $I$ the $s_3\times s_3$ identity matrix. In particular by choosing $n$ large enough we can ensure both that $d^2\circ \phi$ is injective, and hence that ${\rm cok}(d^2\circ \phi)$ is finite and ${\rm cone}(\theta)'$ is isomorphic in $D(\mathfrak{A})$ to ${\rm cok}(d^2\circ \phi)[-2]$, and that
\[ {\rm det}(M + n\cdot I) \equiv {\rm det}(M)\,\, \text{ modulo }\, \wrp\cdot |H^3(C)|\cdot {\rm Fit}_{\mathfrak{A}}^0(H^3(C)).\]

This last observation shows, in particular, that as we vary the choice of minor $M$, the elements ${\rm det}(d^2\circ \phi) =
{\rm det}(M + n\cdot I)$ are invertible in $A$ and constitute a set $\mathcal{F}$ of generators of ${\rm Fit}^0_{\mathfrak{A}}(H^3(C))$.

For each minor $M$ we then set $x := {\rm det}(d^2\circ \phi)$ and $C_x := C_\phi$ and $Q_x := {\rm cok}(d^2\circ \phi)$.

With these choices, claim (i) is clear. To prove claim (ii) we note the above construction gives an exact triangle
\[C\to C_x \to Q_x[-2] \to C[1]\]
in $D^{\rm p}(\mathfrak{A})$ and hence gives rise to an equality of invertible $\mathfrak{A}$-modules
\begin{equation}\label{C and C_x} {\rm Det}_{\mathfrak{A}}(C_x) = {\rm Det}_{\mathfrak{A}}(C)\cdot {\rm Det}_{\mathfrak{A}}(Q_x[-2]).
\end{equation}

It is thus enough to note that the exact sequence of $\mathfrak{A}$-modules
\begin{equation}\label{resolution}0 \to P^3\xrightarrow{d^2\circ\phi} P^3 \to Q_x\to 0\end{equation}
implies that (the ungraded part of) ${\rm Det}_{\mathfrak{A}}(Q_x[-2])$ is generated over $\mathfrak{A}$ by the inverse of the invertible element
${\rm det}(d^2\circ\phi) =: x$. Now (iii) follows from this and the equality (\ref{C and C_x}).
\end{proof}

The next result describes the cohomology groups of the complex $\mathfrak{A}_0\otimes^{\mathbb{L}}_{\mathfrak{A}}C$.

\begin{lemma}\label{spec seq exact} Let $C$ be an object of $D^{\rm a}(\mathfrak{A})$ and set $C_0:= \mathfrak{A}_0\otimes^{\mathbb{L}}_{\mathfrak{A}}C$.

\begin{itemize}
\item[(i)] $C_0$ belongs to $D^{\rm a}(\mathfrak{A}_0)$ and $H^3(C_0)$ identifies with the finite module $\mathfrak{A}_0\otimes_{\mathfrak{A}}H^3(C)$.
\item[(ii)] Assume $\mathfrak{A} = R[\Gamma]$ for a finite abelian group $\Gamma$ and for any $\mathfrak{A}_0$-submodule $I$ of $A$ and $R[\Gamma]$-module $M$ endow the tensor product $I\otimes_RM$ with the action of $\mathfrak{A}_0\times R[\Gamma]$ under
which $a_0$ in $\mathfrak{A}_0$  acts as $i\otimes_Rm \mapsto
a_0i\otimes_Rm$ and $\gamma$ in $\Gamma$ as $i\otimes_Rm \mapsto
i\gamma^{-1}\otimes_R\gamma (m)$. Then $H^1(C_0)$ identifies with $H^0(\Gamma,\mathfrak{A}_0\otimes_R H^1(C))$ and there is a natural exact sequence of $\mathfrak{A}_0$-modules of the form
\begin{equation*}\label{exactseq} H^1(\Gamma,\mathfrak{A}_0\otimes_R H^1(C)) \to
H^2(C_0) \to
 H^0(\Gamma,\mathfrak{A}_0\otimes_RH^2(C)) \to
 H^2(\Gamma,\mathfrak{A}_0\otimes_R H^1(C)).
 \end{equation*}
\end{itemize}
\end{lemma}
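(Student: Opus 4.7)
For (i), I would work with an explicit representative of $C$ as a complex $P^\bullet=(P^1\to P^2\to P^3)$ of finitely generated projective $\mathfrak{A}$-modules concentrated in degrees one, two and three (available because $C\in D^{\rm a}(\mathfrak{A})$, cf.\ Remark \ref{representative}). Then $\mathfrak{A}_0\otimes_{\mathfrak{A}} P^\bullet$ represents $C_0$ as a complex of finitely generated projective $\mathfrak{A}_0$-modules in the same degrees, so $C_0$ is perfect and acyclic outside degrees one, two, three. The vanishing of the Euler characteristic of $A_0\otimes_{\mathfrak{A}_0}C_0$ in $K_0(A_0)$ follows from that of $A\otimes_{\mathfrak{A}}C$ in $K_0(A)$, since $A_0=Ae_0$ is a direct factor of $A$; the $R$-torsion-freeness of $H^1(C_0)$ is inherited from its embedding into $\mathfrak{A}_0^{s_1}\subset A_0^{s_1}$; and applying the right-exact functor $\mathfrak{A}_0\otimes_{\mathfrak{A}}-$ to the presentation $P^2\xrightarrow{d^2}P^3\to H^3(C)\to 0$ identifies $H^3(C_0)$ with $\mathfrak{A}_0\otimes_{\mathfrak{A}}H^3(C)$, which is $R$-torsion (and hence finite) by the defining property of $e_0$.

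For part (ii) the strategy is to express $C_0$ as the derived $\Gamma$-invariants of $\mathfrak{A}_0\otimes_R C$ (endowed with the diagonal action of the statement), from which both assertions will follow from the hypercohomology spectral sequence
\[E_2^{p,q}=H^p(\Gamma,\mathfrak{A}_0\otimes_R H^q(C))\Rightarrow H^{p+q}(C_0).\]
The key technical ingredient is the claim that when $\mathfrak{A}=R[\Gamma]$ with $\Gamma$ finite abelian, the $R[\Gamma]$-module $\mathfrak{A}_0\otimes_R P$ (with the diagonal action) is cohomologically trivial for every finitely generated free $\mathfrak{A}$-module $P$. For $P=\mathfrak{A}$ this can be verified via the $R$-linear isomorphism $\phi\colon \mathfrak{A}_0\otimes_R R[\Gamma]\to \mathfrak{A}_0\otimes_R R[\Gamma]$ given on pure tensors by $i\otimes\sigma\mapsto (i\sigma)\otimes\sigma$ (which is well-defined precisely because $\Gamma$ is abelian, so that $e_0$ is central in $A$), and which intertwines the diagonal $\Gamma$-action with the action of $\Gamma$ by left multiplication on the second tensor factor only. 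Under the latter action the module is manifestly projective over $R[\Gamma]$, and the general case follows by taking direct sums.

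Since each term of $\mathfrak{A}_0\otimes_R P^\bullet$ is consequently cohomologically trivial, the norm map yields a natural isomorphism of complexes between $(\mathfrak{A}_0\otimes_R P^\bullet)_\Gamma=\mathfrak{A}_0\otimes_{\mathfrak{A}}P^\bullet$ (a representative of $C_0$) and $(\mathfrak{A}_0\otimes_R P^\bullet)^\Gamma$ (which represents $R\Gamma(\Gamma,\mathfrak{A}_0\otimes_R C)$, again by the cohomological triviality of each term). This produces the desired identification $C_0\simeq R\Gamma(\Gamma,\mathfrak{A}_0\otimes_R C)$ in $D(\mathfrak{A}_0)$ and hence also the spectral sequence above. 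The identification of $H^1(C_0)$ then follows immediately, since $H^0(C)=0$ (condition (ad$_3$)) forces $E_2^{p,0}=0$ for every $p\geq 0$, so that the edge map $H^1(C_0)\to E_2^{0,1}$ is an isomorphism. The four-term exact sequence for $H^2(C_0)$ is then extracted by a routine low-degree analysis of the spectral sequence, whose filtration of $H^2(C_0)$ has graded pieces $E_\infty^{1,1}=E_2^{1,1}$ (no nontrivial differentials in or out in our range) and $E_\infty^{0,2}=\ker(d_2\colon E_2^{0,2}\to E_2^{2,1})$.

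The main obstacle I anticipate is the cohomological triviality of $\mathfrak{A}_0\otimes_R\mathfrak{A}$ with the diagonal action, which crucially exploits the abelianness of $\Gamma$ (without which $\phi$ would not even be well-defined); once this is in place, the remaining steps are standard homological algebra.
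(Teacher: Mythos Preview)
Your proposal is correct and follows essentially the same route as the paper: for (i) you verify the admissibility conditions directly from a projective representative, and for (ii) you establish cohomological triviality of $\mathfrak{A}_0\otimes_R P$ for projective $P$, pass from coinvariants to invariants via the norm, and read off the claims from the resulting hypercohomology spectral sequence $H^p(\Gamma,\mathfrak{A}_0\otimes_R H^q(C))\Rightarrow H^{p+q}(C_0)$, exactly as the paper does. Two small remarks: the three-term projective representative of Remark~\ref{representative} is only guaranteed \emph{locally}, so in (i) the torsion-freeness of $H^1(C_0)$ should be checked prime by prime (as the paper does); and your map $\phi$ is well-defined and intertwines the two $\Gamma$-actions simply because $A$ is commutative (so $\mathfrak{A}_0\sigma=\mathfrak{A}_0$), not specifically because $\Gamma$ is abelian---but this does not affect the argument.
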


\begin{proof} Since $C$ is admissible it is easy to see $C_0$ belongs to $D^{\rm p}(\mathfrak{A}_0)$, is acyclic outside degrees one, two and three and that its top degree of cohomology $H^3(C_0)$ identifies with $\mathfrak{A}_0\otimes_{\mathfrak{A}}H^3(C)$.

In addition, $H^1(C_0)$ is torsion-free over $R$ since if for each prime ideal $\wrp$ of $R$ we fix a representative of $C_\wrp$ of the form (\ref{exp rep com}), then $H^1(C_0)_\wrp$ is a submodule of the torsion-free module $\mathfrak{A}_{0,\wrp}\otimes_\mathfrak{A}P$. This shows that $C_0$ belongs to $D^{\rm a}(\mathfrak{A}_0)$ and so proves claim (i).

To prove claim (ii) we note that, as $\mathfrak{A}_0$ is $R$-free, for any projective $\mathfrak{A}$-module
 $Q$ the $\Gamma$-module $\mathfrak{A}_0\otimes_RQ$ is cohomologically-trivial  and so the map
 $a\otimes_Rq \mapsto \sum_{\gamma \in \Gamma}\gamma(a\otimes_Rq)$ induces an isomorphism of $\mathfrak{A}_0$-modules
 $\mathfrak{A}_0\otimes_\mathfrak{A}Q = H_0(\Gamma,\mathfrak{A}_0\otimes_RQ)\cong
H^0(\Gamma,\mathfrak{A}_0\otimes_RQ)$. Such isomorphisms give rise to a
convergent cohomological spectral sequence of the form
$H^b(\Gamma,\mathfrak{A}_0\otimes_R H^a(C)) \Rightarrow
H^{b+a}(C_0)$.

Since $H^a(C)$ vanishes for $a < 1$ this spectral sequence induces a canonical isomorphism
$\mathfrak{A}_0$-modules $H^1(C_0) \cong H^0(\Gamma,\mathfrak{A}_0\otimes_R H^1(C))$ and an exact sequence of low degree terms (which coincides with displayed sequence in the statement above). \end{proof}



\section{Higher special elements}\label{hse sect}

As in earlier sections, we fix a Dedekind domain $R$ that has characteristic $0$ and field of fractions $F$ and an $R$-order $\mathfrak{A}$ in a finite dimensional separable $F$-algebra $A$. We also fix an extension field $E$ of $F$ and set $A_E := E\otimes_F A$.

Taking advantage of the observations made in \S\ref{control section}, in the sequel we shall assume to be given a strictly admissible complex of $\mathfrak{A}$-modules $C$ and an isomorphism of $A_E$-modules
\[ \lambda: E\otimes_RH^1(C) \xrightarrow{\sim} E\otimes_RH^2(C).\]

 \subsection{Definitions}\label{hse def sect} The given isomorphism $\lambda$ induces a composite isomorphism of $A_E$-modules
\begin{align}\label{passage to chomo iso} \vartheta_{\lambda}: {\rm Det}_{A_E}(E\otimes_R C) \cong &{\rm Det}_{A_E}(E\otimes_R H^1(C))^{-1} \otimes {\rm Det}_{A_E}(E\otimes_R H^2(C))\\
\cong &{\rm Det}_{A_E}(E\otimes_R H^2(C))^{-1} \otimes {\rm Det}_{A_E}(E\otimes_R H^2(C))\notag\\
\cong & (A_E,0),\notag\end{align}
where the first isomorphism is the canonical `passage to cohomology' isomorphism, the second is ${\rm Det}_{A_E}(\lambda)^{-1}\otimes 1$ and the third is induced by the standard evaluation pairing on ${\rm Det}_{A_E}(E\otimes_R H^2(C))$.

\begin{definition}\label{char elt def} {\em An element $\mathcal{L} =\mathcal{L}_{(C,\lambda)}$ of $A_E^\times$ will be said to be a `characteristic element' for the pair $(C, \lambda)$ if it satisfies
\begin{equation}\label{etnc equality} \vartheta_\lambda({\rm Det}_\mathfrak{A}(C)) = (\mathfrak {A}\cdot \mathcal{L},0).\end{equation}}
\end{definition}

\begin{remark}{\em \
There exists an element $\mathcal{L}$ of $A_E^\times$ with the property (\ref{etnc equality}) if and only if the Euler characteristic of $C$ in $K_0(\mathfrak{A})$ vanishes. The condition (${\rm ad}_2$) implies that this condition is automatically satisfied if $K_0(\mathfrak{A})$ is torsion-free as is the case, for example, if $R$ is local.


}\end{remark}

For each non-negative integer $a$ we also define idempotents of $A$ by setting
\[ e_a=e_{C,a} := \sum_e e \,\, \text{  and  }\,\, e_{(a)}=e_{C,(a)} := \sum _{a'\ge a}e_{C,a'}\]
where the first sum runs over all primitive idempotents $e$ of $A$ with the property that the (free) $Ae$-module $e(F\cdot H^2(C))$ has rank $a$.

\begin{definition}\label{hse def}{\em Let $\mathcal{L}$ be a characteristic element for the pair $(C,\lambda)$. Then, for any ordered subset $\mathcal{X}$ of $H^2(C)_{\rm tf}$ of cardinality $a$, the {\em higher special element} associated to the data $(C,\lambda,\mathcal{L},\mathcal{X})$ is the unique element $\eta = \eta_{(C,\lambda,\mathcal{L},\mathcal{X})}$ of ${\bigwedge}_{A_E}^a(E\otimes_RH^1(C))$ that satisfies}
\begin{equation}\label{eta def} ({\bigwedge}_{A_E}^a\lambda)(\eta) = e_{C,a}\cdot\mathcal{L}^{-1}\cdot {\wedge}_{x \in \mathcal{X}}x.\end{equation}
\end{definition}

\begin{remark}{\em \
 The use of $\mathcal{L}^{-1}$ (rather than $\mathcal{L}$) in the definition of $\eta$ is forced by the equality (\ref{etnc equality}) and the fact that we have assumed $C$, and hence $E\otimes_R C$, to be acyclic outside degrees one and two (rather than zero and one).}\end{remark}

\begin{lemma}\label{rationality} For any data $(C,\lambda,\mathcal{L},\mathcal{X})$ as in Definition \ref{hse def} the element $\eta_{(C,\lambda,\mathcal{L},\mathcal{X})}$ belongs to $e_{C,|\mathcal{X}|}(F\cdot{\bigwedge}_{\mathfrak{A}}^{|\mathcal{X}|} H^1(C))$.
\end{lemma}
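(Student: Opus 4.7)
The plan is to first observe that $\eta$ is automatically annihilated by $1 - e_{C,a}$, where $a := |\mathcal{X}|$: since $\bigwedge^a_{A_E} \lambda$ commutes with the central idempotents of $A$ and the right-hand side of (\ref{eta def}) is multiplied by $e_{C,a}$, one has $e_{C,a} \eta = \eta$. The actual content of the lemma is therefore $F$-rationality, namely that $\eta$ lies in $F \cdot \bigwedge^a_\mathfrak{A} H^1(C)$ inside $\bigwedge^a_{A_E}(E \otimes_R H^1(C))$, and this is a local condition on $R$.

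Accordingly, I will fix a prime ideal $\wrp$ of $R$ and pass to a local component $\mathfrak{A}'_\wrp$ of $\mathfrak{A}_\wrp$, so that ${\rm Det}_{\mathfrak{A}'_\wrp}(C'_\wrp)$ is free of rank one with some generator $\ell$; the defining property (\ref{etnc equality}) then yields $\vartheta_\lambda(\ell) = u \cdot \mathcal{L}$ for a unit $u \in (\mathfrak{A}'_\wrp)^\times$. Since $C$ is strictly admissible one has $H^3(C) = 0$, so Remark \ref{semisimplicity} forces both $e_{C,a}(F \cdot H^1(C))$ and $e_{C,a}(F \cdot H^2(C))$ to be free of rank $a$ over $A e_{C,a}$. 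I will fix respective $Ae_{C,a}$-bases $\{b_i\}_{1 \le i \le a}$ and $\{y_j\}_{1 \le j \le a}$ and let $\Lambda \in M_a(A_E e_{C,a})$ denote the matrix of $\lambda$ in these bases.

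The next step is to unwind the explicit form of $\vartheta_\lambda$ displayed in (\ref{passage to chomo iso}): tracing the passage-to-cohomology map, the factor ${\rm Det}_{A_E}(\lambda)^{-1}\otimes 1$, and the evaluation pairing, the rational element $\ell_0 := (b_1 \wedge \cdots \wedge b_a)^{-1} \otimes (y_1 \wedge \cdots \wedge y_a)$ is sent to $\det(\Lambda)^{-1}\cdot e_{C,a}$ in $A_E e_{C,a}$. Since $\ell_0$ also generates $e_{C,a}(F \cdot {\rm Det}_\mathfrak{A}(C))$ as an $Ae_{C,a}$-module, there exists $\mu \in (Ae_{C,a})^\times$ with $e_{C,a}\ell = \mu \ell_0$. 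Combining these observations gives $e_{C,a} u \mathcal{L} = \mu \det(\Lambda)^{-1}$, so in particular $e_{C,a} \mathcal{L}^{-1}\det(\Lambda)^{-1} = u \mu^{-1}$ lies in $A e_{C,a}$.

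Finally, writing $e_{C,a} \cdot (\wedge_{x \in \mathcal{X}} x) = \beta \cdot (y_1 \wedge \cdots \wedge y_a)$ for a (possibly non-invertible) element $\beta \in Ae_{C,a}$, the defining equation (\ref{eta def}) forces
\[
\eta = \bigl(e_{C,a} \mathcal{L}^{-1} \det(\Lambda)^{-1} \beta\bigr) \cdot (b_1 \wedge \cdots \wedge b_a),
\]
and the previous paragraph shows this scalar lies in $Ae_{C,a}$. Hence $\eta$ belongs to $e_{C,a}(F \cdot \bigwedge^a_\mathfrak{A} H^1(C))$, as required. The main obstacle in implementing this plan is the careful bookkeeping needed to verify that the composite $\vartheta_\lambda$ really sends $\ell_0$ to $\det(\Lambda)^{-1} e_{C,a}$; once that is in place, the rationality of $\eta$ is a direct consequence of the defining equality (\ref{etnc equality}) of the characteristic element.
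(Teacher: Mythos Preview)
Your argument is correct and follows essentially the same approach as the paper: both rest on the observation that on the $e_{C,a}$-component the determinant line identifies with the top exterior powers of cohomology, so that the defining property (\ref{etnc equality}) of $\mathcal{L}$ forces $(\bigwedge^a_{A_E}\lambda)\bigl(e_a(F\cdot\bigwedge^a_\mathfrak{A} H^1(C))\bigr)=e_a\mathcal{L}^{-1}(F\cdot\bigwedge^a_\mathfrak{A} H^2(C))$, from which rationality of $\eta$ is immediate. The paper's write-up is slightly more streamlined in that it works directly with primitive idempotents of $A$ rather than first localising at primes of $R$ (the target space $e_a(F\cdot\bigwedge^a_\mathfrak{A}H^1(C))$ is already an $A$-module, so localisation over $R$ is unnecessary), but this is only a cosmetic difference.
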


\begin{proof} Setting $\eta:= \eta_{(C,\lambda,\mathcal{L},\mathcal{X})}$, $a := |\mathcal{X}|$ and $e_a := e_{C,a}$, it is enough to prove for each primitive idempotent $e$ of $A$ that the element $e(\eta)$ belongs to $ee_{a}({\bigwedge}_{A}^a H^1(C^\bullet))$.

If $ee_a =0$, then this containment is clear since the defining equality (\ref{eta def}) implies that $e(\eta) = 0$.

If $ee_a \not= 0$, then the rank over $Ae$ of $e(F\cdot H^2(C))$, and hence also of $e(F\cdot H^1(C))$,  is $a$ and so
\[ e\cdot {\rm Det}_{A_E}(E\otimes_ R H^i(C))  = ({\bigwedge}^a_{A_E}e(E\otimes_R H^i(C)),a)\]
for both $i=1$ and $i=2$. Given this, our choice of $\mathcal{L}$ implies that
\[ ({\bigwedge}_{A_E}^a\lambda)(e_a(F\cdot {\bigwedge}^a_{\mathfrak{A}}H^1(C))) = e_a\cdot \mathcal{L}^{-1}\cdot  (F\cdot {\bigwedge}^a_{\mathfrak{A}} H^2(C))\]
and so the required containment is true since ${\wedge}_{x \in \mathcal{X}}x$ belongs to ${\bigwedge}^a_{\mathfrak{A}} H^2(C)$.
\end{proof}


We will find that the theory of special elements has an especially rich structure when one considers subsets $\mathcal{X}$ of $H^2(C)$ with the following property.

\begin{definition}\label{sep def}{\em  A finite subset $\mathcal{X}$ of an $\mathfrak{A}$-module $X$ will be said to be {\em separable} if the $\mathfrak{A}$-module $\langle\mathcal{X}\rangle$ that it generates is both free of rank $|\mathcal{X}|$ and a direct summand of $X$.}\end{definition}

\begin{remark}{\em If $\langle\mathcal{X}\rangle$ is a projective $\mathfrak{A}$-module that is a direct summand of $H^2(C)$, then for each prime ideal $\wrp$ of $R$ and each local factor $\mathfrak{A}'_\wrp$ of $\mathfrak{A}_\wrp$ the image of $\mathcal{X}$ in the $\mathfrak{A}'_\wrp$-module generated by $H^2(C)$ is separable. In this way, the term `free' in the definition of separable can be replaced by `projective'.} \end{remark}

\begin{remark}\label{sep=sat}\em If $\langle \mathcal{X}\rangle$ is a free $\mathfrak{A}$-module of rank $|\mathcal{X}|$, then $\mathcal{X}$ is separable if and only if the tautological short exact sequence $0 \to \langle \mathcal{X}\rangle \to X \to X/\langle \mathcal{X}\rangle \to 0$ splits. If $\mathfrak{A}$ is Gorenstein, then the latter condition is automatically satisfied if $\langle \mathcal{X}\rangle$ is $R$-saturated in $X$ since then the group ${\rm Ext}^1_\mathfrak{A}(X/\langle\mathcal{X}\rangle,\mathfrak{A})$ vanishes (cf. the proof of Lemma \ref{dual preserve}). \end{remark}

\begin{example}{\em Assume that $R$ is local, with maximal ideal $\mathfrak{m}$, and that $\mathfrak{A} = R[G]$ for a finite abelian group $G$. Write $G = P\times H$ with $P$ the Sylow $p$-subgroup. Then it can be shown that a finite subset $\mathcal{X}$ of an $\mathfrak{A}$-lattice $X$ is separable if and only if the images in $H^0(P,X)/\mathfrak{m}$ of the elements $h\cdot\sum_{g \in P}g(x)$ for $h \in H$ and $x \in X$ are linearly independent over $R/\mathfrak{m}$. In particular, if $G=P$, then a singleton subset $\{x\}$ of an $R[G]$-lattice $X$ is separable if and only if the element $\sum_{g\in G}g(x)$ is not contained in $\mathfrak{m}\cdot H^0(G,X)$.}
\end{example}

\subsection{Higher Fitting ideals and annihilators}\label{separable} In this section we fix data $(C,\lambda,\mathcal{L},\mathcal{X})$ as in Definition \ref{hse def}. For a finitely generated $\mathfrak{A}$-module $X$, we set $X^*:=\Hom_\mathfrak{A}(X,\mathfrak{A})$.

To study integrality properties of the special element $\eta = \eta_{(C,\lambda,\mathcal{L},\mathcal{X})}$ we note at the outset that Lemma \ref{rationality} implies the set
\[ I(\eta) := \{ (\wedge_{i=1}^{i=|\mathcal{X}|}\varphi_i)(\eta)\! :\! \varphi_i\in H^1(C)^* \,\text{ for }\, 1\le i\le |\mathcal{X}|\}\]
is an $\mathfrak{A}$-submodule of $A$.

\subsubsection{}In this section we prove the following result.

\begin{theorem}\label{char els} Fix data $(C,\lambda,\mathcal{L},\mathcal{X})$ as in Definition \ref{hse def} and set $\eta := \eta_{(C,\lambda,\mathcal{L},\mathcal{X})}$, $a := |\mathcal{X}|$ and $\mathfrak{A}' := \mathfrak{A}e_{C,(a)}$.

Then for any element $y$ of the ideal ${\rm Ann}_\mathfrak{A}({\rm Ext}^2_{\mathfrak{A}}(H^2(C),\mathfrak{A}))\cdot {\rm Ann}_{\mathfrak{A}}
({\rm Ext}^2_{\mathfrak{A}'}(\mathfrak{A}'\otimes_{\mathfrak{A}}H^2(C),\mathfrak{A}'))$ the following claims are valid.

\begin{itemize}
\item[(i)] If $H^2(C)'$ is any subquotient of the $\mathfrak{A}$-module $H^2(C)$ that has rank at least $a$ at each simple component of $Ae_{C,(a)}$, then for any element $x$ of $\mathfrak{A} \cap \mathfrak{A}'$ one has
\[ xy^a \cdot I(\eta) \subseteq {\rm Fit}^a_\mathfrak{A}(H^2(C)) \cap {\rm Ann}_\mathfrak{A}(H^2(C)'_{\rm tor}).\]
\item[(ii)] If $\mathcal{X}$ is separable, then $e_{C,(a)} = 1$ and both
\[y^a\cdot I(\eta) \subseteq {\rm Fit}^a_\mathfrak{A}(H^2(C))\mbox{ and }{\rm Fit}^a_\mathfrak{A}(H^2(C)) \subseteq I(\eta).\]
\end{itemize}
\end{theorem}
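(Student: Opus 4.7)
My plan is to begin by localising at an arbitrary prime ideal $\wrp$ of $R$, after which Remark \ref{representative} allows $C$ to be represented by a two-term complex $P^1 \xrightarrow{d^1} P^2$ of finitely generated free $\mathfrak{A}_\wrp$-modules placed in degrees one and two, giving the four-term exact sequence
\[ 0 \to H^1(C) \to P^1 \xrightarrow{d^1} P^2 \to H^2(C) \to 0. \]
In this picture, the Fitting ideal $\Fit^a_\mathfrak{A}(H^2(C))$ is generated by the $(m-a)\times(m-a)$ minors of the matrix of $d^1$ (where $m$ denotes the rank of $P^2$), the characteristic element $\mathcal{L}$ is determined via equation (\ref{etnc equality}) by the same matrix, and the special element $\eta$ admits an explicit description via the defining relation (\ref{eta def}) once preimages in $P^2$ of the elements of $\mathcal{X}$ have been chosen.

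The key computation is then to analyse the evaluation $({\bigwedge}_{i=1}^{a} \varphi_i)(\eta)$ for dual functionals $\varphi_i \in H^1(C)^*$. Each such $\varphi_i$ extends to an $\mathfrak{A}$-linear map $\tilde\varphi_i\colon P^1 \to \mathfrak{A}$ only up to an obstruction class in $\Ext^2_\mathfrak{A}(H^2(C),\mathfrak{A})$, and multiplication by an annihilator $y$ of this group kills the obstruction so as to produce an actual extension. Iterating this construction over the $a$ chosen functionals accounts for the factor $y^a$, and a Cramer-style expansion of the resulting determinant should then identify $y^a \cdot({\bigwedge}_{i=1}^{a} \varphi_i)(\eta)$ with an $\mathfrak{A}$-linear combination of $(m-a)$-minors of $d^1$. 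This establishes the basic containment $y^a \cdot I(\eta) \subseteq \Fit^a_\mathfrak{A}(H^2(C))$, and the annihilator portion of claims (i) and (ii) then follows from the standard fact that $\Fit^a_\mathfrak{A}(H^2(C))$ annihilates the torsion of any subquotient of rank at least $a$.

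For part (ii), separability produces a basis of $P^2$ whose first $a$ vectors lift $\mathcal{X}$ and split off a complementary summand; in this adapted basis every primitive component of $H^2(C)$ has rank at least $a$, forcing $e_{C,(a)}=1$, and the reverse inclusion $\Fit^a_\mathfrak{A}(H^2(C)) \subseteq I(\eta)$ follows because each $(m-a)$-minor of $d^1$ can be realised as an evaluation of $\eta$ against a suitable choice of $\varphi_\bullet$. The hard part of the argument will be establishing (i) in the absence of separability: there no adapted basis is available, and the role of the second annihilator factor in $y$ is to control obstructions over the order $\mathfrak{A}'=\mathfrak{A}e_{C,(a)}$ supported on the components where $H^2(C)$ has rank at least $a$, while the element $x\in\mathfrak{A}\cap\mathfrak{A}'$ mediates between the $\mathfrak{A}$- and $\mathfrak{A}'$-structures. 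Producing the precise matrix identities in this more delicate setting, where two distinct $\Ext^2$ obstructions must be handled simultaneously, is expected to be the main technical obstacle and accounts for why the argument here is substantially more involved than the corresponding result in \cite{bks1}.
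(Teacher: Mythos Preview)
Your treatment of claim (ii) is essentially correct and matches the paper: after invoking Lemma~\ref{adm-rep} to obtain a basis $\{b_i\}$ of $P$ adapted to $\mathcal{X}$, the formula for $\eta$ in terms of the minors of $\psi$ (via \cite[Lem.~4.3 and Prop.~4.1]{bks1}) gives exactly the computation you describe, and the surjectivity of $P^*\to H^1(C)^*$ yields the reverse inclusion.

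For claim (i), however, there are two genuine gaps. First, the ``Cramer-style expansion'' you invoke is not available without separability: there is no reason the lifts of the elements of $\mathcal{X}$ to $P^2$ interact nicely with a basis of $P^2$, so one cannot directly read off $(\wedge_i\varphi_i)(\eta)$ as a combination of $(m-a)$-minors of $d^1$. The paper circumvents this by a perturbation argument (Proposition~\ref{first reduction} and Lemma~\ref{tech1}) that reduces to the case where the image of $\mathcal{X}$ spans a free $\mathfrak{A}'$-module of rank $a$, and then introduces an \emph{auxiliary} linearly independent set $\mathcal{X}'\subset H^1(C_{(a)})$. Using both $\mathcal{X}$ and $\mathcal{X}'$ one builds a three-term complex $D$ as in (\ref{D rep}), and the heart of the proof is the technical Proposition~\ref{almost there}, which produces the inclusion into ${\rm Fit}^0_{\mathfrak{A}'}(H^2(C')/X)$. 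The second $\Ext^2$ annihilator arises precisely here, as the obstruction to extending maps from $Z^1(D)$ to $D^1$ inside this proposition; it is not, as you suggest, a parallel obstruction to extending the $\varphi_i$ over a different order.

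Second, the assertion that ``$\Fit^a_\mathfrak{A}(H^2(C))$ annihilates the torsion of any subquotient of rank at least $a$'' is not a standard fact over a general order $\mathfrak{A}$, and the paper does not argue this way. Instead, both the Fitting-ideal and the annihilation containments in (\ref{last step}) are deduced separately from the single sharper containment in ${\rm Fit}^0_{\mathfrak{A}'}(H^2(C')/X)$: the first because quotienting by a free rank-$a$ summand relates $\Fit^0$ to $\Fit^a$, and the second because $\Fit^0$ always annihilates, combined with the short exact sequence coming from $0\to\mathfrak{A}^\sharp\to\mathfrak{A}\to\mathfrak{A}'\to 0$ to pass back from $\mathfrak{A}'$-modules to $\mathfrak{A}$-modules (this is where the factor $x\in\mathfrak{A}\cap\mathfrak{A}'$ enters). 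Your plan to prove the $\Fit^a$ inclusion first and then deduce annihilation would therefore not go through as stated.
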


\begin{remark}\label{get rid of x and y}{\em In certain cases, the elements $x$ and $y$ can be either omitted from, or made explicit in, the statement of Theorem \ref{char els}. For example, if $e = e_{C,(a)}\in \mathfrak{A}$, then the element $x$ can be taken to be $e$ and so (since $\eta = e\cdot \eta)$ can be omitted from the statement. Regarding the choice of $y$ note, for example, that if $\mathfrak{B}$ is an $R$-order such that ${\rm Ext}^1_{\mathfrak{B}}(N,\mathfrak{B})$ vanishes for all finitely generated torsion-free $\mathfrak{B}$-modules $N$ (see Example \ref{dual exams}(ii)), then ${\rm Ext}^2_{\mathfrak{B}}(X',\mathfrak{B})$ vanishes for all finitely generated $\mathfrak{B}$-modules $X'$.}
\end{remark}

\begin{remark}\label{explicit verson of char els} {\em Under the conditions discussed in Remark \ref{get rid of x and y} (for example when $\mathcal{X}$ is separable and $\mathfrak{A}$ is Gorenstein), Theorem \ref{char els} implies that $I(\eta_{(C,\lambda,\mathcal{L},\mathcal{X})}) = {\rm Fit}^a_\mathfrak{A}(H^2(C))$ with $a = |\mathcal{X}|$ and so the special element completely determines the higher Fitting ideal. In this case Theorem \ref{char els} is a direct generalization of \cite[Th. 7.5]{bks1}.}
\end{remark}

\begin{remark}{\em The occurrence of the subquotient $H^2(C)'$ in Theorem \ref{char els}(i) is motivated, in part, by the constructions in Propositions \ref{admiss constructions}(iii), \ref{admiss constructons3}(iii) and \ref{reduction}(ii).}\end{remark}

\begin{remark}\label{explicit cong}{\em If $\mathfrak{A} = R[\Gamma]$ for a finite abelian group $\Gamma$, then the containments in Theorem \ref{char els} imply families of congruence relations between the different components of elements of the form $(\wedge_{i=1}^{i=a}\varphi_i)(\eta)$. To see this, we fix an algebraic closure $F^c$ of $F$ and for each $\rho$ in $\Gamma^* := \Hom(\Gamma,F^{c,\times})$ define an idempotent $e_\rho := |\Gamma|^{-1}\sum_{\gamma\in \Gamma}\rho(\gamma^{-1})\gamma$ in $F^c[\Gamma]$. For each $x$ in $F^c[\Gamma]$ we then define elements $x_\rho$ in $F^c$ via the equality
\[ x = \sum_{\rho \in \Gamma^*}x_\rho e_\rho = |\Gamma|^{-1}\sum_{\gamma \in \Gamma}\gamma\sum_{\rho \in \Gamma^*}\rho(\gamma^{-1})x_\rho.\]
Then, writing $F_\rho$ for the field generated over $F$ by $\{\rho(\gamma): \gamma \in \Gamma\}$ and $\mathcal{O}_\rho$ for its valuation ring, an element $x$ belongs to $R[\Gamma]$ if and only if one has $x_\rho\in \mathcal{O}_\rho$ for all $\rho$ in $\Gamma^*$, $\omega(x_\rho) = x_{\omega\circ\rho}$ for all $\rho \in \Gamma^*$ and $\omega\in G_{F_\rho/F}$ and $\sum_{\rho \in \Gamma^*}\rho(\gamma^{-1})x_\rho\equiv 0\,\, ({\rm modulo} \,\,|\Gamma|\cdot R)$ for all $\gamma \in \Gamma$.}\end{remark}


The proof of Theorem \ref{char els} will occupy the rest of \S\ref{separable}.

\subsubsection{}We first prove an important reduction result.

\begin{proposition}\label{first reduction} It is enough to prove Theorem \ref{char els} (i) in the case that $H^2(C)'$ is a quotient of $H^2(C)$ and the image of $\mathcal{X}$ in $e_{(a)}(F\cdot H^2(C)')$ generates a free $\mathfrak{A}e_{(a)}$-module of rank $a$.
 \end{proposition}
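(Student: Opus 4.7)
The plan is to split the argument into two independent reductions, handled in sequence.

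\textbf{Reduction 1 (subquotient to quotient).} Given a general subquotient $H^2(C)' = N/M$ with $M \subseteq N \subseteq H^2(C)$, I would replace $H^2(C)'$ by $\tilde H := H^2(C)/M$, which is already a quotient of $H^2(C)$. The canonical injection $N/M \hookrightarrow \tilde H$ automatically preserves $R$-torsion, giving an embedding $(N/M)_{\rm tor} \hookrightarrow \tilde H_{\rm tor}$ and therefore an inclusion ${\rm Ann}_{\mathfrak{A}}(\tilde H_{\rm tor}) \subseteq {\rm Ann}_{\mathfrak{A}}((N/M)_{\rm tor})$. The same injection shows that $\tilde H$ has rank at least that of $N/M$ at every simple component of $Ae_{C,(a)}$, so the rank hypothesis is preserved. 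Since the Fitting ideal term $\mathrm{Fit}^a_{\mathfrak{A}}(H^2(C))$ does not depend on $H^2(C)'$, the containment of Theorem \ref{char els}(i) for $\tilde H$ immediately implies the containment for $N/M$. This step is essentially formal.

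\textbf{Reduction 2 (quotient to good quotient).} Starting now from a quotient $H^2(C)' = H^2(C)/M$ satisfying the rank condition, I would construct $M' \subseteq M$ and set $H^2(C)'' := H^2(C)/M'$ so that (a) the image of $\mathcal{X}$ in $e_{C,(a)}(F\cdot H^2(C)'')$ generates a free $\mathfrak{A}e_{C,(a)}$-module of rank $a$, and (b) $H^2(C)''$ still satisfies the rank condition. Since $H^2(C)'' \twoheadrightarrow H^2(C)'$ realises $H^2(C)'_{\rm tor}$ as a quotient of $H^2(C)''_{\rm tor}$, one has ${\rm Ann}_{\mathfrak{A}}(H^2(C)''_{\rm tor}) \subseteq {\rm Ann}_{\mathfrak{A}}(H^2(C)'_{\rm tor})$, so the containment for $H^2(C)''$ forces the containment for $H^2(C)'$.

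The core of the proof is the construction of $M'$. By Lemma \ref{rationality}, $\eta = e_{C,a}\eta$, and thus $I(\eta) \subseteq e_{C,a}A$; the containment in Theorem \ref{char els}(i) is therefore vacuous in any simple component of $A$ orthogonal to $e_{C,a}$. Moreover, the non-vanishing of $e\eta$ for a primitive idempotent $e \leq e_{C,a}$ forces the image of $\mathcal{X}$ in $e(F\cdot H^2(C))$ to be a basis, so that the free-rank-$a$ condition is automatic there. The genuinely delicate case arises from primitive idempotents $e$ contributing to $e_{C,(a)} \setminus e_{C,a}$, where $e(F\cdot H^2(C))$ has rank strictly greater than $a$; here the rank condition on $H^2(C)'$ guarantees $e(F\cdot H^2(C)/M)$ still has rank at least $a$, and I would choose $M'\subseteq M$ to carve out precisely an $a$-dimensional direct summand supporting the image of $\mathcal{X}$, by lifting a suitable splitting back from the $F$-localisation.

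\textbf{The main obstacle} is this explicit construction of $M'$ in Reduction 2: one must arrange component-by-component that the image of $\mathcal{X}$ becomes free of rank $a$ over each simple factor of $Ae_{C,(a)}$, while simultaneously passing from $M$ to a submodule $M'$ and preserving the rank condition. I expect this to require localisation at each prime of $R$ (reducing to a local order, as in Remark \ref{representative}), component-wise bookkeeping over the simple factors of $A$, and a careful lifting argument that uses the extra freedom provided by the rank condition in the components where $F\cdot H^2(C)$ has rank exceeding $a$.
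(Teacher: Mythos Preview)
Your Reduction 1 matches the paper's argument exactly.

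Your Reduction 2, however, has a genuine gap. The freeness you need is that the images of the $x_i$ in $e(F\cdot H^2(C)'')$ are linearly independent for \emph{every} primitive idempotent $e\le e_{C,(a)}$. But the image of $\mathcal{X}$ in any quotient $H^2(C)/M'$ factors through $H^2(C)$ itself: if $\sum_i c_i x_i=0$ in $e(F\cdot H^2(C))$ then the same relation holds in $e(F\cdot H^2(C)/M')$ for every choice of $M'$. Since the $x_i$ are arbitrary elements of $H^2(C)_{\rm tf}$, such linear dependence at some component of $e_{C,(a)}$ is entirely possible, and no modification of $M$ can repair it. (Your observation that non-vanishing of $e\eta$ for $e\le e_{C,a}$ forces independence there is correct, but the components $e\le e_{C,(a)}-e_{C,a}$, where $e\eta=0$ automatically, still have to be handled in order to land in the \emph{integral} ideal ${\rm Fit}^a_{\mathfrak A}(H^2(C))$.) There is also a second, smaller gap: the surjection $H^2(C)''\twoheadrightarrow H^2(C)'$ does not in general realise $H^2(C)'_{\rm tor}$ as a quotient of $H^2(C)''_{\rm tor}$ (consider $\ZZ\twoheadrightarrow\ZZ/2$), so the annihilator inclusion you derive from it is not justified.

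The paper's route is different: it keeps the quotient $Q=H^2(C)'$ fixed and instead perturbs the set $\mathcal{X}$. Having chosen auxiliary elements $y_j\in Q$ whose images span a free $A$-module of rank $a$, one forms $\mathcal{Y}_N:=\{\overline{x_j}+p^N y_j\}$; for $N$ large this set is linearly independent over $Ae_{(a)}$ by an eigenvalue argument (Lemma \ref{tech1}). One then compares $\eta$ to the special element $\eta_N$ attached to $\mathcal{Y}_N$, showing $\eta_N\equiv\eta$ modulo $p^N$ times a fixed lattice, and checks that for $N\gg 0$ this error term already lies in ${\rm Fit}^a_{\mathfrak A}(H^2(C))\cap{\rm Ann}_{\mathfrak A}(Q_{\rm tor})$ by a finite-index argument. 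The key conceptual difference is that perturbing $\mathcal{X}$ can create linear independence, whereas passing to a further quotient cannot.
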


\begin{proof} The first assertion is clear since if $H^2(C)'$ is a submodule of a quotient $Q$ of $H^2(C)$, then $Q$ must have rank at least $a$ at each simple component of $A{e_{(a)}}$ and $H^2(C)'_{\rm tor}$ is a submodule of $Q_{\rm tor}$ so that ${\rm Ann}_\mathfrak{A}(Q_{\rm tor})\subseteq {\rm Ann}_\mathfrak{A}(H^2(C)'_{\rm tor})$.

To prove the second assertion we assume that $Q := H^2(C)'$ is a quotient of $H^2(C)$, label the elements of $\mathcal{X}$ as $\{x_i\}_{1\le i\le a}$ and for each index $i$ write $\overline{x_i}$ for the image of $x_i$ in $Q$.

We can also fix a subset $\mathcal{Y} := \{y_j\}_{1\le j\le a}$ of $Q$ that spans a free $A$-module of rank $a$. Then Lemma \ref{tech1} below implies that for any large enough integer $N$ the set $\mathcal{Y}_N:= \{\overline{x_j}+ p^N\cdot y_j\}_{1\le j\le a}$ spans a free $Ae_{(a)}$-module of rank $a$.

We fix such an $N$ and write $\eta_N$ for the unique element that satisfies
\[ ({\bigwedge}_{A_E}^a\lambda)(\eta_N) = e_a\cdot\mathcal{L}^{-1}\cdot {\wedge}_{y \in \mathcal{Y}_N}y.\]
By explicitly comparing the terms ${\wedge}_{y \in \mathcal{Y}_N}y$ and ${\wedge}_{x\in \mathcal{X}}x$ one finds that
\[ \eta_N \equiv \eta \,\,{\rm modulo} \,\, p^N({\bigwedge}_{A_E}^a\lambda)^{-1}(\mathcal{L}^{-1}\cdot \Xi)\]
with $\Xi$ the $\mathfrak{A}$-submodule of $Q$ generated by $\mathcal{X}\cup \mathcal{Y}$, and so it enough for us to prove that the $\mathfrak{A}$-lattice
\[ I(\Xi) := \{ (\wedge_{i=1}^{i=a}\varphi_i)({\bigwedge}_{A_E}^a\lambda)^{-1}(\mathcal{L}^{-1}\cdot \xi)\! : \xi \in \Xi\,\,\text{and}\,\, (\varphi_i)_{i}\in \Hom_\mathfrak{A}(H^1(C),\mathfrak{A})^{a}\}\]
is such that for any large enough choice of $N$ one has
\begin{equation}\label{needed} p^N\cdot I(\Xi) \subseteq {\rm Fit}^a_\mathfrak{A}(H^2(C)) \cap {\rm Ann}_\mathfrak{A}(Q_{\rm tor}).\end{equation}

To show this we note that the definition of $e_a$ combines with our choice of $\mathcal{Y}$ to imply that the natural projection and inclusion maps
\[ e_a(F\cdot ({\bigwedge}_{\mathfrak{A}}^aH^2(C))) \to e_a(F\cdot ({\bigwedge}_{\mathfrak{A}}^a Q))\,\,\,\text{ and }\,\,\, e_a(F\cdot ({\bigwedge}_{\mathfrak{A}}^a \Xi)) \to e_a(F\cdot ({\bigwedge}_{\mathfrak{A}}^a Q)\]
are both bijective. This implies that $({\bigwedge}_{A_E}^a\lambda)^{-1}(\mathcal{L}^{-1}\cdot \Xi)$ is a full $\mathfrak{A}$-sublattice of the space $e_a(F\cdot ({\bigwedge}_{\mathfrak{A}}^aH^1(C)))$ and hence that $I(\Xi)$ is a full sublattice of $e_aA$.

Since ${\rm Ann}_\mathfrak{A}(Q_{\rm tor})$ has finite index in $\mathfrak{A}$, to prove that (\ref{needed}) is valid for any large enough $N$, it is thus enough to note that ${\rm Fit}^a_\mathfrak{A}(H^2(C))\cap e_aA$ has finite index in $e_a\mathfrak{A}$. This is true because $e_a(F\cdot H^2(C))$ is a free $A$-module of rank $a$ and hence that $e_a(F\cdot  {\rm Fit}^a_\mathfrak{A}(H^2(C))) = {\rm Fit}^a_{Ae_a}(e_a(F\cdot H^2(C))) = Ae_a$. \end{proof}

In the following result we use the set $\mathcal{Y}_N$ defined above.

\begin{lemma}\label{tech1} For any large enough natural number $N$ the $Ae_{(a)}$-module $Y_N$ spanned by $\mathcal{Y}_N$ is free of rank $a$.\end{lemma}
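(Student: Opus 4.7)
The plan is to reduce freeness of $Y_N$ as an $Ae_{(a)}$-module of rank $a$ to a non-vanishing statement for a top exterior product at each simple component of $Ae_{(a)}$. Since $A$ is a separable commutative $F$-algebra, it is a product of fields, and so $Ae_{(a)} = \prod_e Ae$ is also a product of fields indexed by the primitive idempotents $e$ of $A$ for which $\dim_{Ae} e(F\cdot H^2(C)) \ge a$. Thus $Y_N$ is free of rank $a$ over $Ae_{(a)}$ if and only if, for every such $e$, the elements $\{e\overline{x_j} + p^N\, ey_j\}_{j=1}^a$ are $Ae$-linearly independent in the $Ae$-vector space $eV := e(F\cdot Q)$, which has dimension at least $a$ (by the hypothesis on $Q = H^2(C)'$ made at the start of the proof of Proposition~\ref{first reduction}) and in which $\{ey_j\}_{j=1}^a$ is linearly independent (by the choice of $\mathcal{Y}$).

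To detect this linear independence, I would introduce the vector-valued polynomial
\[
P_e(t) := (e\overline{x_1} + t \cdot ey_1) \wedge \cdots \wedge (e\overline{x_a} + t \cdot ey_a) \in \bigl({\textstyle\bigwedge}^a_{Ae} eV\bigr)[t],
\]
which upon expansion is a polynomial of degree at most $a$ in $t$ whose $t^a$-coefficient is $ey_1 \wedge \cdots \wedge ey_a$. This leading coefficient is nonzero by the linear independence of $\{ey_j\}_{j=1}^a$ in the $Ae$-vector space $eV$.

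Choosing an $Ae$-basis of ${\bigwedge}^a_{Ae} eV$ and writing $P_e(t)$ in coordinates then produces a finite tuple of polynomials in $Ae[t]$, at least one of which has a nonzero $t^a$-coefficient and hence admits at most $a$ roots in the field $Ae$. The set of $t \in Ae$ with $P_e(t) = 0$ is therefore finite; and since the powers $\{p^N\}_{N \ge 1}$ are pairwise distinct in $Ae$ (which contains $\mathbb{Q}$), one has $P_e(p^N) \neq 0$ for all sufficiently large $N$. As there are only finitely many primitive idempotents of $Ae_{(a)}$, a single $N$ works simultaneously at all of them, giving the lemma.

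The only genuinely substantive point in the argument is the opening reduction, which repackages freeness over the semisimple ring $Ae_{(a)}$ as a componentwise non-vanishing statement for the wedge $P_e(p^N)$; once that reformulation is in place, the conclusion is a routine polynomial-avoidance argument using that a nonzero polynomial over a field has only finitely many roots.
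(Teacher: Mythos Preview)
Your proof is correct and follows essentially the same strategy as the paper: pass to each field component of $Ae_{(a)}$ and note that linear independence of $\{e\overline{x_j} + t\,ey_j\}_j$ is detected by a polynomial in $t$ with nonzero top coefficient (since $\{ey_j\}_j$ is independent), so it can fail for only finitely many values of $t$. The paper packages this via the transition matrix $M_N = M + p^N\cdot\mathrm{Id}_a$ and the observation that $-p^N$ is eventually not an eigenvalue of $M$, whereas you use the $a$-th exterior power; the content is the same, and your formulation has the minor advantage of not needing $\pi(\mathcal{Y})$ to be a full basis of the component.
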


\begin{proof} The algebra $Ae_{(a)}$ decomposes as a finite product of fields (of characteristic zero) and, after replacing $Ae_{(a)}$ by any such field $E$, it is enough to prove that for any large enough natural number $N$ the $E$-module $E\otimes_{\mathfrak{A}}Y_N$ is free of rank $a$.

We write $\pi$ for the natural projection $Q \to E\otimes_{\mathfrak{A}}Y_N$. Then the set $\pi(\mathcal{Y})$ gives an $E$-basis of the space $E\otimes_\mathfrak{A}Q = E\otimes_\mathfrak{A}H^2(C)$ and we write $M_N$ and $M$ for the transition matrices from $\pi(\mathcal{Y})$ to the sets $\pi(\mathcal{Y}_N)$ and $\pi(\mathcal{X})$. It is then enough to prove that for any large enough $N$ the matrix $M_N$ is invertible.

However, since $M_N = M + p^N\cdot{\rm Id}_a$, this is true since for any large enough choice of $N$ the integer $-p^N$ cannot be an eigenvalue of $M$ (over any algebraic closure of $E$). \end{proof}

\subsubsection{}It is enough to prove Theorem \ref{char els} (i) after replacing $\mathfrak{A}$ by its localisation $\mathfrak{A}_\wrp$ at each prime ideal $\wrp$ of $R$ and then the semi-local ring $\mathfrak{A}_\wrp$ by each of its local components. In the sequel we shall therefore assume that $\mathfrak{A}$ is local and hence that every finitely generated projective $\mathfrak{A}$-module is free.

In addition, we always assume $H^2(C)'$ and $\mathcal{X}$ are chosen as in Proposition \ref{first reduction}. We also write $e_a$ and $e_{(a)}$ in place of $e_{C,a}$ and $e_{C,(a)}$ and then set $\mathfrak{A}_a := \mathfrak{A}e_a$, $\mathfrak{A}_{(a)} := \mathfrak{A}e_{(a)}$, $A_a := Ae_a$ and $A_{(a)} := Ae_{(a)}$.

We consider the object $C_{(a)} := \mathfrak{A}_{(a)}\otimes^\mathbb{L}_\mathfrak{A} C$ of $D^{\rm p}(\mathfrak{A}_{(a)})$.

We note that the definition of $e_{(a)}$ ensures $F\cdot H^2(C_{(a)})$ contains a free $A_{(a)}$-module of rank $a$ and then Remark \ref{semisimplicity} implies the same is true of the $A_{(a)}$-module $F\cdot H^1(C_{(a)})$. We can thus fix a subset $\mathcal{X}' = \{x'_j\}_{1\le j \le a}$ of $H^1(C_{(a)})$ that is linearly independent over $A_{(a)}$.

In particular, since the definition of $e_a$ ensures that the sets $e_a\mathcal{X}'$ and $e_a\mathcal{X}$ are respectively bases of the $A_{a,E}$-modules $e_a(E\cdot H^1(C_{(a)}))$ and $e_a(E\cdot H^2(C_{(a)}))$ we can define $M(\lambda)$ to be the matrix in ${\rm GL}_a(A_{a,E})$ that represents $e_a\lambda$ with respect to these bases.

The next key step is to prove the following result concerning this matrix. In this result we write $X$ and $X'$ for the $\mathfrak{A}_{(a)}$-modules that are generated by $\mathcal{X}$ and $\mathcal{X}'$ respectively.

\begin{lemma}\label{inclusion lemma} Set $\mathfrak{A}' := \mathfrak{A}_{(a)}$ and $C' := C_{(a)}$. Then there exists a natural homomorphism of $\mathfrak{A}'$-modules
\[ \kappa: {\rm Ext}^1_{\mathfrak{A}'}\!(H^1(C'),\!\mathfrak{A}') \to {\rm Ext}^3_{\mathfrak{A}'}\!(H^2(C'),\mathfrak{A}')\]
for which one has
\[ {\rm Ann}_{\mathfrak{A}'}({\rm Ext}^2_{\mathfrak{A}'}(H^2(C'),\mathfrak{A}'))^a\cdot {\rm Fit}^0_{\mathfrak{A}'}(\ker(\kappa))\cdot ({\rm det}(M(\lambda))\mathcal{L})^{-1} \subseteq {\rm Fit}^0_{\mathfrak{A}'}(H^2(C')/X).\]
\end{lemma}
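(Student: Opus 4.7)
The plan is to construct $\kappa$ by a standard homological-algebra procedure and then reduce the Fitting-ideal inclusion to an explicit calculation with matrix minors.

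First I would build $\kappa$ as the differential $d_2^{1,-1}$ in the universal-coefficient spectral sequence
\[
E_2^{p,q} = \Ext^p_{\mathfrak{A}'}(H^{-q}(C'),\mathfrak{A}') \Rightarrow H^{p+q}({\rm R}\Hom_{\mathfrak{A}'}(C',\mathfrak{A}')),
\]
which has just two nonzero rows (at $q=-1$ and $q=-2$) because $C'$ is acyclic outside degrees one and two. Equivalently, $\kappa$ is the Yoneda product with the class in $\Ext^2_{\mathfrak{A}'}(H^2(C'), H^1(C'))$ defined by the canonical truncation triangle $H^1(C')[-1] \to C' \to H^2(C')[-2] \to H^1(C')$ in $D(\mathfrak{A}')$.

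Next I would fix a representative of $C'$ as $P^1 \xrightarrow{d} P^2$ with $P^1$ and $P^2$ finitely generated free $\mathfrak{A}'$-modules of the same rank $n$; this is available after localising at a prime of $R$ and passing to a local component of the resulting semi-local order, via Remark \ref{representative}, and such a reduction is legitimate since the inclusion to be proved can be verified locally. With $\mathcal{X} = \{x_1,\dots,x_a\}$ lifted to $\tilde x_1,\dots,\tilde x_a \in P^2$ and $\mathcal{X}' = \{x'_1,\dots,x'_a\}$ viewed inside $\ker d \subseteq P^1$, the ideal $\Fit^0_{\mathfrak{A}'}(H^2(C')/X)$ becomes the ideal of $n\times n$ minors of the augmented matrix $[D \mid \tilde X]$, where $D$ is the matrix of $d$ and $\tilde X$ has columns $\tilde x_i$.

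The technical core of the argument is a direct identification of $\det(M(\lambda))\mathcal{L}$ in $A_E$: unwinding the defining equation (\ref{etnc equality}) for $\mathcal{L}$ via the passage-to-cohomology isomorphism for the two-term representative, combined with the definition of $M(\lambda)$ in terms of the bases $e_a\mathcal{X}'$ and $e_a\mathcal{X}$, produces a formula expressing typical elements of $\Fit^0_{\mathfrak{A}'}(H^2(C')/X)$ as $\det(M(\lambda))\mathcal{L}$ times the evaluation of a specific cocycle in $\Hom_{\mathfrak{A}'}(P^1,\mathfrak{A}')$ on the tuple $\mathcal{X}'$. This is a direct generalisation of the calculation behind \cite[Th.\ 7.5]{bks1}, carried out here without any separability assumption on $\mathcal{X}$.

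Finally, given a cocycle $\varphi \in \Hom_{\mathfrak{A}'}(P^1,\mathfrak{A}')$ whose class in $\Ext^1_{\mathfrak{A}'}(H^1(C'),\mathfrak{A}')$ lies in $\ker\kappa$, the vanishing of $\kappa(\varphi)$ permits $\varphi$ to be extended across each augmenting column $\tilde x_i$, one at a time, at the cost of a multiplication by an element of $\Ann_{\mathfrak{A}'}(\Ext^2_{\mathfrak{A}'}(H^2(C'),\mathfrak{A}'))$. Iterating this $a$ times explains the $a$-th power in the statement, and applying the resulting extended dual vectors to the matrix $[D\mid \tilde X]$ and invoking the identification from the previous paragraph delivers the required elements of $\Fit^0_{\mathfrak{A}'}(H^2(C')/X)$. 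The hard part will be the bookkeeping in that third paragraph: since $\mathcal{L}$ lies only in $A_E^\times$, one must track denominators carefully so that, after combining with the lifting step, the final expressions genuinely lie in $\mathfrak{A}'$ and realise the full ideal of maximal minors of $[D\mid\tilde X]$ rather than only a larger ideal in $A_E$.
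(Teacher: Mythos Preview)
Your outline misses the central construction of the paper's proof, and the heuristics you substitute for it are not solid enough to carry the argument.

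The paper does \emph{not} work directly with a two-term presentation of $C'$. Instead it introduces the free rank-$a$ submodules $X'\subseteq H^1(C')$ and $X\subseteq H^2(C')$ and forms the cone $D$ in an exact triangle
\[
X[-2]\oplus X'[-1]\longrightarrow C'\longrightarrow D\longrightarrow X[-1]\oplus X'[0],
\]
so that $H^1(D)=H^1(C')/X'$ and $H^2(D)=H^2(C')/X$. A commutative diagram of isomorphisms $\lambda_1,\lambda_2,\lambda_3$ (chosen so that $e_a\lambda_1=e_a\lambda_2=e_a\lambda$) then gives, via multiplicativity of determinants along the triangle, the clean identity
\[
e_a\,\vartheta_{\lambda_3}(\Det_{\mathfrak{A}'}(D))=\mathfrak{A}\cdot\det(M(\lambda))\,\mathcal{L}.
\]
The inclusion of the lemma is then a direct application of the general Proposition~\ref{almost there}(ii) to the three-term complex $D$: the map $\kappa$ is the Yoneda-product map attached to $D$ (via $Z^1(D)=H^1(C')$), and the exponent $a$ is exactly $\mathrm{rk}(D^0)=\mathrm{rk}(X')$.

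Against this, your proposal has two concrete problems. First, your $\kappa$ is the $d_2$-differential for $C'$ itself; the paper's $\kappa$ is built from the auxiliary complex $D$ and depends on the choice of $\mathcal{X}'$ (indeed $\ker(\kappa)$ is later identified with the kernel of $\ell^2:\Ext^1(H^1(D),\mathfrak{A}')\to\Ext^1(H^1(C'),\mathfrak{A}')$). These are not the same object, and it is the second one whose zeroth Fitting ideal appears in the statement. Second, your ``extension across each augmenting column $\tilde x_i$'' step is not well-posed: an element $\varphi\in\Hom_{\mathfrak{A}'}(P^1,\mathfrak{A}')$ restricts to $\Hom(H^1(C'),\mathfrak{A}')$, not to $\Ext^1(H^1(C'),\mathfrak{A}')$, so the phrase ``whose class in $\Ext^1$ lies in $\ker\kappa$'' does not parse; and the purported link between $\kappa(\varphi)=0$ and extendability of $\varphi$ over the columns $\tilde x_i\in P^2$ is never made precise. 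In the paper the $a$-th power arises not from $a$ iterations but from the fact that $D^0=X'$ has rank $a$, via the explicit construction of $\phi\oplus\delta^1$ in the proof of Proposition~\ref{almost there}.

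What you should do instead is: build $D$ as above, check the determinant identity $e_a\vartheta_{\lambda_3}(\Det(D))=\mathfrak{A}\cdot\det(M(\lambda))\mathcal{L}$ from multiplicativity along the triangle, and then invoke Proposition~\ref{almost there}(ii). That proposition already packages the minor calculation you were aiming at, but organised around the three-term complex $D$ rather than the two-term $C'$ with an augmented matrix.
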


\begin{proof} Since the $\mathfrak{A}'$-modules $X$ and $X'$ that are generated by $\mathcal{X}$ and $\mathcal{X}'$ are both free of rank $a$, the inclusions $\iota^2:X\subseteq H^2(C')$ and $\iota^1:X'\subseteq H^1(C')$ give rise to an exact triangle in $D^{\rm p}(\mathfrak{A}')$ of the form
\begin{equation}\label{key triangle} X[-2] \oplus X'[-1] \xrightarrow{\iota} C' \to D \to X[-1] \oplus X'[0]\end{equation}
in which $H^1(\iota) = \iota^1$, $H^2(\iota) = \iota^2$ and the complex $D$ is acyclic outside degrees one and two and has cohomology groups in these degrees that respectively identify with the quotients $H^1(C')/X'$ and $H^2(C')/X$.


Since $\lambda$  induces an isomorphism of $A_E$-modules between $e_a(E\cdot X') = e_a(E\cdot H^1(C))$ and $e_a(E\cdot H^2(C)) = e_a(E\cdot X)$ we can fix a commutative diagram of $A_E$-modules
\begin{equation}\label{key diagram} \begin{CD}
0 @> >> E\cdot X' @> \subseteq >> E\cdot H^1(C') @> >> E\cdot H^1(D) @> >> 0\\
@. @V \lambda_1 VV @V \lambda_2 VV @V \lambda_3 VV\\
0 @> >> E\cdot X @> \subseteq >> E\cdot H^2(C') @> >> E\cdot H^2(D) @> >> 0\end{CD}\end{equation}
where the maps $\lambda_1$, $\lambda_2$ and $\lambda_3$ are bijective and satisfy
\begin{equation}\label{restrictions agree} e_a\lambda_1 = e_a\lambda_2 = e_a \lambda.\end{equation}

This commutative diagram combines with the triangle (\ref{key triangle}) to give an equality of lattices
\[ \vartheta_{\lambda_3}({\rm Det}_{\mathfrak{A}'}(D)) = \vartheta_{\lambda_1}({\rm Det}_{\mathfrak{A}'}(X[-2]\oplus X'[-1]))^{-1}\cdot \vartheta_{\lambda_2}({\rm Det}_{\mathfrak{A}'}(C'))\]

In particular, upon multiplying this equality by $e_a$, and taking account of both of the equalities (\ref{etnc equality}) and (\ref{restrictions agree}), one deduces that
\begin{align}\label{useful step eq} e_a\vartheta_{\lambda_3}({\rm Det}_{\mathfrak{A}'}(D)) = &e_a\vartheta_{\lambda_1}({\rm Det}_{\mathfrak{A}'}(X[-2]\oplus X'[-1]))^{-1}\cdot e_a\vartheta_{\lambda_2}({\rm Det}_{\mathfrak{A}'}(C'))\\
= &{\rm det}(M(\lambda))\cdot \vartheta_{e_{a}\lambda_2}(e_a\cdot {\rm Det}_{\mathfrak{A}}(C))\notag\\
= &{\rm det}(M(\lambda))\cdot e_a\vartheta_{\lambda}({\rm Det}_{\mathfrak{A}}(C))\notag\\
= & \mathfrak{A} \cdot {\rm det}(M(\lambda))\mathcal{L}\notag\end{align}
where $M(\lambda)$ is the matrix defined above.

To investigate the first expression in (\ref{useful step eq}) we note that Remark \ref{representative} allows us to fix a representative of $C$ of the form
\begin{equation}\label{rep2} P^1\to P^2,\end{equation}
where $P^1$ and $P^2$ are finitely generated free $\mathfrak{A}$-modules, $P^1$ is placed in degree one and the differential is $\delta$. In this case the complex $C'$ is represented by the complex $e_{(a)} P^1\to e_{(a)} P^2$ with differential $e_{(a)}\delta$ and so we can take $D$ to be a complex
\begin{equation}\label{D rep} X' \xrightarrow{\iota '\oplus\, 0} e_{(a)}P^1\oplus X \xrightarrow{(e_{(a)}\delta,\,\iota)} e_{(a)}P^2\end{equation}
with $X'$ placed in degree zero.

After applying the result of Proposition \ref{almost there}(ii) below to this complex we obtain the claimed inclusion as a direct consequence of (\ref{useful step eq}). \end{proof}

\begin{proposition}\label{almost there} Let $\mathfrak{B}$ be a commutative local $R$-algebra that spans a finite dimensional separable $F$-algebra $B$. Let $D$ be a complex of finitely generated free $\mathfrak{B}$-modules of the form
\[ D^0\xrightarrow{\delta^0} D^1 \xrightarrow{\delta^1} D^2\]
where $D^0$ is placed in degree zero. Assume $D$ is acyclic outside degrees one and two and that there is an isomorphism of $B_E$-modules of the form $\mu: E\cdot H^1(D) \cong E\cdot H^2(D)$.

Then the following claims are valid.

\begin{itemize}
\item[(i)] There exists a natural homomorphism of abelian groups of the form
\[ \kappa: {\rm Ext}^1_{\mathfrak{B}}\!(H^1(D),\!\mathfrak{B}) \to {\rm Ext}^3_{\mathfrak{B}}\!(H^2(D),\mathfrak{B}).\]

\item[(ii)] With $e$ denoting the sum of primitive idempotents of $B$ that annihilate $F\cdot H^2(D)$, one has
\[ {\rm Ann}_{\mathfrak{B}}({\rm Ext}^2_{\mathfrak{B}}(H^2(D),\mathfrak{B}))^{{\rm rk}(D^0)}\cdot {\rm Fit}^0_{\mathfrak{B}}(\ker(\kappa))\cdot e \cdot \vartheta_{\mu}({\rm Det}_{\mathfrak{B}}(D))^{-1} \subseteq {\rm Fit}^0_{\mathfrak{B}}(H^2(D)),\]
where ${\rm rk}(D^0)$ denotes the rank of the $\mathfrak{B}$-module $D^0$.

\item[(iii)] Assume that the canonical homomorphism $R \to \mathfrak{B}$ satisfies the conditions of Lemma \ref{dual preserve} and that the modules $H^1(D)$ and $H^2(D)$ are finite. Then $e=1$ and one has
\[{\rm Fit}^0_{\mathfrak{B}}( \Hom_R(H^1(D),F/R))\cdot \vartheta_{0}({\rm Det}_{\mathfrak{B}}(D))^{-1} = {\rm Fit}^0_{\mathfrak{B}}(H^2(D)),\]
where the Pontryagin dual is endowed with the natural action of $\mathfrak{B}$ and $0$ denotes the unique automorphism of the zero space.

\end{itemize}
\end{proposition}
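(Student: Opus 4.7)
The proof begins with part (i). Since $D$ is acyclic outside cohomological degrees one and two, it fits into a canonical exact triangle
\[ H^1(D)[-1] \to D \to H^2(D)[-2] \xrightarrow{\epsilon} H^1(D)[0]\]
in the derived category $D(\mathfrak{B})$, with connecting morphism $\epsilon$ defining a class in $\Ext^2_\mathfrak{B}(H^2(D), H^1(D))$ represented by the Yoneda $2$-extension $0 \to H^1(D) \to D^1/\delta^0(D^0) \to D^2 \to H^2(D) \to 0$. The homomorphism $\kappa$ in (i) is defined as Yoneda composition with $\epsilon$. Equivalently, it arises as the differential $d_2^{1,-1}$ on the second page of the universal coefficient spectral sequence $E_2^{p,q} = \Ext^p_\mathfrak{B}(H^{-q}(D),\mathfrak{B}) \Rightarrow H^{p+q}({\rm R}\Hom_\mathfrak{B}(D, \mathfrak{B}))$.

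For part (ii), the plan is to work with an explicit representation of $D$ as a complex of free $\mathfrak{B}$-modules of ranks $r, s, t$ in degrees $0, 1, 2$. The vanishing Euler characteristic gives $s = r + t$. Choosing $\mathfrak{B}$-bases of each $D^i$ together with compatible lifts in $E \cdot D^1$ and $E \cdot D^2$ of $E$-bases of $H^1(D)$ and $H^2(D)$, the lattice $\vartheta_\mu(\Det_\mathfrak{B}(D))$ is generated by an element $\mathcal{L} \in B_E^\times$ describable via the determinant of an appropriate matrix representing $\mu$ together with basis-change minors of $\delta^0$ and $\delta^1$. The idempotent $e$ singles out the components on which both $H^1(D)$ and $H^2(D)$ are torsion, and on these $\Fit^0_\mathfrak{B}(H^2(D))$ is generated by the $t \times t$ minors of the matrix of $\delta^1$.

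The inclusion in (ii) is then proved by induction on $r = {\rm rk}(D^0)$. The base case $r=0$ is immediate: $D$ reduces to $D^1 \xrightarrow{\delta^1} D^2$ with $\delta^1$ a square $t \times t$ matrix whose determinant is a nonzerodivisor on the $e$-components (since $E \cdot \delta^1$ is there an isomorphism), so $\ker\delta^1 = 0$, hence $H^1(D)$ vanishes on the $e$-components, $e\cdot \Fit^0_\mathfrak{B}(\ker\kappa) = e\cdot \mathfrak{B}$, and the inclusion reduces to the tautology $\det\delta^1 \cdot \mathfrak{B} \subseteq (\det\delta^1)$. For the inductive step, use the distinguished triangle
\[ \sigma^{\ge 1}D \to D \to D^0[0] \to (\sigma^{\ge 1}D)[1]\]
arising from the naive truncation short exact sequence $0 \to \sigma^{\ge 1}D \to D \to D^0[0] \to 0$ (where $\sigma^{\ge 1}D$ denotes $D^1 \xrightarrow{\delta^1} D^2$ in degrees $1, 2$). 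The associated long exact cohomology sequence produces an extension $0 \to D^0 \to \ker\delta^1 \to H^1(D) \to 0$, and tracking how both the determinant lattice and the Fitting ideal of top degree cohomology change shows that passing from $D$ to a complex with the rank of $D^0$ reduced by one introduces exactly one factor of $\Ann_\mathfrak{B}(\Ext^2_\mathfrak{B}(H^2(D),\mathfrak{B}))$, arising from the Ext-theoretic obstruction to splitting this extension. Iterating $r$ times accumulates $\Ann(\Ext^2)^r$, while the residual error after all steps is captured by $\Fit^0_\mathfrak{B}(\ker\kappa)$ via the spectral-sequence description of $\ker\kappa$ from (i).

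Part (iii) then follows formally: under the hypothesis that $R \to \mathfrak{B}$ satisfies the conditions of Lemma \ref{dual preserve}, the proof of that lemma shows $\Ext^j_\mathfrak{B}(N,\mathfrak{B}) = 0$ for all $j \ge 2$ and every finite $\mathfrak{B}$-module $N$, while $\Ext^1_\mathfrak{B}(N,\mathfrak{B}) \cong \Hom_R(N,F/R)$. Hence $\Ann_\mathfrak{B}(\Ext^2_\mathfrak{B}(H^2(D),\mathfrak{B})) = \mathfrak{B}$; the target $\Ext^3_\mathfrak{B}(H^2(D),\mathfrak{B})$ of $\kappa$ vanishes so $\ker\kappa = \Ext^1_\mathfrak{B}(H^1(D),\mathfrak{B}) = \Hom_R(H^1(D),F/R)$; and $e = 1$ because both cohomology groups are finite. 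The passage from inclusion to equality comes from observing that under the Gorenstein hypothesis the intermediate obstructions in the inductive step all vanish and the matrix computation becomes exact. The main difficulty in this plan lies in paragraph three: verifying that each step of the induction on $r$ contributes exactly one factor of $\Ann_\mathfrak{B}(\Ext^2)$, and that the accumulated residual error matches $\Fit^0_\mathfrak{B}(\ker\kappa)$ via the universal coefficient spectral sequence.
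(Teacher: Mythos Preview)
Your construction of $\kappa$ in (i) is fine and equivalent to the paper's; the paper builds it via the tautological short exact sequences $0 \to D^0 \to Z^1(D) \to H^1(D) \to 0$, $0 \to Z^1(D) \to D^1 \to B^2(D) \to 0$, $0 \to B^2(D) \to D^2 \to H^2(D) \to 0$, identifying $\Ext^1_\mathfrak{B}(Z^1(D),\mathfrak{B}) \cong \Ext^3_\mathfrak{B}(H^2(D),\mathfrak{B})$ and composing with the connecting map $j^2 : \Ext^1_\mathfrak{B}(H^1(D),\mathfrak{B}) \to \Ext^1_\mathfrak{B}(Z^1(D),\mathfrak{B})$.

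Your plan for (ii), however, does not work as written. The naive truncation triangle $\sigma^{\ge 1}D \to D \to D^0[0]$ removes \emph{all} of $D^0$ in one step, so there is nothing to ``iterate $r$ times''; and there is no evident way to peel off one free summand of $D^0$ while keeping the remaining terms free. More importantly, the obstruction you point to (splitting $0 \to D^0 \to Z^1(D) \to H^1(D) \to 0$) lives in $\Ext^1_\mathfrak{B}(H^1(D),D^0)$, not in $\Ext^2_\mathfrak{B}(H^2(D),\mathfrak{B})$. The paper's argument is not inductive. It directly builds a map $\phi : D^1 \to D^0$ and shows $\det(\phi \oplus \delta^1 : D^1 \to D^0 \oplus D^2)$ lies in $\Fit^0_\mathfrak{B}(H^2(D))$ via the surjection $\cok(\phi \oplus \delta^1) \twoheadrightarrow H^2(D)$. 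Constructing $\phi$ means producing $r$ maps $\varphi_j : D^1 \to \mathfrak{B}$ that restrict on $Z^1(D)$ to prescribed $\phi_j$; the obstruction to each such extension lives in $\Ext^1_\mathfrak{B}(B^2(D),\mathfrak{B}) \cong \Ext^2_\mathfrak{B}(H^2(D),\mathfrak{B})$, so one must first multiply $\phi_j$ by an element $z$ of the annihilator. This is where the power $z^r$ comes from: $r$ separate lifting problems, one per basis vector of $D^0$. The $\phi_j$ themselves range over $j^0(\Hom_\mathfrak{B}(Z^1(D),\mathfrak{B})) = \ker(j^1)$, and the resulting determinants $\det(c_{ij})$ generate exactly $\Fit^0_\mathfrak{B}(\ker j^2) = \Fit^0_\mathfrak{B}(\ker\kappa)$. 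An explicit comparison on the $e$-component with the isomorphism $\iota : E\cdot(D^0\oplus D^2) \cong E\cdot D^1$ then identifies $\det(M(\phi\oplus\delta^1))$ with $z^r\det(c_{ij})\cdot\det(M(\iota))^{-1}$, and $\mathfrak{B}e\cdot\det(M(\iota)) = e\cdot\vartheta_\mu(\Det_\mathfrak{B}(D))$.

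Your argument for (iii) also has a real gap. Saying ``the matrix computation becomes exact'' under the Gorenstein hypothesis does not upgrade the inclusion of (ii) to an equality: even with all $\Ext^{\ge 2}$ vanishing, (ii) only yields $\Fit^0_\mathfrak{B}(\Ext^1_\mathfrak{B}(H^1(D),\mathfrak{B}))\cdot\vartheta_0(\Det_\mathfrak{B}(D))^{-1} \subseteq \Fit^0_\mathfrak{B}(H^2(D))$. The missing idea is duality: apply (ii) a second time to $D^* := R\Hom_R(D,R[-3])$, which under the hypotheses again lies in $D^{\rm s}(\mathfrak{B})$ with $H^i(D^*) \cong \Hom_R(H^{3-i}(D),F/R)$. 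This gives the reverse inclusion $\Fit^0_\mathfrak{B}(H^2(D))\cdot\vartheta_0(\Det_\mathfrak{B}(D^*))^{-1} \subseteq \Fit^0_\mathfrak{B}(\Hom_R(H^1(D),F/R))$, and one checks $\vartheta_0(\Det_\mathfrak{B}(D^*))^{-1} = \vartheta_0(\Det_\mathfrak{B}(D))$ to conclude.
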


\begin{remark}
{\em
The proof of Proposition \ref{almost there} given below shows that the homomorphism $\kappa$ in claim (i) is constructed as the composition
$$\Ext^1_\mathfrak{B}(H^1(D), \mathfrak{B})  \to \Ext^1_\mathfrak{B}(Z^1(D), \mathfrak{B}) \to \Ext^3_\mathfrak{B}(H^2(D),\mathfrak{B}),$$
where the first map is induced by the natural map $Z^1(D)=\ker (\delta^1) \to H^1(D)$ and the second by the `Yoneda product' with the Yoneda extension class
$$0 \to Z^1(D) \to D^1 \to D^2 \to H^2(D)\to 0$$
in $\Ext^2_\mathfrak{B}(H^2(D),Z^1(D))$.
}
\end{remark}

\begin{proof}[Proof of Proposition \ref{almost there}] The differential $\delta^0$ is injective and, since the groups $e(F\cdot H^2(D))$ and hence $e(F\cdot H^1(D))$ vanish, there exists a direct sum decomposition $e(F\cdot D^1) = V^1_1 \oplus V^1_2$ so that the maps $e(F\otimes_R\delta^0)$ and $e(E\otimes_R\delta^1)$ give isomorphisms $e(F\cdot D^0) \cong V^1_1$ and $V^1_2 \cong e(F\cdot D^2)$ respectively. We can therefore fix an isomorphism of $B_E$-modules
\begin{equation}\label{helpful iso} \iota: E\cdot (D^0\oplus D^2) \to E\cdot D^1\end{equation}
which restricts to give the scalar extension of the isomorphism $e(F\cdot D^0) \oplus e(F\cdot D^2) \cong e(F\cdot D^1)= V_1^1 \oplus V^1_2$ given by $(e(F\otimes_R\delta^0),e(F\otimes_R\delta^1)^{-1})$.

For any such isomorphism one has an equality
\begin{equation}\label{initial step} e \cdot \vartheta_{\mu}({\rm Det}_{\mathfrak{B}}(D)) = \mathfrak{B}e\cdot {\rm det} (M(\iota))\end{equation}
where $M(\iota)$ is the matrix of $\iota$ with respect to any choice of $\mathfrak{B}$-bases of $D^0\oplus D^2$ and $D^1$.

To compute the term ${\rm det}(M(\iota))$ more explicitly we apply the functor $\Hom_{\mathfrak{B}}(-,\mathfrak{B})$ to the tautological exact sequences
\[\begin{cases}0 \to D^0 \xrightarrow{\delta^0} Z^1(D) \to H^1(D)\to 0\\
 0 \to Z^1(D) \to D^1 \to B^2(D) \to 0\\
 0 \to B^2(D) \to D^2 \to H^2(D) \to 0\end{cases}\]

In particular, since the groups ${\rm Ext}^j_{\mathfrak{B}}(D^i,\mathfrak{B})$ vanish for each $j \ge 1$ and each $i\in \{0,1,2\}$, we obtain in this way exact sequences
\begin{equation}\label{useful}\Hom_{\mathfrak{B}}\!(Z^1\!(D),\!\mathfrak{B})\!\xrightarrow{j^0} \Hom_{\mathfrak{B}}\!(D^0,\!\mathfrak{B})\! \xrightarrow{j^1} {\rm Ext}^1_{\mathfrak{B}}\!(H^1(D),\!\mathfrak{B}) \!\xrightarrow{j^2} {\rm Ext}^1_{\mathfrak{B}}\!(Z^1(D),\mathfrak{B})\to 0,\end{equation}
%
%
and
\[
\Hom_{\mathfrak{B}}\!(D^1,\mathfrak{B}) \xrightarrow{k^0} \Hom_{\mathfrak{B}}\!(Z^1\!(D),\mathfrak{B}) \xrightarrow{k^1} {\rm Ext}^1_{\mathfrak{B}}\!(B^2\!(D),\mathfrak{B}) \to 0\]%
and isomorphisms ${\rm Ext}^1_{\mathfrak{B}}\!(Z^1(D),\mathfrak{B}) \cong {\rm Ext}^2_{\mathfrak{B}}\!(B^2(D),\mathfrak{B}) \cong {\rm Ext}^3_{\mathfrak{B}}\!(H^2(D),\mathfrak{B}).$

Taking the composite of the latter isomorphism with the map $j^2$ in (\ref{useful}) we obtain a map $\kappa$ as in claim (i).

Turning to claim (ii) we set $t:= {\rm rk}(D^0)$ and choose a $\mathfrak{B}$-basis $\{y_i\}_{1\le i\le t}$ of $D^0$. Then the $\mathfrak{B}$-module $\Hom_{\mathfrak{B}}(D^0,\!\mathfrak{B})$ is free with basis $\{y_j^*\}_{1\le j\le t}$ where each $y_j^*$ is the dual of $y_j$.

For each integer $j$ with $1\le j\le t$ we choose an element
\begin{equation}\label{choose element} b_j := \sum_{i=1}^{i=t}c_{ij}y_i^*\end{equation}
in $\ker(j^1)$ and an element $\phi_j$ of $\Hom_{\mathfrak{B}}\!(Z^1\!(D),\!\mathfrak{B}\!)$ with $j^0(\phi_j) = b_j$. Then for any element $z$ of the group
\[ {\rm Ann}_{\mathfrak{B}}({\rm Ext}^1_{\mathfrak{B}}\!(B^2(D),\mathfrak{B})) = {\rm Ann}_{\mathfrak{B}}({\rm Ext}^2_{\mathfrak{B}}\!(H^2(D),\mathfrak{B}))\]
there exists a homomorphism $\varphi_j$ in $\Hom_{\mathfrak{B}}\!(D^1,\mathfrak{B})$ with
 $k^0(\varphi_j) = z\cdot \phi_j$.

We finally define $\phi$ to be the element of $\Hom_{\mathfrak{B}}(D^1,D^0)$ that sends each element $w$ of $D^1$ to $\sum_{i=1}^{i=t}\varphi_i(w)\cdot y_i$ and consider the homomorphism $D^1 \to D^0\oplus D^2$ that is given by the direct sum $\phi \oplus \delta^1$.

Now, by explicitly comparing this map to the isomorphism $\iota$ defined in (\ref{helpful iso}) one computes that on $e(E\cdot D^0)\oplus e(E\cdot D^2)$ there is an equality of functions
\[ e(E\otimes_R(\phi\oplus \delta^1))\circ e(\iota) = (e(E\otimes_R(\phi\circ \delta^0)),{\rm id}_{e(E\cdot D^2)}).\]
and for each basis element $y_i$ one has
\begin{multline*} (\phi\circ \delta^0)(y_i) = \sum^{j=t}_{j=1} z(\phi_j\circ\delta^0)(y_i)\cdot y_j = \sum^{j=t}_{j=1} z(b_j)(y_i)\cdot y_j \\ = \sum^{j=t}_{j=1}z (\sum_{a=1}^{a=t}c_{aj}y_a^*(y_i))\cdot y_j
 = \sum^{j=t}_{j=1} zc_{ij}\cdot y_j.
 \end{multline*}

Hence, if we write $M(\phi\oplus \delta^1)$ for the matrix of $\phi\oplus \delta^1$ with respect to any choice of $\mathfrak{B}$-bases of $D^1$ and $D^2$ and the fixed basis $\{y_i\}_{1\le i\le t}$ of $D^0$, then one has
\[  {\rm det}(e\cdot M(\phi\oplus \delta^1)) = {\rm det}((zc_{ij})_{1\le i,j\le t})\cdot {\rm det}(e\cdot M(\iota))^{-1}.\]
In addition, for any primitive idempotent $e'$ of $B$, the matrix $e'\cdot M (\phi\oplus \delta^1)$ is invertible only if $e' = e' e$, and so one has ${\rm det}(M(\phi\oplus \delta^1)) = {\rm det}(e \cdot M(\phi\oplus \delta^1))$.

Putting everything together we find that (\ref{initial step}) implies that

\begin{align}\label{almost2} z^t\cdot {\rm det}((c_{ij})_{1\le i,j\le t})e \cdot \vartheta_{\mu}({\rm Det}_{\mathfrak{B}}(D))^{-1} &={\rm det}((zc_{ij})_{1\le i,j\le t})e \cdot \vartheta_{\mu}({\rm Det}_{\mathfrak{B}}(D))^{-1}\\
&= \mathfrak{B}e\cdot
{\rm det}((zc_{ij})_{1\le i,j\le t})\cdot {\rm det}(M(\iota))^{-1}\notag\\
&= \mathfrak{B}\cdot
{\rm det}((zc_{ij})_{1\le i,j\le t})\cdot {\rm det}(e\cdot M(\iota))^{-1}\notag\\
&= \mathfrak{B}\cdot {\rm det}(e\cdot M(\phi\oplus \delta^1))\notag\\
&= \mathfrak{B}\cdot {\rm det}( M(\phi\oplus \delta^1))\notag\\
&= {\rm Fit}^0_{\mathfrak{B}}( {\rm cok}(\phi\oplus\delta^1))\notag\\
&\subseteq  {\rm Fit}^0_{\mathfrak{B}}(H^2(D)),\notag\end{align}
where the last equality follows directly from the definition of zero-th Fitting invariant and the upper row in the following exact commutative diagram
\[\begin{CD}
 D^1 @> \phi\oplus\delta^1 >> D^0 \oplus D^2 @> >> {\rm cok}(\phi\oplus\delta^1) @> >> 0\\
 @\vert @V (0,{\rm id})VV @V\epsilon VV \\
 D^1 @> \delta^1 >> D^2 @> >> H^2(D
 ) @> >> 0,\end{CD}\]
and the inclusion in (\ref{almost2}) from (a standard property of Fitting ideals with respect to surjective maps and) the fact that the map $\epsilon$ in this diagram is surjective.

Now the exactness of the sequence (\ref{useful}) implies that as the elements $b_j$ in (\ref{choose element}) range over $\ker(j^1)$ the determinants of the matrices $(c_{ij})_{1\le i,j\le t}$ range over a set of generators of the ideal ${\rm Fit}_{\mathfrak{B}}^0(\ker(j^2)) = {\rm Fit}_{\mathfrak{B}}^0(\ker(\kappa))$.

The inclusion (\ref{almost2}) therefore implies that for any element $z$ of ${\rm Ann}_{\mathfrak{B}}({\rm Ext}^2_{\mathfrak{B}}(H^2(D),\mathfrak{B}))$  one has $z^t\cdot{\rm Fit}_{\mathfrak{B}}^0(\ker(\kappa))\cdot\vartheta_{\mu}({\rm Det}_{\mathfrak{B}}(D))^{-1} \subseteq {\rm Fit}^0_{\mathfrak{B}}(H^2(D))$, as required to prove claim (ii).

To prove claim (iii) we note first that, under the stated conditions, the inclusion of claim (ii) becomes
\[ {\rm Fit}^0_{\mathfrak{B}}({\rm Ext}^1_{\mathfrak{B}}(H^1(D),\mathfrak{B})) \cdot \vartheta_{0}({\rm Det}_{\mathfrak{B}}(D))^{-1} \subseteq {\rm Fit}^0_{\mathfrak{B}}(H^2(D)),\]
with $0$ denoting the unique automorphism of the zero space.

In addition, in this case the argument of Lemma \ref{dual preserve} implies $D^* := R\Hom_R(D,R[-3])$ belongs to $D^{\rm s}(\mathfrak{B})$ and, as $H^1(D)$ and $H^2(D)$ are assumed to be finite, that the group $H^i(D^*)$ for $i \in \{1,2\}$ is isomorphic to
${\rm Ext}^1_{R}(H^{3-i}(D),R)\cong \Hom_{R}(H^{3-i}(D),F/R)$, where the last isomorphism is induced by applying the functor $\Hom_R(H^{3-i}(D),-)$ to the tautological exact sequence $0 \to R \to F \to F/R \to 0$.

Hence, if we apply claim (ii) to the complex $D^*$, we obtain an inclusion
\[ {\rm Fit}^0_{\mathfrak{B}}(H^2(D)) \cdot \vartheta_{0}({\rm Det}_{\mathfrak{B}}(D^*))^{-1} \subseteq {\rm Fit}^0_{\mathfrak{B}}(\Hom_R(H^1(D),F/R)).\]

Noting that $\vartheta_{0}({\rm Det}_{\mathfrak{B}}(D^*))^{-1} = \vartheta_{0}({\rm Det}_{\mathfrak{B}}(D))$, we can therefore combine the last two displayed inclusions to obtain the equality in claim (iii).\end{proof}

\begin{remark}{\em The equality in Proposition \ref{almost there}(iii) was first proved (by using a different argument and under certain additional hypotheses on $\mathfrak{B}$) by Cornacchia and Greither in \cite[Prop. 6]{cg} and has since been extensively used in the literature to compute Fitting ideals of natural arithmetic modules. }
\end{remark}

\subsubsection{}The next result relates the element $\eta_\mathcal{X}$ in Theorem \ref{char els} to the terms that occur in Lemma \ref{inclusion lemma}.

\begin{lemma}\label{inclusion lemma2} For each subset $\{\varphi'_j\}_{1\le j\le a}$ of $\Hom_{\mathfrak{A}'}(H^1(C'),\mathfrak{A}')$ one has
\[ ({\wedge}_{i=1}^{i=a}\varphi'_i)(\eta_\mathcal{X}) \in {\rm Fit}^0_{\mathfrak{A}'}(\ker(\kappa))\cdot ({\rm det}(M(\lambda))\mathcal{L})^{-1},\]
where $\kappa$ is the homomorphism in Lemma \ref{inclusion lemma}.
\end{lemma}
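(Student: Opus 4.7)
The plan is to compute $({\wedge}_{i=1}^{i=a}\varphi'_i)(\eta_\mathcal{X})$ explicitly in terms of the basis $\mathcal{X}'$ and then show that the resulting determinant lies in ${\rm Fit}^0_{\mathfrak{A}'}(\ker(\kappa))$, by producing, for each $i$, an explicit pre-image of $\varphi'_i|_{X'}$ under the restriction map $j^0$ of the exact sequence (\ref{useful}).

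First, I would unwind the defining equation (\ref{eta def}). Since $M(\lambda)$ represents $e_a\lambda$ with respect to the bases $\{e_ax'_j\}$ and $\{e_ax_j\}$, one obtains
\[({\bigwedge}_{A_E}^a\lambda)(e_ax'_1\wedge\cdots\wedge e_ax'_a) = \det(M(\lambda))\cdot e_a\cdot x_1\wedge\cdots\wedge x_a,\]
and applying $({\bigwedge}_{A_E}^a\lambda)^{-1}$ to both sides of (\ref{eta def}) then gives the identity
\[\eta_\mathcal{X} = e_a\mathcal{L}^{-1}\det(M(\lambda))^{-1}\cdot x'_1\wedge\cdots\wedge x'_a\]
in $e_a(E\cdot{\bigwedge}^a_{\mathfrak{A}'}H^1(C'))$. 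Pairing this identity with $\wedge_{i=1}^{a}\varphi'_i$ yields
\[({\wedge}_{i=1}^{i=a}\varphi'_i)(\eta_\mathcal{X}) = e_a\cdot\det(\varphi'_i(x'_j))_{i,j}\cdot(\det(M(\lambda))\mathcal{L})^{-1},\]
and since $\det(M(\lambda))^{-1}$ is supported on $e_a$, the desired containment reduces to showing that $\det(\varphi'_i(x'_j))_{i,j}$ lies in ${\rm Fit}^0_{\mathfrak{A}'}(\ker(\kappa))$.

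For the second step, I would recall from the proof of Proposition \ref{almost there} that $\kappa$ is the composition of $j^2$ with a canonical isomorphism ${\rm Ext}^1_{\mathfrak{A}'}(Z^1(D),\mathfrak{A}')\cong{\rm Ext}^3_{\mathfrak{A}'}(H^2(D),\mathfrak{A}')$, so $\ker(\kappa) = \ker(j^2) = \im(j^1)$. Using the representative (\ref{D rep}) of $D$, together with the injectivity of $\iota^2: X\hookrightarrow H^2(C')$ ensured by the reduction of Proposition \ref{first reduction}, the assignment $p\mapsto(p,0)$ provides an isomorphism $H^1(C') = \ker(e_{(a)}\delta)\xrightarrow{\sim} Z^1(D)$ under which $\delta^0 = \iota'\oplus 0$ identifies with $\iota^1: X'\hookrightarrow H^1(C')$. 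Consequently, for each $i$ the functional $\varphi'_i$ gives, via this identification, an element $\psi_i\in\Hom_{\mathfrak{A}'}(Z^1(D),\mathfrak{A}')$ satisfying $j^0(\psi_i) = \psi_i\circ\delta^0 = \varphi'_i\circ\iota^1 = \varphi'_i|_{X'}$, so that $\varphi'_i|_{X'}\in\im(j^0) = \ker(j^1)$.

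Finally, expanding $\varphi'_i|_{X'} = \sum_j\varphi'_i(x'_j)(x'_j)^*$ in the dual basis $\{(x'_j)^*\}$ exhibits the tuple $(\varphi'_i|_{X'})_{1\le i\le a}$ as an $a$-element subset of $\ker(j^1)$ inside the free $\mathfrak{A}'$-module $\Hom_{\mathfrak{A}'}(X',\mathfrak{A}')$ with coefficient matrix $(\varphi'_i(x'_j))_{i,j}$. Since the presentation $\Hom_{\mathfrak{A}'}(X',\mathfrak{A}')/\im(j^0)\cong\ker(\kappa)$ shows that ${\rm Fit}^0_{\mathfrak{A}'}(\ker(\kappa))$ is generated precisely by determinants of $a\times a$ matrices of elements in $\im(j^0)$, it contains $\det(\varphi'_i(x'_j))_{i,j}$, and combining with the first step yields the required containment. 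The main technical point is the identification of $Z^1(D)$ with $H^1(C')$ used in the second step, which relies crucially on the injectivity of $\iota^2$ supplied by the reduction to Proposition \ref{first reduction}; once this is in hand the remaining steps are essentially formal manipulations with Fitting ideals and dual bases.
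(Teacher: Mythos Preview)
Your proposal is correct and follows essentially the same argument as the paper. The paper introduces a new exact sequence (\ref{usefulexact2}) from $0\to X'\xrightarrow{\iota'} H^1(C')\to H^1(D)\to 0$ and then identifies its maps $\ell^0,\ell^1,\ell^2$ with $j^0,j^1,j^2$ of (\ref{useful}) via $Z^1(D)=H^1(C')$ at the end, whereas you make that identification first and work directly inside (\ref{useful}); the underlying computation of $\eta_\mathcal{X}=(\det(M(\lambda))\mathcal{L})^{-1}\cdot x'_1\wedge\cdots\wedge x'_a$ and the conclusion $\det(\varphi'_i(x'_j))\in{\rm Fit}^0_{\mathfrak{A}'}(\ker(\kappa))$ are identical.
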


\begin{proof} To do this we note first that the image of $\eta_\mathcal{X}$ under the injective map $\lambda$ is, by its very definition,  equal to
\begin{align*}e_a\cdot\mathcal{L}^{-1}\cdot {\wedge}_{i=1}^{i=a}x_i = &\mathcal{L}^{-1}\cdot {\wedge}_{i=1}^{i=a}e_a x_{i} \\
= &\mathcal{L}^{-1}\cdot {\rm det}(M(\lambda))^{-1}\cdot\lambda({\wedge}_{j=1}^{j=a}e_ax_j')\\
 = &({\rm det}(M(\lambda))\mathcal{L})^{-1}\cdot\lambda({\wedge}_{j=1}^{j=a}x_j')\\
  = &\lambda(({\rm det}(M(\lambda))\mathcal{L})^{-1}\cdot ({\wedge}_{j=1}^{j=a}x_j'))\end{align*}
and hence that
\begin{equation}\label{almost wanted} \eta_\mathcal{X} = ({\rm det}(M(\lambda))\mathcal{L})^{-1}\cdot {\wedge}_{j=1}^{j=a}x_j'.\end{equation}

We next apply the functor $\Hom_{\mathfrak{A}'}(-,\mathfrak{A}')$ to the short exact sequence
\[0 \to X'\xrightarrow{\iota'} H^1(C') \to H^1(D) \to 0\]
to obtain an exact sequence
\begin{multline}\label{usefulexact2}  \Hom_{\mathfrak{A}'}(H^1(C'),\mathfrak{A}') \xrightarrow{\ell^0} \Hom_{\mathfrak{A}'}(X',\mathfrak{A}') \xrightarrow{\ell^1} {\rm Ext}^1_{\mathfrak{A}'}(H^1(D),\mathfrak{A}')\\ \xrightarrow{\ell^2} {\rm Ext}^1_{\mathfrak{A}'}(H^1(C'),\mathfrak{A}')\to 0.\end{multline}

Now the $\mathfrak{A}'$-module $\Hom_{\mathfrak{A}'}(X',\mathfrak{A}')$ is free on the basis $\{x_j'^*\}_{1\le j\le a}$ where each $x_j'^*$ is dual to $x'_j$. In particular, for the given homomorphisms $\varphi'_j$ we can write the element $\ell^0(\varphi'_j) = \varphi'_j\circ\iota'$ of $\ker(\ell^1)$ as $\sum^{i=a}_{i=1}c'_{ij}x_i'^*$ with each $c'_{ij}$ in $\mathfrak{A}'$.

The equality (\ref{almost wanted}) therefore implies that

\begin{align}\label{almost 3}
{\rm det}(M(\lambda))\mathcal{L}\cdot({\wedge}_{i=1}^{i=a}\varphi'_i)(e_a\cdot\eta_\mathcal{X}) &=  ({\wedge}_{i=1}^{i=a}\varphi'_i)({\wedge}^{j=a}_{j=1}x_j') \\
&= ({\wedge}_{i=1}^{i=a}(\varphi'_i\circ \iota))({\wedge}_{j=1}^{j=a}x_j') \notag\\
&= {\rm det}((\varphi'_i\circ \iota)(x_j'))_{1\le i,j\le a})\notag\\
&= {\rm det}((\sum^{m=a}_{m=1}c'_{mi}x_m'^*)(x_j'))_{1\le i,j\le a})\notag\\
&= {\rm det}((c'_{ji})_{1\le i,j\le a})\notag\\
&\in {\rm Fit}^0_{\mathfrak{A}'}(\ker(\ell^2)),\notag
\end{align}
where the containment follows from the exactness of (\ref{usefulexact2}).

Finally, we note that, since $D$ is the complex (\ref{D rep}), one has $Z^1(D) = \ker(e_{(a)}\delta) = H^1(C')$ and the map $\ell^2$ coincides with the map $j^2$ in (\ref{useful}). It follows that $\ker(\ell^2) = \ker(j^2) = \ker(\kappa)$, with $\kappa$ the map in Lemma \ref{inclusion lemma}, and so the claimed result follows directly from (\ref{almost 3}). \end{proof}

In view of Lemma \ref{inclusion lemma2} it is important to explain the connection between the homomorphism groups $\Hom_{\mathfrak{A}}(H^1(C),\mathfrak{A})$ and $\Hom_{\mathfrak{A}'}(H^1(C'),\mathfrak{A}')$.

\begin{lemma}\label{missing one} For each $y$ in ${\rm Ann}_{\mathfrak{A}}({\rm Ext}^2_{\mathfrak{A}}(H^2(C),\mathfrak{A}))$ and each $\varphi$ in  $\Hom_{\mathfrak{A}}(H^1(C),\mathfrak{A})$ the product $ye_{(a)}\cdot \varphi$ belongs to $\Hom_{\mathfrak{A}'}(H^1(C'),\mathfrak{A}')$.
\end{lemma}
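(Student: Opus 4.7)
The plan is to use the hypothesis that $y$ annihilates $\Ext^2_{\mathfrak{A}}(H^2(C),\mathfrak{A})$ to extend $y\varphi$ to a morphism defined on a projective resolution of $H^1(C)$, from which the desired map on $H^1(C')$ will arise by base-change along the canonical surjection $\mathfrak{A} \twoheadrightarrow \mathfrak{A}'$.

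Concretely, I would first fix a representative of $C$ by a bounded complex $P^\bullet$ of finitely generated projective $\mathfrak{A}$-modules and set $Z^1 := \ker d^1$, $B^1 := \im d^0$ and $B^2 := \im d^1$, so that $H^1(C) = Z^1/B^1$. The tautological short exact sequences $0 \to Z^1 \to P^1 \to B^2 \to 0$ and $0 \to B^2 \to P^2 \to H^2(C) \to 0$, combined with the vanishing of $\Ext^{\ge 1}_{\mathfrak{A}}(P^i,\mathfrak{A})$, yield a canonical identification $\Ext^1_{\mathfrak{A}}(B^2,\mathfrak{A}) \cong \Ext^2_{\mathfrak{A}}(H^2(C),\mathfrak{A})$ together with an exact sequence
\[
\Hom_{\mathfrak{A}}(P^1,\mathfrak{A}) \to \Hom_{\mathfrak{A}}(Z^1,\mathfrak{A}) \to \Ext^2_{\mathfrak{A}}(H^2(C),\mathfrak{A}) \to 0.
\]
Composing $\varphi$ with the quotient map $q : Z^1 \twoheadrightarrow H^1(C)$ gives an element $\varphi \circ q \in \Hom_{\mathfrak{A}}(Z^1,\mathfrak{A})$, and the annihilator hypothesis then lifts $y\cdot(\varphi \circ q)$ to some $\tilde\varphi \in \Hom_{\mathfrak{A}}(P^1,\mathfrak{A})$.

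Base-change produces $\tilde\varphi' : P^1 \otimes_{\mathfrak{A}} \mathfrak{A}' \to \mathfrak{A}'$. Because $\tilde\varphi|_{Z^1} = y(\varphi \circ q)$ already vanishes on $B^1 = \ker q$, its base change $\tilde\varphi'$ annihilates the image of $d^0 \otimes_{\mathfrak{A}} \mathfrak{A}'$; and since $P^\bullet \otimes_{\mathfrak{A}} \mathfrak{A}'$ represents $C' = \mathfrak{A}' \otimes^{\mathbb{L}}_{\mathfrak{A}} C$ (the terms $P^i$ being projective, hence flat), the map $\tilde\varphi'$ descends to a well-defined $\psi \in \Hom_{\mathfrak{A}'}(H^1(C'),\mathfrak{A}')$. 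A routine diagram chase then shows that $\psi \circ \pi = ye_{(a)}\varphi$, where $\pi : H^1(C) \to H^1(C')$ is the canonical map and the right-hand side is interpreted in $\Hom_{\mathfrak{A}}(H^1(C),\mathfrak{A}')$ via the surjection $\mathfrak{A} \twoheadrightarrow \mathfrak{A}'$; this is precisely the assertion that $ye_{(a)}\varphi$ lies in $\Hom_{\mathfrak{A}'}(H^1(C'),\mathfrak{A}')$.

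I expect the main subtlety to be conceptual rather than technical: one has to read the notation ``$ye_{(a)}\varphi \in \Hom_{\mathfrak{A}'}(H^1(C'),\mathfrak{A}')$'' as asserting the existence of a lift under precomposition with $\pi$ (rather than any literal arithmetic operation), and to be careful with the fact that $e_{(a)}$ need not belong to $\mathfrak{A}$ itself. Once this is settled, the real mathematical content reduces to the obstruction calculation in $\Ext^2_{\mathfrak{A}}(H^2(C),\mathfrak{A})$ sketched above, and no further appeal to special properties of strictly admissible complexes appears necessary.
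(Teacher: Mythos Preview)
Your argument is correct and is essentially the paper's own proof: the paper also lifts $y\varphi$ to a map $\varphi_y\in\Hom_{\mathfrak{A}}(P^1,\mathfrak{A})$ using exactly the same obstruction calculation in $\Ext^2_{\mathfrak{A}}(H^2(C),\mathfrak{A})$, and then observes that $H^1(C')\subseteq e_{(a)}P^1$ and $\varphi_y(e_{(a)}P^1)=e_{(a)}\varphi_y(P^1)\subseteq \mathfrak{A}'$. One small clarification on the point you flag as the ``main subtlety'': the paper reads $ye_{(a)}\varphi\in\Hom_{\mathfrak{A}'}(H^1(C'),\mathfrak{A}')$ quite literally---$H^1(C')$ sits inside $e_{(a)}(F\cdot H^1(C))$, so $ye_{(a)}\varphi$ restricts to it directly and the claim is that the values lie in $\mathfrak{A}'$---and your map $\psi$ coincides with this restriction, while your extra layer involving $P^0$, $B^1$ and $q$ is harmless but trivial here since $C$ is strictly admissible and has a two-term representative $P^1\to P^2$.
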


\begin{proof} The representative (\ref{rep2}) of $C$ gives rise to tautological exact sequences
\[ \begin{cases} 0 \to H^1(C) \to P^1 \to B^2(C) \to 0,\\
                 0 \to B^2(C) \to P^2 \to H^2(C) \to 0.\end{cases}\]

These sequences in turn give rise to an exact sequence
\[ \Hom_{\mathfrak{A}}(P^1,\mathfrak{A}) \to \Hom_{\mathfrak{A}}(H^1(C),\mathfrak{A}) \to {\rm Ext}^1_{\mathfrak{A}}(B^2(C),\mathfrak{A})\to 0 \]
and an isomorphism ${\rm Ext}^1_{\mathfrak{A}}(B^2(C),\mathfrak{A})\cong {\rm Ext}^2_{\mathfrak{A}}(H^2(C),\mathfrak{A})$.

Thus for each $y$ in ${\rm Ann}_{\mathfrak{A}}({\rm Ext}^2_{\mathfrak{A}}(H^2(C),\mathfrak{A}))$ and each $\varphi$ in  $\Hom_{\mathfrak{A}}(H^1(C),\mathfrak{A})$ there exists a homomorphism $\varphi_y$ in $\Hom_{\mathfrak{A}}(P^1,\mathfrak{A})$ which restricts to give $y\cdot \varphi$ on $H^1(C)$.

Since $H^1(C')$ is a submodule of $e_{(a)}P^1$ one therefore has
\[ (ye_{(a)}\cdot \varphi)(H^1(C')) = \varphi_y(H^1(C')) \subseteq  \varphi_y(e_{(a)}P^1) = \varphi_y(P^1)\cdot e_{(a)} \subseteq \mathfrak{A}\cdot e_{(a)} = \mathfrak{A}' ,    \]
as required.
\end{proof}

\subsubsection{}We are now ready to prove Theorem \ref{char els}(i).

As a first step we combine the results of Lemmas \ref{inclusion lemma}, \ref{inclusion lemma2} and \ref{missing one} to deduce that for each subset $\{\varphi_j\}_{1\le j\le a}$ of $\Hom_{\mathfrak{A}}(H^1(C),\mathfrak{A})$, each $y_1$ in ${\rm Ann}_{\mathfrak{A}'}({\rm Ext}^2_{\mathfrak{A}'}(H^2(C'),\mathfrak{A}'))$ and each $y_2$ in ${\rm Ann}_{\mathfrak{A}}({\rm Ext}^2_{\mathfrak{A}}(H^2(C),\mathfrak{A}))$ one has
\[ (y_1y_2)^a\cdot ({\wedge}_{i=1}^{i=a}\varphi_i)(\eta_\mathcal{X}) = (y_1)^a \cdot ({\wedge}_{i=1}^{i=a} (y_2e_{(a)}\cdot\varphi_i))(\eta_\mathcal{X}) \subseteq {\rm Fit}^0_{\mathfrak{A}'}(H^2(C')/X).\]

Set $\mathfrak{A}^\dagger := \mathfrak{A}\cap \mathfrak{A}'$. Then to deduce the result of Theorem \ref{char els}(i) from the above inclusion it suffices to prove that one has both
\begin{equation}\label{last step} \mathfrak{A}^\dagger\cdot {\rm Fit}^0_{\mathfrak{A}'}(H^2(C')/X)\subseteq {\rm Fit}^a_\mathfrak{A}(H^2(C))\,\,\text{ and }\,\, \mathfrak{A}^\dagger\cdot {\rm Fit}^0_{\mathfrak{A}'}(H^2(C')/X)\subseteq {\rm Ann}_\mathfrak{A}(H^2(C)'_{\rm tor}).\end{equation}

Since $\mathfrak{A}^\dagger \cdot \mathfrak{A}' \subseteq \mathfrak{A}$ the first of these inclusions follows directly from the fact that
\[ {\rm Fit}^0_{\mathfrak{A}'}(H^2(C')/X) \subseteq {\rm Fit}^a_{\mathfrak{A}'}(H^2(C')) = {\rm Fit}^a_{\mathfrak{A}'}(\mathfrak{A}'\otimes_{\mathfrak{A}}H^2(C)) = \mathfrak{A}' \cdot {\rm Fit}^a_{\mathfrak{A}}(H^2(C)).\]
Here the inclusion is true because $X$ is a free $\mathfrak{A}'$-module of rank $a$, the first equality because $H^2(C')$ is isomorphic to $\mathfrak{A}'\otimes_{\mathfrak{A}}H^2(C)$ and the second equality follows from a standard property of Fitting ideals under change of ring.

Next we note that the given surjective homomorphism of $\mathfrak{A}$-modules $H^2(C) \to H^2(C)'$ induces a surjective homomorphism of $\mathfrak{A}'$-modules $\varpi: H^2(C') \to \mathfrak{A}'\otimes_\mathfrak{A}H^2(C)'$ and we recall that, by assumption, the $\mathfrak{A}'$-module $\varpi(X)$ is free of rank $a$. This implies that
\begin{equation}\label{second incl} {\rm Fit}^0_{\mathfrak{A}'}(H^2(C')/X) \subseteq
{\rm Ann}_{\mathfrak{A}'}((\mathfrak{A}'\otimes_\mathfrak{A}H^2(C)')/\varpi(X)) \subseteq {\rm Ann}_{\mathfrak{A}'}((\mathfrak{A}'\otimes_\mathfrak{A}H^2(C)')_{\rm tor}).\end{equation}

We now set $\mathfrak{A}^{\sharp} := \mathfrak{A}\cap \mathfrak{A}(1-e_{(a)})$ and note that the tautological short exact sequence
$0 \to \mathfrak{A}^\sharp \to \mathfrak{A} \to \mathfrak{A}' \to 0$ gives rise to an exact sequence of $\mathfrak{A}$-modules of the form %
\[ (\mathfrak{A}^\sharp\otimes_\mathfrak{A}H^2(C)')_{\rm tor} \to H^2(C)'_{\rm tor} \to (\mathfrak{A}'\otimes_\mathfrak{A}H^2(C)')_{\rm tor}.\]
Since the first module in this sequence is clearly annihilated by $\mathfrak{A}^\dagger$ one therefore has
\[ \mathfrak{A}^\dagger\cdot {\rm Ann}_{\mathfrak{A}'}((\mathfrak{A}'\otimes_\mathfrak{A}H^2(C)')_{\rm tor}) \subseteq {\rm Ann}_{\mathfrak{A}}(H^2(C)'_{\rm tor})\]
and this combines with (\ref{second incl}) to imply the second inclusion of (\ref{last step}), as required to complete the proof of Theorem \ref{char els}(i).

\subsubsection{} As preparation for the proof of Theorem \ref{char els}(ii), in this section we show that strictly admissible complexes admit resolutions with certain useful features.

For any $\mathfrak{A}$-module $M$ we set $M^* := \Hom_\mathfrak{A}(M,\mathfrak{A})$, endowed with the natural action of $\mathfrak{A}$. We recall that $M$ is said to be `reflexive' if the natural map $M \to (M^*)^*$ is bijective.

\begin{lemma}\label{adm-rep} For any object $C$ of $D^{\rm s}(\mathfrak{A})$, the following claims are valid.
\begin{itemize}
\item[(i)] The $\mathfrak{A}$-module $H^1(C)$ is reflexive.
\item[(ii)] Assume that the Euler characteristic of $C$ in $K_0(\mathfrak{A})$ vanishes. Then for each separable subset $\mathcal{X}$ of $H^2(C)$, there exists an isomorphism in $D(\mathfrak{A})$ between $C$ and a complex of the form $P \xrightarrow{\psi} P$, where $P$ is a finitely generated free $\mathfrak{A}$-module, the first term is placed in degree one and both of the following conditions are satisfied.
\begin{itemize}
\item[(a)] The natural map $P^* \to \ker(\psi)^*  \cong H^1(C)^*$ is surjective.
\item[(b)] Set $a := |\mathcal{X}|$, denote the elements of $\mathcal{X}$ by $\{x_i\}_{1\le i\le a}$ and write $d$ for the $\mathfrak{A}$-rank of $P$. Then $d \ge a$ and there exists an ordered basis $\{b_i\}_{1\le i\le d}$ of $P$ such that $\im(\psi)$ is contained in the $\mathfrak{A}$-submodule generated by $\{b_i\}_{a < i \le d}$ and for each $i$ with $1\le i\le a$ the natural map $P \to {\rm cok}(\psi)\cong H^2(C)$ sends $b_i$ to $x_i$.
\end{itemize}
\end{itemize}
\end{lemma}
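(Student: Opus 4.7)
My plan for part (i) is to apply Remark \ref{representative} to represent $C$ by a two-term complex $P^1 \xrightarrow{d} P^2$ of finitely generated free $\mathfrak{A}$-modules. This will exhibit $H^1(C)$ as the kernel in an exact sequence $0 \to H^1(C) \to P^1 \to P^2$, and the reflexivity will then follow from the standard Auslander--Bridger fact that such a ``second syzygy'' module (i.e.\ a submodule appearing between two projectives in a three-term exact sequence) is reflexive; concretely, this will be verified by dualising the sequence twice and comparing with the original via biduality on the free terms.

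For part (ii), I will proceed in three stages. First, fix a two-term free representative $P^1 \xrightarrow{d} P^2$ of $C$ via Remark \ref{representative}. The vanishing Euler characteristic condition gives $[P^1] = [P^2]$ in $K_0(\mathfrak{A})$, so for some finitely generated free $F$ one has $P^1 \oplus F \cong P^2 \oplus F$; adding the trivial complex $F \xrightarrow{\mathrm{id}} F$ and identifying the two isomorphic free modules then yields a representative $P_0 \xrightarrow{\psi_0} P_0$ with $P_0$ finitely generated free.

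Second, to incorporate $\mathcal{X}$, I will exploit separability: $H^2(C) = \langle \mathcal{X}\rangle \oplus K$ with $\langle \mathcal{X}\rangle$ free of rank $a$. Composing the canonical surjection $\pi\colon P_0 \twoheadrightarrow H^2(C)$ with the retraction $r\colon H^2(C) \twoheadrightarrow \langle \mathcal{X}\rangle$ gives a surjection to a free module, which therefore admits a section $s$. Setting $b_i := s(x_i)$ for $1 \le i \le a$ produces a direct sum decomposition $P_0 = L \oplus M$ with $L := \langle b_1,\ldots, b_a\rangle$ free of rank $a$ and $M$ a (necessarily projective) direct summand; one moreover checks $\im(\psi_0) \subseteq M$ using $r\circ\pi\circ\psi_0 = 0$. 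A further stabilization by a trivial free complex renders $M$ free, allowing extension of $\{b_i\}_{i\le a}$ to a full $\mathfrak{A}$-basis $\{b_i\}_{1\le i\le d}$ of the enlarged module $P$. This yields condition (b) directly.

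The main obstacle is verifying condition (a). Dualising the exact sequence $0 \to \ker(\psi) \to P \to \im(\psi) \to 0$ reformulates (a) as the vanishing of $\mathrm{Ext}^1_\mathfrak{A}(\im(\psi),\mathfrak{A})$. The key idea for achieving this simultaneously with (b) is to exploit the reflexivity established in part (i): given a finite generating family of $H^1(C)^*$, reflexivity together with a projective-cover argument will enable an additional preliminary modification of the representative---typically by coupling $P$ with an auxiliary free complex---so that every such functional can be obtained as the restriction of an element of $P^*$, thereby killing the obstruction. The technically delicate point is to arrange this refinement compatibly with the basis decomposition from the previous stage, so that both conditions (a) and (b) are realised in the final complex.
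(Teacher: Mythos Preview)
Your approach to (i) is correct and actually more direct than the paper's. The paper argues via the derived dual $C^\dagger := R\Hom_\mathfrak{A}(C,\mathfrak{A}[-3])$, computing $H^1(C) = H^2(C^\dagger)^*$ and using a universal-coefficient exact sequence (with finite outer terms) to produce an injection $(H^1(C)^*)^* \hookrightarrow H^1(C)$ inverse to the biduality map. Your second-syzygy argument reaches the same conclusion with less machinery; just note that Remark~\ref{representative} is a local statement (so say first that reflexivity is local), and that the cokernel $H^1(C)^{**}/H^1(C)$ of the split injection you obtain is both $R$-torsion-free (as a summand of a dual module) and $R$-torsion (since $A$ is semisimple), hence zero.

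For (ii), your plan for condition (b) matches the paper's, but your plan for condition (a) has a genuine gap. You correctly identify the obstruction as $\Ext^1_\mathfrak{A}(\im(\psi),\mathfrak{A})$ and then propose to kill it by ``coupling $P$ with an auxiliary free complex''. This cannot work: adjoining a contractible summand $F \xrightarrow{\mathrm{id}} F$ (even with an off-diagonal map $P \to F$) replaces $\im(\psi)$ by $\im(\psi)\oplus F$, leaving the $\Ext^1$ obstruction unchanged; and any attempt to extend a map out of $H^1(C)$ along the given inclusion $H^1(C)\hookrightarrow P$ is blocked by that very same $\Ext^1$ group. You are in a loop.

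The paper breaks this circularity by reversing the order of construction. It does not start from an arbitrary representative: it \emph{first} fixes a free surjection $\theta\colon P_1 \twoheadrightarrow H^1(C)^*$ and uses the reflexivity established in (i) to realise $\theta^*$ as an embedding $H^1(C)\hookrightarrow P_1^*$ whose dual $(P_1^*)^* \to H^1(C)^*$ is $\theta$ itself --- so condition (a) holds by construction for this embedding. From there it builds a representative $[P_1 \xrightarrow{\psi'} M]$, resolves $M$ by $0\to P_2\to P_3\to M\to 0$ (using that $\mathfrak{A}$ has dimension one and $C$ is perfect to get $\mathrm{pd}(M)\le 1$), and invokes the Euler-characteristic hypothesis to reach $[P\xrightarrow{\psi}P]$ with $P$ free. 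Only \emph{then} is the basis of the target copy of $P$ chosen compatibly with the splitting $H^2(C)=\langle\mathcal{X}\rangle\oplus K$, which secures (b) without disturbing (a). The essential point you are missing is that reflexivity must be used to \emph{choose} the initial embedding of $H^1(C)$, not to repair an arbitrary one after the fact.
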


\begin{proof} It is enough to prove claim (i) after localising at each prime ideal of $R$ and so we may assume that $C$ is represented by a complex of the form $P \rightarrow P$, where $P$ is a finitely generated free $\mathfrak{A}$-module and the first term is placed in degree one.

For any complex $C'$ in $D^{\rm p}(\mathfrak{A})$ we set $(C')^\dagger := R\Hom_\mathfrak{A}(C,\mathfrak{A}[-3])$.

Then, since $P$ is both isomorphic to $P^*$ and reflexive, the complex $C^\dagger$ belongs to $D^{\rm s}(\mathfrak{A})$ and the complex  $(C^\dagger)^\dagger$ identifies with $C$.

In addition, by using the universal coefficient spectral sequence, one computes that $H^1(C) = H^1((C^\dagger)^\dagger)$ identifies with $H^2(C^\dagger)^*$ and that there is a natural exact sequence of $\mathfrak{A}$-modules
\[ 0 \to {\rm Ext}^1_\mathfrak{A}(H^2(C),\mathfrak{A}) \to H^2(C^\dagger) \to H^1(C)^* \to {\rm Ext}^2_\mathfrak{A}(H^2(C),\mathfrak{A}) \to 0.\]

In particular, since the modules ${\rm Ext}^1_\mathfrak{A}(H^2(C),\mathfrak{A})$ and ${\rm Ext}^2_\mathfrak{A}(H^2(C),\mathfrak{A})$ are both finite, the $\mathfrak{A}$-linear dual of this exact sequence induces an injective homomorphism from $(H^1(C)^*)^*$ to $H^2(C^\dagger)^* = H^1(C)$ that is inverse to the canonical map $H^1(C) \to (H^1(C)^*)^*$. This shows that $H^1(C)$ is reflexive, and hence proves claim (i).

To prove claim (ii) we first fix a surjective homomorphism of $\mathfrak{A}$-modules $\theta: P_1 \to H^1(C)^*$ where $P_1$ is finitely generated and free. Then, taking account of claim (i), the $\mathfrak{A}$-linear dual $\theta^*$ of $\theta$ gives an injective homomorphism from $H^1(C)$ to the (free) $\mathfrak{A}$-module $P_1^*\cong P_1$ with the property that its linear dual $(\theta^*)^*$ is the surjective homomorphism $\theta$.

By a standard argument one now shows that $C$ is isomorphic to a complex of the form $P_1 \xrightarrow{\psi'} M$, where $\ker(\psi') = \im(\theta^*)$ and the module $M$ is finitely generated. Then, since $P_1$ is free and $C$ belongs to $D^{\rm p}(\mathfrak{A})$ the module $M$ has a finite projective resolution and hence, as $\mathfrak{A}$ has dimension one, there exists an exact sequence of finitely generated $\mathfrak{A}$-modules
\[ 0 \to P_2 \xrightarrow{\psi''} P_3 \to M \to 0\]
where $P_2$ is projective and $P_3$ is free. It is then clear that $C$ is isomorphic in $D(\mathfrak{A})$ to the complex $P_1\oplus P_2 \to P_3$, where the first term is placed in degree one, the differential is $\psi := (\tilde{\psi'},\psi'')$ with $\tilde{\psi'}$ any lift of $\psi'$ through the given surjection $P_2\to M$ and the map $\theta^*$ induces an identification of $H^1(C)$ with $\ker(\psi) = \{(x,y): x \in \im(\theta^*), \psi''(y) = -\tilde{\psi'}(x)\}$.

In particular, since the Euler characteristic of $C$ in $K_0(\mathfrak{A})$ is assumed to vanish, the $\mathfrak{A}$-module $P_1\oplus P_2$ must be isomorphic to the free module $P_3$.

Thus, if we set $P:= P_1\oplus P_2$, then at this stage we have shown $C$ to be isomorphic to a complex $P \xrightarrow{\psi} P$ that has the property in claim (ii)(a).

Next we note that, since the $\mathfrak{A}$-module $X$ generated by $\mathcal{X}$ is a direct summand of $H^2(C)$ we can fix a left inverse $H^2(C) \to X$ to the inclusion $X \subseteq H^2(C)$. Then, since $X$ is free and the composite homomorphism $\pi: P \to {\rm cok}(\psi) \cong H^2(C) \to X$ is surjective, we obtain a direct sum decomposition of $\mathfrak{A}$-modules $P = \ker(\pi)\oplus X'$ in which $\ker(\pi)$ is free and $X'$ is mapped bijectively by $\pi$ to $X$.

Hence, since $\im(\psi) \subseteq \ker(\pi)$, we obtain a basis of the sort required in claim (ii)(b) by taking $\{b_i\}_{a < i \le d}$ to be any basis of $\ker(\pi)$ and, for each $1\le i\le a$, defining $b_i$ to be the unique element of $X'$ with $\pi(b_i) = x_i$.
\end{proof}

\subsubsection{}We now complete the proof of Theorem \ref{char els} by proving claim (ii).

As a first step we note that, since $\mathcal{X}$ is now assumed to be separable it spans a free $\mathfrak{A}$-module $X$ of rank $a$. In this case it is thus clear that $e_{C,(a)} = 1$ (so $\mathfrak{A}' = \mathfrak{A}$) and hence that Theorem \ref{char els}(ii) with $x=1$ gives an inclusion $y^a\cdot I(\eta) \subseteq {\rm Fit}^a_\mathfrak{A}(H^2(C))$.

It therefore only remains to prove that ${\rm Fit}^a_\mathfrak{A}(H^2(C))$ is contained in $I(\eta)$. By Lemma \ref{adm-rep}, one can choose a distinguished representative of $C$ of the form $P\xrightarrow{\psi} P$ with the described properties. Consider the presentation of $H^2(C)$ given by $P\xrightarrow{\psi}P\xrightarrow{\pi} H^2(C)$. Define $\psi_i:= b_i^* \circ \psi$ where $b_i^*$ is the dual of $b_i$. Note that $\psi_i=0$ for each $i\leq a$ because, by Lemma \ref{adm-rep} (ii), $\im(\psi)$ is contained in the $\mathfrak{A}$-module generated by $\{b_i\}_{a<i\leq d}$. Thus, $\Fit_\mathfrak{A}^a(H^2(C))$ is generated by the minors $\{\det(\psi_i(b_{\sm(k)})_{a<i,k\leq d}\}_{\sm\in\mathfrak{S}_{d,a}}$, where $\mathfrak{S}_{d,a}$ denotes the set $\{\sm\in S_d \! : \! \sm(1)<\cdots<\sm(a)\mbox{ and } \sm(a+1)<\cdots <\sm(d)\}$.

In the sequel, we identify $\Det_\mathfrak{A}(C)$ with $\bigwedge^d_{\mathfrak{A}}P^*\otimes_\mathfrak{A}\bigwedge^d_{\mathfrak{A}}P$. Define $z_b=z{\bigwedge}_{i=1}^d b_i$ in ${\bigwedge}_{\mathfrak{A}}^d P$ with $z\in \mathfrak{A}^\times$ to be the pre-image of $\mathcal{L}^{-1}$ under the composite isomorphism of $\mathfrak{A}$-modules
$${\bigwedge}_{\mathfrak{A}}^d P \stackrel{\sim}{\longrightarrow} {\bigwedge}_{\mathfrak{A}}^d P \otimes {\bigwedge}_{\mathfrak{A}}^dP^* = {\rm Det}^{-1}_{\mathfrak{A}}(C) \xrightarrow{\vartheta_\lambda} \mathfrak{A}\cdot\mathcal{L}^{-1},$$
where the first map sends each $v$ to $ v\otimes{\bigwedge}_{1\leq i\leq d } b_i^*.$

We consider the composition
\begin{multline*}\Det^{-1}_{\mathfrak{A}}(C)\subset \Det^{-1}_{A_E}(E\otimes_R C) \xrightarrow{\times e_{C,a}} e_{C,a}\Det^{-1}_{A_E}(E\otimes_{R}C) \\
 \cong (e_{C,a}\Det_{A_E}(E\otimes_R H^1(C)))\otimes_{A_E}(e_{C,a}\Det^{-1}_{A_E}(E\otimes_R H^2(C)))\end{multline*}
where the inclusion is obvious and the final map is the `passage to cohomology' isomorphism.

Then \cite[Lem 4.3]{bks1} in this setting implies that
\begin{center}
$(-1)^{a(d-a)}\left(\bigwedge_{i=1}^{i=a}\lambda\circ\bigwedge_{a<i\leq d} \psi_i(z_b)\right)\otimes\bigwedge_{i=1}^{i=a} x_i^*=(e_{C,a}\cdot\mathcal{L}^{-1} \bigwedge_{i=1}^{i=a} x_i)\otimes\bigwedge_{i=1}^{i=a} x_i^*.$
\end{center}
Hence, combining this with the definition of the higher special element, we have $\eta=(-1)^{a(d-a)}\bigwedge_{a<i\leq d} \psi_i(z_b)$. The explicit formula in \cite[Prop. 4.1]{bks1} thus implies that
\[
\eta=(-1)^{a(d-a)} z\sum_{\sm\in \mathfrak{S}_{d,a}}{\rm sgn}(\sm)\det(\psi_i(b_{\sm(k)}))_{a<i,k\leq d} (b_{\sm(1)}\wedge\cdots\wedge b_{\sm(a)}),
\]
and hence that
\begin{equation*}\label{Fit-gen}
(b_{\sm(1)}^*\wedge\cdots\wedge b_{\sm(a)}^*)(\eta)=\pm z\cdot\det(\psi_i(b_{\sm(k)})_{a<i,k\leq d}.
\end{equation*}
Since $P^*$ surjects onto $H^1(C)^*$ by construction, this last equality implies that
\[ z\cdot\det(\psi_i(b_{\sm(k)}))_{a<i,k\leq d}\in I(\eta)= \{ (\wedge_{i=1}^{i=a}\varphi_i)(\eta)\! :\! (\varphi_i)_{i}\in \Hom_\mathfrak{A}(H^1(C),\mathfrak{A})^{a}\}.\]
Then, as $z$ is unit in $\mathfrak{A}$, this implies the required inclusion $\Fit^a_\mathfrak{A}(H^2(C))\subseteq I(\eta)$.

This completes the proof of Theorem \ref{char els}.

\subsection{The structure of exterior power biduals}\label{str bidual sect} In this section, we study the exterior power biduals of $H^1(C)$ under the assumption (in general) that $\mathfrak{A}$ is Gorenstein. 

\subsubsection{Definition} In this subsection, we let $X$ be a finitely generated $\mathfrak{A}$-module and recall that we write $(-)^*:=\Hom_{\mathfrak{A}}(-,\mathfrak{A})$ to be the linear dual functor.

\begin{definition}\label{exterior-bidual} {\em For any non-negative integer $a$, we define the `$a$-th exterior power bidual' of $X$ to be the $\mathfrak{A}$-module by setting
\[{\bigcap}_\mathfrak{A}^a X:=\left({\bigwedge}^a_\mathfrak{A} (X^*)\right)^*.\]
}
\end{definition}

\begin{remark}{\em The $A$-module $F\otimes_R X$ is both finitely generated and projective and so there is a natural identification of $F\otimes_R\bigwedge^a_{\mathfrak{A}} X$ with  ${\bigcap}^a_{A}(F\otimes_R X)=\Hom_\mathfrak{A}({\bigwedge}_\mathfrak{A}^a (X^*), A)$. Consequently, the map $x\mapsto (\Phi\mapsto\Phi(x))$ induces an identification
\[\left\{x\in F\otimes_R{\bigwedge}^a_{\mathfrak{A}} X : \Phi(x)\in \mathfrak{A} \mbox{ for all } \Phi\in {\bigwedge}^a_{\mathfrak{A}}(X^*)\right\}\xrightarrow{\sim}{\bigcap}_{\mathfrak{A}}^a X.\]
In this way, we can regard ${\bigcap}_{\mathfrak{A}}^a X$ as an $\mathfrak{A}$-submodule of $F\otimes_R{\bigwedge}_\mathfrak{A}^a X$.
}
\end{remark}

Here we record an easy consequence of Theorem \ref{char els}(i).

\begin{theorem}\label{hse-integrality} Fix data $\mathfrak{A}, C, \lambda$ and $\mathcal{L}$ as in Theorem \ref{char els}. Then for any non-negative integer $a$ and any elements $x$ and $y$ of $\mathfrak{A}$ as in Theorem \ref{char els}(i) the following claims are valid.
\begin{itemize}
\item[(i)] For any ordered subset $\mathcal{X}$ of $H^2(C)_{\rm tf}$ such that $|\mathcal{X}|=a$, the higher special element $\eta_\mathcal{X}$ associated with the data $(C,\lambda,\mathcal{L},\mathcal{X})$ satisfies  $xy^a\cdot\eta_{\mathcal{X}}\in {\bigcap}_{\mathfrak{A}}^aH^1(C)$.
\item[(ii)] There is an inclusion
\[ xy^a\cdot e_{C,a}\cdot\mathcal{L}^{-1}\cdot ({\bigwedge}^a_{\mathfrak{A}}H^2(C))_{\rm tf} \subseteq ({\bigwedge}_{A_E}^a\lambda)({\bigcap}_{\mathfrak{A}}^aH^1(C)).\]
\end{itemize}
\end{theorem}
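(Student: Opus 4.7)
The plan is to derive both claims directly from Theorem \ref{char els}(i), applied with $H^2(C)' := H^2(C)$. This choice of subquotient is admissible since, by the very definition of $e_{C,(a)}$ as the sum of primitive idempotents $e$ for which $e(F\cdot H^2(C))$ has rank at least $a$, the module $H^2(C)$ itself has rank at least $a$ at each simple component of $Ae_{C,(a)}$.

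To prove claim (i), I will invoke the identification (recorded just after Definition \ref{exterior-bidual}) of ${\bigcap}^a_\mathfrak{A} H^1(C)$ with the set of those $\xi \in F\cdot {\bigwedge}^a_\mathfrak{A} H^1(C)$ that satisfy $\Phi(\xi) \in \mathfrak{A}$ for every $\Phi \in {\bigwedge}^a_\mathfrak{A}(H^1(C)^*)$. Since the evaluation pairing is $\mathfrak{A}$-bilinear and ${\bigwedge}^a_\mathfrak{A}(H^1(C)^*)$ is generated over $\mathfrak{A}$ by pure wedges $\varphi_1\wedge\cdots\wedge\varphi_a$, it will suffice to test the condition on such wedges. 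For any such pure wedge one has
\[ (\varphi_1\wedge\cdots\wedge\varphi_a)(xy^a\cdot\eta_\mathcal{X}) = xy^a\cdot (\varphi_1\wedge\cdots\wedge\varphi_a)(\eta_\mathcal{X}) \in xy^a\cdot I(\eta_\mathcal{X}), \]
and Theorem \ref{char els}(i) then yields $xy^a\cdot I(\eta_\mathcal{X}) \subseteq {\rm Fit}^a_\mathfrak{A}(H^2(C)) \subseteq \mathfrak{A}$, as required.

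For claim (ii), I will start from the defining equation of $\eta_\mathcal{X}$: for any ordered $a$-subset $\mathcal{X}$ of $H^2(C)_{\rm tf}$ one has $({\bigwedge}_{A_E}^a\lambda)(\eta_\mathcal{X}) = e_{C,a}\cdot \mathcal{L}^{-1}\cdot \wedge_{x\in\mathcal{X}}x$. Every element of $({\bigwedge}^a_\mathfrak{A} H^2(C))_{\rm tf}$ can be represented as an $\mathfrak{A}$-linear combination $\sum_j r_j \cdot \wedge_{x\in\mathcal{X}_j} x$ of such pure wedges (by lifting a preimage in ${\bigwedge}^a_\mathfrak{A} H^2(C)$ to a sum of pure wedges of elements of $H^2(C)$ and then projecting to $H^2(C)_{\rm tf}$ inside $F\cdot H^2(C)$). $\mathfrak{A}$-linearity of ${\bigwedge}_{A_E}^a\lambda$ will then give
\[ xy^a\cdot e_{C,a}\cdot \mathcal{L}^{-1}\cdot \sum_j r_j \wedge_{x\in\mathcal{X}_j}x = ({\bigwedge}_{A_E}^a\lambda)\Bigl(\sum_j r_j \cdot (xy^a\cdot \eta_{\mathcal{X}_j})\Bigr), \]
and claim (i) combined with the fact that ${\bigcap}^a_\mathfrak{A} H^1(C)$ is an $\mathfrak{A}$-module shows that the argument on the right-hand side belongs to ${\bigcap}^a_\mathfrak{A} H^1(C)$, which establishes the inclusion.

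There is no substantive obstacle: the statement is a formal consequence of Theorem \ref{char els}(i), and the only small technicalities are the two routine reductions to pure wedges (first, from arbitrary elements of ${\bigwedge}^a_\mathfrak{A}(H^1(C)^*)$ in the check of membership in the exterior bidual, and second, from general elements of $({\bigwedge}^a_\mathfrak{A} H^2(C))_{\rm tf}$ to wedges of elements of $H^2(C)_{\rm tf}$).
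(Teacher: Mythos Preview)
Your proof is correct and follows essentially the same approach as the paper: both derive claim (i) from Theorem \ref{char els}(i) via the containment $xy^a\cdot I(\eta_\mathcal{X})\subseteq {\rm Fit}^a_\mathfrak{A}(H^2(C))\subseteq\mathfrak{A}$ together with the identification of ${\bigcap}^a_\mathfrak{A}H^1(C)$ recorded after Definition \ref{exterior-bidual}, and then obtain claim (ii) by noting that $({\bigwedge}^a_\mathfrak{A}H^2(C))_{\rm tf}$ is spanned by pure wedges, so that $e_{C,a}\cdot\mathcal{L}^{-1}\cdot({\bigwedge}^a_\mathfrak{A}H^2(C))_{\rm tf}$ is generated by elements $({\bigwedge}^a_{A_E}\lambda)(\eta_\mathcal{X})$. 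Your version simply spells out the routine reductions to pure wedges that the paper leaves implicit.
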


\begin{proof} Since ${\rm Fit}^a_\mathfrak{A}(H^2(C))$ is contained in $\mathfrak{A}$, the containment in claim (i) follows immediately from the first containment in Theorem \ref{char els}(i).

Next we note that the $\mathfrak{A}$-module $({\bigwedge}^a_{\mathfrak{A}}H^2(C))_{\rm tf}$ is spanned by elements of the form
${\wedge}_{x \in \mathcal{X}}x$ where $\mathcal{X}$ runs over (ordered) subsets of $H^2(C)$ of cardinality $a$.

This implies that the  $\mathfrak{A}$-module $e_{C,a}\cdot\mathcal{L}^{-1}\cdot ({\bigwedge}^a_{\mathfrak{A}}H^2(C))_{\rm tf}$ is generated by  elements of the form $({\bigwedge}_{A_E}^a\lambda)(\eta_{\mathcal{X}})$ and so claim (ii) follows directly from claim (i). \end{proof}

\subsubsection{} In this section, we study the structure of the quotient of exterior power biduals by the submodule generated by the corresponding special elements.

For an $\mathfrak{A}$-lattice $X$ and an idempotent $e$ of $A$ we set
\[ X^e:=\{x\in X: e\cdot x=x \mbox{ in } F\otimes_R X\}\]
(so that $X^e = e\cdot X$ if $e$ belongs to $\mathfrak{A}$).

For any $\mathfrak{A}$-module $X$ we write $X_{\rm tor}$ for the submodule comprising all $R$-torsion elements. For any element $x$ of $X$ we denote the $\mathfrak{A}$-submodule that $x$ generates by $\langle x\rangle$.

The main algebraic result in this section is the following.

\begin{theorem}\label{str-bidual} Assume that $\mathfrak{A}$ is Gorenstein. Fix data $(C,\lambda, \mathcal{L}, \mathcal{X})$ as in Definition \ref{hse def}. Set $a := |\mathcal{X}|$, $e_a:=e_{C,a}$, $e_{(a)}:=e_{C,(a)}$ and $\mathfrak{A}':=\mathfrak{A}e_{(a)}$ and write $\eta$ in place of  $\eta_{(C,\lambda,\mathcal{L},\mathcal{X})}$.

Then for any elements $x$ of $\mathfrak{A} \cap \mathfrak{A}'$ and $y$ of  ${\rm Ann}_{\mathfrak{A}}
({\rm Ext}^2_{\mathfrak{A}'}(\mathfrak{A}'\otimes_{\mathfrak{A}}H^2(C),\mathfrak{A}'))$ such that both $xe_{a}$ and $ye_{a}$ are invertible in $Ae_a$, the following claims are valid.

\begin{itemize}
\item[(i)] There exists a canonical perfect $\mathfrak{A}$-bilinear pairing of finite modules
%
\[ \left(\dfrac{{\bigcap}^a_\mathfrak{A} H^1(C)}{\langle xy^a\cdot\eta\rangle}\right)_{\!{\rm tor}}\times \left(\dfrac{\mathfrak{A}}{xy^a\cdot I(\eta)}\right)_{\rm tor} \to \frac{A}{\mathfrak{A}}.\]
(An explicit description of this pairing is given in Remark \ref{explicit desc} below).

\item[(ii)] There exists a canonical short exact sequence of $\mathfrak{A}$-modules
\[0\rightarrow\left(\dfrac{\Fit^a_{\mathfrak{A}}(H^2(C))}{xy^a\cdot I(\eta)}\right)_{\rm tor}\rightarrow\Hom_\mathfrak{A}\left(\left(\dfrac{{\bigcap}^a_\mathfrak{A} H^1(C)}{\langle xy^a\cdot\eta\rangle}\right)_{\!{\rm tor}},\frac{A}{\mathfrak{A}}\right)\rightarrow\left(\dfrac{\mathfrak{A}}{\Fit_\mathfrak{A}^a(H^2(C))^{e_a}}\right)_{\rm tor}\rightarrow 0.\]
\end{itemize}
\end{theorem}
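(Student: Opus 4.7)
The plan is to prove (i) by defining the pairing explicitly, verifying well-definedness, and establishing perfectness through a Gorenstein-duality computation. Part (ii) will then follow from (i) combined with the inclusion $xy^a I(\eta)\subseteq \Fit^a_\mathfrak{A}(H^2(C))$ of Theorem \ref{char els}(i), after identifying the relevant torsion submodules with their $e_a$-parts.

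Theorem \ref{hse-integrality}(i) gives $xy^a\cdot\eta\in{\bigcap}^a_\mathfrak{A} H^1(C)$, so there are two $\mathfrak{A}$-linear maps, mutually dual under the identification ${\bigcap}^a_\mathfrak{A} H^1(C)=({\bigwedge}^a_\mathfrak{A} H^1(C)^*)^*$,
\[
\Psi\colon {\bigwedge}^a_\mathfrak{A} H^1(C)^*\to\mathfrak{A},\ \Phi\mapsto \Phi(xy^a\eta); \qquad \Psi^*\colon \mathfrak{A}\to {\bigcap}^a_\mathfrak{A} H^1(C),\ 1\mapsto xy^a\eta,
\]
of respective images $xy^aI(\eta)$ and $\langle xy^a\eta\rangle$. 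Writing $M:=\mathfrak{A}/xy^a I(\eta)$ and $N:={\bigcap}^a_\mathfrak{A} H^1(C)/\langle xy^a\eta\rangle$, I would define the pairing on $([\alpha],[\delta])\in N_{\rm tor}\times M_{\rm tor}$ by $\beta\delta/c\pmod{\mathfrak{A}}\in A/\mathfrak{A}$, where $c\in\mathfrak{A}$ is a non-zero-divisor and $\beta\in\mathfrak{A}$ satisfy $c\alpha=\beta\cdot xy^a\eta$ (such data exist since $[\alpha]$ is torsion). A key point is the equivalent symmetric description $\Phi(\alpha)/r$ whenever $r\delta=\Phi(xy^a\eta)$, which follows from the identity $c\Phi(\alpha)=\Phi(c\alpha)=\beta\Phi(xy^a\eta)=\beta r\delta$; this, together with the torsion-freeness of ${\bigcap}^a_\mathfrak{A} H^1(C)$ (which is reflexive by Lemma \ref{adm-rep}(i) and the Gorenstein hypothesis) and the observation that $\Ann_\mathfrak{A}(xy^a\eta)$ annihilates $M_{\rm tor}$, delivers independence of all choices and $\mathfrak{A}$-bilinearity of the pairing.

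For perfectness, the invertibility assumptions on $xe_a$ and $ye_a$ first permit the identifications $N_{\rm tor}=({\bigcap}^a_\mathfrak{A} H^1(C))^{e_a}/\langle xy^a\eta\rangle$ and $M_{\rm tor}=\mathfrak{A}^{e_a}/xy^a I(\eta)$ as finite quotients of full $\mathfrak{A}$-sublattices of the rank-one $Ae_a$-modules $e_a\cdot F\cdot{\bigcap}^a_\mathfrak{A} H^1(C)$ and $Ae_a$ respectively. Applying the standard Gorenstein duality $\Hom_\mathfrak{A}(T,A/\mathfrak{A})\cong\Ext^1_\mathfrak{A}(T,\mathfrak{A})$ for f.g. torsion $T$ (deduced from $0\to\mathfrak{A}\to A\to A/\mathfrak{A}\to 0$ and the vanishing of $\Hom_\mathfrak{A}(T,A)$ and $\Ext^1_\mathfrak{A}(T,A)$) to the resolution $0\to xy^a I(\eta)\to\mathfrak{A}^{e_a}\to M_{\rm tor}\to 0$, together with the Gorenstein vanishing $\Ext^1_\mathfrak{A}(\mathfrak{A}^{e_a},\mathfrak{A})=0$ (valid because $\mathfrak{A}^{e_a}$ is torsion-free over the Gorenstein ring $\mathfrak{A}$), yields $\Hom_\mathfrak{A}(M_{\rm tor},A/\mathfrak{A})\cong (xy^aI(\eta))^*/(\mathfrak{A}^{e_a})^*$; reflexivity then matches this with $N_{\rm tor}$ compatibly with the pairing formula. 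Part (ii) is then essentially the natural short exact sequence
\[
0\to \frac{\Fit^a_\mathfrak{A}(H^2(C))^{e_a}}{xy^a I(\eta)}\to\frac{\mathfrak{A}^{e_a}}{xy^a I(\eta)}\to\frac{\mathfrak{A}^{e_a}}{\Fit^a_\mathfrak{A}(H^2(C))^{e_a}}\to 0,
\]
once its outer terms are identified with $(\Fit^a_\mathfrak{A}(H^2(C))/xy^a I(\eta))_{\rm tor}$ and $(\mathfrak{A}/\Fit^a_\mathfrak{A}(H^2(C))^{e_a})_{\rm tor}$ and its middle term is rewritten as $\Hom_\mathfrak{A}(N_{\rm tor},A/\mathfrak{A})$ via (i).

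The hardest part will be perfectness, and specifically verifying that the Ext-theoretic isomorphism $(xy^aI(\eta))^*/(\mathfrak{A}^{e_a})^*\cong N_{\rm tor}$ arising from the resolution argument coincides with the isomorphism induced by the explicit pairing formula. This will demand careful bookkeeping of the reflexivity identifications defining ${\bigcap}^a_\mathfrak{A} H^1(C)$, complicated by the fact that $\mathcal{X}$ is not assumed separable in Theorem \ref{str-bidual} and so Lemma \ref{adm-rep}(ii) cannot be invoked directly to supply a particularly clean representative of $C$.
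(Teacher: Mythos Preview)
Your approach is correct and follows essentially the same strategy as the paper's: both identify the torsion parts with the $e_a$-parts (this is the paper's Lemma \ref{torsion}), both exploit the Gorenstein identification $\Ext^1_\mathfrak{A}(T,\mathfrak{A})\cong\Hom_\mathfrak{A}(T,A/\mathfrak{A})$ for finite $T$, and both deduce (ii) from (i) via the tautological filtration and Theorem \ref{char els}(i).

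The one execution difference is which short exact sequence gets dualised. You apply $\Hom_\mathfrak{A}(-,\mathfrak{A})$ to $0\to xy^aI(\eta)\to\mathfrak{A}^{e_a}\to M_{\rm tor}\to 0$, and must then identify $(xy^aI(\eta))^*/(\mathfrak{A}^{e_a})^*$ with $N_{\rm tor}$ via reflexivity and check that this identification matches your explicit pairing formula; you rightly flag this compatibility as the delicate step. The paper dualises the other sequence $0\to\langle xy^a\eta\rangle\to({\bigcap}^a_\mathfrak{A} H^1(C))^{e_a}\to N_{\rm tor}\to 0$. The point is that the evaluation-at-$xy^a\eta$ maps give isomorphisms $\langle xy^a\eta\rangle^*\cong\mathfrak{A}^{e_a}$ and $(({\bigcap}^a_\mathfrak{A} H^1(C))^{e_a})^*\cong xy^aI(\eta)$ (the latter using $(({\bigcap}^a_\mathfrak{A} H^1(C))^{e_a})^*\cong e_a\cdot{\bigwedge}^a_\mathfrak{A}(H^1(C)^*)$ from Proposition \ref{dual-frac}(ii)), and the resulting square commutes by inspection. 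This yields $N_{\rm tor}^\vee\cong M_{\rm tor}$ directly, with the explicit pairing then \emph{read off} rather than verified after the fact. That ordering sidesteps exactly the bookkeeping you anticipate, and requires no particular representative of $C$.
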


\begin{remark}{\em If $e_{(a)}$ belongs to $\mathfrak{A}$ (as is automatically the case, for example, if $\mathcal{X}$ is separable), then $\mathfrak{A}\cap\mathfrak{A}'=\mathfrak{A}'$ is Gorenstein and so $\Ext^2_{\mathfrak{A}'}(\mathfrak{A}'\otimes_\mathfrak{A}H^2(C), \mathfrak{A}')$ vanishes. In such a case we can therefore take $x=y=e_{(a)}$ and omit them from the statement of Theorem \ref{str-bidual}. In particular, if $\mathcal{X}$ is separable, then Theorem \ref{char els}(ii) and Remark \ref{explicit verson of char els} combine to imply  $I(\eta)=\Fit^a_{\mathfrak{A}}(H^2(C))$ (which is equal to $\Fit^a_{\mathfrak{A}}(H^2(C))^{e_a}$ in this case) and hence that there is a canonical perfect pairing
\[ \left(({\bigcap}^a_\mathfrak{A} H^1(C))/\langle \eta\rangle\right)_{\!{\rm tor}}\times \left(\mathfrak{A}/\Fit_\mathfrak{A}^a(H^2(C))\right)_{\rm tor} \to A/\mathfrak{A}.\]
}\end{remark}

The proof of Theorem \ref{str-bidual} will occupy the rest of \S\ref{str bidual sect}.

\subsubsection{Dual modules} In the sequel for any $\mathfrak{A}$-module $X$ we set $X^*:=\Hom_\mathfrak{A}(X,\mathfrak{A})$ and $X^\vee:=\Hom_\mathfrak{A}(X,A/\mathfrak{A})$ and regard both as $\mathfrak{A}$-modules in the natural way.

In this section we collect some useful results regarding these dual modules.

\begin{proposition}\label{dual-frac} Assume $\mathfrak{A}$ is Gorenstein. Fix an idempotent $e$ in $A$ and a finitely generated $\mathfrak{A}$-module $X$.
\begin{itemize}
\item[(i)] If $X$ is $R$-torsion-free, then $X$ is reflexive. i.e. $X^{**}=X$.
\item[(ii)] $(Xe)^*\cong (X^*)^e$ and if $X$ is $R$-torsion-free, then $X^*e\cong(X^e)^*$.
\item[(iii)] If $X$ is a finite $\mathfrak{A}$-module, then one has $(X^\vee)^\vee=X$.
\end{itemize}
\end{proposition}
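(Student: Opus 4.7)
I will prove each part in turn, with the Gorenstein hypothesis entering via the vanishing $\Ext^1_\mathfrak{A}(N,\mathfrak{A})=0$ for every finitely generated $R$-torsion-free $\mathfrak{A}$-module $N$ that was invoked already in the proof of Lemma \ref{dual preserve}.

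For (i) my plan is a snake-lemma argument. I fix a presentation $0\to Y\to P\to X\to 0$ with $P$ finitely generated free; as $Y$ is then also $R$-torsion-free, the Gorenstein hypothesis gives $\Ext^1_\mathfrak{A}(X,\mathfrak{A})=\Ext^1_\mathfrak{A}(Y,\mathfrak{A})=0$ and dualising produces an exact sequence $0\to X^*\to P^*\to Y^*\to 0$. Each of $X^*,P^*,Y^*$ is in turn $R$-torsion-free (embedding into a module of the form $\Hom_R(-,A)$), so a second application of the same argument yields an exact sequence $0\to Y^{**}\to P^{**}\to X^{**}\to 0$ which, via the canonical evaluation maps, sits above the original in a commutative diagram whose middle vertical map is the identification $P=P^{**}$. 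The snake lemma then forces $\cok(X\to X^{**})=0$; injectivity of $X\to X^{**}$ is automatic from torsion-freeness, since $X$ embeds into $F\otimes_R X$, which is reflexive over the semisimple algebra $A$.

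For (ii) I interpret $Xe$ and $X^*e$ as the images $eX\subseteq F\otimes_R X$ and $eX^*\subseteq F\otimes_R X^*$. The first isomorphism is then induced by the restriction map $(X^*)^e\to (eX)^*$, $f\mapsto f|_{eX}$: this lands in $\mathfrak{A}$ because for $x\in X$ and $f\in X^*$ satisfying $ef=f$ one has $f(ex)=ef(x)=f(x)\in\mathfrak{A}$, and an inverse is given by $g\mapsto [x\mapsto g(ex)]$. The second isomorphism is proved by the analogous restriction map $eX^*\to (X^e)^*$, again well-defined by a one-line calculation; injectivity follows because $X^e$ spans $e(F\otimes_R X)$ over $F$. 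Surjectivity is the only step that needs the torsion-free hypothesis on $X$: under that hypothesis $X/X^e$ is also $R$-torsion-free, so the Gorenstein vanishing $\Ext^1_\mathfrak{A}(X/X^e,\mathfrak{A})=0$ makes $X^*\twoheadrightarrow (X^e)^*$ (and hence $eX^*\twoheadrightarrow (X^e)^*$) surjective.

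For (iii) I would reduce the bidual problem to a statement about self-injectivity modulo a regular element. Since $X$ is finite, I can choose a nonzero $r\in R$ with $rX=0$; then automatically $rX^\vee=0$, so $X$ and $X^\vee$ are both modules over the quotient $B:=\mathfrak{A}/r\mathfrak{A}$. As $r$ is regular in the one-dimensional Gorenstein ring $\mathfrak{A}$, the ring $B$ is zero-dimensional Gorenstein, hence quasi-Frobenius and in particular self-injective. Applying $\Hom_\mathfrak{A}(X,-)$ to $0\to\mathfrak{A}\to A\to A/\mathfrak{A}\to 0$ and using that $\Hom_\mathfrak{A}(X,\mathfrak{A})=\Hom_\mathfrak{A}(X,A)=0$ for torsion $X$, one obtains a natural identification $X^\vee\cong\Hom_\mathfrak{A}(X,(A/\mathfrak{A})[r])\cong\Hom_B(X,B)$ coming from the canonical isomorphism $(A/\mathfrak{A})[r]\cong B$, $r^{-1}a+\mathfrak{A}\mapsto a+r\mathfrak{A}$. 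The canonical evaluation map $X\to X^{\vee\vee}$ then corresponds under this identification to the biduality map $X\to\Hom_B(\Hom_B(X,B),B)$, which is an isomorphism because $B$ is quasi-Frobenius and $X$ is finitely generated. The main technical point I expect to encounter is verifying that the identifications above really do convert the canonical evaluation map $X\to X^{\vee\vee}$ into the biduality map over $B$; once this naturality is checked, the remaining steps are routine.
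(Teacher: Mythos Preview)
Your proof is correct, and in each part you take a somewhat different route from the paper.

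For (i), the paper simply invokes Bass's theorem \cite[Th.~6.2]{bass} that torsion-free modules over a Gorenstein order are reflexive. Your snake-lemma argument is a clean self-contained proof of exactly this fact, using the $\Ext^1$-vanishing characterisation of Gorenstein orders twice. This buys independence from the external reference at the cost of a few lines.

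For the second isomorphism in (ii), the paper argues formally: applying the first isomorphism with $X$ replaced by $X^*$ gives $(X^*e)^*\cong (X^{**})^e=X^e$ by part (i), and then one dualises again (using reflexivity of $X^*e$). Your direct argument via the restriction map and the observation that $X/X^e$ is $R$-torsion-free (hence $\Ext^1_\mathfrak{A}(X/X^e,\mathfrak{A})=0$) is equally valid and perhaps more transparent, since it exhibits the isomorphism concretely rather than through a chain of dualities.

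For (iii), the paper fixes a presentation $0\to K\to\mathfrak{A}^d\to X\to 0$, identifies $X^\vee$ with $\Ext^1_\mathfrak{A}(X,\mathfrak{A})$, dualises twice, and then compares the resulting sequence $0\to K^{**}\to(\mathfrak{A}^d)^{**}\to(X^\vee)^\vee\to 0$ with the original via part (i). Your reduction modulo a regular element $r$ to the quasi-Frobenius ring $B=\mathfrak{A}/r\mathfrak{A}$ is a genuinely different idea: it trades the homological bookkeeping for a single structural fact (biduality over self-injective Artinian rings). The naturality check you flag is indeed routine, since the identification $(A/\mathfrak{A})[r]\cong B$ is $\mathfrak{A}$-linear and functorial in $X$. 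Your approach is arguably more conceptual, while the paper's stays closer to the $\Ext$-calculus already in play and reuses part (i) directly.
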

\begin{proof} Claim (i) follows directly from Bass's result that, if $\mathfrak{A}$ is Gorenstein, then every finitely generated $R$-torsion-free $\mathfrak{A}$-module is reflexive (cf. \cite[Th. 6.2]{bass}).

For claim (ii), define $\pi: X\rightarrow Xe$ by $x\mapsto xe$. We claim that the assignment $\theta\mapsto \theta\circ \pi$ gives an the isomorphism $(Xe)^*\xrightarrow{\sim}(X^*)^e$. To prove this, we will prove that this assignment is invertible. Suppose we are given an element $\theta\in(X^*)^e$, i.e. $\theta\in X^*$ such that $\theta(x)=e\theta(x)$. If we denote $\theta_F : F\otimes Xe\rightarrow A$ to be its induced morphism of $\theta$ by extension of scalars, then for any $x\in X$, one has $\theta_F(ex)=e\theta_F(x)=\theta(x)\in\mathfrak{A}$. Therefore the restriction of $\theta_F$ on $Xe$ belongs to  $(Xe)^*$. We are done. For the second half of the statement of (ii), suppose that $X$ is $R$-torsion-free. Then so is $Xe$. So (i) implies that these modules are both reflexive. Hence, the isomorphism $(Xe)^*\cong (X^*)^e$ implies that $Xe\cong((X^*)^e)^*$. The claim follows by replacing $X$ with $X^*$ in the later isomorphism.

To prove claim (iii) we let $d$ be a sufficient large integer so that there exists a surjection $\mathfrak{A}^d\rightarrow X$ and denote its kernel by $K$. Since $X$ is a finite module and $\mathfrak{A}$ is $R$-torsion-free, we have $\Hom_\mathfrak{A}(X,\mathfrak{A})=0$. Therefore, by applying the functor $\Hom_\mathfrak{A}(X,-)$ to the short exact sequence $0\rightarrow \mathfrak{A}\rightarrow A\rightarrow A/\mathfrak{A}\rightarrow 0$, we have that $\Ext_{\mathfrak{A}}^1(X,\mathfrak{A})=X^\vee$. Recall that for any free $\mathfrak{A}$-module $N$, $\Ext_{\mathfrak{A}}^1(N,\mathfrak{A})=0$. Therefore,  if we apply the functor $\Hom_\mathfrak{A}(-,\mathfrak{A})$ to the short exact sequence $0\rightarrow K\rightarrow \mathfrak{A}^d\rightarrow X\rightarrow 0$, we obtain another short exact sequence $0\rightarrow(\mathfrak{A}^d)^*\rightarrow K^*\rightarrow X^\vee\rightarrow 0$. Now by applying the functor $\Hom_\mathfrak{A}(-,\mathfrak{A})$ again to this new sequence and repeating the above argument, we obtain a short exact sequence
$0\rightarrow (K^*)^*\rightarrow((\mathfrak{A}^d)^*)^*\rightarrow  (X^\vee)^\vee\rightarrow 0$. Since $\mathfrak{A}^d$, and hence $K$, are finitely generated and $R$-torsion-free, we have by (i) that $((\mathfrak{A}^d)^*)^*=\mathfrak{A}^d$ and $(K^*)^*=K$. Hence, the commutative diagram
\[ \begin{CD}
0 @> >> (K^*)^* @> >> ((\mathfrak{A}^d)^*)^* @>  >>  (X^\vee)^\vee@> >> 0\\
@. @\vert @\vert @V VV \\
0 @> >> K @> >> \mathfrak{A}^d @>  >>  X@> >> 0\\
\end{CD}\]
implies that $(X^\vee)^\vee=X$.\end{proof}

\subsubsection{} We can now prove Theorem \ref{str-bidual}. For convenience, we abbreviate $xy^a\cdot\eta$ to $\tilde{\eta}$.

At the outset we note that, by its very definition, $\eta$ has non-zero component at each simple component of $Ae_a$. As $xe_a$ and $ye_a$ are both assumed to be invertible in $Ae_a$, the element $\tilde{\eta} = e_a\tilde{\eta}$ also has non-zero component at each simple component of $Ae_a$ and so the assignment $e_a \mapsto \tilde{\eta}$ induces an isomorphism of $\mathfrak{A}$-modules
\begin{equation}\label{first iso} \mathfrak{A}e_a\cong \langle \tilde{\eta}\rangle.\end{equation}
This in turn implies that the assignment $\Phi\mapsto \Phi(\tilde{\eta})$ gives an isomorphism of $\mathfrak{A}$-modules
\begin{equation}\label{isom2}\langle \tilde{\eta}\rangle^*\xrightarrow{\sim}\mathfrak{A}^{e_a}
\end{equation}

We next observe that Proposition \ref{dual-frac}(ii) induces a natural isomorphism
\begin{equation*}\left(\left({\bigcap}^a_\mathfrak{A} H^1(C)\right)^{e_a}\right)^*\xrightarrow{\sim}
e_a\cdot\left({\bigcap}^a_\mathfrak{A} H^1(C)\right)^* =e_a\cdot\bigwedge^a_\mathfrak{A} (H^1(C)^*).\end{equation*}
In particular, since the definition of $I(\tilde{\eta})$ implies that
\[{\bigwedge}^a_\mathfrak{A} (H^1(C)^*)\cdot e_a (\tilde{\eta})= {\bigwedge}^a_\mathfrak{A} (H^1(C)^*)\cdot(\tilde{\eta})= I(\tilde{\eta})=x y^a\cdot I(\eta),\]
the map $e_a\cdot{\bigwedge}^a_\mathfrak{A} (H^1(C)^*) \to I(\tilde{\eta})$ that sends each $\Phi$ to $\Phi(\tilde{\eta})$ induces an isomorphism
\begin{equation}\label{isom1}\left(\left({\bigcap}^a_\mathfrak{A} H^1(C)\right)^{e_a}\right)^*\xrightarrow{\sim} xy^a\cdot I(\eta).
\end{equation}

We set $Q :=({\bigcap}^a_\mathfrak{A} H^1(C))^{e_a}/\langle \tilde{\eta}\rangle$ and note that this module is finite as a consequence of the isomorphism (\ref{first iso}).

Since $Q$ is finite the module $\Ext^1_{\mathfrak{A}}(Q,\mathfrak{A})$ identifies with $Q^\vee$. In addition, since $\mathfrak{A}$ is Gorenstein and the module $({\bigcap}^a_\mathfrak{A} H^1(C))^{e_a}$ is $R$-torsion-free, the group $\Ext^1_{\mathfrak{A}}(({\bigcap}^a_\mathfrak{A} H^1(C))^{e_a}, \mathfrak{A})$ vanishes.

Consequently, if we apply the functor $\Hom_{\mathfrak{A}}(-,\mathfrak{A})$ to the tautological exact sequence
\[ 0\rightarrow \langle\tilde{\eta}\rangle\rightarrow ({\bigcap}^a_\mathfrak{A} H^1(C))^{e_a}\rightarrow Q\rightarrow 0,\]
then we obtain the upper row in the following exact commutative diagram
%
\[ \begin{CD}
0 @> >> (({\bigcap}^a_\mathfrak{A} H^1(C))^{e_a})^* @> >> \langle\tilde{\eta}\rangle^* @>  >>  Q^\vee@> >> 0\\
@. @V(\ref{isom1})VV @V (\ref{isom2})VV @.  \\
0 @> >> x y^a\cdot I(\eta) @> >> \mathfrak{A}^{e_a} @>  >>  \dfrac{\mathfrak{A}^{e_a}}{x y^a\cdot I(\eta)}@> >> 0.\\
\end{CD}\]

Note that the lower row in this diagram is the tautological short exact sequence and that, after unwinding the definitions of all involved maps, it is straightforward to check the square commutes.

From the diagram we therefore obtain an induced isomorphism of $\mathfrak{A}$-modules of the form
\begin{equation}\label{isom3} \iota: \left(\dfrac{({\bigcap}^a_\mathfrak{A} H^1(C))^{e_a}}{\langle\tilde{\eta}\rangle}\right)^\vee = Q^\vee \cong \dfrac{\mathfrak{A}^{e_a}}{x y^a\cdot I(\eta)}.
\end{equation}

This isomorphism in turn gives rise to an $\mathfrak{A}$-bilinear pairing of finite modules
\[ \dfrac{({\bigcap}^a_\mathfrak{A} H^1(C))^{e_a}}{\langle\tilde{\eta}\rangle}\times \dfrac{\mathfrak{A}^{e_a}}{x y^a\cdot I(\eta)} \to \dfrac{A}{\mathfrak{A}}\]
that sends each pair $(u,v)$ to $(\iota^{-1}(v))(u)$ and this pairing is perfect as a consequence of Proposition \ref{dual-frac}(iii). To derive the result of Theorem \ref{str-bidual}(i) we then need only apply the result of Lemma \ref{torsion} below to each of the modules involved in the above pairing.

To prove Theorem \ref{str-bidual}(ii) we observe Theorem \ref{char els}(i) implies that $xy^a\cdot I(\eta)$ is contained in $\Fit^a_{\mathfrak{A}}(H^2(C))^{e_a}$ and hence that there exists a tautological short exact sequence
\[ 0\rightarrow\dfrac{\Fit^a_{\mathfrak{A}}(H^2(C))^{e_a}}{x y^a\cdot I(\eta)}\rightarrow\dfrac{\mathfrak{A}^{e_a}}{x y^a\cdot I(\eta)}\rightarrow\dfrac{\mathfrak{A}^{e_a}}{\Fit_\mathfrak{A}^a(H^2(C))^{e_a}}\rightarrow 0.\]

This gives the exact sequence in Theorem \ref{str-bidual}(ii) after one takes account of the isomorphism (\ref{isom3}) and then applies the observation in the following result.

\begin{lemma}\label{torsion} Let $X$ be a $\mathfrak{A}$-lattice  and $e$ be an idempotent of $A$. If $X'$ is a full sub-lattice of $X^e$, then one has $(X/X')_{\rm tor}=X^e/X'$.
\end{lemma}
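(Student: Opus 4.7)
The plan is to prove the equality $(X/X')_{\rm tor}=X^e/X'$ by establishing the two inclusions separately, using only the definition of $X^e$ and the hypothesis that $X'$ has full rank inside $X^e$ (i.e. that $F\otimes_R X' = F\otimes_R X^e$).

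First I would verify the inclusion $X^e/X' \subseteq (X/X')_{\rm tor}$. Since $X'$ is a full sub-lattice of $X^e$ and $R$ is a Dedekind domain, for any $x\in X^e$ there exists a nonzero $r\in R$ with $rx\in X'$; hence the class of $x$ in $X/X'$ is $R$-torsion, giving the desired containment.

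For the reverse inclusion $(X/X')_{\rm tor}\subseteq X^e/X'$, I would take any $x\in X$ whose class is $R$-torsion, fix a nonzero $r\in R$ with $rx\in X'$, and then exploit that $X'\subseteq X^e$. By definition of $X^e$, the element $rx\in X'$ satisfies $e(rx)=rx$ in $F\otimes_R X$. Since $F\otimes_R X$ is an $F$-vector space and $r\neq 0$, multiplying by $r^{-1}\in F$ yields $ex=x$ in $F\otimes_R X$, so $x$ lies in $X^e$, whence the class of $x$ in $X/X'$ lies in $X^e/X'$.

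There is no real obstacle here — the statement follows directly from unwinding the definition of $X^e$ together with the fact that $F\otimes_R X' = F\otimes_R X^e$, once one observes that $F\otimes_R X$ is torsion-free so the relation $e(rx)=rx$ can be divided by $r$. The only minor care needed is to make sure that the computations take place inside $F\otimes_R X$ rather than $X$ itself (since $e$ and $r^{-1}$ need not act on $X$), but this is precisely the setup in which $X^e$ was defined.
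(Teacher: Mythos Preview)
Your proof is correct and follows essentially the same approach as the paper's: both establish the two inclusions separately using that $F\otimes_R X' = F\otimes_R X^e$ for the first and the torsion-freeness of $F\otimes_R X$ for the second. The only cosmetic difference is that the paper phrases the reverse inclusion contrapositively (if $(1-e)x\neq 0$ then the image of $x$ is not torsion), whereas you argue directly by dividing by $r$; these are equivalent.
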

\begin{proof} As $X'$ is a full sub-lattice of $X^e$, one has $F\otimes_R X'=F\otimes_R X^e$ and this implies that $X^e/X'$ is contained in $(X/X')_{\rm tor}$.

To prove the reverse inclusion, we note that since $X$ is a $\mathfrak{A}$-lattice (and therefore $R$-torsion-free), the image in $X/X'$ of any element $x$ of $X$ with $x(1-e)\neq 0$ is not annihilated by any non-zero element of $R$. This immediately implies $(X/X')_{\rm tor}$ is contained inn $X^e/X'$, as required.
\end{proof}

\begin{remark}\label{explicit desc}{\em The above argument gives the following explicit description of the pairing in Theorem \ref{str-bidual}(i). If $u$ and $v$ are any elements of finite order in ${\bigcap}^a_\mathfrak{A} H^1(C)/\langle xy^a\cdot\eta\rangle$ and $\mathfrak{A}/(xy^a\cdot I(\eta))$, then they can be respectively represented by elements $\tilde u$ and $\tilde v$ in $({\bigcap}^a_\mathfrak{A} H^1(C))^{e_a}$ and $\mathfrak{A}^{e_a}$ and the pairing in Theorem \ref{str-bidual}(i) sends $(u,v)$ to $\tilde v\cdot \theta(\tilde u)$, where $\theta$ is the unique element of $\langle xy^a\cdot \eta\rangle^*$ with $\theta(xy^a\cdot \eta) = e_a$.} \end{remark}

\begin{remark}{\em If $e_a$ belongs to $\mathfrak{A}$, then the isomorphism (\ref{isom3}) implies $\left({\bigcap}^a_{\mathfrak{A}}H^1(C)/\langle xy^a \cdot \eta\rangle\right)_{\rm tor}$ is a cyclic $\mathfrak{A}$-module. To deal with the general case we assume (after localising) that $\mathfrak{A}$ is a (one-dimensional) local ring with maximal ideal $\mathfrak{m}$ and write $g_\mathfrak{A}(N)$ for  the minimal number of generators of an $\mathfrak{A}$-module $N$. We recall that, under these conditions, the supremum $\nu(\mathfrak{A})$ of $g_\mathfrak{A}(I)$ as $I$ runs over all ideals of $\mathfrak{A}$ is finite and bounded explicitly in terms of the Hilbert function of the graded module ${\rm gr}_\mathfrak{m}(\mathfrak{A})$ (cf. \cite{shaler}). In this case, therefore, the isomorphism (\ref{isom3}) implies that $g_\mathfrak{A}(\left({\bigcap}^a_{\mathfrak{A}}H^1(C)/\langle xy^a \cdot \eta\rangle\right)_{\rm tor})$ is at most $\nu(\mathfrak{A})$.}
\end{remark}

\subsection{Separability, $G$-valued pairings and congruences} In this section we shall describe an abstract formalism of pairings that are valued in (direct sums of) Galois groups and associated `refined class number formulas' that relate the higher special elements of separable subsets.

Upon appropriate specialisation (in the setting of \S\ref{csc exam}), the main result of this section can be combined with the standard  leading term conjecture for Dirichlet $L$-series to recover the central conjectures that are formulated by Mazur and Rubin in \cite{MR, MR2} and by the second author in \cite{sano} and, with a little more work, the same formalism can also be used recover the refined version of these conjectures that are studied in \cite{bks1}.

However, our purpose here is to abstract certain arguments from loc. cit. in order to establish a general formalism that one can then specialise to other interesting cases.

In particular, in \cite{bmc2} the main result (Theorem \ref{mrstheorem}) of this section is applied in the setting of \S\ref{fc exam} in order to formulate new refinements of the Birch and Swinnerton-Dyer conjecture that both refine and extend the central conjectures that are formulated by Mazur and Tate in \cite{mt}.

We again fix a Dedekind domain $R$ of characteristic zero and write $F$ for its fraction field. Throughout this section we also assume to be given a finite abelian group $G$, a subgroup $J$ of $G$ and an object $C$ of $D^{\rm s}(R[G])$.

\subsubsection{}We start with a useful technical lemma concerning the object
\[ C_J := R[G/J]\otimes_{R[G]}^\mathbb{L}C\]
of $D(R[G/J])$.

\begin{lemma}\label{descent lema} The complex $C_J$ belongs to $D^{\rm s}(R[G/J])$ and there are natural identifications
$ H^1(C_J) = H^1(C)^J$ and $H^2(C_J) = H^2(C)_J$.\end{lemma}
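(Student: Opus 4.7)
The proof centres on the following observation: since $R[G]$ is free as an $R[J]$-module on any system of coset representatives for $J \backslash G$, every $R[G]$-projective module is automatically $R[J]$-projective and hence $R[J]$-cohomologically trivial. In particular, for any such module $M$, the norm map $N_J \colon M_J \to M^J$ is an isomorphism.

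Using Remark \ref{representative} I would work locally at each prime of $R$ and represent $C$ by a two-term complex $P^1 \xrightarrow{d} P^2$ of finitely generated free $R[G]$-modules in degrees one and two; because all the isomorphisms produced below are natural, they glue from local data to give global statements. Since $P^1$ and $P^2$ are $R[G]$-flat, the derived base change $C_J$ is represented by the non-derived complex $P^1_J \xrightarrow{d_J} P^2_J$ of finitely generated projective $R[G/J]$-modules in degrees one and two, and so $C_J$ belongs to $D^{\rm p}(R[G/J])$ and is acyclic outside degrees one and two.

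For the cohomology: right-exactness of $(-)_J$ applied to $P^1 \to P^2 \to H^2(C) \to 0$ gives $H^2(C_J) = H^2(C)_J$ directly. For $H^1(C_J)$, I would use the natural norm isomorphism $N_J \colon P^i_J \xrightarrow{\sim} (P^i)^J$ on each term to identify the representing complex of $C_J$ with $(P^1)^J \xrightarrow{d^J} (P^2)^J$, and then invoke left-exactness of $(-)^J$ on $0 \to H^1(C) \to P^1 \xrightarrow{d} P^2$ (here using $H^0(C) = 0$) to obtain $H^1(C_J) = \ker(d^J) = H^1(C)^J$.

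It then remains only to confirm the strict-admissibility of $C_J$: (ad$_1$) and (ad$_3$), together with the vanishing of $H^3(C_J)$, are visible from the two-term projective representative; (ad$_4$) holds because $H^1(C_J) = H^1(C)^J$ is a submodule of the $R$-torsion-free module $H^1(C)$; and (ad$_2$) is inherited from $C$ by functoriality of Euler characteristics under the base-change map $K_0(F[G]) \to K_0(F[G/J])$, which sends the class of $F \otimes C$ to the class of $F \otimes C_J$. The one subtle point in the whole argument is the apparent mismatch between the derived-coinvariants definition of $C_J$ and the appearance of $J$-invariants in $H^1(C_J) = H^1(C)^J$; this is precisely the discrepancy bridged by the norm isomorphism on projective $R[J]$-modules, which is the single fact driving the proof.
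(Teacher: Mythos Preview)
Your proof is correct and follows essentially the same approach as the paper: represent $C$ by a two-term complex of projectives, compute $H^2(C_J)$ by right-exactness of coinvariants, and obtain $H^1(C_J) = H^1(C)^J$ via the norm isomorphism $P_J \cong P^J$ on projective $R[G]$-modules. The only cosmetic difference is that the paper works directly with a global two-term projective representative (so no localization or gluing is needed), whereas you localize to obtain free modules; your additional explicit verification of (ad$_1$)--(ad$_4$) for $C_J$ is a welcome elaboration of what the paper leaves implicit.
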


\begin{proof} Since $C$ belongs to $D^{\rm s}(R[G])$ it can be represented by a complex of finitely generated projective $R[G]$-modules of the form $P_1 \xrightarrow{\psi} P_2$, where the first term occurs in degree one (see Remark \ref{representative}).

It follows that $C_J$ is represented by the complex of $J$-coinvariants $P_{1,J} \xrightarrow{\psi_J} P_{2,J}$ and this representative makes it clear both that $C_J$ belongs to $D^{\rm s}(R[G/J])$ and that there is a canonical identification $H^2(C_J) = {\rm cok}(\psi_J) = {\rm cok}(\psi)_J = H^2(C)_J$.

Finally, the identification $H^1(C_J) = H^1(C)^J$ is induced by the commutative diagram
\[ \begin{CD}
0 @> >> H^1(C_J) @> >> P_{1,J} @> \psi_J >> P_{2,J}\\
@. @. @V VV @V VV \\
0 @> >> H^1(C)^J @> >> P_1^J @> \psi^J >> P_2^J\end{CD}\]
in which both rows are exact and the two vertical arrows are the bijections $P_{i,J} \to P_i^J$ that are induced by sending each element $x$ of $P_i$ to $\sum_{j \in J}j(x)$. \end{proof}

\begin{remark}{\em In the sequel we shall sometimes use the identifications $ H^1(C_J) = H^1(C)^J$ and $H^2(C_J) = H^2(C)_J$ given by Lemma \ref{descent lema} without explicit comment. }\end{remark}

We now assume to be given a separable subset $\mathcal{X}$ of $H^2(C)$ and, noting that the image $\mathcal{X}_J$ of $\mathcal{X}$ under the projection $H^2(C) \to H^2(C)_J = H^2(C_J)$ is separable in $H^2(C_J)$, we also fix a separable subset $\mathcal{X}'$ of $H^2(C_J)$ that contains $\mathcal{X}_J$.

We set $a := |\mathcal{X}|$ and $a':= |\mathcal{X}'|$ (so that $a' \ge a$) and write $X$ and $X'$ for the $R[G]$-module direct summands of $H^2(C)$ and $H^2(C_J)$ that are respectively generated by $\mathcal{X}$ and $\mathcal{X}'$.

We write $I_R(J)$ for the augmentation ideal of $R[J]$ and $\mathcal{I}_R(J)$ for the ideal of $R[G]$ generated by $I_R(J)$. Then the canonical short exact sequence
\begin{equation}\label{can ses} 0 \to \mathcal{I}_R(J) \to R[G] \to R[G/J]\to 0\end{equation}
induces an exact triangle in $D(R[G])$ of the form
\begin{equation*}\label{descent triangle} \mathcal{I}_R(J)\otimes^\mathbb{L}_{R[G]}C \to C \to C_J \xrightarrow{\beta}\mathcal{I}_R(J)\otimes^\mathbb{L}_{R[G]}C[1].\end{equation*}

For each $x$ in $\mathcal{X}'\setminus \mathcal{X}_J$ this triangle gives rise to a  composite homomorphism of $R[G]$-modules ${\rm Boc}^C_{x}$ of the form
\begin{multline*} H^1(C)^J =  H^1(C_J)  \xrightarrow{H^1(\beta)} H^1(\mathcal{I}_R(J)\otimes_{R[G]}C[1])  = \mathcal{I}_R(J)\otimes_{R[G]}H^2(C) \\
 \to \mathcal{I}_R(J)/\mathcal{I}_R(J)^2\otimes_{R[G]} H^2(C_J)\xrightarrow{{\rm id}\otimes c_x} \mathcal{I}_R(J)/\mathcal{I}_R(J)^2. \end{multline*}
Here the unlabeled arrow is induced by the projection maps $\mathcal{I}_R(J) \to \mathcal{I}_R(J)/\mathcal{I}_R(J)^2$ and $H^2(C) \to H^2(C)_J = H^2(C_J)$ and $c_x$ denotes the homomorphism
$H^2(C_J) \to R[G/J]$ obtained as the composite of a choice of projection from $H^2(C_J)$ to $X'$ and the homomorphism $X' \to R[G/J]$ given by projecting an element to its coefficient at the basis element $x$.

\begin{remark}\label{explicit pairings}{\em Since $I_R(J)/I_R(J)^2$ is canonically isomorphic to $R\otimes J$ each map ${\rm Boc}^C_{x}$ can be regarded as taking values in the direct sum $R[G/J]\otimes J$ of copies of $R\otimes J$. There are two important cases in which such pairings have been constructed in the literature. To discuss them we fix a finite Galois extension of number fields $F/k$ that is unramified outside a finite set of places $S$ containing all archimedean and all $p$-adic places, for some fixed prime $p$, and set $G := \Gal(F/k)$. We also assume that $R = \ZZ_p$.

\noindent{}(i) In the notation of Proposition \ref{admiss constructions}(iii), the complex $C := R\Hom_{\ZZ_p}(C_\Sigma(\ZZ_p),\ZZ_p[-3])$ is an object of $D^{\rm s}(\ZZ_p[G])$ for which the subsets of $S$ comprising places that split completely in $F
/k$, respectively in $F^J/k$, correspond naturally to separable subsets $\mathcal{X}$ and $\mathcal{X}'$ of $H^2(C)$ and $H^2(C_J)$. In this context the computation in \cite[Lem. 5.21]{bks1} shows that for $x$ in $\mathcal{X}'\setminus \mathcal{X}_J$ the map ${\rm Boc}^C_{x}$ can be described in terms of the local reciprocity map at the place in $S$ that corresponds to $x$.

\noindent{}(ii) Assume now the hypotheses of \S\ref{fc exam} and set $C:= C_f(T_F)$ and $J = G$. Take $\mathcal{X}$ to be the empty set and $\mathcal{X}'$ to be any basis of the free abelian group $\Hom_\ZZ(A(k),\ZZ)$. Then the sets $\mathcal{X}$ and $\mathcal{X}'$ are respectively separable in $H^2(C)=\ZZ_p\otimes\Hom_\ZZ(A(F),\ZZ)$ and in $H^2(C_J) = \ZZ_p\otimes\Hom_\ZZ(A(k),\ZZ)$ and, in this setting, it can be shown that the homomorphisms ${\rm Boc}^C_x$ extend the canonical $G$-valued height pairings that are constructed by Mazur and Tate in \cite{mt} (for details see the appendix to \cite{bmc2}).
}\end{remark}

In the sequel we set $Q := I_R(J)^{a'-a}/I_R(J)^{1+a'-a}$. Then by the same method as in \cite[Prop. 2.7]{sano} (or \cite[Cor. 2.1]{MR2}), the maps ${\rm Boc}^C_{x}$ for $x$ in $\mathcal{X}'\setminus \mathcal{X}_J$ can be combined to give a canonical homomorphism
\begin{equation}
{\rm Boc}^C_{\mathcal{X},\mathcal{X'}}: {\bigcap}_{R[G/J]}^{a'} H^1(C)^J \longrightarrow ({\bigcap}_{R[G/J]}^{a} H^1(C)^J)\otimes_{R}Q.
\end{equation}

Finally, following an original idea of Darmon in \cite{D}, we define the `norm operator'
$$\mathcal{N}_J : {\bigcap}_{R[G]}^a H^1(C) \longrightarrow ({\bigcap}_{R[G]}^a H^1(C)) \otimes_{R} R[J]/I_R(J)^{1+a'-a}$$
to be the homomorphism induced by sending each $x$ to $\sum_{\sigma \in J} \sigma (x) \otimes \sigma^{-1}$ and recall that the approach of \cite[Prop. 4.12]{bks1} constructs a canonical injective homomorphism of $R[G]$-modules
\[ \nu_J : ({\bigcap}_{R[G/J]}^{a} H^1(C)^J)\otimes_{R}Q \rightarrow ({\bigcap}_{R[G]}^{a} H^1(C))\otimes_{R}Q.\]

We can now state the main result of this section.

\begin{theorem}\label{mrstheorem} We suppose given an extension $E$ of $F$, an isomorphism of $E[G]$-modules $\lambda: H^1(C)_E \to H^2(C)_E$ and  an element $\mathcal{L}$ of $E[G]^\times$ with
$\vartheta_\lambda({\rm Det}_{R[G]}(C)) = (R[G]\cdot\mathcal{L}^{-1},0)$.

Then the following claims are valid.
\begin{itemize}
\item[(i)] Write $\mathcal{L}_J$ for the image of $\mathcal{L}$ under the projection $E[G] \to E[G/J]$ and $\lambda_J$ for the composite isomorphism of $E[G/J]$-modules
\[ H^1(C_J)_E = H^1(C)^J_E \to H^1(C)_{E,J} \xrightarrow{\lambda} H^2(C)_{E,J} = H^2(C_J)_E\]
where the first arrow is induced by the projection $H^1(C)\to H^1(C)_J$. Then one has $\vartheta_{\lambda_J}({\rm Det}_{R[G/J]}(C_J)) = (R[G/J]\cdot\mathcal{L}^{-1}_J,0)$.

\item[(ii)] Set $\eta_\mathcal{X} := \eta_{(C,\lambda,\mathcal{L},\mathcal{X})}$ and, with the notation of claim (i), also $\eta_{\mathcal{X}'} :=  \eta_{(C_J,\lambda_J,\mathcal{L}_J,\mathcal{X}')}$.

Then $\eta_{\mathcal{X}}$ belongs to ${\bigcap}_{R[G]}^{a}H^1(C)$ and $\eta_{\mathcal{X}'}$ to ${\bigcap}_{R[G/J]}^{a'} H^1(C)^J$ and 
are such that
\[ \mathcal{N}_J(\eta_\mathcal{X})=(-1)^{a(a'-a)}\cdot \nu_J( {\rm Boc}^C_{\mathcal{X},\mathcal{X}'}  (\eta_{\mathcal{X}'}))\]
in $({\bigcap}_{R[G]}^{a} H^1(C))\otimes_{R} Q
$.
\end{itemize}
\end{theorem}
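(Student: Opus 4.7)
The plan is to deduce claim (i) from the base-change behaviour of determinants and to prove claim (ii) by an explicit matrix computation using the strong form of representative of $C$ provided by Lemma \ref{adm-rep}.

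For claim (i), I would invoke the functoriality of $\mathrm{Det}_{(-)}$ under the derived tensor product $R[G/J] \otimes^{\mathbb{L}}_{R[G]} -$ to get a canonical identification $\mathrm{Det}_{R[G/J]}(C_J) = R[G/J] \otimes_{R[G]} \mathrm{Det}_{R[G]}(C)$. A naturality check then shows that the passage-to-cohomology map for $C_J$ is the composite of base-change of that for $C$ with the $E$-linear identifications $E \otimes_R H^1(C)^J \cong E \otimes_R H^1(C)_J$ and $E \otimes_R H^2(C)_J = E \otimes_R H^2(C_J)$ arising from Lemma \ref{descent lema}; by the very construction of $\lambda_J$ these identifications absorb precisely the composite that distinguishes $\lambda_J$ from the derived tensor product of $\lambda$. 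Applying this to the defining equality of $\mathcal{L}$ and passing to the quotient via the projection $E[G] \to E[G/J]$ delivers the claimed property of $\mathcal{L}_J$.

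For claim (ii), I would first use Lemma \ref{adm-rep}(ii) to represent $C$ by a complex $P \xrightarrow{\psi} P$ of finitely generated free $R[G]$-modules with an ordered basis $\{b_i\}_{1 \le i \le d}$ of $P$ such that the first $a$ basis elements project to $\mathcal{X}$ in $H^2(C) = \mathrm{cok}(\psi)$. By a routine modification of that lemma applied to $C_J$ one can arrange that the induced basis of $P_J$ is adapted to $\mathcal{X}'$, so that $\{b_i\}_{a<i\le a'}$ projects to lifts of the elements of $\mathcal{X}' \setminus \mathcal{X}_J$ in $\mathrm{cok}(\psi_J)$. In this setting, the computation leading to the proof of Theorem \ref{char els}(ii) expresses both $\eta_\mathcal{X}$ and $\eta_{\mathcal{X}'}$ as signed wedge products of maps of the form $b_i^* \circ \psi$ and $b_i^* \circ \psi_J$ evaluated on the respective canonical volume element.

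The algebraic heart of the proof is then a Taylor expansion of matrix entries of $\psi$ along the augmentation filtration. Each entry $\psi_{i,j}$ of the matrix of $\psi$ decomposes, modulo $\mathcal{I}_R(J)^{1+a'-a}$, as a sum of terms in successive powers $\mathcal{I}_R(J)^s/\mathcal{I}_R(J)^{s+1}$; the degree-zero term is the matrix of $\psi_J$, and unwinding the definition of the connecting morphism $\beta$ of the triangle attached to (\ref{can ses}) shows that the degree-one term recovers precisely the homomorphisms $\mathrm{Boc}^C_x$ on the relevant basis vectors. The key computation is then to apply $\mathcal{N}_J$ to the determinantal expression for $\eta_\mathcal{X}$ and expand modulo $I_R(J)^{1+a'-a}$: a block-matrix calculation (generalising the case $a'-a=1$ treated in \cite[Prop. 2.7]{sano}, and following the higher-order formalism of \cite[Prop. 4.17]{bks1}) shows that the surviving term factors as the product of the determinant computing $\eta_{\mathcal{X}'}$ with the determinant of the matrix $(\mathrm{Boc}^C_{x_j})_i$, exactly matching $\nu_J \circ \mathrm{Boc}^C_{\mathcal{X},\mathcal{X}'}(\eta_{\mathcal{X}'})$; the sign $(-1)^{a(a'-a)}$ arises from reordering the wedge factors so that the $a$ surviving classes precede the $(a'-a)$ Bockstein classes.

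The main obstacle will be the bookkeeping of signs and exterior-algebra conventions through the higher-order Taylor expansion when $a' - a > 1$, since the case $a'-a = 1$ used in the earliest formulations of refined class number formulas does not require the full machinery of $\nu_J$ or $\mathrm{Boc}^C_{\mathcal{X},\mathcal{X}'}$ as a single map. A secondary obstacle is the simultaneous compatibility of the chosen representative of $C$ with both $\mathcal{X}$ and $\mathcal{X}'$, which may require proving a mild strengthening of Lemma \ref{adm-rep}(ii) so that the basis of $P$ can be chosen adapted to a separable subset of a quotient cohomology group.
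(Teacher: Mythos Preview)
Your proposal is correct and follows essentially the same route as the paper's proof. The paper likewise localises at primes of $R$, chooses a representative $P \xrightarrow{\psi} P$ with a basis simultaneously adapted to $\mathcal{X}$ and $\mathcal{X}'$ (precisely the strengthening of Lemma \ref{adm-rep}(ii) you anticipate as an obstacle), identifies the degree-one term of $b_j^\ast \circ \psi$ modulo $\mathcal{I}_R(J)^2$ with $\mathrm{Boc}^C_{x_j'}$, expresses both special elements as $\pm(\bigwedge_i \psi_i)(z_b)$ for a fixed volume element $z_b$, and then closes via a commutative diagram taken from \cite[Lem.~5.21]{bks1} (rather than Prop.~4.17) that packages exactly your block-matrix Taylor expansion.
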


\begin{remark}{\em In the setting of Remark \ref{explicit pairings}(i) the congruence in Theorem \ref{mrstheorem}(ii) can be used to show that the central conjectures of Mazur and Rubin in \cite{MR2} and of the second author in \cite{sano} follow as consequences of the relevant special case of the equivariant Tamagawa number conjecture (see \cite[Th. 5.16]{bks1}). In \cite{bmc2} the congruence of Theorem \ref{mrstheorem}(ii) is considered in detail in the setting of Remark \ref{explicit pairings}(ii).}\end{remark}

\subsubsection{}The proof of Theorem \ref{mrstheorem} will occupy the rest of this section.

Note first that to prove claim (i) it is enough to show $\vartheta_{\lambda_J}({\rm Det}_{R[G/J]}(C_J))$ identifies with the image of $\vartheta_{\lambda}({\rm Det}_{R[G]}(C))$ under the natural projection $(E[G],0) \to (E[G/J],0)$ and this is a straightforward exercise.

In addition, since the sets $\mathcal{X}$ and $\mathcal{X}'$ are respectively assumed to be separable in $H^2(C)$ and $H^2(C_J)$ the containments  $\eta_{\mathcal{X}}\in {\bigcap}_{R[G]}^{a}H^1(C)$ and $\eta_{\mathcal{X}'}\in {\bigcap}_{R[G/J]}^{a'} H^1(C)^J$ in claim (ii) follow from
 Remark \ref{explicit verson of char els}.

However, to prove the congruence relation in claim (ii) we must extend the rather delicate approach used to prove \cite[Th. 5.16]{bks1}.

To do this we label, and thereby order, the elements of $\mathcal{X}$ as $\{x_i\}_{1\le i\le a}$ and the elements of $\mathcal{X}'\setminus \mathcal{X}_J$ as $\{x'_j\}_{a < j \le a'}$. We write $\kappa_J$ for the natural map $H^2(C) \to H^2(C_J)$ and fix left inverses $\sigma_X$ and $\sigma_{X'}$ to the inclusions $X \subseteq H^2(C)$ and $X'\subseteq H^2(C_J)$ respectively.

Since it suffices to verify the stated congruence after localizing at each prime ideal of $R$ we can, and will, assume in the sequel that $R$ is local.

Then, by an easy adaptation of the argument in Lemma \ref{adm-rep}(ii), one shows $C$ is represented by a complex $P\xrightarrow{\psi}P$, where the first term is placed in degree one and $P$ is a finitely generated free $R[G]$-module for which one can fix an ordered basis $\{b_i\}_{1\le i\le d}$ and a surjective homomorphism of $R[G]$-modules $\pi: P \to H^2(C)$ with all of the following properties:
\begin{equation}\label{key props of basis} \begin{cases} \ker(\pi) = \im(\psi),\\
\pi(b_i) = x_i \,\,\text{ for }\,\, 1\le i\le a,\\
\{\pi(b_i)\}_{a< i \le d} \subset \ker(\sigma_X),\\
\kappa_J(\pi(b_i)) = x_i' \,\,\text{for}\,\, a < i\le a',\\
R[G]\cdot\{\kappa_J(\pi(b_i))\}_{a'< i \le d} = \ker(\sigma_{X'}).\end{cases}\end{equation}
(For details of a similar construction see \cite[\S5.4]{bks1}.)


The differential $\psi$ of $C$ then has the following key properties.

\begin{lemma}\label{explicit psi description} Fix an integer $j$ with $1\le j\le a'$ and set $\psi _j := b_j^\ast\circ \psi$.
\begin{itemize}
\item[(i)] If $j \le a$, then $\psi_j = 0$.
\item[(ii)] If $j > a$, then $\im(\psi_j)\subseteq \mathcal{I}_R(J)$.
\item[(iii)] If $j > a$ let $\tilde\psi_j$ denote the element of $\Hom_{R[G/J]}(P^J,\mathcal{I}_R(J)/\mathcal{I}_R(J)^2)$ that sends each $c$ in $P^J$ to the image in $\mathcal{I}_R(J)/\mathcal{I}_R(J)^2$ of $\psi_j(\tilde c)$, where $\tilde c$ is any element of $P$ with $T_J(\tilde c) = c$ in $P^J = T_J(P)$, with $T_J:=\sum_{j \in J}j$. Then the restriction of $\tilde\psi_j$ to $H^1(C)^J$ is equal to ${\rm Boc}^C_{x_j'}$.
\end{itemize}  \end{lemma}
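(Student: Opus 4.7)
The plan is to establish all three claims from a direct analysis of $\psi$ in terms of the basis $\{b_i\}_{1\le i\le d}$ and the conditions listed in (\ref{key props of basis}). For (i), separability of $\mathcal{X}$ together with the second and third conditions in (\ref{key props of basis}) implies that $\pi$ restricts to an $R[G]$-module isomorphism between the free summand $\bigoplus_{i\le a}R[G]\cdot b_i$ of $P$ and the free direct summand $X$ of $H^2(C)$, while sending the complementary summand $\bigoplus_{a<i\le d}R[G]\cdot b_i$ into $\ker(\sigma_X)$. Any element of $\ker(\pi)=\im(\psi)$ must therefore have trivial $b_i$-component for each $i\le a$, whence $\psi_j=b_j^*\circ\psi$ vanishes for every such $j$.

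For (ii), I would apply the same argument after reducing modulo $\mathcal{I}_R(J)$, working with the representative $P_J\xrightarrow{\bar\psi}P_J$ of $C_J$ provided by Lemma \ref{descent lema} and the induced surjection $\bar\pi\colon P_J\to H^2(C_J)$. The fourth condition in (\ref{key props of basis}) guarantees that $\{\bar\pi(\bar b_i)\}_{1\le i\le a'}$ coincides with the separable basis $\mathcal{X}'=\mathcal{X}_J\cup\{x_i'\}_{a<i\le a'}$ of the free direct summand $X'$ of $H^2(C_J)$, while the fifth shows that $\bar\pi(\bar b_i)\in\ker(\sigma_{X'})$ for $i>a'$. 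The argument of (i) then gives $\im(\bar\psi)\subseteq\bigoplus_{i>a'}R[G/J]\cdot\bar b_i$, which amounts to $b_j^*(\psi(P))\subseteq\mathcal{I}_R(J)$ for every $j\le a'$. This proves (ii), and also ensures that $\tilde\psi_j$ is well-defined since $\ker(T_J\colon P\to P)=\mathcal{I}_R(J)\cdot P$ (because $P$ is $R[G]$-free) and hence $\psi_j(\ker T_J)\subseteq \mathcal{I}_R(J)\cdot\psi_j(P)\subseteq\mathcal{I}_R(J)^2$.

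For (iii), I would compute ${\rm Boc}^C_{x_j'}$ explicitly on the representative $P\xrightarrow{\psi}P$. The standard description of the connecting morphism $\beta$ attached to the triangle $\mathcal{I}_R(J)\otimes^{\mathbb{L}}_{R[G]}C\to C\to C_J$ shows that if $c\in H^1(C)^J$ corresponds, via the identification of Lemma \ref{descent lema}, to the class in $H^1(C_J)$ of a lift $\tilde c\in P$ with $T_J(\tilde c)=c$, then $H^1(\beta)(c)=[\psi(\tilde c)]$ in $\mathcal{I}_R(J)\otimes_{R[G]}H^2(C)$. Expanding $\psi(\tilde c)=\sum_k\psi_k(\tilde c)\,b_k$, projecting to $\mathcal{I}_R(J)/\mathcal{I}_R(J)^2\otimes_{R[G/J]}H^2(C_J)$, and applying $c_{x_j'}$, the computation of $\bar\pi(\bar b_k)$ from the proof of (ii) gives $c_{x_j'}(\bar\pi(\bar b_k))=\delta_{kj}$ for $k\le a'$ and $c_{x_j'}(\bar\pi(\bar b_k))=0$ for $k>a'$. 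One therefore recovers $\overline{\psi_j(\tilde c)}\in\mathcal{I}_R(J)/\mathcal{I}_R(J)^2$, which by construction equals $\tilde\psi_j(c)$.

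The principal difficulty is bookkeeping: reconciling the two notions of ``lift of $c$'' implicit in the definitions of $\tilde\psi_j$ and ${\rm Boc}^C_{x_j'}$ (the first via $T_J$, the second via set-theoretic lifting to $P$), and checking that the various identifications commute modulo $\mathcal{I}_R(J)^2$. Once claim (ii) is in hand, this reduces to the direct matching described above.
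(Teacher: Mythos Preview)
Your proposal is correct and follows essentially the same approach as the paper. The paper dismisses (i) and (ii) as ``easy to check directly (by using a similar argument to that in Lemma~\ref{adm-rep}(ii))'', which is precisely the separability-plus-splitting argument you spell out; for (iii) the paper carries out the identical snake-lemma computation on the short exact sequence $0\to\mathcal{I}_R(J)\otimes_{R[G]}P\to P\xrightarrow{\cdot T_J}P^J\to 0$, arriving at ${\rm Boc}^C_{x_j'}(c)=\overline{\psi_j(\tilde c)}$ by the same term-by-term analysis of $\sum_k\psi_k(\tilde c)\,b_k$ against the properties~(\ref{key props of basis}).
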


\begin{proof} Claims (i) and (ii) are easy to check directly (by using a similar argument to that in Lemma \ref{adm-rep}(ii)).

%
%

To prove claim (iii) we fix $j$ with $a < j \le a'$, set $y := x_j'$ and use the fact that ${\rm Boc}^C_{y}$ can be computed as the composite of
the connecting homomorphism in the following commutative diagram

\[\begin{CD}
@. @. @. H^1(C)^J\\ @. @. @. @V VV\\ 0 @>
>> \mathcal{I}_R(J)\otimes _{R[G]}P @> \subseteq >> P @> \cdot T_{J} >>
 P^{J}  @> >> 0\\
@. @VV {\rm id}\otimes_{R[G]}\psi V @VV\psi V @VV\psi^{J}
V\\ 0 @>
>> \mathcal{I}_R(J)\otimes _{R[G]}P @> \subseteq >> P @> \cdot T_{J} >>
 P^{J}  @> >> 0\\
@. @VV {\rm id}\otimes_{R[G]}\pi V \\ @. \mathcal{I}_R(J)\otimes_{R[G]}H^2(C)\end{CD}\]
together with the surjective homomorphism
\[ \varrho_y: \mathcal{I}_R(J)\otimes_{R[G]}H^2(C) \rightarrow (\mathcal{I}_R(J)/\mathcal{I}_R(J)^2)\otimes_{R[G]}H^2(C_J) \xrightarrow{{\rm id}\otimes c_{y}} \mathcal{I}_R(J)/\mathcal{I}_R(J)^2.\]

In this way one computes that

\begin{multline*} {\rm Boc}^C_y(c) = \varrho_y (({\rm id}\otimes_{R[G]}\pi)(\psi(\tilde c))) = \sigma_y (({\rm id}\otimes_{R[G]}\pi)(\psi_j(\tilde c)\cdot b_j))\\ = \varrho_y (\psi_j(\tilde c)\otimes_{R[G]} \pi(b_j)) = ({\rm id}\otimes c_{y})(\overline{\psi_j(\tilde c)}\otimes_{R[G]} y) = \overline{\psi_j(\tilde c)}.\end{multline*}
Here $\overline{\psi_j(\tilde c)}$ denotes the image in $\mathcal{I}_R(J)/\mathcal{I}_R(J)^2$ of $\psi_j(\tilde c)$, the second and fourth equalities both follow as a consequence of the properties (\ref{key props of basis}) and the last equality is true because $c_y(y) = 1$. This proves claim (iii).
\end{proof}

We now define $z_b$ in ${\bigwedge}_{R[G]}^d P$ to be the pre-image of $\mathcal{L}^{-1}$ under the composite isomorphism of $R[G]$-modules
$${\bigwedge}_{R[G]}^d P \stackrel{\sim}{\longrightarrow} {\bigwedge}_{R[G]}^d P \otimes {\bigwedge}_{R[G]}^d\Hom_{R[G]}(P,R[G]) = {\rm Det}^{-1}_{R[G]}(C) \xrightarrow{\vartheta_\lambda} R[G]\cdot\mathcal{L}^{-1},$$
where the first map sends each $x$ to $x \otimes {\bigwedge}_{1\leq i\leq d } b_i^\ast.$

The connection between $z_b$ and the elements $\eta_\mathcal{X}$ and $\eta_{\mathcal{X}'}$ is described in the next result. This result will be proved by applying several general results about exterior powers that are established in \cite{bks1}.

We use the homomorphism of $R[G]$-modules
\[ \N_J^d: {\bigwedge}_{R[G]}^d P \to {\bigwedge}_{R[G/J]}^d P^J\]
that is given by the $d$-th exterior power of the homomorphism $P \to P^J$ that sends each $x$ to $T_J(x):=\sum_{j \in J}j(x)$.

\begin{proposition}\label{zeta higher special}\
\begin{itemize}
\item[(i)] $\eta_\mathcal{X} = (-1)^{a(d-a)}({\bigwedge}_{a <i\leq d}\psi_i)(z_b)$.
\item[(ii)] $\eta_{\mathcal{X}'} = (-1)^{a'(d-a')}({\bigwedge}_{a' <i\leq d}\psi_i^J)(\N_J^d(z_b))$.
\end{itemize}
\end{proposition}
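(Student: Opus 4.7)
The plan is to prove the two assertions by working directly with the representative $P\xrightarrow{\psi}P$ and the basis $\{b_i\}_{1\le i\le d}$ satisfying (\ref{key props of basis}).

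For part (i), I would simply observe that the identity has, in fact, already been proved during the argument for Theorem \ref{char els}(ii). Indeed, the displayed formula
\[
\eta=(-1)^{a(d-a)}z\!\!\sum_{\sm\in \mathfrak{S}_{d,a}}{\rm sgn}(\sm)\det(\psi_i(b_{\sm(k)}))_{a<i,k\leq d}\,(b_{\sm(1)}\wedge\cdots\wedge b_{\sm(a)})
\]
obtained there, via \cite[Lem.~4.3, Prop.~4.1]{bks1}, is, once re-expanded, exactly $(-1)^{a(d-a)}\bigwedge_{a<i\le d}\psi_i(z_b)$ since $z_b=z\cdot\bigwedge_i b_i$. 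The only inputs needed from the present choice of basis are $\ker(\pi)=\im(\psi)$, the vanishing $\psi_i=0$ for $i\le a$ (which is Lemma \ref{explicit psi description}(i), itself a consequence of $\im(\psi)\subseteq R[G]\langle b_{a+1},\ldots,b_d\rangle$), and the identification $\pi(b_i)=x_i$ for $1\le i\le a$; all of these are built into (\ref{key props of basis}).

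For part (ii), the strategy is to apply part (i) to the descended data $(C_J,\lambda_J,\mathcal{L}_J,\mathcal{X}')$ and then transport the resulting formula along the isomorphism $P_J\xrightarrow{\sim} P^J$ induced by $T_J$. Concretely, $C_J$ is represented by $P^J\xrightarrow{\psi^J}P^J$, the elements $\bar b_i:=T_J(b_i)$ give a basis of $P^J$ as an $R[G/J]$-module, and the surjection $\pi_J:P^J\to H^2(C_J)$ sending $\bar b_i$ to $\kappa_J(\pi(b_i))$ satisfies, with respect to $\mathcal{X}'$, the analogues of (\ref{key props of basis}). Indeed, for $1\le i\le a$ the element $\kappa_J(\pi(b_i))=\kappa_J(x_i)$ is the $i$-th element of $\mathcal{X}_J\subseteq\mathcal{X}'$, for $a<i\le a'$ it equals $x_i'$ by (\ref{key props of basis}), and for $a'<i\le d$ its $R[G/J]$-span coincides with $\ker(\sigma_{X'})$. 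Part (i), applied to this representation, then yields
\[
\eta_{\mathcal{X}'}=(-1)^{a'(d-a')}\bigwedge_{a'<i\le d}\psi_i^J(\tilde z_b),
\]
where $\tilde z_b\in\bigwedge^d_{R[G/J]}P^J$ is the element defined for $(C_J,\lambda_J,\mathcal{L}_J)$ relative to the basis $\{\bar b_i\}$.

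It remains to identify $\tilde z_b=\N_J^d(z_b)$, and this is where I expect the main obstacle to lie, as it requires carefully tracking how the various canonical isomorphisms of determinant modules interact with the descent $R[G]\to R[G/J]$ and with the norm isomorphism $P_J\cong P^J$. The key point is that Theorem \ref{mrstheorem}(i), already established above, ensures that $\vartheta_{\lambda_J}$ is the reduction of $\vartheta_\lambda$ modulo $\mathcal{I}_R(J)$, compatibly with the descent of the bases $\{b_i\}$ to $\{\bar b_i\}$ and of $\{b_i^\ast\}$ to $\{\bar b_i^\ast\}$. Writing $z_b=z\cdot\bigwedge_i b_i$ and $\tilde z_b=z'\cdot\bigwedge_i\bar b_i$ with $z\in R[G]^\times$, $z'\in R[G/J]^\times$ the respective pre-images of $\mathcal{L}^{-1}$ and $\mathcal{L}_J^{-1}$, this compatibility forces $z'$ to be the image $\bar z$ of $z$ in $R[G/J]$. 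The explicit description of $P_J\cong P^J$ then gives $\N_J^d(z_b)=\bar z\cdot\bigwedge_i\bar b_i=\tilde z_b$, and combining this with the formula displayed above completes the proof of part (ii).
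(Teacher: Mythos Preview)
Your proposal is correct and follows essentially the same route as the paper. For (i), both arguments ultimately rest on \cite[Lem.~4.3]{bks1} together with the vanishing $\psi_i=0$ for $i\le a$; the paper's version adds an explicit character-by-character step (via \cite[Lem.~4.2]{bks1}) verifying that $\tilde z:=(\bigwedge_{a<i\le d}\psi_i)(z_b)$ actually lies in $F\cdot\bigwedge^a_{R[G]}H^1(C)$ and satisfies $e_a\tilde z=\tilde z$ before invoking Lem.~4.3, whereas you package this into the reference to the Theorem~\ref{char els}(ii) argument. For (ii), the paper merely says ``entirely similar''; your explicit identification $\tilde z_b=\N_J^d(z_b)$ via the descent compatibility established in Theorem~\ref{mrstheorem}(i) is precisely the content that ``entirely similar'' is meant to encode, and your justification of it is sound.
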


\begin{proof} For each $\chi$ in $\Hom(G,F^{c,\times})$ we set $r_\chi = r _{C,\chi} := {\rm dim}_{F^c}(e_\chi(F^c\cdot H^2(C)))$, where $e_\chi$ is the idempotent of $F^c[G]$ defined in Remark \ref{explicit cong}.

Then, since $X$ is a direct summand of $H^2(C)$, one has $r_\chi \ge a$ and we claim first that the homomorphism
$$\Psi_\chi:=\bigoplus_{a<i \leq d}\psi_i : e_\chi (F^c\cdot P) \longrightarrow e_\chi F^c[G]^{\oplus (d-a)}$$
is surjective if and only if $r_{\chi}=a$.

To see this we note that if $r_{\chi}=a$, then $\{ e_\chi(x) \}_{x \in \mathcal{X}}$ is a $F^c$-basis of $e_\chi (F^c\cdot H^2(C))$ and so
 (\ref{key props of basis}) implies that $e_\chi(F^c\cdot\im (\psi)) = e_\chi(F^c\cdot\ker (\pi)) = \bigoplus_{a<i\leq d}e_\chi F^c[G]b_i$. In this case, therefore, $\Psi_\chi$ is clearly surjective.

Conversely, if $r_{\chi}>a$, then $\dim_{F^c}(e_\chi(F^c\cdot\im(\psi)))=d-r_{\chi}<d-a$ and so $\Psi_\chi$ cannot be surjective.

In particular, setting $\tilde z:= ({\bigwedge}_{a<i\leq d}\psi_i)(z_b)$, the general result of \cite[Lem. 4.2]{bks1} applies in this setting to imply that
$$e_\chi (\tilde z)
\begin{cases}
\in e_\chi(F^c\cdot{\bigwedge}_{R[G]}^aH^1(C)), &\text{if $r_{\chi}=a$,} \\
=0, &\text{if $r_{\chi}>a$}
\end{cases}
$$
As this is true for all $\chi$ in $\Hom(G,F^{c,\times})$ it implies both that $\tilde z$ belongs to $F\cdot{\bigwedge}_{R[G]}^aH^1(C)$ and is such that $\tilde z = e_a\cdot \tilde z.$

On the other hand, the general formula of \cite[Lem. 4.3]{bks1} combines with our definition of $z_b$ to imply that
$$\lambda ((-1)^{a(d-a)}\tilde z)= e_a\lambda ((-1)^{a(d-a)}\tilde z) = e_a\mathcal{L}^{-1}\cdot{\bigwedge}_{x \in \mathcal{X}}x,$$
and hence, upon recalling the definition of $\eta_\mathcal{X}$, that $(-1)^{a(d-a)}\tilde z=\eta_\mathcal{X}.$

This completes the proof of claim (i) and the proof of claim (ii) is entirely similar.
\end{proof}

We next recall that the general observation of \cite[Rem. 4.8]{bks1} implies that the natural map
\begin{equation*} ({\bigcap}_{R[G/J]}^{a}H^1(C)^J) \otimes_{R}Q \to ({\bigwedge}_{R[G/J]}^{a}F^J)\otimes_{R}Q\end{equation*}
is injective and hence that the equality of Theorem \ref{mrstheorem} can be verified by computing in $({\bigwedge}_{R[G/J]}^{a}F^J)\otimes_{R}Q$.

To complete the proof of Theorem \ref{mrstheorem}(ii) it is thus enough to note that
\begin{eqnarray*}\label{mrs equalities}
(-1)^{a(a'-a)}
{\rm Boc}^C_{\mathcal{X},\mathcal{X}'}(\eta_{\mathcal{X}'}) &=&
(-1)^{a(a'-a)}({\bigwedge}_{a< j\le a'}\tilde\psi_j)((-1)^{a'(d-a')}{\bigwedge}_{a' <i\leq d}\psi_i^J)(\N_J^d z_b)) \\
&=& \widetilde{\rm Boc}(\N_J^d (z_b))\notag\\
&=& (-1)^{a(d-a)} \nu_J^{-1}(\mathcal{N}_J(({\bigwedge}_{a <i\leq d}\psi_i)(z_b))) \\
&=&   \nu_J^{-1}(\mathcal{N}_J(\eta_\mathcal{X}))\notag
\end{eqnarray*}
where $\widetilde{\rm Boc}$ denotes the composition of $(-1)^{a'(d-a')}{\bigwedge}_{a'<i \leq d}\psi_i^J$ and
$(-1)^{a(a'-a)}{\bigwedge}_{a<i \leq a'}\widetilde\psi_i$. Here the first equality in the above display is proved by combining the explicit definition of ${\rm Boc}^C_{\mathcal{X},\mathcal{X}'}$ with the
formula in Proposition \ref{zeta higher special}(ii) and the description in Lemma \ref{explicit psi description}(iii); the second equality is an immediate consequence of the definition of $\widetilde{\rm Boc}$; the third equality is true because the argument of \cite[Lem 5.21]{bks1} implies that there is a commutative diagram
\begin{equation*}\xymatrix{
{{\bigwedge}_{R[G]}^d P} \ar[d]_{{\rm N}_J^d} \ar[rrrr]^{(-1)^{a(d-a)}{\bigwedge}_{a <i \leq d}\psi_i} &  & & & {I_R(J)^{a'-a}\cdot{\bigwedge}_{R[G]}^aP} \ar[d]^{\nu_J^{-1}\circ \mathcal{N}_J} \\
{{\bigwedge}_{R[G/J]}^dP^J} \ar[rrrr]^{\widetilde{\rm Boc}} & & & & {({\bigwedge}_{R[G/J]}^aP^J) \otimes_{R} Q;}
}\end{equation*}
(which is well-defined since $\mathcal{N}_J(I_R(J)^{a'-a}\cdot{\bigwedge}_{R[G]}^aP) \subseteq \im(\nu_J)$); finally the fourth equality follows directly from the formula in Proposition \ref{zeta higher special}(i).

This completes the proof of Theorem \ref{mrstheorem}.

\section{Compactly supported $p$-adic cohomology}\label{compact-applications}

In this section we discuss the compactly supported $p$-adic cohomology of $p$-adic representations and, in particular, prove Proposition \ref{admiss constructions}. The argument presented here is a development of that given by Barrett and the first author in \cite[\S 3.1]{barretb}.

We assume throughout the notation of \S\ref{csc exam}. In particular, we assume to be given a pair $(\mathfrak{A},T)$ comprising  a $\ZZ_p$-order $\mathfrak{A}$ and a continuous $\ZZ_p[G_{k,S}]$-module $T$ that is endowed with a commuting action of $\mathfrak{A}$ with respect to which it is a projective module.

For any module, or complex of modules, $N$ over $\mathfrak{A}\times \ZZ_p[G_{k,S}]$ we write $N^\vee$ for the Pontryagin dual $\Hom_{\ZZ_p}(N,\QQ_p/\ZZ_p)$ which we endow with the contragredient action of $\ZZ_p[G_{k,S}]$ and the obvious action of $\mathfrak{A}$.

We also set $V := \QQ_p\otimes_{\ZZ_p}T$, $V^*(1) = \QQ_p\otimes_{\ZZ_p}T^*(1)$, $W := V/T$ and $W^*(1) :=
V^*(1)/T^*(1) \cong T^\vee(1)$, each regarded as endowed with the actions of $\mathfrak{A}$ and $G_{k,S}$ that are
induced from the respective actions on $T$ and $T^*(1)$.

\subsection{Generalities} If $R$ denotes either $\mathcal{O}_{k,S}, k$ or the completion $k_v$ of $k$ at a
 place $v$ and $\mathcal{F}$ is an \'etale (pro-)sheaf on $\Spec(R)$,
then we abbreviate the complex $R\Gamma_{{\rm \acute e
t}}(\Spec(R),\mathcal{F})$ and in each degree $a$ the group
$H^a_{{\rm \acute e t}}(\Spec(R),\mathcal{F})$ to
$R\Gamma(R,\mathcal{F})$ and $H^a(R,\mathcal{F})$ respectively.

\subsubsection{}For any \'etale (pro-)sheaf $\mathcal{F}$ on $\Spec
(\mathcal{O}_{k,S})$ we then define the compact support
cohomology complex $R\Gamma_c(\mathcal{O}_{k,S},\mathcal{F})$
by means of the exact triangle
\begin{equation}\label{def-tri} R\Gamma_c(\mathcal{O}_{k,S},\mathcal{F}) \to R\Gamma (\mathcal{O}_{k,S},\mathcal{F}) \to \bigoplus_{v \in
S}R\Gamma(k_v,\mathcal{F})\to
R\Gamma_c(\mathcal{O}_{k,S},\mathcal{F})[1]
\end{equation}
where the second arrow is the direct sum of the natural localisation
morphisms. For each integer $a$ we set
$H^a_{c}(\mathcal{O}_{k,S},\mathcal{F}) :=
H^a(R\Gamma_{c}(\mathcal{O}_{k,S},\mathcal{F})).$

This definition of cohomology with compact support differs from that given in \cite[Chap. II]{milne} since Milne uses Tate cohomology at
each archimedean place. To take account of this difference we define for each $v\in S_\infty$ and each continuous $G_{k_v}$-module $N$
 a complex $R\Gamma_{\Delta}(k_v,N)$ by means of the short exact sequence
\[0\rightarrow C^\bullet(k_v,N)\rightarrow C^\bullet_{\rm Tate}(k_v,N)\rightarrow R\Gamma_{\Delta}(k_v,N)[1]\rightarrow 0\]
where $C^\bullet(k_v,N)$ is the standard complex of continuous
cochains of $\Gal(\CC/k_v)$ with values in $N$, $C^\bullet_{\rm Tate}(k_v,N)$ the standard complex
computing Tate cohomology of $N$ over $\Gal(\CC/k_v)$ and the second arrow is the natural inclusion morphism.

Then the same method that is used in loc. cit. to prove the global duality theorem of \cite[Chap. II, Cor. 3.3]{milne} proves there exists a canonical exact triangle
\begin{equation}\label{duality} C(T)\to R\Gamma(\mathcal{O}_{k,S},W^*(1))^\vee[-3] \to  \bigoplus_{v \in S_\infty}R\Gamma_\Delta(k_v,T)\to \end{equation}
in $D(\mathfrak{A})$, where, as in \S\ref{csc exam}, we write $C(T)$ in place of $R\Gamma_c(\mathcal{O}_{k,S},T)$. 

\subsubsection{}\label{sha recall}If $\mathcal{F}$ denotes either of $T, V$ or $W$ as above, then for each
non-archimedean place $v$ of $k$ we write $H^1_f(k_v,\mathcal{F})$
for the finite support cohomology group that is defined by Bloch and
Kato in \cite{bk}.

We recall in particular that $H^1_f(k_v,V)$ is a
subspace of $H^1(k_v,V)$ and that $H^1_f(k_v,T)$, resp.
$H^1_f(k_v,W)$, is
 defined to be the pre-image, resp. image, of $H^1_f(k_v,V)$, under the natural map
$H^1(k_v,T) \to
  H^1(k_v,V)$, resp. $H^1(k_v,V) \to H^1(k_v,W)$.

  If $v$ is archimedean then, as $p$ is
  odd, one has $H^1(k_v,\mathcal{F}) = 0$ and so we set $H^1_f(k_v,\mathcal{F}) = 0$.

Then
  the global finite support cohomology group $H^1_f(k,\mathcal{F})$ of Bloch and
  Kato is defined by the natural exact sequence
\begin{equation}\label{h1f-def} 0 \to H^1_f(k,\mathcal{F}) \xrightarrow{\subseteq} H^1(\mathcal{O}_{k,S},\mathcal{F}) \to
\bigoplus_{v\in
S}\frac{H^1(k_v,\mathcal{F})}{H^1_f(k_v,\mathcal{F})},\end{equation}
and hence there is an induced localisation map $\lambda_{\mathcal{F}}: H^1_f(k,\mathcal{F})\to \bigoplus_{v \in
S} H^1_f(k_v,\mathcal{F})$.

The Selmer and Tate-Shafarevic groups ${\rm Sel}(T)$ and $\sha(T)$ of $T$ are then defined by Bloch and Kato in \cite{bk} to be
equal to $H^1_f(k,W)$ and the cokernel of the natural homomorphism
$H^1_f(k,V) \to H^1_f(k,W) = {\rm Sel}(T)$ respectively and so there is a canonical short exact sequence
\begin{equation*} 0 \to
\bq_p/\bz_p\otimes_{\bz_p} H^1_f(k,T) \to {\rm Sel}(T) \to
\sha(T) \to 0.\end{equation*}
In particular, since $\sha(T)$ is finite (cf. \cite[Chap. II, 5.3.5]{fpr91}) this sequence induces an identification of $\sha(T)$ with ${\rm
Sel}(T)_{\rm cotor}$.

Finally, we recall that the main result of Flach \cite{flach} is the existence of a canonical isomorphism between $\sha(T)$ and $\sha(T^*(1))^\vee$.


\subsubsection{}As a preliminary to the proof of Proposition \ref{admiss constructions}(ii) we note that a comparison between (\ref{h1f-def}) and the long exact cohomology sequence of (\ref{def-tri}) with $\mathcal{F} = T$ shows that the
localisation homomorphism $\lambda_T$ fits into a natural exact
sequence
\begin{equation}\label{first} H^1_f(k,T)\xrightarrow{\lambda_T} \bigoplus_{v \in
S}H^1_f(k_v,T) \xrightarrow{\tilde\lambda_T} H^2(C(T)) \to {\rm
cok}(\tilde\lambda_T)\to 0.\end{equation}
Now for each place $v$ in $S$, the Pontryagin dual of the
tautological exact sequence
\[ 0 \to H^1_f(k_v,T) \xrightarrow{\subseteq} H^1(k_v,T) \to
\frac{H^1(k_v,V)}{H^1_f(k_v,V)}\]
combines with the local duality isomorphism $H^1(k_v,T)^\vee\cong
H^1(k_v,W^*(1))$ and the definition of $H^1_f(k_v,W^*(1))$ to imply
that $H^1_f(k_v,T)^\vee$ is naturally isomorphic to the quotient
$H^1(k_v,W^*(1))/H^1_f(k_v,W^*(1))$. Hence, upon taking the
Pontryagin dual of (\ref{first}), and using the global duality isomorphism $H^2(C(T))^\vee \cong H^1(\mathcal{O}_{k,S},W^*(1))$ induced by the long exact cohomology sequence of (\ref{duality}), one obtains an exact
sequence
\[ 0 \to \cok(\tilde\lambda_T)^\vee \to H^1(\mathcal{O}_{k,S},W^*(1)) \xrightarrow{\tilde\lambda _T^\vee} \bigoplus_{v \in
S}\frac{H^1(k_v,W^*(1))}{H^1_f(k_v,W^*(1))}
\]
in which $\tilde\lambda _T^\vee$ identifies with the sum of the
natural localisation maps. Since ${\rm Sel}(T^*(1))$ is defined to be $\ker(\tilde\lambda_T^\vee)$ we thus
obtain an isomorphism $\cok(\tilde\lambda_T)\cong {\rm
Sel}(T^*(1))^\vee$ and so (\ref{first}) induces an exact sequence
\begin{equation}\label{useful2} 0 \to {\rm cok}(\lambda_T) \to H^2(C(T)) \to {\rm Sel}(T^*(1))^\vee
\to 0.\end{equation}

\subsection{The proof of Proposition \ref{admiss constructions}} We now assume the notation and hypotheses of Proposition \ref{admiss constructions}.

\subsubsection{}\label{admiss prelims}It is well known that $C(T)$ satisfies the conditions (ad$_1$), (ad$_2$) and (ad$_3$) of \S\ref{admissible definition} (see, for example, \cite[\S1.6.5 and \S2.1.3]{fukaya-kato}).

In addition, the long exact cohomology sequence
 of the natural exact triangle
\begin{equation*} C(T) \to
R\Gamma_c(\mathcal{O}_{k,S},V) \to
R\Gamma_c(\mathcal{O}_{k,S},W) \to
 C(T)[1]\end{equation*}
identifies $H^1(C(T))_{\rm tor}$ with
$H^0_{c}(\mathcal{O}_{k,S},W)_{\rm cotor}$ and the
 latter group vanishes because the long exact cohomology sequence of (\ref{def-tri}) implies $H^0_{c}(\mathcal{O}_{k,S},\mathcal{F})$ vanishes
 for all sheaves
 $\mathcal{F}$.

It follows that $C(T)$ also satisfies the assumption (ad$_4$) and hence belongs to $D^{\rm a}(\mathfrak{A})$, as required to prove Proposition \ref{admiss constructions}(i).

\subsubsection{}To prove Proposition \ref{admiss constructions}(ii) we set $P:= \bigoplus_{v \in S_\infty}H^0(k_v,T)$ and so must define suitable homomorphisms $\theta^1:P \to H^1(C(T))$ and $\theta^2: P \to H^2(C(T))$.

We note first that the exact triangle (\ref{def-tri}) gives rise to a canonical long exact sequence
\[ 0 \to H^0(\mathcal{O}_{k,S},T) \xrightarrow{\kappa} \bigoplus_{v \in S}H^0(k_v,T) \xrightarrow{\alpha} H^1(C(T)) \to H^1(\mathcal{O}_{k,S},T) \xrightarrow{\lambda} \bigoplus_{v \in S}H^1(k_v,T),\]
in which $\kappa$ is the diagonal inclusion map and $\lambda$ the diagonal localisation map.

This implies, in particular, that if we define $\theta^1$ to be the restriction of $\alpha$ to $P$, then $\theta^1$ is injective and there is an exact sequence
\[ 0 \to H^0(\mathcal{O}_{k,S},T) \xrightarrow{\kappa'} \bigoplus_{v \in S\setminus S_\infty} H^0(k_v,T) \to {\rm cok}(\theta^1) \to \ker(\lambda) \to 0\]
in which $\kappa'$ is the diagonal inclusion map. It follows that ${\rm cok}(\theta^1)$ is torsion-free if ${\rm cok}(\kappa')$ and  $\ker(\lambda)$ are both torsion-free. But, since $T$ is torsion-free, it is easy to check that ${\rm cok}(\kappa')$ is torsion-free whilst the (assumed) injectivity of the displayed localisation map in Proposition \ref{admiss constructions}(ii) implies directly that $\ker(\lambda)$ is torsion-free.

In addition, if we define $\theta^2$ to be the composite homomorphism of $\mathfrak{A}$-modules
\[  P \to \bigoplus_{v \in S_p}H^1_f(k_v,T) \to {\rm cok}(\lambda_T) \to H^2(C(T)),\]
where the first map is the given homomorphism $\phi$, the second is the tautological projection and the third is from (\ref{useful2}), then the exact sequence (\ref{useful2}) implies that ${\rm Sel}(T^*(1))^\vee$, and hence also $\sha(T) \cong \sha(T^*(1))^\vee$, is isomorphic to a subquotient of ${\rm cok}(\theta^2)$.

Thus, to complete the proof of Proposition \ref{admiss constructions}(ii) it suffices to define $C_{\phi}(T)$ to be the complex $D$  constructed when Lemma \ref{cone construct} is applied to the data $(C,\theta^1,\theta^2)$ and to note that the exact sequence (\ref{les sigma}) identifies $H^2(D)$ with ${\rm cok}(\theta^2)$.

\subsubsection{}\label{sigma complex} Turning to Proposition \ref{admiss constructions}(iii), we note that, as $\Sigma$ is disjoint from $S$, for each $v$ in $\Sigma$ there is a natural morphism $\theta_v$ in $D(\mathfrak{A})$ from $R\Gamma(\co_{k,S},W^*(1))$ to $R\Gamma(\kappa_v,W^*(1))$ for which $H^0(\theta_v)$ is the inclusion $H^0(\mathcal{O}_{k,S},W^*(1)) \to H^0(k_v,W^*(1)) = H^0(\kappa_v,W^*(1))$.

In particular, if $C_\Sigma(W)$ is any complex lying in an exact triangle of the form
\begin{equation*} C_\Sigma(W) \to R\Gamma(\co_{k,S},W^*(1)) \xrightarrow{\theta} \bigoplus_{v \in \Sigma}R\Gamma(\kappa_v,W^*(1)) \to \end{equation*}
with $\theta := \oplus_{v \in \Sigma}\theta_v$, then the module $H^0(C_\Sigma(W))= \ker(\oplus_{v \in \Sigma}H^0(\theta_v))$ vanishes.

Now $R\Gamma(\kappa_v,W^*(1))$ is represented by $T^\vee(1) \xrightarrow{1-\sigma_v} T^\vee(1)$, with the first term placed in degree zero. The complex $R\Gamma(\kappa_v,W^*(1))^\vee$ is thus represented by $T(-1) \xrightarrow{1-\sigma_v} T(-1)$, with the first term placed in degree $-1$, and so is isomorphic to $R\Gamma(\kappa_v,T(-1))[1]$.

In particular, upon applying the functor $M \mapsto M^\vee[-3]$ to the above triangle we obtain the central row in the following commutative diagram of exact triangles

\begin{equation}\label{pre triangle} \begin{CD}
@.  \bigoplus_{v \in S_\infty}R\Gamma_\Delta(k_v,T) @= \bigoplus_{v \in S_\infty}R\Gamma_\Delta(k_v,T)\\
@. @A AA @A AA\\
C'_\Sigma(T) @> \theta^\vee[-3]>> R\Gamma(\co_{k,S},W^*(1))^\vee[-3] @> >> C_\Sigma(W)^\vee[-3]\\
@\vert @A\delta AA @A \delta' AA\\
C'_\Sigma(T) @> \theta' >> R\Gamma_c(\co_{k,S},T) @> >> C_\Sigma(T).\end{CD}\end{equation}
Here we set $C'_\Sigma(T) := \bigoplus_{v \in \Sigma}R\Gamma(\kappa_v,T(-1))[-2]$, the central column is the exact triangle (\ref{duality}) and there exists a morphism $\theta'$ that makes the first square commute because the group $\Hom_{D(\mathfrak{A})}(C'_\Sigma(T), \bigoplus_{v \in S_\infty}R\Gamma_\Delta(k_v,T))$ vanishes (as $C'_\Sigma(T)$ is represented by a complex of projective $\mathfrak{A}$-modules that is concentrated in degrees two and three whilst $H^i(R\Gamma_\Delta(k_v,T))$ vanishes for all $v \in S_\infty$ and all $i > 0$). We then define $C_\Sigma(T)$ to be the mapping cone of $\theta'$ (so that the bottom row is an exact triangle) and $\delta'$ to be the morphism induced by $\delta$ and the commutativity of the first square (so that the third column is an exact triangle).

Since the first two complexes in the lower row of (\ref{pre triangle}) belong to $D^{\rm a}(\mathfrak{A})$ this exact triangle implies directly that $C_\Sigma(T)$ satisfies the conditions (ad$_1$) and (ad$_2$) of \S\ref{admissible definition}. The same fact combines with the long exact sequence of cohomology of the lower row of (\ref{pre triangle}) to show $C_\Sigma(T)$ satisfies (ad$_3$) and also gives an exact sequence
\[ 0 \to H^1_c(\co_{k,S},T) \to H^1(C_\Sigma(T)) \to \ker(H^2(\theta')) \to 0.\]
Then, as $\ker(H^2(\theta'))$ is a submodule of the torsion-free module $\bigoplus_{v \in \Sigma}T(-1)$, this sequence implies $H^1(C_\Sigma(T))$ is torsion-free, and hence that $C_\Sigma(T)$ satisfies (ad$_4$).

To show $C_\Sigma(T)$ belongs to $D^{\rm s}(\mathfrak{A})$ it is thus enough to note that the long exact sequence of cohomology of the right hand column in (\ref{pre triangle}) combines with the vanishing of $H^3(C_\Sigma(W)^\vee[-3]) \cong H^0(C_\Sigma(W))^\vee$ and of the groups $H^2(R\Gamma_\Delta(k_v,T))$ for $v \in S_\infty$ to imply $H^3(C_\Sigma(T))$ vanishes.

The morphism $\theta'$ and complex $C_\Sigma(T)$ in (\ref{pre triangle}) therefore have the properties stated in Proposition \ref{admiss constructions}(iii).

To complete the proof we now fix a morphism $\phi$ as in claim (ii) and define $C_{\phi,\Sigma}(T)$ to be the mapping cone of the composite morphism
\[ C'_\Sigma(T) \xrightarrow{\theta'} R\Gamma_c(\co_{k,S},T) \to C_\phi(T)\]
where the second morphism is as constructed in claim (ii).

Then, since the exact sequence (\ref{les sigma}) identifies $H^3(R\Gamma_c(\co_{k,S},T))$ with $H^3(C_\phi(T))$, one checks easily that
 $H^3(C_{\phi,\Sigma}(T))$ vanishes and hence, by the above argument, that $C_{\phi,\Sigma}(T)$ belongs to $D^{\rm s}(\mathfrak{A})$.

In addition, claim (ii) implies that ${\rm Sel}(T^*(1))^\vee$ and $\sha(T)$ are isomorphic to subquotients of $H^2(C_{\phi,\Sigma}(T))$ since $H^2(C_\phi(T))$ is isomorphic to a submodule of $H^2(C_{\phi,\Sigma}(T))$.

This completes the proof of Proposition \ref{admiss constructions}.

\section{Tate motives}\label{tate section}  As a first illustration of the arithmetic interest of our approach, in this section we apply the results obtained above to the Galois representations $\ZZ_p(r)$ for various integers $r$.

In this way we are led to, amongst other things, extend the existing theory of refined Stark conjectures (see Remark \ref{etnc-fitSel-Gm}), address a problem explicitly raised by Washington in \cite[Remark after Th. 8.2]{washington} and by Lang in \cite[p. 260]{lang} (see \S\ref{Structure of Exterior Biduals}), extend and refine the conjectures that are formulated by Castillo and Jones in \cite{cj} and by Solomon in \cite{sol3} (see Remark \ref{general B}), refine the main conjecture formulated by Kurihara and the first and second authors in \cite{bks2-2} (see Remark \ref{bksremark}) and both  extend and correct the main result proved by El Boukhari in \cite{boukhari} (see Remark \ref{boukhari}).

We shall also in this section provide proofs for Theorems A and B, as stated in the Introduction.

In a subsequent article \cite{bmc2} of Macias Castillo and the first author the same methods are also applied to formulate and study refined versions of the Birch and Swinnerton-Dyer concerning the leading  terms of Artin-Hasse-Weil $L$-series.

\subsection{The leading terms} At the outset we fix a finite abelian extension $F/k$ of number fields and set $G := \Gal(F/k)$. We also fix a finite set of places $S$ of $k$ that contains all places that are either archimedean or ramify in $F/k$ and an auxiliary finite set of places $T$ in $k$ that is disjoint from $S$.

For any homomorphism $\chi$ in $\widehat{G} :=\Hom(G,\CC^\times)$ we consider the $S$-truncated $T$-modified $L$-function of a complex variable $s$ that is defined by setting
\[L_{k,S,T}(\chi,s) := \prod_{v\in T}(1-\chi(\Fr_v)Nv^{1-s})\prod_{v\notin S}(1-\chi(\Fr_v)Nv^{-s})^{-1},\]
where $\Fr_v$ denotes the (arithmetic) Frobenius in $G$ of any place of $F$ above $v$ and $Nv$ is the cardinality of the residue field at $v$.

We then define a corresponding $\CC[G]$-valued $L$-function by setting
\[\theta_{F/k,S,T}(s):=\sum_{\chi\in\widehat{G}}L_{k,S,T}(\chi^{-1},s)e_\chi .\]
%
This function admits a meromorphic continuation to $\CC$ and its leading term at an integer $r$ is the unit of $\RR[G]$ that is given by the sum
\[\theta^*_{F/k,S,T}(r):=\sum_{\chi\in\widehat{G}}L_{k,S,T}^*(\chi^{-1},r)e_\chi.\]
where $L_{k,S,T}^*(\chi^{-1},r)$ denotes the leading term in the Laurent expansion of $L_{k,S,T}(\chi^{-1},s)$ at $s=r$.

In the following sections we shall investigate results that are obtained by applying the general approach developed in earlier sections to the special elements that are constructed by using the leading terms $\theta^*_{F/k,S,T}(r)$.

\subsection{Weight zero}\label{weight 0 sect} In \cite{bks1} Kurihara and the first and second authors describe a canonical `compactly supported, $T$-modified, Weil-\'etale cohomology' complex $R\Gamma_{c,T}((\mathcal{O}_{F,S})_\mathcal{W},\ZZ)$ for the constant sheaf $\ZZ$.

In this section we discuss the special elements that are constructed by combining this complex with the leading term $\theta^*_{F/k,S,T}(0)$.

For any $G$-module $M$ we write $M^\ast$ and $M^\vee$ for the linear dual $\Hom_\ZZ(M,\ZZ)$ and Pontryagin dual $\Hom_\ZZ(M,\QQ/\ZZ)$, each endowed with the contragredient action of $G$.

\subsubsection{}\label{recall selmer}We first recall relevant properties of the complex $R\Gamma_{c,T}((\mathcal{O}_{F,S})_\mathcal{W},\ZZ)$.

To do this we recall that the `$S$-relative $T$-trivialized integral Selmer group' ${\rm Sel}_{S}^{T}(F)$ for the multiplicative group $\GG_m$ over $F$ is defined to be the cokernel of a canonical homomorphism of $G$-modules
\[{\prod}_{w}\ZZ \longrightarrow \Hom_\ZZ(F_T^{\times},\ZZ) \]
(see \cite[Def. 2.1]{bks1} where the notation $\mathcal{S}_{S,T}(\mathbb{G}_{m/F})$ is used). Here in the product $w$ runs over all places of $F$ that do not lie above places in $S\cup T$, $F_T^\times$ is the subgroup of $F^\times$ comprising elements $u$ for which $u-1$ has a strictly positive valuation at each place above $T$ and the unlabeled arrow sends each element $(x_w)_w$ to the map $(u \mapsto \sum_{w}{\rm ord}_w(u)x_w)$ with ${\rm ord}_w$ the normalised additive valuation at $w$.

Then the group ${\rm Sel}_{S}^{T}(F)$ is a natural analogue for $\mathbb{G}_m$ of the integral Selmer groups of abelian varieties that are defined by Mazur and Tate in \cite{mt}, lies in a canonical exact sequence of $G$-modules of the form
\begin{equation}\label{can ses original} 0 \to {\rm Cl}_{S}^{T}(F)^\vee \to {\rm Sel}_{S}^{T}(F) \to (\mathcal{O}^{\times}_{F,S,T})^\ast\to 0\end{equation}
and also has a subquotient canonically isomorphic to ${\rm Cl}(F)^\vee$ (for details see \cite[Prop. 2.2, Rem. 2.3 and Prop. 2.4(ii)]{bks1}). Here ${\rm Cl}^T_S(F)$ is the ray class group modulo the product of
places of $F$ above $T$ of the subring $\mathcal{O}_{F,S}$ of $F$ comprising elements that are integral at all places outside $S$, $\mathcal{O}^{\times}_{F,S,T}$ is the group $F_T^\times \cap \mathcal{O}_{F,S}^\times$ and ${\rm Cl}(F)$ is the class group of $F$ and all duals are endowed with the contragredient action of $G$.

We also write $Y_{F,S}$ for the free abelian group on the set $S_F$ of places of $F$ above $S$ and $X_{F,S}$ for the submodule of $Y_{F,S}$ comprising elements whose coefficients sum to zero. We note that $Y_{F,S}$ is endowed with a natural action of $G$ and that, with respect to this action,  $X_{F,S}$ is a submodule.


\begin{proposition}\label{weil-etale} If the group $F_T^\times$ is torsion-free, then $C_{F,S,T} := R\Gamma_{c,T}((\mathcal{O}_{F,S})_\mathcal{W},\ZZ)$ belongs to $D^{\rm s}(\ZZ[G])$ and there are canonical identifications $H^1(C_{F,S,T})=Y_{F,S}/\Delta_S(\ZZ)$ and $H^2(C_{F,S,T}) = {\rm Sel}_S^T(F)$, where $\Delta_S$ denotes the natural diagonal map $\ZZ\to Y_{F,S}$. \end{proposition}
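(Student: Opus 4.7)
My plan is to combine the construction of $C_{F,S,T}$ given in [bks1]---where its cohomology is essentially computed and the relevant formal properties are established---with a verification of the four axioms for a strictly admissible complex. The hypothesis that $F_T^\times$ is torsion-free enters because it ensures $\mathcal{O}^\times_{F,S,T}$ is a torsion-free $\ZZ[G]$-module, which is needed for both the exact sequence (\ref{can ses original}) and the identification of $H^2(C_{F,S,T})$.

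The first step is to invoke the construction of $C_{F,S,T}$ given in [bks1], which produces a bounded complex of $\ZZ[G]$-modules fitting into a distinguished triangle involving the $T$-modified \'etale cohomology of $\GG_m$ on $\Spec\mathcal{O}_{F,S}$ and an archimedean correction term built from $Y_{F,S}$. Unwinding the long exact cohomology sequence of that triangle, and applying global duality for $\GG_m$, one obtains the identifications $H^1(C_{F,S,T})=Y_{F,S}/\Delta_S(\ZZ)$ and $H^2(C_{F,S,T})={\rm Sel}_S^T(F)$, together with the vanishing of $H^i(C_{F,S,T})$ for $i\notin\{1,2\}$. The vanishing in degree three is the central technical point: it is the effect of passing from $R\Gamma_{c,T}$ to its Weil-\'etale modification, which removes the usual $\widehat{\ZZ}$-contribution from \'etale compactly supported cohomology; the vanishing in degree zero is a consequence of the torsion-freeness of $\mathcal{O}^\times_{F,S,T}$. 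This already gives condition (ad$_3$) in the strict form required.

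For the remaining three axioms the arguments are as follows. Perfectness (ad$_1$) follows from the construction together with the fact that each of $\mathcal{O}_{F,S,T}^\times$, $X_{F,S}$, ${\rm Cl}_S^T(F)$ and $Y_{F,S}$ is finitely generated over $\ZZ$. Condition (ad$_4$) is immediate from the explicit description of $H^1$, which is $\ZZ$-torsion-free. For (ad$_2$), one passes to $\QQ[G]\otimes C_{F,S,T}$ and uses the Dirichlet regulator isomorphism $\QQ\otimes \mathcal{O}_{F,S,T}^\times\cong \QQ\otimes X_{F,S}$ together with (\ref{can ses original}) to identify $\QQ\otimes H^2(C_{F,S,T})$ with $\QQ\otimes X_{F,S}$ as $\QQ[G]$-modules; since also $\QQ\otimes (Y_{F,S}/\Delta_S(\ZZ))\cong \QQ\otimes X_{F,S}$ (for instance via any $\QQ[G]$-splitting of the augmentation sequence on $Y_{F,S}$), the Euler characteristic vanishes in $K_0(\QQ[G])$.

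The main obstacle I foresee is notational rather than conceptual: one must carefully match the complex built in [bks1] with the one appearing in the statement here, and verify that the connecting homomorphisms in the relevant long exact sequence produce exactly the diagonal map $\Delta_S$ in $H^1$ and the displayed exact sequence (\ref{can ses original}) in $H^2$, rather than isomorphic but differently-labelled (or sign-twisted) versions. Once this bookkeeping is complete, the verifications of (ad$_1$), (ad$_2$) and (ad$_4$) are entirely formal.
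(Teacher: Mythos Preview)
Your proposal is correct and takes essentially the same approach as the paper, which simply cites \cite[Prop.~2.4]{bks1}; you have effectively unpacked what that citation provides and verified the four admissibility axioms explicitly. The only minor point is that your attribution of the vanishing in degree zero to the torsion-freeness hypothesis is slightly off---the compact support already forces $H^0=0$, while the torsion-freeness of $F_T^\times$ is what makes $\mathcal{O}_{F,S,T}^\times$ torsion-free and hence is needed for the clean identification of $H^2$ with ${\rm Sel}_S^T(F)$ via (\ref{can ses original})---but this does not affect the overall argument.
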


\begin{proof} This follows immediately from the result of \cite[Prop. 2.4]{bks1}. \end{proof}

Following this result we will assume in the rest of \S\ref{weight 0 sect} that $T$ is chosen so that $F_T^\times$ is torsion-free.

We write
\begin{equation}\label{dirichlet} \lambda_{F,S}: \RR\cdot \mathcal{O}_{F,S}^\times \cong
 \RR\cdot X_{F,S}\end{equation}
for the `Dirichlet regulator' isomorphism of $\RR[G]$-modules that sends each $u$ in $\mathcal{O}_{F,S}^\times$ to $-\sum_{w\in S_F}\log|u|_ww$. We note that, since ${\rm Cl}_S^T(F)$ is finite, the exact sequence (\ref{can ses original}) combines with Proposition \ref{weil-etale} to imply that the linear dual of $\lambda_{F,S}$ gives a canonical isomorphism of $\RR[G]$-modules
\[ \lambda^\ast_{F,S} : \RR\cdot H^1(C_{F,S,T}) \xrightarrow{\sim} \RR \cdot H^2(C_{F,S,T}).\]

We can now recall the following result (due to Greither and the first author and to Flach). In this result we write $x \mapsto x^\#$ for the  involution of $\ZZ[G]$ that inverts elements of $G$

\begin{theorem}\label{etnc-ab} If $F$ is an abelian extension of $\QQ$, then the inverse of $\theta^*_{F/k,S,T}(0)^{\#}$ is a characteristic element for the pair $(C_{F,S,T},\lambda^\ast_{F,S})$.
\end{theorem}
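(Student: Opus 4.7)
The plan is to recognize this statement as a reformulation, in the specific language of characteristic elements and Weil--\'etale cohomology set up earlier in the paper, of the equivariant Tamagawa number conjecture (eTNC) for the pair $(h^0(\Spec F), \ZZ[G])$, whose validity has been established (by Burns--Greither and by Flach) for abelian extensions of $\QQ$.

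First, I would translate the assertion into a more classical form. Unwinding Definition \ref{char elt def} using the identifications $H^1(C_{F,S,T}) = Y_{F,S}/\Delta_S(\ZZ)$ and $H^2(C_{F,S,T}) = \Sel_S^T(F)$ supplied by Proposition \ref{weil-etale}, and combining them with the exact sequence (\ref{can ses original}) which relates $\Sel_S^T(F)$ to $\co_{F,S,T}^\times$ and to ${\rm Cl}_S^T(F)^\vee$, the asserted equality $\vartheta_{\lambda^\ast_{F,S}}(\Det_{\ZZ[G]}(C_{F,S,T})) = (\ZZ[G]\cdot\theta^\ast_{F/\QQ,S,T}(0)^{-\#},0)$ becomes precisely the standard statement of the eTNC for the motive $h^0(\Spec F)$ relative to $\ZZ[G]$; here the involution $x \mapsto x^\#$ is the standard accounting device expressing the passage between the motive and its Kummer (or Artin) dual, and the shift by $[-1]$ built into our convention of placing cohomology in degrees one and two rather than zero and one manifests itself in the appearance of $\lambda^\ast_{F,S}$ (the dual of (\ref{dirichlet})) rather than $\lambda_{F,S}$ itself.

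Second, I would reduce to checking the equality in each $p$-localisation. Because the Euler characteristic of $C_{F,S,T}$ in $K_0(\ZZ[G])$ vanishes by condition (ad$_2$), and because $K_0$ of a local order is torsion-free, the existence of a characteristic element $p$-locally is automatic, and globally one needs to patch the local data. At each prime $p$ the equality to be checked is the $p$-part of the eTNC and is known: for odd $p$ it is the main result of Burns and Greither (using the main conjecture of Iwasawa theory for abelian fields), while for $p = 2$ it was subsequently established by Flach (building on Burns--Greither and on the work of Huber--Kings on the Bloch--Kato conjecture for Tate motives in residue characteristic $2$).

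The main obstacle, I expect, is not conceptual but bookkeeping: one must carefully compare the normalisations in the definition of $\vartheta_{\lambda^\ast_{F,S}}$ given in (\ref{passage to chomo iso}) with those used in the eTNC literature. Specifically, one must track (a) the effect of the $T$-modification of the $L$-function against the $T$-modification of the Selmer group, (b) the sign conventions in the Dirichlet regulator (\ref{dirichlet}), and (c) the interaction between the shift $[-1]$ (which moves cohomology from degrees $\{0,1\}$ to $\{1,2\}$) and the involution $x \mapsto x^\#$, so that the resulting characteristic element is confirmed to be $\theta^\ast_{F/\QQ,S,T}(0)^{-\#}$ rather than $\theta^\ast_{F/\QQ,S,T}(0)^{-1}$ or some other related unit. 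Once these normalisations are correctly aligned, the theorem follows directly from the cited proofs of the eTNC.
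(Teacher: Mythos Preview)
Your proposal is correct and takes essentially the same approach as the paper. The paper's own proof is simply a citation to \cite[Rem.~3.3, Prop.~3.4]{bks1}, where exactly the translation you describe (between the characteristic-element formulation and the eTNC for $(h^0(\Spec F),\ZZ[G])$) is carried out, and then invokes the Burns--Greither/Flach proof of the eTNC for abelian extensions of $\QQ$; so your outline unpacks precisely what that citation encapsulates.
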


\begin{proof} This follows immediately from \cite[Rem. 3.3, Prop. 3.4]{bks1}. \end{proof}

\begin{remark}\label{general etnc}{\em For general finite abelian extensions $F/k$, the argument of loc. cit. shows that the assertion of Theorem \ref{etnc-ab} is equivalent to the validity of the relevant case of the equivariant Tamagawa Number Conjecture.}
\end{remark}

\begin{remark}\label{old way}{\em Set $C_{F,S,T}^\ast := R\Hom_{\ZZ[G]}(C_{F,S,T},\ZZ[G][-3])$. Then, assuming that $F_T^\times$ is torsion-free, Proposition \ref{weil-etale} combines with Lemma \ref{dual preserve} to imply $C_{F,S,T}^\ast$ belongs to $D^{\rm s}(\ZZ[G])$ and that there are identifications of $H^1(C_{F,S,T}^\ast)$ and $H^2(C_{F,S,T}^\ast)$ with $\mathcal{O}_{F,S,T}^\times$ and the transpose integral Selmer module ${\rm Sel}_{S}^{T}(F)^{\rm tr}$ that is defined in \cite[Def. 2.6]{bks1} (where it is denoted $\mathcal{S}^{\rm tr}_{S,T}(\mathbb{G}_{m/F})$). In particular, the map $\lambda_{F,S}$ induces an isomorphism of $\RR[G]$-modules $\RR\cdot H^1(C_{F,S,T}^\ast)\cong \RR\cdot H^2(C_{F,S,T}^\ast)$ and the argument used in \cite[Prop. 3.4]{bks1} shows Theorem \ref{etnc-ab} is equivalent to asserting that the inverse of $\theta^*_{F/k,S,T}(0)$ is a characteristic element for the pair $(C^\ast_{F,S,T},\lambda_{F,S})$.}
\end{remark}

\subsubsection{}\label{HigherFitWeightZero}  We write $\bf 1$ for the trivial homomorphism $G \to \QQ^{c,\times}$.  For each non-negative integer $a$ we write $\widehat G'_{S,a}$ for the subset of $\widehat{G} \setminus \{{\bf 1}\}$ comprising homomorphisms $\psi$ for which the set $S_\psi:= \{v \in S: G_v\subseteq \ker(\psi)\}$ has cardinality $a$, where we write $G_v$ for the decomposition subgroup in $G$ of a place $v$ of $k$. We then set
\[ \widehat G_{S,a} := \begin{cases} \widehat G_{S,a}' \cup \{{\bf 1}\}, &\text{ if $a=|S|-1$},\\
    \widehat G_{S,a}', &\text{ if $a\not= |S|-1$.}\end{cases}\]
Write $\widehat G_{S,(a)}$ for the union $\bigcup_{a'\ge a}\widehat G_{S,a'}$ and define idempotents of $\QQ[G]$ by setting
\[ e_{S,a} := \sum_{\psi \in \widehat G_{S,a}} e_\psi\,\,\,\,\text{ and }\,\,\,\, e_{S,(a)} := \sum_{\psi \in \widehat G_{S,(a)}}e_\psi = \sum_{a'\ge a}e_{S,a'}.\]

Since the case $F=k$ is not of much interest we shall assume, for simplicity, in the following result that $G$ is not trivial.\\


\begin{theorem}\label{Fit-Selmer-Gm} Let $\mathcal{L}$ be a characteristic element for the pair $(C_{F,S,T}, \lambda^\ast_{F,S})$. Then for any non-negative integer $a$, any subset $S_a$ of $S$ that has cardinality at least $a+1$ and contains $\bigcup_{\psi \in \widehat G_{S,a}'}S_\psi$, any $x$ in $\ZZ[G]\cap \ZZ[G]e_{S,(a)}$ and any $\Phi$ in ${\bigwedge}_{\ZZ[G]}^a\Hom_{\ZZ[G]}(\mathcal{O}_{F,S,T}^\times,\ZZ[G])$ one has
\[\Phi( x\cdot\lambda_{F,S}^{-1}(e_{S,a}\cdot(\mathcal{L}^{-1})^\#\cdot {\bigwedge}^a_{\ZZ[G]}X_{F,S})) \subseteq {\rm Fit}_{\ZZ[G]}^a({\rm Sel}_S^T(F))^\# \cap
{\rm Ann}_{\ZZ[G]}({\rm Cl}_{S_a}^T(F)).\]
\end{theorem}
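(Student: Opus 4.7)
The plan is to apply Theorem \ref{char els}(i) to the linear dual complex
\[ C^\ast := R\Hom_{\ZZ[G]}(C_{F,S,T}, \ZZ[G][-3]) \]
paired with the Dirichlet regulator $\lambda_{F,S}$. By Remark \ref{old way}, $C^\ast$ belongs to $D^{\rm s}(\ZZ[G])$ with $H^1(C^\ast) = \mathcal{O}_{F,S,T}^\times$ and $H^2(C^\ast) = {\rm Sel}^T_S(F)^{\rm tr}$, and the canonical exact sequence of \cite{bks1} identifies $H^2(C^\ast)_{\rm tf}$ with $X_{F,S}$. A standard duality between the determinants $\Det_{\ZZ[G]}(C_{F,S,T})$ and $\Det_{\ZZ[G]}(C^\ast)$, combined with the fact that $\lambda_{F,S}$ and $\lambda_{F,S}^\ast$ are $\ZZ[G]$-linear duals of each other, then shows that $\mathcal{L}^\#$ is a characteristic element for $(C^\ast,\lambda_{F,S})$.

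By Dirichlet's unit theorem the $\CC$-dimension of $e_\psi(\CC\otimes_\ZZ H^1(C^\ast))$ equals $|S_\psi|$ for nontrivial $\psi\in\widehat G$ and equals $|S|-1$ when $\psi$ is trivial, so the idempotents $e_{C^\ast,a}$ and $e_{C^\ast,(a)}$ of Definition \ref{hse def} coincide respectively with $e_{S,a}$ and $e_{S,(a)}$. For any ordered $a$-tuple $\mathcal{X}=(x_1,\ldots,x_a)$ of elements of $X_{F,S}$ the higher special element of Definition \ref{hse def} is then
\[ \eta_\mathcal{X} = \lambda_{F,S}^{-1}\bigl(e_{S,a}\cdot(\mathcal{L}^{-1})^\#\cdot x_1\wedge\cdots\wedge x_a\bigr), \]
and as $\mathcal{X}$ varies these elements $\ZZ[G]$-linearly generate the module $\lambda_{F,S}^{-1}(e_{S,a}\cdot(\mathcal{L}^{-1})^\#\cdot\bigwedge^a_{\ZZ[G]}X_{F,S})$ appearing in the theorem.

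I would then invoke Theorem \ref{char els}(i). Because $\ZZ[G]$ is Gorenstein (cf.\ Example \ref{dual exams}(i)), Remark \ref{get rid of x and y} allows the Ext-annihilator factor $y$ to be taken equal to $1$. For each $x\in\ZZ[G]\cap\ZZ[G]e_{S,(a)}$ and each $\Phi\in\bigwedge^a_{\ZZ[G]}\Hom_{\ZZ[G]}(\mathcal{O}_{F,S,T}^\times,\ZZ[G])$ the theorem then yields
\[ \Phi(x\cdot\eta_\mathcal{X}) \in {\rm Fit}^a_{\ZZ[G]}({\rm Sel}^T_S(F)^{\rm tr})\cap\Ann_{\ZZ[G]}(H^2(C^\ast)'_{\rm tor}) \]
for any subquotient $H^2(C^\ast)'$ of ${\rm Sel}^T_S(F)^{\rm tr}$ whose rank at each simple component of $Ae_{S,(a)}$ is at least $a$. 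The Fitting-ideal factor will be rewritten as ${\rm Fit}^a_{\ZZ[G]}({\rm Sel}^T_S(F))^\#$ by comparing the square presentations of these two modules: if $P\xrightarrow{\psi}P$ represents $C_{F,S,T}$ then $P^\ast\xrightarrow{\psi^\ast}P^\ast$ represents $C^\ast$, and the $(d-a)$-minors of $\psi^\ast$ are obtained from those of $\psi$ by transposition combined with the $\#$-involution coming from the contragredient $\ZZ[G]$-action.

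The main obstacle will be to construct a subquotient $H^2(C^\ast)'$ of ${\rm Sel}^T_S(F)^{\rm tr}$ that meets the rank condition at each $\psi\in\widehat G_{S,(a)}$ while having ${\rm Cl}^T_{S_a}(F)$ as a subquotient of its torsion (thereby forcing $\Ann_{\ZZ[G]}(H^2(C^\ast)'_{\rm tor})\subseteq \Ann_{\ZZ[G]}({\rm Cl}^T_{S_a}(F))$). The two hypotheses on $S_a$ supply the required combinatorial input: the bound $|S_a|\geq a+1$ gives trivial-character rank $|S_a|-1\geq a$, while $S_a\supseteq\bigcup_{\psi\in\widehat G'_{S,a}}S_\psi$ forces $e_\psi(\CC\cdot X_{F,S_a})$ to equal $e_\psi(\CC\cdot X_{F,S})$ and so have dimension $|S_\psi|=a$ at each such $\psi$. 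My proposed construction would combine the canonical surjection ${\rm Cl}^T_{S_a}(F)\twoheadrightarrow{\rm Cl}^T_S(F)$ (whose kernel is generated by the classes of places in $S\setminus S_a$) with an appropriate submodule of ${\rm Sel}^T_S(F)^{\rm tr}$ lying above a lift of $X_{F,S_a}$; securing the rank condition at characters $\psi\in\widehat G'_{S,a'}$ with $a'>a$ will be the most delicate step.
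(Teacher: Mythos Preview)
Your approach is essentially dual to the paper's and is correct for the Fitting-ideal half, but the paper makes a different choice that entirely avoids the ``main obstacle'' you identify.

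The paper applies Theorem \ref{char els}(i) directly to $C = C_{F,S,T}$ with the isomorphism $\lambda_{F,S}^\ast$ (not to the dual complex $C^\ast$). Here $H^1(C) = Y_{F,S}/\Delta_S(\ZZ)$ and $H^2(C) = {\rm Sel}^T_S(F)$; the elements $\varphi_i \in (\mathcal{O}_{F,S,T}^\times)^\ast$ play the role of the ordered subset of $H^2(C)_{\rm tf}$, while the $x_j \in X_{F,S} = (Y_{F,S}/\Delta_S(\ZZ))^\ast$ play the role of the homomorphisms in $I(\eta)$. Theorem \ref{char els}(i) (with $y=1$, as you correctly note) then gives
\[
({\textstyle\bigwedge}_j x_j)\bigl(x\cdot (\lambda_{F,S}^\ast)^{-1}(e_{S,a}\cdot \mathcal{L}^{-1}\cdot {\textstyle\bigwedge}_i\varphi_i)\bigr)\in {\rm Fit}^a_{\ZZ[G]}({\rm Sel}^T_S(F))\cap \Ann_{\ZZ[G]}({\rm Cl}^T_{S_a}(F)^\vee),
\]
and the stated inclusion follows by applying the involution $\#$ at the very end, using the elementary identities $e_{S,a}^\# = e_{S,a}$, $(\ZZ[G]\cap\ZZ[G]e_{S,(a)})^\# = \ZZ[G]\cap\ZZ[G]e_{S,(a)}$, $\Ann({\rm Cl}^T_{S_a}(F)^\vee) = \Ann({\rm Cl}^T_{S_a}(F))^\#$, and the swap between $\lambda_{F,S}$ and $\lambda_{F,S}^\ast$.

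The payoff of this choice is that the subquotient needed for the annihilator claim is immediately available: \cite[Prop.~2.4(ii),(iii)]{bks1} gives a natural \emph{surjection} ${\rm Sel}^T_S(F)\twoheadrightarrow {\rm Sel}^T_{S_a}(F)$, and the torsion of the target is exactly ${\rm Cl}^T_{S_a}(F)^\vee$ by the sequence (\ref{can ses original}). The rank hypothesis at components of $\QQ[G]e_{S,(a)}$ follows from the choice of $S_a$ just as you argue. There is no delicate construction at all.

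By contrast, in your dual setup the torsion of $H^2(C^\ast) = {\rm Sel}^T_S(F)^{\rm tr}$ is ${\rm Cl}^T_S(F)$, which is a \emph{quotient} of ${\rm Cl}^T_{S_a}(F)$, not a module containing it; producing a subquotient of ${\rm Sel}^T_S(F)^{\rm tr}$ whose torsion admits ${\rm Cl}^T_{S_a}(F)$ as a subquotient therefore requires a non-obvious construction (one must introduce extra torsion by passing to a non-saturated quotient encoding the primes in $S\setminus S_a$). This can be done, but it is exactly the circuitous step the paper's choice of complex eliminates.
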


\begin{proof} Set $C:=C_{F,S,T}$. We prove first that the idempotent $e_{C,a}$ defined in \S \ref{hse def sect} is equal to $e_{S,a}$. To do this, we note that the isomorphism (\ref{dirichlet}) combines with \cite[Chap. I, Prop. 3.4]{tb} to imply that for each $\chi$ in $\widehat{G}$ one has
\begin{equation}\label{Tatedim}
\dim_\CC (e_\chi(\CC\cdot(\mathcal{O}^{\times}_{F,S})^\ast))=\begin{cases}
|S_\chi| &\mbox{ if } \chi\neq {\bf 1}\\
|S|-1 &\mbox{ if } \chi={\bf 1}\end{cases},
\end{equation}
and hence that $e_\chi\cdot e_{S,a}\neq 0$ if and only if $\dim_\CC (e_\chi(\CC\cdot(\mathcal{O}^{\times}_{F,S})^\ast))=a$.
Since $H^2(C)= {\rm Sel}_{S}^T(F)$, the exact sequence (\ref{can ses original}) therefore implies $e_{C,a}=e_{S,a}$, as required.

We next recall that \cite[Prop. 2.4(ii) and (iii)]{bks1} combine to give a natural surjective homomorphism of $G$-modules ${\rm Sel}_S^T(F)\rightarrow{\rm Sel}_{S_a}^T(F)$, that our choice of $S_a$ implies ${\rm Sel}_{S_a}^T(F)$ has rank at least $a$
in each simple component of $\ZZ[G]e_{S,(a)}$ and that the sequence (\ref{can ses original}) implies $\Hom_\ZZ({\rm Cl}_{S}^{T}(F),\QQ/\ZZ)$ is the torsion subgroup of ${\rm Sel}_{S_a}^{T}(F)$.

Finally we note that, as $\ZZ[G]$ is Gorenstein, the observation in Remark \ref{get rid of x and y} implies that we may take $y=1$ in the statement of Theorem \ref{char els}(i).

Thus, by putting all of the above observations together, we may deduce from the latter result that for any ordered subsets $\{\varphi_i\}_{1\le i\le a}$ of $(\mathcal{O}_{F,S,T}^{\times})^\ast$ and $\{x_j\}_{1\le j\le a}$ of $(Y_{F,S}/\Delta_S(\ZZ))^\ast = X_{F,S}$ and any element $x$ of  $\ZZ[G]\cap \ZZ[G]e_{S,(a)}$ one has
\begin{equation}\label{key1} ({\bigwedge}_{j=1}^{j=a}x_j)( x\cdot (\lambda_{F,S}^{\ast})^{-1}(e_{S,a}\cdot\mathcal{L}^{-1}\cdot {\bigwedge}_{i=1}^{i=a}\varphi_i)) \in
{\rm Fit}_{\ZZ[G]}^a({\rm Sel}_S^T(F)) \cap {\rm Ann}_{\ZZ[G]}({\rm Cl}_{S_a}^T(F)^\vee).\end{equation}

Now it is straightforward to check $e_{S,a} = e_{S,a}^\#$, that $\ZZ[G]\cap \ZZ[G]e_{S,(a)} = (\ZZ[G]\cap \ZZ[G]e_{S,(a)})^\#$, that ${\rm Ann}_{\ZZ[G]}({\rm Cl}_{S_a}^T(F)^\vee) = {\rm Ann}_{\ZZ[G]}({\rm Cl}_{S_a}^T(F))^\#$ and that
\[ (({\bigwedge}_{j=1}^{j=a}x_j)( (\lambda_{F,S}^{\ast})^{-1}({\bigwedge}_{i=1}^{i=a}\varphi_i)))^\# =
 ({\bigwedge}_{i=1}^{i=a}\varphi_i)( \lambda_{F,S}^{-1}({\bigwedge}_{j=1}^{j=a}x_j))\]
for all choices of $x_j$ and $\varphi_i$.

To deduce the claimed result from (\ref{key1}) it is thus enough to note that all elements of ${\bigwedge}_{\ZZ[G]}^a\Hom_{\ZZ[G]}(\mathcal{O}_{F,S,T}^\times,\ZZ[G])$ and ${\bigwedge}^a_{\ZZ[G]}X_{F,S}$ can be respectively obtained as integral linear combinations of the form ${\bigwedge}_{i=1}^{i=a}\varphi_i$ and ${\bigwedge}_{j=1}^{j=a}x_j$ as the subsets $\{\varphi_i\}_{1\le i\le a}$ and  $\{x_j\}_{1\le j\le a}$ that are chosen above vary. \end{proof}

\begin{remark}\label{etnc-fitSel-Gm}{\em If $F$ is abelian over $\QQ$ (in which case Theorem \ref{etnc-ab} can be used) or, following Remark \ref{general etnc}, one assumes the relevant case of the equivariant Tamagawa number conjecture to be valid, then the element $e_{S,a}\cdot(\mathcal{L}^{-1})^\#$ in Theorem \ref{Fit-Selmer-Gm} can be taken to be the value at $s=0$ of the series $s^{-a}\cdot\theta_{F/k,S,T}(s)$.

By using this observation it can be seen that the inclusion in Theorem \ref{Fit-Selmer-Gm} refines the existing theory of higher-order `abelian Stark conjectures'. To be a little more precise, we recall that there is a very extensive theory of such conjectures that are due to Stark \cite{stark}, to Rubin \cite{R} and to Popescu \cite{Pop} amongst others and are nicely surveyed, for example, by Valli\`{e}res in \cite[\S1]{vallieres} and that all of these conjectures were recently both extended and refined by Livingstone Boomla and the first author in \cite{dbalb}. However, even the inclusion that is conjectured in \cite{dbalb} deals only with certain distinguished elements of ${\bigwedge}^a_{\ZZ[G]}X_{F,S}$ and so is more restricted than that predicted by Theorem \ref{Fit-Selmer-Gm}.}
\end{remark}

\subsubsection{}\label{Structure of Exterior Biduals} In this section, we specialise Theorem \ref{str-bidual} to the setting of Theorem \ref{etnc-ab} in the  case $F=\QQ(\zeta_f)^+$ for some integer $f$.

To do this we set $S=\{\infty\}\cup\{\ell| f\}$ and let $T$ be an auxiliary non-empty subset of places of $\QQ$ disjoint from $S$. We use the  $T$-modified cyclotomic unit
 \[ c_{F,T}:=\delta_T\cdot (1-\zeta_f)  \]
of $\mathcal{O}_{F,S,T}^\times$ with $\delta_T:=\prod_{\ell\in T}(1-\sm_\ell^{-1} \cdot \ell)$, where $\sm_\ell$ is the Frobenius element of $\ell$.

In \cite[p. 260]{lang} Lang explicitly comments that the module structure of the quotient of the group of units by the group of cyclotomic units is a `mystery'. This issue is at least partly addressed by the following result.

\begin{theorem}\label{G-stru_isom} Set $F :=\QQ(\zeta_f)^+$ and $G :=\Gal(F/\QQ)$. Then there exists a canonical perfect $G$-bilinear pairing of finite modules
\[
(\mathcal{O}_{F,S,T}^\times/\langle c_{F,T}\rangle)_{\rm tor}\times (\ZZ[G]/\Fit^1_{G}({\rm Sel}^T_S(F)^{\rm tr}))_{\rm tor} \to \QQ[G]/\ZZ[G] .\]
%
\end{theorem}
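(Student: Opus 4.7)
The plan is to specialise Theorem \ref{str-bidual} to the pair $(C^\ast_{F,S,T}, \lambda_{F,S})$ from Remark \ref{old way}, with $a=1$ and a carefully chosen separable singleton $\mathcal{X}\subset H^2(C^\ast_{F,S,T})=\mathrm{Sel}^T_S(F)^{\mathrm{tr}}$. By Theorem \ref{etnc-ab} combined with Remark \ref{old way}, the element $\mathcal{L}:=\theta^\ast_{F/\QQ,S,T}(0)^{-1}$ is a characteristic element for this pair. Since $F$ is totally real, the archimedean place of $\QQ$ splits completely in $F$, so $|S_\chi|\ge 1$ for every non-trivial character $\chi$ of $G$ and $|S|-1\ge 1$ (as $F\neq \QQ$); in view of (\ref{Tatedim}) this forces $e_{C^\ast_{F,S,T},(1)}=1$. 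Since $\ZZ[G]$ is Gorenstein, we may therefore take $x=y=1$ in Theorem \ref{str-bidual}.

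Fixing a place $w_\infty$ of $F$ above $\infty$ and using the canonical exact sequence $0\to\mathrm{Cl}^T_S(F)\to\mathrm{Sel}^T_S(F)^{\mathrm{tr}}\to X_{F,S}\to 0$ (the analogue of (\ref{can ses original}) for the transpose Selmer module), I lift $w_\infty - \frac{1}{|S|}\sum_{v\in S}(\text{orbit of }v)$ to a canonical element $x_\infty\in\mathrm{Sel}^T_S(F)^{\mathrm{tr}}$. Because $\infty$ splits completely the $\ZZ[G]$-submodule generated by $x_\infty$ is free of rank one, and a straightforward argument (using that $\mathrm{Sel}^T_S(F)^{\mathrm{tr}}/\langle x_\infty\rangle$ fits into an exact sequence with $\ZZ$-torsion-free quotient) shows that $\mathcal{X}:=\{x_\infty\}$ is separable in the sense of Definition \ref{sep def}.

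With these choices, the defining formula (\ref{eta def}) for the higher special element $\eta:=\eta_{(C^\ast_{F,S,T},\lambda_{F,S},\mathcal{L},\{x_\infty\})}$ specialises to the Rubin--Stark-type equation $\lambda_{F,S}(\eta)=\theta^\ast_{F/\QQ,S,T}(0)\cdot x_\infty$, and the classical explicit computation of the Dirichlet regulator of $c_{F,T}$ (namely the Siegel/Stark formula for cyclotomic units in abelian extensions of $\QQ$) identifies $\eta$ with $c_{F,T}$ up to sign. Theorem \ref{str-bidual}(i) then supplies a canonical perfect $G$-bilinear pairing of finite modules
\[
\left(\dfrac{{\bigcap}^1_{\ZZ[G]}\mathcal{O}^\times_{F,S,T}}{\langle c_{F,T}\rangle}\right)_{\!\mathrm{tor}}\times\left(\dfrac{\ZZ[G]}{I(c_{F,T})}\right)_{\!\mathrm{tor}}\longrightarrow \QQ[G]/\ZZ[G].
\]

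To rewrite this in the form stated in Theorem \ref{G-stru_isom} I invoke two identifications. First, $\mathcal{O}^\times_{F,S,T}$ is $\ZZ$-torsion-free (as $F_T^\times$ is, by hypothesis on $T$), so Proposition \ref{dual-frac}(i) gives reflexivity and hence ${\bigcap}^1_{\ZZ[G]}\mathcal{O}^\times_{F,S,T}=\mathcal{O}^\times_{F,S,T}$. Second, separability of $\mathcal{X}$ combined with Remark \ref{explicit verson of char els} (applied via Theorem \ref{char els}(ii), valid since $\ZZ[G]$ is Gorenstein) shows $I(c_{F,T})=\mathrm{Fit}^1_{\ZZ[G]}(\mathrm{Sel}^T_S(F)^{\mathrm{tr}})$. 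Substituting these identifications yields the desired pairing. The main obstacle is the identification $\eta=c_{F,T}$: this rests on the explicit Stark-style formula for cyclotomic units and on matching normalisations (signs, the $\#$-involution, and the canonical choice of $x_\infty$), which requires some care but is essentially classical.
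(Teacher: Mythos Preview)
Your overall strategy is exactly the paper's: work with $(C^\ast_{F,S,T},\lambda_{F,S})$ as in Remark~\ref{old way}, take the characteristic element $\mathcal{L}=\theta^\ast_{F/\QQ,S,T}(0)^{-1}$ from Theorem~\ref{etnc-ab}, choose a singleton $\mathcal{X}$ in $H^2(C^\ast)_{\rm tf}$, identify the resulting special element with $c_{F,T}$ via the classical formula for $L'_S(\chi,0)$, and then invoke Theorem~\ref{str-bidual}. Your verification that $e_{(1)}=1$ and that ${\bigcap}^1_{\ZZ[G]}\mathcal{O}^\times_{F,S,T}=\mathcal{O}^\times_{F,S,T}$ (via reflexivity) are both correct.

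The genuine problem is your element $x_\infty$. The expression $w_\infty-\frac{1}{|S|}\sum_{v\in S}(\text{orbit of }v)$ does not have integer coefficients (there is no reason $|S|$ should divide the orbit sizes), and its coefficient sum is $1-|S_F|/|S|$, which is nonzero since $\infty$ splits completely while finite primes in $S$ need not; so it does not even lie in $X_{F,S}$. Moreover, no ``lift'' to $\mathrm{Sel}^T_S(F)^{\rm tr}$ is needed (and none would be canonical, since $\mathrm{Cl}^T_S(F)$ is generally nontrivial): by Definition~\ref{hse def} the set $\mathcal{X}$ lives in $H^2(C^\ast)_{\rm tf}$, and the exact sequence $0\to\mathrm{Cl}^T_S(F)\to\mathrm{Sel}^T_S(F)^{\rm tr}\to X_{F,S}\to 0$ identifies this torsion-free quotient with $X_{F,S}$ directly.

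The paper takes $\mathcal{X}_V=\{w-w_0\}$ where $w$ is a fixed archimedean place and $w_0$ a fixed \emph{non-archimedean} place of $F$ above some prime in $S\setminus\{\infty\}$. This element is visibly integral, lies in $X_{F,S}$, and generates a free rank-one $\ZZ[G]$-module (because the $G$-orbits of $w$ and $w_0$ are disjoint and $w$ alone already generates a free module). The identification $\eta=c_{F,T}$ then follows from the classical equality $L'_S(\chi,0)=-\tfrac{1}{2}\sum_{\sigma}\log|(1-\zeta_f^\sigma)(1-\zeta_f^{-\sigma})|_w\,\chi(\sigma)$: for characters $\chi$ contributing to $e_{C^\ast,1}$ one has $e_\chi\cdot w_0=0$ (such $\chi$ have $S_\chi=\{\infty\}$), so $e_{C^\ast,1}\cdot(w-w_0)=e_{C^\ast,1}\cdot w$ and the defining equation~(\ref{eta def}) matches the formula exactly. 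Your appeal to ``the Siegel/Stark formula'' cannot be completed without first correcting $\mathcal{X}$ to this form.
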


\begin{proof} We set $C ^\ast:=C^\ast_{F,S,T}$ and recall from Remark \ref{old way} that $\theta^*_{F/\QQ,S,T}(0)^{-1}$ is a characteristic element for the pair $(C^\ast,\lambda_{F,S})$.

We now set $V=\{\infty\}\subsetneq S$ and fix an archimedean place $w$ of $F$ and a (non-archimedean) place $w_0$ of $F$ that lies above a rational prime in $S\setminus V$. Then the canonical exact sequence
\begin{equation}\label{last eq}
0\rightarrow {\rm Cl}_S^T(F)\rightarrow {\rm Sel}_S^T(F)^{\rm tr}\rightarrow X_{F,S}\rightarrow 0
\end{equation}
(from \cite[Rem. 2.7]{bks1}) allows us to regard $\mathcal{X}_V:=\{w-w_0\}$ as a subset of $H^2(C^\ast)_{\rm tf}$.

In this way the classical equality
\begin{equation*}\label{derivative}L_S'(\chi,0)=-\dfrac{1}{2}\sum_{\sm\in \Gal(\QQ(\zeta_f)/\QQ)}\log|(1-\zeta_f^{\sm})(1-\zeta_f^{-\sm})|_w \chi(\sm)
\end{equation*}
(taken from, for example, \cite[p. 79]{tb}) implies that the higher special element associated to the data $C^\ast, \lambda_{F,S},\theta^*_{F/\QQ,S,T}(0)^{-1}$ and $\mathcal{X}_V$ coincides with $c_{F,T}$.

Given this, the claimed isomorphism follows directly upon applying Theorem \ref{str-bidual} to the data $(C^\ast, \lambda_{F,S},\theta^*_{F/\QQ,S,T}(0)^{-1},\mathcal{X}_V)$.\end{proof}

\begin{remark}\label{washingtonremark}{\em \
 Assume now that $f=p^n$ for some prime $p$. In this case the quotient $\mathcal{O}_{F,S,T}^\times/\langle c_{F,T}\rangle$ is finite and the $G$-module $X_{F,S}$ is free of rank one so that $e_1=1$ and the exact sequence (\ref{last eq}) implies that $\Fit_G^0({\rm Cl}_S^T(F))=\Fit_G^1({\rm Sel}_S^T(F)^{\rm tr})$. Theorem \ref{G-stru_isom} therefore gives an isomorphism of (cyclic) $G$-modules $ (\mathcal{O}_{F,S,T}^\times/\langle c_{F,T}\rangle)^\vee\cong \ZZ[G]/\Fit_G^0({\rm Cl}_S^T(F))$.

This isomorphism resolves the problem explicitly raised by Washington in \cite[Rem. following Th. 8.2]{washington} of establishing a precise connection in this case between the Galois structures of $\mathcal{O}_{F,S,T}^\times/\langle c_{F,T}\rangle$ and ${\rm Cl}_S^T(F)$. }
\end{remark}

\begin{remark}\label{remark bks}{\em It is also possible to  deduce Theorem \ref{G-stru_isom} directly from the result of Kurihara and the first two authors in \cite[Th. 7.5]{bks1} rather than via specialization of the more general result of Theorem \ref{str-bidual}.}
\end{remark}

\subsubsection{}\label{proof of A} We can now prove Theorem A, as stated in the Introduction.

To do this we note that, since $G$ is abelian, for each $G$-module $M$ there exists a $G$-module $M^\iota$ that has the same underlying set as $M$ but upon which the $G$-action is obtained by composing the given action of $G$ on $M$ with the involution $x \mapsto x^\#$ of $\ZZ[G]$ that inverts elements of $G$.

In particular, the $G$-bilinearity of the pairing in Theorem \ref{G-stru_isom} implies that if one composes this pairing with the projection map $\QQ[G]/\ZZ[G] \to \QQ/\ZZ$ that is induced by sending each element of $\QQ[G]$ to its coefficient at the identity element of $G$, then one obtains a perfect $G$-invariant bilinear pairing of the form
\[ (\mathcal{O}_{F,S,T}^\times/\langle c_{F,T}\rangle)_{\rm tor}\times (\ZZ[G]/\Fit^1_{G}({\rm Sel}^T_S(F)^{\rm tr}))^\iota_{\rm tor} \to \QQ/\ZZ .\]

To derive the pairing of Theorem A in the Introduction from this it is enough to note that the assignment $x\mapsto x^\#$ induces a natural isomorphism of $G$-modules
\[ (\ZZ[G]/\Fit^1_{G}({\rm Sel}^T_S(F)^{\rm tr}))^\iota \cong \ZZ[G]/\Fit^1_{G}({\rm Sel}^T_S(F)^{\rm tr})^\#\]
and that \cite[Lem. 2.8]{bks1} implies $\Fit^1_{G}({\rm Sel}^T_S(F)^{\rm tr})^\#$ is equal to $\Fit^1_{G}({\rm Sel}^T_S(F))$.

This completes the proof of Theorem A.

\subsection{Weight minus two}\label{weight -2} In the next two sections, we assume $S$ also contains the $p$-adic places of $k$. For each integer $r$ we consider the complexes
\[ C(r) := R\Gamma_c(\mathcal{O}_{k,S},\ZZ_p(r)_F)\]
of $D^{\rm a}(\ZZ_p[G])$ defined in \S\ref{csc exam}. If we denote by $S_{\infty}(F)$ the set of infinite places of $F$, we also set
\[Y_F(r):=\bigoplus_{w\in S_\infty(F)}H^0(F_w, \ZZ_p(r)).\]  In this section we consider in detail the case when $r=1$

\subsubsection{}\label{first -2}We write $A_F$ for the Sylow $p$-subgroup of the ideal class group of $F$ and $M_S(F)$ for the maximal pro-$p$ abelian extension of $F$ unramified outside $S$. Also for each $p$-adic place $w$ of $F$ we write $U_w$ for the (pro-$p$) group of principal units in the completion of $F$ at $w$.

Then there are canonical identifications $H^2(C(1)) = \Gal(M_S(F)/F), H^3(C(1))= \ZZ_p,$ $ \sha(\ZZ_p(1)_F) = A_F$ and for each $p$-adic place $v$ of $k$ also $H^1_f(k_v,\ZZ_p(1)_F) = \prod_{w\mid v}U_w$ where the product is over all places of $F$ above $v$.

Thus, if we set $Y_F := Y_F(1)$, then for any homomorphism
\begin{equation}\label{phi}
\phi: Y_F \to \prod_{w\mid p}U_w
\end{equation}
of $\ZZ_p[G]$-modules the construction of Proposition \ref{admiss constructions}(ii) gives an exact triangle in $D(\ZZ_p[G])$
\begin{equation}\label{canonical exact tri} Y_F[-1]\oplus Y_F[-2] \to C(1) \to C_\phi(1) \to Y_F[0]\oplus Y_F[-1]
\end{equation}
for a complex $C_\phi(1)$ in $D^{\rm a}(\ZZ_p[G])$. In addition, in this case one finds that $\sha(\ZZ_p(1)_F) = A_F$ is a quotient (rather than merely a subquotient) of $H^2(C_\phi(1))$.

We set $T_{\bf 1} := \sum_{g \in G}g$ and write $e_{\bf 1}$ for the idempotent $|G|^{-1}T_{\bf 1}$ of $\QQ_p[G]$. We set $\mathfrak{B} := \ZZ_p[G](1-e_{\bf 1})$ and also $B := \QQ_p[G](1-e_{\bf 1})$. We note that the natural short exact sequence of $\ZZ_p[G]$-modules
\begin{equation}\label{tauto ses}0 \to (T_{\bf 1}) \to \ZZ_p[G] \xrightarrow{1\mapsto 1-e_{\bf 1}} \mathfrak{B} \to 0\end{equation}
implies that for a $\ZZ_p[G]$-module $N$ there is a natural action of $\mathfrak{B}$ on the quotient $N/H^0(G,N)$.

The complex $C_\phi(1)_0 := \mathfrak{B}\otimes^{\mathbb{L}}_{\ZZ_p[G]}C_\phi(1)$ belongs to $D^{\rm a}(\mathfrak{B})$ and, since $H^3(C_\phi(1)) = H^3(C(1))$ identifies with $\ZZ_p$, the group $H^3(C_\phi(1)_0)= H^3(C(1)_0)$ is finite and the $B$-modules $\QQ_p\otimes_{\ZZ_p}H^1(C_\phi(1)_0)$ and $\QQ_p\otimes_{\ZZ_p}H^2(C_\phi(1)_0)$, and $\QQ_p\otimes_{\ZZ_p}H^1(C(1)_0)$ and $\QQ_p\otimes_{\ZZ_p}H^2(C(1)_0)$, are isomorphic (as a consequence of the respective conditions ({\rm ad}$_2$) and ({\rm ad}$_3$)).

In the sequel we write $e_*$ for the sum of all primitive idempotents $e$ in $B$ with the property that $e(\QQ_p\otimes_{\ZZ_p}H^2(C_\phi(1)_0))$, and hence also $e(\QQ_p\otimes_{\ZZ_p}H^1(C_\phi(1)_0))$, vanishes.

\begin{proposition}\label{weight -2 prop} The following claims are valid.
\begin{itemize}
\item[(i)] One has $e_*(\QQ_p\otimes_{\ZZ_p}Y_F) = e_*(\QQ_p\otimes_{\ZZ_p}H^1(C(1)_0))$ and the map
\[ \phi_{(e_*)}: e_*(\QQ_p\otimes_{\ZZ_p}H^1(C(1)_0)) \to e_*(\QQ_p\otimes_{\ZZ_p}H^2(C(1)_0))\]
induced by the composite of $\phi$ and the reciprocity map $\prod_{w\mid p}U_w \to \Gal(M_S(F)/F)$ is bijective.

\item[(ii)] Let $\mathcal{L}$ be a characteristic element of $C(1)_0$ with respect to an isomorphism $\psi$ of $B_{\CC_p}$-modules $\CC_p\otimes_{\ZZ_p}H^1(C(1)_0) \to \CC_p\otimes_{\ZZ_p}H^2(C(1)_0)$ that agrees with $\CC_p\otimes_{\QQ_p}\phi_{(e_*)}$ on $e_*(\CC_p\otimes_{\ZZ_p}H^1(C(1)_0))$.

Then the product $e_*\cdot \mathcal{L}^{-1}$ belongs to $\mathfrak{B}$ and annihilates the module $A_F/H^0(G,A_F)$.
\end{itemize}
\end{proposition}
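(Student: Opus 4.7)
To prove (i), I would apply the long exact cohomology sequence associated with the exact triangle (\ref{canonical exact tri}) after tensoring with $\mathfrak{B}$ and rationalising. Since $\theta^1$ is injective (as shown in the proof of Proposition \ref{admiss constructions}(ii)) and $\theta^2$ is, by construction, the composite of $\phi$ with local reciprocity $\bigoplus_{w\mid p}U_w\to \Gal(M_S(F)/F)=H^2(C(1))$, and since the very definition of $e_*$ forces $e_*(\QQ_p\otimes H^i(C_\phi(1)_0))=0$ for $i=1,2$, the long exact sequence collapses at $e_*$ into isomorphisms $\theta^i_{(e_*)}:e_*(\QQ_p\otimes Y_F)\xrightarrow{\sim} e_*(\QQ_p\otimes H^i(C(1)_0))$ for $i=1,2$. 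The case $i=1$ yields the claimed equality, and since $\phi_{(e_*)}=\theta^2_{(e_*)}\circ (\theta^1_{(e_*)})^{-1}$ by construction, its bijectivity is immediate.

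For (ii), the strategy is to transfer the characteristic element from $C(1)_0$ to the related complex $C_\phi(1)_0$ and then reduce to a strictly admissible setting in which Theorem \ref{char els}(ii) applies. The key observation is that since $(Y_F\otimes_{\ZZ_p}\mathfrak{B})[-1]\oplus (Y_F\otimes_{\ZZ_p}\mathfrak{B})[-2]$ has canonically trivial graded determinant, tensoring (\ref{canonical exact tri}) with $\mathfrak{B}$ yields a canonical identification $\Det_\mathfrak{B}(C(1)_0)\cong \Det_\mathfrak{B}(C_\phi(1)_0)$. Using this, together with the observation from (i) that $C_\phi(1)_0$ is rationally acyclic at $e_*$ (so that the naturally induced isomorphism $\psi_\phi$ on its cohomology is there the zero map), one obtains a characteristic element $\mathcal{L}_\phi$ of $(C_\phi(1)_0,\psi_\phi)$ with $e_*\mathcal{L}^{-1}=e_*\mathcal{L}_\phi^{-1}$.

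To handle the fact that $C_\phi(1)_0$ is only admissible, I would then apply Proposition \ref{reduction}. Using the explicit computation $H^3(C_\phi(1)_0)=H^3(C(1)_0)=\ZZ_p/|G|\ZZ_p$ with trivial $G$-action (which follows from $H^3(C(1))=\ZZ_p$ and the spectral sequence $\mathrm{Tor}^{\ZZ_p[G]}_{-p}(\mathfrak{B},H^q(C(1)))\Rightarrow H^{p+q}(C(1)_0)$), one sees that the correction factor $x$ supplied by the proposition is a unit at $e_*$. Theorem \ref{char els}(ii) applied to the resulting strictly admissible complex $\tilde C\in D^{\rm s}(\mathfrak{B})$ with $a=0$ and the trivially separable empty set $\mathcal{X}=\emptyset$ then yields $\mathfrak{B}\cdot e_*\mathcal{L}_x^{-1}=\Fit^0_\mathfrak{B}(H^2(\tilde C))\subseteq \mathfrak{B}$; combining with the previous step gives $e_*\mathcal{L}^{-1}\in \mathfrak{B}$ and its annihilation of $H^2(C_\phi(1)_0)$. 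Finally, since $A_F=\sha(\ZZ_p(1)_F)$ is by Proposition \ref{admiss constructions}(ii) a quotient of $H^2(C_\phi(1))$, the module $A_F/T_{\bf 1}A_F=\mathfrak{B}\otimes_{\ZZ_p[G]}A_F$ is a $\mathfrak{B}$-quotient of $H^2(C_\phi(1)_0)$, and the natural surjection $A_F/T_{\bf 1}A_F\twoheadrightarrow A_F/H^0(G,A_F)$ transfers the annihilation property. The main obstacle will be the careful bookkeeping of the characteristic elements across the three complexes $C(1)_0,C_\phi(1)_0,\tilde C$, and in particular verifying that the correction factor $x$ from Proposition \ref{reduction} combines with $e_*\mathcal{L}_x^{-1}$ to produce an element lying integrally in $\mathfrak{B}$ even though $x^{-1}$ itself need not.
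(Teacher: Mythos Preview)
Your argument for claim (i) is correct and matches the paper's one-line proof.

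For claim (ii), your overall strategy (pass from $C(1)_0$ to $C_\phi(1)_0$ via the trivialised determinant of the $Y_F$-terms, then invoke Proposition~\ref{reduction} and a rank-zero Fitting-ideal computation) is the same as the paper's. However, the obstacle you flag at the end is not mere bookkeeping: it is the substantive step, and your outline does not close it. From Theorem~\ref{char els}(ii) you obtain $\mathfrak{B}\cdot e_*\mathcal{L}_x^{-1}=\Fit^0_\mathfrak{B}(H^2(\tilde C))$, but the determinant relation from Proposition~\ref{reduction}(iii) gives $e_*\mathcal{L}^{-1}$ only up to a factor of $x^{-1}$, and there is no reason for $\Fit^0_\mathfrak{B}(H^2(\tilde C))\subseteq x\mathfrak{B}$; the observation that $x$ is a unit in $Be_*$ does not help, since you need integrality in $\mathfrak{B}$, not invertibility in $B$. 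The paper resolves this by working in the opposite direction: it shows $e_*\mathcal{L}^{-1}\in e_*\Det_\mathfrak{B}(C_\phi(1)_0)^{-1}=x\cdot e_*\Det_\mathfrak{B}(C_x)^{-1}\subseteq x\cdot\Ann_\mathfrak{B}(H^2(C_\phi(1)_0))$, thus retaining the factor $x$ rather than losing $x^{-1}$. This extra factor of $x$ is then \emph{used}, via the exact sequence of Lemma~\ref{spec seq exact}(ii), to pass from $\Ann(H^2(C_\phi(1)_0))$ to $\Ann(H^0(G,\mathfrak{B}\otimes_{\ZZ_p}H^2(C_\phi(1))))$: the cokernel of the relevant map is $H^2(G,\mathfrak{B}\otimes_{\ZZ_p}H^1(C_\phi(1)))\cong H^3(G,(T_{\bf 1})\otimes_{\ZZ_p}H^1(C_\phi(1)))$, and the latter is annihilated by $x\in I_p(G)$ because the $\ZZ_p[G]$-action on $(T_{\bf 1})$ factors through augmentation.

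A secondary point: your claim that $\mathfrak{B}\otimes_{\ZZ_p[G]}A_F$ is a \emph{quotient} of $H^2(C_\phi(1)_0)$ is not correct as stated. The Tor spectral sequence (here degenerate, since $\mathfrak{B}$ has projective dimension one over $\ZZ_p[G]$) gives a short exact sequence $0\to\mathfrak{B}\otimes_{\ZZ_p[G]}H^2(C_\phi(1))\to H^2(C_\phi(1)_0)\to{\rm Tor}_1(\mathfrak{B},H^3(C_\phi(1)))\to 0$, so $\mathfrak{B}\otimes_{\ZZ_p[G]}A_F$ is only a \emph{subquotient}. This does not affect annihilation statements, but it is another reason the paper routes through $H^0(G,\mathfrak{B}\otimes_{\ZZ_p}H^2(C_\phi(1)))$ and the sequence~(\ref{tauto ses}) rather than through $\mathfrak{B}\otimes_{\ZZ_p[G]}H^2(C_\phi(1))$ directly.
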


\begin{proof} Claim (i) follows directly from our definition of $e_*$ and the long exact sequence of cohomology of the triangle (\ref{canonical exact tri}).

To prove claim (ii) we use the notation of \S\ref{control section} with $C= C_\phi(1)$, $\mathfrak{A} = \ZZ_p[G]$ and $\wrp =p\ZZ_p$.

Then, since $H^2(C_\phi(1)) = \ZZ_p$, one has $e_0 = 1-e_{\bf 1}$ and ${\rm Fit}_{\ZZ_p[G]}(H^2(C_\phi(1)))$ identifies with the augmentation ideal $I_p(G):= \ZZ_p[G]\cap B$ of $\ZZ_p[G]$.

Thus, for some $x$ in $I_p(G)\cap B^\times$, Proposition \ref{reduction} implies the existence of a complex $C_x =  C_\phi(1)_x$ in $D^{\rm s}(\mathfrak{B})$ with the property that $H^2(C_x)$ contains $H^2(C_\phi(1)_0)$ and $x\cdot{\rm Det}_{\mathfrak{B}}(C_x) = e_0\cdot{\rm Det}_{\ZZ_p[G]}(C_\phi(1)) = {\rm Det}_{\mathfrak{B}}(C_\phi(1)_0)$.

In addition, since the complex $Be_*\otimes_{\mathfrak{B}}C_x$ is acyclic one can identify $e_*\cdot{\rm Det}_{\mathfrak{B}}(C_x)^{-1}$ as a sublattice of $B$   and, with respect to this identification, the equality $e_* = e_*e_0$ implies that
\begin{multline}\label{multi inclusion} e_*\cdot {\rm Det}_{\ZZ_p[G]}(C_\phi(1))^{-1} = e_*\cdot {\rm Det}_{\mathfrak{B}}(C_\phi(1)_0)^{-1} = xe_*\cdot {\rm Det}_{\mathfrak{B}}(C_x)^{-1}\\ \subseteq x\cdot {\rm Ann}_{\mathfrak{B}}(H^2(C_x))\subseteq x\cdot {\rm Ann}_{\mathfrak{B}}(H^2(C_\phi(1)_0)) \subseteq {\rm Ann}_{\mathfrak{B}}(H^0(G,\mathfrak{B}\otimes_{\ZZ_p}H^2(C_\phi(1)))). \end{multline}
Here the first inclusion follows from the argument of Proposition \ref{almost there}, the second is obvious and the third is true because of the exact sequence (see Lemma \ref{spec seq exact}(ii))
\begin{equation*} H^2(C_\phi(1)_0) \to
 H^0(G,\mathfrak{B}\otimes_{\ZZ_p}H^2(C_\phi(1))) \to
 H^2(G,\mathfrak{B}\otimes_{\ZZ_p}H^1(C_\phi(1)))
 \end{equation*}
and the short exact sequence (\ref{tauto ses}) can be used to show  $H^2(G,\mathfrak{B}\otimes_{\ZZ_p}H^1(C_\phi(1)))$ is isomorphic to $H^3(G,(T_{\bf 1})\otimes_{\ZZ_p}H^1(C_\phi(1)))$ and so is annihilated by $x$.

Next, we note that the functor $-\otimes_{\ZZ_p}H^2(C_\phi(1))$ preserves the exactness of the sequence (\ref{tauto ses}) and hence, upon taking $G$-invariants, gives an exact sequence
\[ 0 \to H^0(G,(T_{\bf 1}\otimes_{\ZZ_p}H^2(C_\phi(1)))) \to H^0(G,\ZZ_p[G]\otimes_{\ZZ_p}H^2(C_\phi(1))) \to H^0(G,\mathfrak{B}\otimes_{\ZZ_p}H^2(C_\phi(1))).\]

Since the third module in this sequence identifies with $H^2(C_\phi(1))$ in such a way that the image of the second arrow is equal to $H^0(G,H^2(C_\phi(1)))$, we deduce from (\ref{multi inclusion}) that any element in $e_*\cdot {\rm Det}_{\mathfrak{B}}(C_\phi(1)_0)^{-1}$ belongs to $\mathfrak{B}$ and annihilates $H^2(C_\phi(1))/H^0(G,H^2(C_\phi(1)))$.

Hence, since the fact that $A_F$ is isomorphic to a quotient of $H^2(C_\phi(1))$ implies that $A_F/H^0(G,A_F)$ is isomorphic to a quotient of $H^2(C_\phi(1))/H^0(G,H^2(C_\phi(1)))$, we deduce that any such element annihilates $A_F/H^0(G,A_F)$.

It is thus enough to show that for any element $\mathcal{L}$ as in claim (ii) one has $e_*\cdot \mathcal{L} \in e_*\cdot {\rm Det}_{\mathfrak{B}}(C_\phi(1)_0)$.

To do this we combine the standard identification ${\rm Det}_{\ZZ_p[G]}(Y_F[-1]\oplus Y_F[-2]) = (\ZZ_p[G],0)$ with the exact triangle in $D(\mathfrak{B})$ that is obtained by applying $\mathfrak{B}\otimes_{\ZZ_p[G]}-$ to (\ref{canonical exact tri}) to obtain an identification of $\mathfrak{B}$-modules ${\rm Det}_{\mathfrak{B}}(C_\phi(1)_0) = {\rm Det}_{\mathfrak{B}}(C(1)_0)$ that has the following property: the restriction of $\vartheta_{\psi}$ to $e_*\cdot {\rm Det}_{\mathfrak{B}}(C(1)_0)$ coincides with the composite $e_*\cdot {\rm Det}_{\mathfrak{B}}(C(1)_0) = e_*\cdot {\rm Det}_{\mathfrak{B}}(C_\phi(1)_0) \subset e_*\cdot B$, where the inclusion is as used in (\ref{multi inclusion}).

This implies, in particular, that $e_*\cdot \vartheta_{\psi}^{-1}(\mathcal{L}) = e_*\cdot \mathcal{L}$ belongs to $e_*\cdot {\rm Det}_{\mathfrak{B}}(C_\phi(1)_0)$, as required. \end{proof}

\subsubsection{}\label{proof of B}To prove Theorem B (as stated in the introduction) it is enough to combine Proposition \ref{weight -2 prop} with previous results of Macias Castillo and the first author.

To explain this we assume $F$ is totally real. In this case the module $Y_F$ vanishes and so the complex $C(1)_\phi$ constructed in \S\ref{first -2} coincides with $C(1)$. The idempotent $e_*$ in Proposition \ref{weight -2 prop} therefore coincides with the idempotent $e_{F/k,1}$ that occurs in \cite[Th. 5.4]{omac}.

Given this observation, and the stated hypothesis that either $\mu_p(F)$ vanishes or $p$ does not divide $[F:k]$, the latter result implies  the existence of a characteristic element $\mathcal{L}$ of $C(1)_0$ with the property that
\[ e_*\cdot \mathcal{L}^{-1} = e_*\cdot \theta^p_{F/k,S}(1),\]
where $\theta^p_{F/k,S}(1)$ is the element of $B_{\CC_p}$ defined by the interpolation property (\ref{interpolation}).

To deduce Theorem B from the final assertion of Proposition \ref{weight -2 prop} it is thus enough to show that $e_*\cdot \theta^p_{F/k,S}(1)$ is equal to $\theta^p_{F/k,S}(1)$.

This is in turn equivalent to proving that $L_{p,S}(1,\rho)$ vanishes for any non-trivial homomorphism $\rho: G \to \QQ_p^{c,\times}$ for which the space $e_\rho(\QQ_p^c\otimes_{\ZZ_p}H^2(C(1)))$ does not vanish and this follows directly from the result of \cite[Th. 2.4]{dbmc2} (with $m = 1$ and $\rho$ non-trivial).

This completes the proof of Theorem B.

\begin{remark}\label{general B}{\em Without any restriction on the abelian extension $F/k$, and for any choice of homomorphism $\phi$ as in (\ref{phi}), the equivariant Tamagawa number conjecture for the pair $(h^0({\rm Spec}(F))(1),\ZZ_p[G])$ leads to a precise (conjectural) description of an element $\mathcal{L}$ of the form required by Proposition \ref{weight -2 prop}(ii). The inverse of this element is the product of the leading term $\theta^*_{F/k,S}(1)$ by a combination of natural archimedean periods and regulators, a $p$-adic regulator and a `logarithmic resolvent' associated to $\phi$. In this way, Proposition \ref{weight -2 prop} leads one to the formulation, and in the case that $F$ is an abelian extension of $\QQ$ the proof, of conjectures that both extend and refine those formulated by Castillo and Jones in \cite{cj} and by Solomon in \cite{sol3}, both of which deal only with the `minus part' of CM extensions. Further details of this aspect of the theory are given in the PhD thesis \cite{thesis} of the third author.}\end{remark}

\subsection{Other weights}\label{other-weight} Throughout this section we assume that $j\notin\{0,1\}$ and that $p$ is an odd prime. We fix an idempotent $\varepsilon$ of $\ZZ_p[G]$ an a finite set of places $T$ of $k$ that is disjoint from $S$.

\subsubsection{}Following the construction of \cite{bks2-2}, for any integer $j$, define $R\Gamma_T(\OO_{F,S}, \ZZ_p(j))$ to be the complex that lies in an exact triangle in $D(\ZZ_p[G])$ of the form
\begin{equation}\label{T-mod tri}
R\Gamma_T(\OO_{F,S}, \ZZ_p(j))\rightarrow R\Gamma(\OO_{F,S}, \ZZ_p(j))\rightarrow\bigoplus_{w\in T_F}R\Gamma (\kappa_w, \ZZ_p(j))
\end{equation}
and in each degree $i$ the `$T$-modified' \'etale cohomology of $\ZZ_p(j)$ is then defined by setting $H^i_T(\OO_{F,S},\ZZ_p(j)):=H^i(R\Gamma_T(\OO_{F,S}, \ZZ_p(j))).$ Now we define an object of $D(\ZZ_p[G]\varepsilon)$ by setting
\[C^\varepsilon_{F,S,T}(j):=\ZZ_p[G]\varepsilon \otimes^{\mathbb{L}}_{\ZZ_p[G]}(R\Gamma_T(\OO_{F,S}, \ZZ_p(1-j))\oplus Y_F(-j)[-2]).\]

We recall from \cite{bks2-2} the properties of this complex.

\begin{proposition}\label{bks22complex} If $j\notin\{0,1\}$ and $\varepsilon H^1_T(\mathcal{O}_{F,S}, \ZZ_p(1-j))$ is $\ZZ_p$-torsion-free, then $C_{F,S,T}^\varepsilon(j)$ is an object in the category $D^{\rm s}(\ZZ_p[G]\varepsilon)$ such that
\[H^i(C_{F,S,T}^\varepsilon(j)) = \begin{cases}  \varepsilon H^1_T(\mathcal{O}_{F,S}, \ZZ_p(1-j)), & \mbox{ if } i=1,\\
\varepsilon H^2_T(\mathcal{O}_{F,S},\ZZ_p(1-j))\oplus \varepsilon Y_F(-j), &\mbox{ if } i=2.\end{cases}\]
Furthermore, there exists a (non-canonical) isomorphism of $\QQ_p[G]$-modules
\[\QQ_p \otimes_{\ZZ_p}H^1(C_{F,S,T}^\varepsilon(j))\cong\QQ_p\otimes_{\ZZ_p} H^2(C_{F,S,T}^\varepsilon(j)).\]
\end{proposition}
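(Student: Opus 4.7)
The plan is to leverage the direct-sum structure in the definition of $C^\varepsilon_{F,S,T}(j)$. Since $\ZZ_p[G]\varepsilon$ is a direct summand of $\ZZ_p[G]$ and hence a projective $\ZZ_p[G]$-module, the derived tensor product is exact and commutes with direct sums, giving a canonical decomposition
\[ C^\varepsilon_{F,S,T}(j) \;\cong\; \bigl(\ZZ_p[G]\varepsilon \otimes^{\mathbb{L}}_{\ZZ_p[G]} R\Gamma_T(\OO_{F,S}, \ZZ_p(1-j))\bigr) \;\oplus\; \varepsilon Y_F(-j)[-2] \]
in $D(\ZZ_p[G]\varepsilon)$. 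All assertions then reduce to verifying the four axioms (ad$_1$)--(ad$_4$) of \S\ref{admissible definition} from this explicit form, together with a single equivariant-rank computation.

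For (ad$_1$) I would argue that $R\Gamma(\OO_{F,S}, \ZZ_p(1-j))$ is perfect over $\ZZ_p[G]$ by the standard perfectness result for \'etale cohomology with $\ZZ_p$-coefficients on schemes of arithmetic dimension one, while each $R\Gamma(\kappa_w, \ZZ_p(1-j))$ is perfect via the two-term representative $[\ZZ_p(1-j)\xrightarrow{1-\mathrm{Fr}_w}\ZZ_p(1-j)]$; the triangle \eqref{T-mod tri} thus exhibits $R\Gamma_T(\OO_{F,S}, \ZZ_p(1-j))$ as perfect. For the second summand, the oddness of $p$ forces each archimedean decomposition group $G_v$ to have order coprime to $p$, so that $Y_F(-j) = \bigoplus_{v \in S_\infty(k)}\mathrm{Ind}_{G_v}^G H^0(F_{\tilde v}, \ZZ_p(-j))$ is a projective $\ZZ_p[G]$-module by Maschke's argument. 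The cohomology identification stated in the proposition follows immediately from the decomposition (since $Y_F(-j)[-2]$ contributes only in degree $2$), and (ad$_4$) is precisely the hypothesis on $\varepsilon H^1_T$.

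For (ad$_3$) the vanishing $H^3 = 0$ follows from $\mathrm{Spec}(\OO_{F,S})$ having $p$-\'etale cohomological dimension $2$ for $p$ odd. The required vanishing of $\varepsilon H^0_T(\OO_{F,S}, \ZZ_p(1-j))$ is obtained by chasing the long exact sequence of \eqref{T-mod tri}: the image in $\varepsilon H^1_T$ of the finite group $\bigoplus_w H^0(\kappa_w, \ZZ_p(1-j))$ is a torsion submodule of a torsion-free module and is therefore $0$, which localises the question to a finite check on the Tate-twisted roots-of-unity contribution to $\varepsilon H^0(\OO_{F,S}, \ZZ_p(1-j))$ handled by the standing assumptions on $T$ in the setup of \cite{bks2-2}.

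Finally, (ad$_2$) and the abstract isomorphism $\QQ_p\otimes H^1 \cong \QQ_p\otimes H^2$ reduce to a single equivariant-rank comparison, since $\QQ_p[G]\varepsilon$ is semi-simple and any two finitely generated modules over it are isomorphic as soon as they carry the same class in $K_0(\QQ_p[G]\varepsilon)$. Since $H^2(\OO_{F,S}, \ZZ_p(1-j))$ is finite for $j\neq 1$, one has $\QQ_p\cdot\varepsilon H^2(C^\varepsilon_{F,S,T}(j)) = \varepsilon\QQ_p\cdot Y_F(-j)$, so what remains is to match $\varepsilon\QQ_p\cdot H^1_T(\OO_{F,S}, \ZZ_p(1-j))$ with $\varepsilon\QQ_p\cdot Y_F(-j)$ in $K_0(\QQ_p[G]\varepsilon)$. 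This is the main obstacle: for $j\leq -1$ it combines the Soul\'e--Quillen--Lichtenbaum identification $\QQ_p\cdot H^1(\OO_{F,S}, \ZZ_p(1-j)) \cong \QQ_p\otimes K_{1-2j}(\OO_{F,S})$ with Borel's description of the $G$-character of this $K$-theory space, while for $j \geq 2$ one first invokes Artin--Verdier duality to pass to the positive Tate twist and applies the same identification. Apart from this character-matching step the argument is entirely formal manipulation of triangles, long exact sequences, and projectivity.
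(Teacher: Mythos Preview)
Your overall strategy is sound and most steps are correct; the paper itself merely cites \cite[Lem.~4.1]{bks2-2} and gives no argument. However, there is a genuine gap in your treatment of (ad$_2$) and the final rank comparison.

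You assert that $H^2(\mathcal{O}_{F,S}, \ZZ_p(1-j))$ is finite for all $j \neq 1$, so that $\QQ_p\cdot H^2(C^\varepsilon_{F,S,T}(j)) = \varepsilon\QQ_p\cdot Y_F(-j)$ and the problem reduces to matching $H^1$ with $Y_F(-j)$. This finiteness is indeed Soul\'e's theorem when $1-j \geq 2$, i.e.\ $j \leq -1$, but for $j \geq 2$ (so $1-j \leq -1$) it is \emph{not} known unconditionally: Poitou--Tate duality identifies $\QQ_p\otimes H^2(\mathcal{O}_{F,S}, \ZZ_p(1-j))$ with the $\QQ_p$-dual of the kernel of the localisation map
\[ H^1(\mathcal{O}_{F,S}, \QQ_p(j)) \longrightarrow \bigoplus_{v\in S\setminus S_\infty} H^1(F_v, \QQ_p(j)), \]
and the vanishing of this kernel for $j \geq 2$ is precisely a case of Schneider's conjecture \cite{ps}, which the present paper explicitly treats as open (see Remark~\ref{bksremark}). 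Your proposed fix of ``invoking Artin--Verdier duality to pass to the positive Tate twist'' does not help: that duality converts the finiteness question for $H^2$ at a negative twist into exactly this injectivity-of-localisation question for $H^1$ at a positive twist, so the conjectural input is merely relocated, not removed.

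The unconditional route bypasses the individual cohomology groups. One uses the vanishing of $[R\Gamma_c(\mathcal{O}_{k,S}, \ZZ_p(1-j)_F)]$ in $K_0(\QQ_p[G])$ (this is (ad$_2$) for $C(T)$, already recorded in Proposition~\ref{admiss constructions}(i)) together with the defining triangle \eqref{def-tri} and the local Euler--Poincar\'e formulas at the places in $S$ to obtain $[H^1]-[H^2]=[Y_F(-j)]$ in $K_0(\QQ_p[G]\varepsilon)$ directly; this yields both (ad$_2$) and the claimed non-canonical isomorphism without any hypothesis on $H^2$. As a minor aside, your argument for $H^0_T = 0$ is more elaborate than needed: since $j \neq 1$ one has $H^0(\mathcal{O}_{F,S}, \ZZ_p(1-j)) = \ZZ_p(1-j)^{G_F} = 0$ outright, and $H^0_T$ injects into this, so the auxiliary set $T$ plays no role here.
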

\begin{proof}
This is proven in \cite[Lem. 4.1]{bks2-2}.
\end{proof}

Following this result, we assume to be given an isomorphism of $E[G]$-modules
\[ \lambda: E \otimes_{\ZZ_p}H^1(C_{F,S,T}^\varepsilon(j)) \cong E \otimes_{\ZZ_p}H^2(C_{F,S,T}^\varepsilon(j))\]
for some extension field $E$ of $\QQ_p$.

\begin{theorem}\label{otherweights} Fix an integer $j$ with $j\notin\{0,1\}$ and assume that $\varepsilon H^1_T(\mathcal{O}_{F,S}, \ZZ_p(1-j))$ is torsion-free. Let $\mathcal{L}$ be a characteristic element for $(C^\varepsilon_{F,S,T}(j), \lambda)$ and write $r$ for the rank of the (free) $\ZZ_p[G]\varepsilon$-module $\varepsilon\cdot Y_F(-j)$.

Then for any separable subset $\mathcal{X}$ of $H^2(C^\varepsilon_{F,S,T}(j))_{\rm tf}$ with $|\mathcal{X}|=a$, there exists an isomorphism of $\ZZ_p[G]$-modules
\[\left(\dfrac{\bigcap^a_{\ZZ_p[G]}  H^1_T(\mathcal{O}_{F,S}, \ZZ_p(1-j))  }{\ZZ_p[G]\cdot\eta_\mathcal{X}}\right)_{\rm tor}^\vee\cong \left(\dfrac{\ZZ_p[G]    }{\Fit_{\ZZ_p[G]}^{a-r}(  H^2_T(\mathcal{O}_{F,S},\ZZ_p(1-j)))^{\varepsilon_a}}\right)_{\rm tor},\]
where $\eta_\mathcal{X}$ is the higher special element of $(C_{F,S,T}^\varepsilon(j),\lambda,\mathcal{L},\mathcal{X})$ and we set $\varepsilon_a:=\varepsilon\cdot e_{C,a}$.
\end{theorem}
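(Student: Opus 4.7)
The plan is to apply Theorem~\ref{str-bidual} directly to the data $(\mathfrak{A}, C, \lambda, \mathcal{L}, \mathcal{X})$ with $\mathfrak{A} := \ZZ_p[G]\varepsilon$ and $C := C^\varepsilon_{F,S,T}(j)$, and then translate the resulting $\mathfrak{A}$-coefficient statement into the $\ZZ_p[G]$-coefficient form. The hypotheses of Theorem~\ref{str-bidual} hold: $\mathfrak{A}$ is Gorenstein as a direct factor of the Frobenius $\ZZ_p$-algebra $\ZZ_p[G]$ (cf. Example~\ref{dual exams}), $C$ belongs to $D^{\rm s}(\mathfrak{A})$ with torsion-free $H^1$ by Proposition~\ref{bks22complex}, and $\mathcal{X}$ is separable. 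The remark following Theorem~\ref{str-bidual} then permits the choice $x = y = 1$, and Theorem~\ref{char els}(ii) yields $I(\eta_\mathcal{X}) = \Fit^a_\mathfrak{A}(H^2(C))$, so the leftmost term of the exact sequence in Theorem~\ref{str-bidual}(ii) vanishes, producing an isomorphism
\[ \Hom_\mathfrak{A}\!\left(\!\left(\tfrac{\bigcap^a_\mathfrak{A} H^1(C)}{\mathfrak{A}\cdot\eta_\mathcal{X}}\right)_{\!\rm tor},\, A/\mathfrak{A}\right) \cong \left(\tfrac{\mathfrak{A}}{\Fit^a_\mathfrak{A}(H^2(C))^{e_{C,a}}}\right)_{\rm tor}. \]

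To reverse the direction of the duality I apply Proposition~\ref{dual-frac}(iii), and then use the natural isomorphism $\Hom_\mathfrak{A}(M, A/\mathfrak{A}) \cong \Hom_{\ZZ_p}(M, \QQ_p/\ZZ_p)$ (with contragredient $G$-action) that holds for finite $\mathfrak{A}$-modules $M$ because $\mathfrak{A}$ is Frobenius over $\ZZ_p$. This converts the displayed isomorphism into one involving the Pontryagin dual $(-)^\vee$ on one side.

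For the translation to $\ZZ_p[G]$-coefficients three observations suffice. First, Proposition~\ref{dual-frac}(ii) gives $\bigcap^a_\mathfrak{A}(\varepsilon H^1_T) = \varepsilon\cdot\bigcap^a_{\ZZ_p[G]} H^1_T$; since $H^1_T(\mathcal{O}_{F,S},\ZZ_p(1-j))$ is $\ZZ_p$-torsion-free, the $(1-\varepsilon)$-component of $\bigcap^a_{\ZZ_p[G]} H^1_T$ is torsion-free, and as $\eta_\mathcal{X} = \varepsilon\eta_\mathcal{X}$ the torsion of the quotient $\bigcap^a_{\ZZ_p[G]}H^1_T/\ZZ_p[G]\cdot\eta_\mathcal{X}$ coincides with the torsion of the quotient taken over $\mathfrak{A}$. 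Second, since $\varepsilon Y_F(-j)$ is a free $\mathfrak{A}$-module of rank $r$, the multiplicativity of Fitting ideals under direct sums (combined with the vanishing of lower Fitting ideals for free summands) yields $\Fit^a_\mathfrak{A}(H^2(C)) = \Fit^{a-r}_\mathfrak{A}(\varepsilon H^2_T(\mathcal{O}_{F,S},\ZZ_p(1-j)))$, and base-change then gives $\Fit^{a-r}_\mathfrak{A}(\varepsilon H^2_T) = \varepsilon\cdot\Fit^{a-r}_{\ZZ_p[G]}(H^2_T)$. Third, under the identification $\varepsilon_a = \varepsilon\cdot e_{C,a}$, Lemma~\ref{torsion} identifies $(\ZZ_p[G]/\Fit^{a-r}_{\ZZ_p[G]}(H^2_T)^{\varepsilon_a})_{\rm tor}$ with $(\mathfrak{A}/\Fit^a_\mathfrak{A}(H^2(C))^{e_{C,a}})_{\rm tor}$. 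Splicing these identifications together converts the abstract isomorphism above into the stated one.

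The main obstacle is the bookkeeping in the final translation step: one must carefully track how the idempotents $\varepsilon$, $e_{C,a}$, and $\varepsilon_a$ interact with the torsion subfunctor and with Fitting ideals under change of coefficient ring. The only substantive algebraic input beyond Theorem~\ref{str-bidual} itself is the direct-sum formula for Fitting ideals when one summand is free; this is standard but is what is responsible for the rank-shift by $r$ between the Fitting ideals appearing on the two sides of the statement.
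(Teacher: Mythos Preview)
Your proposal is correct and follows essentially the same route as the paper's proof: specialize Theorem~\ref{str-bidual} (with $x=y=1$ via the separability of $\mathcal{X}$ and the Gorenstein-ness of $\ZZ_p[G]\varepsilon$), apply the Fitting-ideal shift for the free summand $\varepsilon Y_F(-j)$, and invoke Lemma~\ref{torsion} to pass from $\mathfrak{A}$- to $\ZZ_p[G]$-coefficients. One small slip: the hypothesis only gives torsion-freeness of $\varepsilon H^1_T$, not of $H^1_T$ itself, but this is harmless since the exterior bidual ${\bigcap}^a_{\ZZ_p[G]}H^1_T$ is automatically torsion-free as a $\ZZ_p[G]$-linear dual.
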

\begin{proof} We abbreviate $C^\varepsilon_{F,S,T}(j)$ to $C$, $e_{C,a}$ to $e_a$ and $H^i_T(\mathcal{O}_{F,S}, \ZZ_p(1-j))$ to $H^i_T$ for $i=1,2$.
To prove the claimed isomorphism, we first specialise Theorem \ref{str-bidual} to the data $(C,\lambda,\mathcal{L},\mathcal{X})$ and obtain an isomorphism of $\ZZ_p[G]$-modules
\begin{equation}\label{intermediate isom}\left(\dfrac{\bigcap^a_{\ZZ_p[G]} H^1(C)}{\ZZ_p[G]\cdot\eta_\mathcal{X}}\right)_{\rm tor}^\vee\cong \left(\dfrac{\ZZ_p[G] \varepsilon}{\Fit_{\ZZ_p[G]}^{a}(  H^2(C))^{e_a}}\right)_{\rm tor}.\end{equation}

By Proposition \ref{bks22complex} one has $H^1(C)=\varepsilon H^1_T$ and $H^2(C)=\varepsilon H^2_T\oplus \varepsilon Y_F(-j)$. In addition, Lemma \ref{torsion} implies the quotients $(\bigcap^a_{\ZZ_p[G]} \varepsilon H^1_T)/(\ZZ_p[G]\cdot\eta_\mathcal{X})$ and  $\ZZ_p[G]\varepsilon/\Fit_{\ZZ_p[G]}^{a}(  H^2(C))^{e_a}$ are equal to the torsion subgroups of $(\bigcap^a_{\ZZ_p[G]} H^1_T)/ (\ZZ_p[G]\cdot\eta_\mathcal{X})$ and $\ZZ_p[G]/\Fit_{\ZZ_p[G]}^{a}(  H^2(C))^{e_a}$ respectively.

Furthermore, since $\varepsilon\cdot Y_F(-j)$ is a free $\ZZ_p[G]\varepsilon$-module of rank $r$, a standard property of Fitting ideals implies that
\[ \Fit_{\ZZ_p[G]\varepsilon}^a(H^2(C))= \Fit_{\ZZ_p[G]\varepsilon}^{a-r}(\varepsilon H^2_T)=\varepsilon\cdot\Fit_{\ZZ_p[G]}^{a-r}(H^2_T).\]

The claimed isomorphism now follows upon combining these observations with the isomorphism in (\ref{intermediate isom}).
\end{proof}

\begin{remark}\label{bksremark}{\em In view of Theorem \ref{str-bidual}, the above isomorphism is equivalent to an equality
\begin{equation}\label{bks-conjecture}I(\eta_\mathcal{X})=\varepsilon_a\cdot\Fit_{\ZZ_p[G]}^{a-r}(  H^2_T(\mathcal{O}_{F,S},\ZZ_p(1-j)).
\end{equation} Suppose now that $a=r$, as would be implied by the validity of Schneider's conjecture \cite{ps}. Then the equivariant Tamagawa number conjecture implies that $\varepsilon\cdot\theta_{F/k,S,T}^*(j)^{-1}$ is a characteristic element for the pair ($C^\varepsilon_{F,S,T}(j)$, $\lambda_j$), where $\lambda_j$ is the period-regulator isomorphism at $j$, as defined by Kurihara, the first and second authors in \cite[\S 2.2]{bks2-2}. In particular, with $\mathcal{X}$ chosen to be the canonical basis of $\varepsilon Y_F(-j)$ that is fixed in \cite[Lem. 2.1]{bks2-2}, the higher special element $\eta_\mathcal{X}$ coincides with the Stark element of rank $r$ and weight $-2j$ defined in \cite[Def. 2.7]{bks2-2} and the equality (\ref{bks-conjecture}) recovers the prediction of \cite[Conj. 3.5]{bks2-2}.}
\end{remark}

\subsubsection{}\label{Higher algebraic $K$-groups} In this last section, we set $\ZZ'=\ZZ[1/2]$ and for each abelian group write $M'$ in place of $\ZZ'\otimes_\Z M$. We also fix an integer $m$ with $m > 1$.

We fix an integer $f$ with $f\not\equiv 2\pmod{4}$ and write $F$ for the field generated by a primitive $f$-th root of unity in $\CC$. We write  $\Sigma$ for the set of places of $\QQ$ comprising $\infty$ and all prime divisors of $f$ and set $\varepsilon^-_m:=(1-(-1)^m\tau)/2$ where $\tau$ is the complex conjugation in $G = \Gal(F/\QQ)$.

We write $\epsilon_m(\zeta_f)$ for Beilinson's `cyclotomic element' in $\QQ\otimes_\ZZ K_{2m-1}(\mathcal{O}_F)$, as described by Neukirch in
\cite[Part II, \S1]{neuk}, and then set $c_F(m) :=  2^{-1}(m-1)!f^{m-1}\cdot \epsilon_m(\zeta_f)$.

\begin{theorem}\label{G-stru_isom-neg} The element $c_F(m)$ belongs to $\varepsilon_{m}^-\cdot K_{2m-1}(\mathcal{O}_{F})'$ and there exists a canonical isomorphism of $\ZZ'[G]$-modules
\[\left(\dfrac{\varepsilon_{m}^-\cdot K_{2m-1}(\mathcal{O}_{F})'}{\ZZ'[G]\cdot c_F(m)}\right)^\vee \cong\dfrac{\ZZ'[G]\varepsilon_{m}^-}{\varepsilon_{m}^-\cdot\Fit^0_{\ZZ'[G]}(K_{2m-2}(\mathcal{O}_{F,\Sigma})')}.\]
\end{theorem}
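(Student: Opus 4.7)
The overall strategy is to derive Theorem \ref{G-stru_isom-neg} as a specialization of Theorem \ref{otherweights} applied at each odd prime $p$ to the weight $j = 1-m$ with idempotent $\varepsilon = \varepsilon_m^-$. At such a prime one enlarges $\Sigma$ to $S := \Sigma \cup \{p\}$ and fixes an auxiliary finite set $T$ of primes disjoint from $S$, large enough that $\varepsilon_m^- H^1_T(\mathcal{O}_{F,S}, \ZZ_p(m))$ is $\ZZ_p$-torsion-free, so that Proposition \ref{bks22complex} ensures $C^{\varepsilon_m^-}_{F,S,T}(1-m) \in D^{\rm s}(\ZZ_p[G]\varepsilon_m^-)$. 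The dependence on $T$ and $p$ is then stripped off at the end.

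The first step is to identify the cohomology of $C^{\varepsilon_m^-}_{F,S,T}(1-m)$ with $\varepsilon_m^-$-parts of the appropriate $K$-groups. Since $p$ is odd and $m \ge 2$, the Chern class maps, together with the now-proved Quillen--Lichtenbaum conjecture, give canonical isomorphisms $\ZZ_p \otimes_{\ZZ} K_{2m-1}(\mathcal{O}_{F,S})' \cong H^1(\mathcal{O}_{F,S}, \ZZ_p(m))$ and $\ZZ_p \otimes_{\ZZ} K_{2m-2}(\mathcal{O}_{F,S})' \cong H^2(\mathcal{O}_{F,S}, \ZZ_p(m))$, and these propagate, via the defining triangle (\ref{T-mod tri}) for $T$-modified cohomology and the standard localization sequence in $K$-theory, to identify $H^1(C^{\varepsilon_m^-}_{F,S,T}(1-m))$ and $H^2(C^{\varepsilon_m^-}_{F,S,T}(1-m))$ with the $\varepsilon_m^-$-components of the corresponding $T$-modified $K$-theory groups. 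A direct computation shows that $\varepsilon_m^- Y_F(m-1)$ is free of rank one over $\ZZ_p[G]\varepsilon_m^-$: indeed $F$ is totally imaginary, so $Y_F(m-1) \cong \mathrm{Ind}^G_{\langle\tau\rangle}\ZZ_p(m-1)$, and since $\tau$ acts on $\ZZ_p(m-1)$ as $(-1)^{m-1}$ the idempotent $\varepsilon_m^-$ precisely selects this eigenspace. We fix $\mathcal{X}$ to be a single $\ZZ_p[G]\varepsilon_m^-$-generator of $\varepsilon_m^- Y_F(m-1)$, so that $|\mathcal{X}| = r = 1$ in the notation of Theorem \ref{otherweights} and $\mathcal{X}$ is automatically separable.

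The crucial input is the identification of the resulting higher special element with a $T$-modification of $c_F(m)$. For abelian $F/\QQ$ the equivariant Tamagawa number conjecture for the pair $(h^0(\mathrm{Spec}(F))(1-m), \ZZ_p[G])$ is known (by work of Burns--Greither, Huber--Kings and Flach), and implies, as recorded in Remark \ref{bksremark}, that the inverse of $\varepsilon_m^- \theta^*_{F/\QQ,S,T}(1-m)$ is a characteristic element for the pair $(C^{\varepsilon_m^-}_{F,S,T}(1-m), \lambda_{1-m})$, where $\lambda_{1-m}$ is the period-regulator isomorphism of Kurihara and the first two authors in \cite{bks2-2}. The explicit formula of Beilinson--Deligne expressing the cyclotomic element $\epsilon_m(\zeta_f)$ in terms of the absolute Hodge regulator, combined with the Euler-factor normalizations built into $L_{k,S,T}(\chi^{-1},1-m)$, identifies the higher special element $\eta_{\mathcal{X}}$ with the $T$-modification of $c_F(m)$. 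The integrality assertion $c_F(m) \in \varepsilon_m^- K_{2m-1}(\mathcal{O}_F)'$ then follows from Theorem \ref{hse-integrality}(i). Applying Theorem \ref{otherweights} with $a = r = 1$ (so that the Fitting index $a-r$ equals $0$), and using Lemma \ref{adm-rep}(i) to identify ${\bigcap}^1_{\ZZ_p[G]\varepsilon_m^-} H^1(C^{\varepsilon_m^-}_{F,S,T}(1-m))$ with the reflexive module $H^1(C^{\varepsilon_m^-}_{F,S,T}(1-m))$ itself, yields a canonical isomorphism of $\ZZ_p[G]\varepsilon_m^-$-modules
\[ \left(\varepsilon_m^- K_{2m-1}(\mathcal{O}_F)'_p / \ZZ_p[G]\cdot c_F(m)_T\right)^\vee \cong \ZZ_p[G]\varepsilon_m^- / \varepsilon_m^- \Fit^0_{\ZZ_p[G]}(K_{2m-2}(\mathcal{O}_{F,\Sigma})'_p)_T. \]
One then removes the auxiliary $T$ using the standard comparison of $T$-modified and ordinary $K$-theory (via the Euler factors $(1 - \mathrm{Fr}_v Nv^{m-1})$ on both sides of the displayed isomorphism), and assembles the resulting $\ZZ_p[G]$-isomorphisms over all odd primes $p$ to obtain the claimed $\ZZ'[G]$-isomorphism.

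The main obstacle is the precise matching of scalar normalizations in $c_F(m) = 2^{-1}(m-1)! f^{m-1} \epsilon_m(\zeta_f)$ with the higher special element. The factor $(m-1)!$ reflects the comparison between motivic cohomology and the absolute Hodge realization in Beilinson's construction, the factor $f^{m-1}$ comes from the conductor dependence in the normalization of cyclotomic elements, and the factor $1/2$ from the projector $\varepsilon_m^-$. Tracking all of these through the period-regulator $\lambda_{1-m}$ and the leading-term formula for $\theta^*_{F/\QQ,S,T}(1-m)$, in order to verify that the higher special element is \emph{exactly} (rather than up to a harmless unit) the stated modification of $\epsilon_m(\zeta_f)$, is the technical heart of the argument.
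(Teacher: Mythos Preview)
Your overall strategy---localize at each odd prime $p$, apply Theorem~\ref{otherweights} with $j=1-m$, $\varepsilon=\varepsilon_m^-$, $a=r=1$, and identify the higher special element with the cyclotomic element---is exactly the paper's. The one substantive divergence is your introduction of an auxiliary set $T$ and the subsequent ``removal'' step. This is unnecessary: complex conjugation $\tau$ acts on $\QQ_p/\ZZ_p(m)$ by $(-1)^m$, so $\varepsilon_m^- H^0(F,\QQ_p/\ZZ_p(m))=0$, and hence $\varepsilon_m^- H^1(\mathcal{O}_{F,S},\ZZ_p(m))$ is already $\ZZ_p$-torsion-free. The paper therefore takes $T=\emptyset$ from the outset, which spares the Euler-factor bookkeeping you defer to the end. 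Your removal step is not wrong, but it is an avoidable detour, and you would need to check that the factor $\delta_T$ genuinely cancels compatibly on both sides (it does, but this deserves a line).

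The paper also handles two points you leave somewhat implicit by direct citation rather than argument: the fact that $c_F(m)$ lands in $\varepsilon_m^- K_{2m-1}(\mathcal{O}_F)'$ is deduced from the Huber--Wildeshaus identification \cite[Cor.~9.7]{hw} of its Chern-class image with the Deligne--Soul\'e element (rather than via Theorem~\ref{hse-integrality}), and the matching of the higher special element with that Deligne--Soul\'e element---including the scalar $2^{-1}(m-1)!f^{m-1}$ you highlight as the technical heart---is delegated to \cite[\S5.1]{bks2-2}. So where you sketch the normalization analysis, the paper simply invokes prior work.
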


\begin{proof} It suffices to prove the claimed isomorphism after tensoring with $\ZZ_p$ for each odd prime $p$. We fix an odd prime $p$ and write $S$ for $\Sigma\cup \{p\}$. Then the known validity of the Quillen-Lichtenbaum conjecture implies that there exists for both $k=1$ and $k=2$ a canonical Chern character isomorphism $\ZZ_p\otimes_\ZZ K_{2m-k}(\mathcal{O}_{F,\Sigma}) \cong H^k(\mathcal{O}_{F,S}, \ZZ_p(m))$.

In addition, in the case $k=1$ one has $K_{2m-k}(\mathcal{O}_{F,\Sigma}) = K_{2m-k}(\mathcal{O}_{F})$ and, by Huber and Wildeshaus \cite[Cor. 9.7]{hw}, the image of $1\otimes c_F(m)$ under the Chern character isomorphism coincides with the cyclotomic element of Deligne and Soul\'e in $H^1(\mathcal{O}_{F,S}, \ZZ_p(m))$.

Since this is true for all odd primes $p$ it implies, in particular, that the element $c_F(m)$ belongs to $\varepsilon_{m}^-\cdot K_{2m-1}(\mathcal{O}_{F})'$, as claimed.

Since the module $\varepsilon_m^-H^1(\mathcal{O}_{F,S},\ZZ_p(m))$ is $\ZZ_p$-torsion-free, Proposition \ref{bks22complex} implies that $C_{F/\QQ,S,\emptyset}^{\varepsilon_m^-}$ is an object in $D^{\rm s}(\ZZ_p[G]\varepsilon_m^-)$. In this case, $\varepsilon_m^-\theta_{F/\QQ,S,\emptyset}^*(1-m)^{-1}$ is a characteristic element for the pair $(C_{F/\QQ,S,\emptyset}^{\varepsilon_m^-},\lambda_{1-m})$ (see, for example, \cite[Cor. 4.4]{bks2-2}). Let $\mathcal{X}$ be the canonical basis of $\varepsilon_m^- Y_F(m-1)$ chosen as in \cite[Lem. 2.1]{bks2-2} and let $\eta_\mathcal{X}$ be the higher special element associated with the data $(C_{F/\QQ,S,\emptyset}^{\varepsilon_m^-},\lambda_{1-m}, \varepsilon_m^-\theta_{F/\QQ,S,\emptyset}^*(1-m)^{-1}, \mathcal{X})$. Note that the rank of $\varepsilon_m^- Y_F(m-1)$ over $\ZZ_p[G]\varepsilon_m^-$ equals to $1$ so Theorem \ref{otherweights} specialises to the above data to imply the isomorphism
\[\left(\dfrac{ \varepsilon_m^-\cdot H^1_T(\mathcal{O}_{F,S}, \ZZ_p(m))}{\ZZ_p[G]\cdot \eta_\mathcal{X}}\right)^\vee\cong \dfrac{\ZZ_p[G]\varepsilon_m^-}{\Fit_{\ZZ_p[G]}^{0}( \varepsilon_m^- \cdot H^2_T(\mathcal{O}_{F,S},\ZZ_p(m)))}.\]
Hence it suffices to show that the higher special element $\eta_\mathcal{X}$ in this context coincides with Deligne-Soul\'e's  cyclotomic element. This is proved in \cite[\S 5.1]{bks2-2}.
\end{proof}

\begin{remark}\label{boukhari}{\em The result of Theorem \ref{G-stru_isom-neg} both extends, and clarifies, the main result of El Boukhari in \cite{boukhari}. To explain this, we fix an odd prime $p$ and write $H$ for the maximal subgroup of $G$ of order prime to $p$ and $I_\ell$ for the inertia subgroup in $G$ of each rational prime $\ell$. We fix a homomorphism $\chi: H\to \mathbb{Q}_p^{c\times}$ that is non-trivial on $I_\ell\cap H$ for all primes $\ell\not= p$ that ramify in $F^H/\QQ$ and also such that $\chi(\varepsilon_m^-) = 1$. Then, setting $R_\chi := \mathbb{Z}_p[G]^\chi$, Theorem \ref{G-stru_isom-neg} implies that
\begin{multline*}  {\rm Fit}^0_{R_\chi}\left(\left(\left(\dfrac{ K_{2m-1}(\mathcal{O}_{F})\otimes_\ZZ\ZZ_p}{\ZZ_p[G]\cdot c_F(m)}\right)^\chi\right)^\vee\right)\\ = {\rm Fit}^0_{R_\chi}((K_{2m-2}(\mathcal{O}_{F})\otimes_\ZZ \ZZ_p)^\chi)     \cdot \prod_{\ell\in \Sigma\setminus \{\infty\}}(1- {\rm Fr}^{-1}_\ell\cdot \ell^{m-1})e_{I_\ell},\end{multline*}
where ${\rm Fr}_\ell$ is the Frobenius automorphism of $\ell$ in $G/I_\ell$ and $e_{I_\ell}$ the idempotent $(\#I_\ell)^{-1}\sum_{g \in I_\ell}g$.

The equality in the main result (Theorem 6.5) of \cite{boukhari} is of precisely this form but is obtained under the additional assumptions that $m$ is odd and $\chi$ is non-trivial on the intersection of $H$ with the full decomposition subgroup in $G$ of each prime that ramifies in $F$. In addition, the factor `$Q(0)$' that occurs in the statement of \cite[Th. 6.5]{boukhari} is incorrectly defined in loc. cit. and the above equality provides a corrected version. For more details of this aspect of the theory see \cite[\S 5.3.3]{thesis}.}
\end{remark}



\end{document}